\theoremstyle{plain} 
\newtheorem{lemma}[equation]{Lemma} 
\newtheorem{proposition}[equation]{Proposition} 
\newtheorem{theorem}[equation]{Theorem} 
\newtheorem{corollary}[equation]{Corollary} 
\newtheorem{question}[equation]{Question}
\newtheorem{priorResults}{Theorem}
\theoremstyle{definition}
\newtheorem{definition}[equation]{Definition} 
\theoremstyle{remark}
\numberwithin{equation}{section}
\title[Two Weight Hilbert] {The Two Weight Inequality for the Hilbert Transform: A Primer}
\author[M. T. Lacey]{Michael T. Lacey}   
\dedicatory{In memory of my father, H.~Elton Lacey} 
\address{ School of Mathematics, Georgia Institute of Technology, Atlanta GA 30332, USA}
\email {lacey@math.gatech.edu}
\thanks{Research supported in part by grant NSF-DMS 0968499,  a grant from the Simons Foundation (\#229596 to Michael Lacey), 
and the Australian Research Council through grant ARC-DP120100399.  The author benefited from two research programs, 
first `Operator Related Function Theory and Time-Frequency Analysis' at the Centre for Advanced Study at the Norwegian Academy of Science and Letters in Oslo during 2012---2013, and second  `Interactions between Analysis and Geometry' program at IPAM, UCLA, 2013.} 
\begin{document}
	\begin{abstract}
	Given a pair of weights $ w, \sigma $, the two weight inequality for the Hilbert transform 
	is of the form $ \lVert H (\sigma f)\rVert_{L ^2 (w)} \lesssim \lVert f\rVert_{L ^2 (\sigma )}$. 
	Recent work of  Lacey-Sawyer-Shen-Uriarte-Tuero and Lacey have established a conjecture of Nazarov-Treil-Volberg, giving a real-variable characterization of which pairs of weights this inequality holds, provided the pair of weights do not share a common point mass. 
	In this paper, the characterization is proved, collecting details from across several papers;
	 counterexamples are detailed; and  areas of application are indicated.      
\end{abstract}
	
	\maketitle  
\setcounter{tocdepth}{1}	
\tableofcontents 	
	
\section{Introduction} 

By a \emph{weight} we mean a non-negative Borel  locally finite measure, typically on $ \mathbb R $. 
We consider the \emph{two weight inequality for the Hilbert transform} for a pair of weights $ w, \sigma $ on $ \mathbb R $: 
\begin{equation} \label{e:N}
  \lVert H (f \cdot \sigma )\rVert_{ L ^2 (w)} 
\le \mathscr N \lVert f\rVert_{L ^2 (\sigma )} \,. 
\end{equation}
Here, $ \mathscr N $ denotes the best constant in the inequality. 
And $H \nu  (x)$ is the Hilbert transform of $ \nu $
\begin{equation}  \label{e:beta} 
H  \nu (x) \coloneqq  \int\frac {\nu (dy)} {y-x} \,. 
\end{equation}
We do not insist on the existence of the principal value, a point addressed in \S~\ref{s:principal_value}. 

The central question is  then a real-variable characterization of the inequality \eqref{e:N}.  In the special case that 
the pair of weights $ \sigma $ and $ w $ do not share a common point mass, this was supplied in three papers, 
one of Lacey-Sawyer-Shen-Uriarte-Tuero \cite{MR3285857} with the refinement of Hyt\"onen \cite{13120843}, and  another of the present author \cite{MR3285858},  
answering a beautiful conjecture of Nazarov-Treil-Volberg \cite{V}. 

\begin{theorem}\label{t:noCommon} 
Define two positive constants $ \mathscr A_2$ and $ \mathscr T$ as the best constants in the inequalities below, 
uniform over intervals $ I$, and with respect to interchanging the roles of $ \sigma $ and $ w$. 
\begin{gather}\label{e:A2}
\frac {\sigma (I)} {\lvert  I\rvert }  \cdot  P (w \mathbf 1_{\mathbb R \setminus I} , I) \le   \mathscr A_2\,, 
\\ \label{e:T}
 \int _{I} H (\sigma \mathbf 1_{I}) ^2 \; d w \le \mathscr T ^2 \sigma (I)\,. 
\end{gather}
There holds $ \mathscr N \simeq  \mathscr H \coloneqq \mathscr A_2 ^{1/2} + \mathscr T$. 
\end{theorem}

The first condition is an extension of the Muckenhoupt $ A_2$ condition to a  `half Poisson condition with a hole.' 
The exact Poisson extension of $ \sigma $ to the upper half-plane is not needed, rather we use the approximation below, 
which is roughly the Poisson extension evaluated at the center of $ I$, and up into the half-plane the length of $ I$, see Figure~\ref{f:sP}. 
\begin{equation}\label{e:P}
P (\sigma ,I) \coloneqq  \int _{\mathbb R } \frac {\lvert  I\rvert } { (\lvert  I\rvert + \textup{dist} (x,I)  ) ^2 }\; \sigma (dx) \,. 
\end{equation}
The remaining conditions are referred to as the Sawyer-type \emph{testing conditions}, 
as Eric Sawyer first introduced these conditions into the two weight setting in his fundamental papers 
on the maximal function \cite{MR676801}, and later the fractional and Poisson integral operators 
\cite{MR930072}.  It is well-known that the $ A_2$ condition  \eqref{e:A2} is necessary for the two weight inequality, and it is obvious that the testing conditions are necessary.  Thus, the substance of the Theorem above concerns the sufficiency of the $ A_2$ and testing inequalities for the 
norm inequality.  

This Theorem is a central result in the non-homogeneous harmonic analysis, as founded in a sequence of influential papers of Nazarov-Treil-Volberg \cites{NTV4,NTV2,MR1998349}.
The proof of the theorem is  involved, encompassing arguments and points of view that were spread across 
several papers \cites{10031596,MR3285857,MR3285858,11082319}.  
Finally, the interest in the two weight inequality is well-motivated by applications to operator theory, model spaces, and spectral theory, 
themselves spread across additional  papers.  

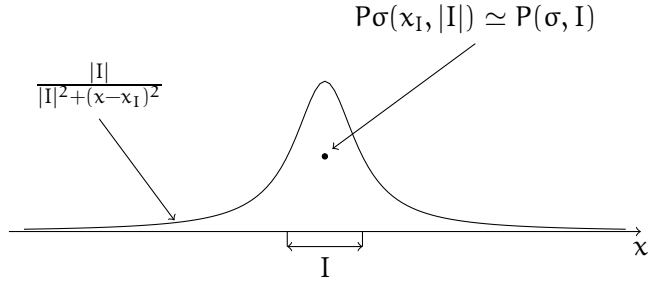
\begin{figure}
\begin{tikzpicture}[domain=0:8, samples=125]
  \draw[->] (-0.2,0) -- (8.2,0) node[right,below] {$x$};
  \draw  plot (\x,{ .5/(.25+(\x-4)^2});      \draw (4.5,0) -- (4.5,-.2);    	    \draw (3.5,0) -- (3.5,-.2); 
    \draw[<->] (3.5, -.2) -- (4.5, -.2) node[midway,below] {$ I$}; 
    \filldraw  (4,1) circle (1pt); 
    \draw[<-] (4.1,1.1) -- (6,2.5) node [above] {$ P \sigma (x_I, \lvert  I\rvert ) \simeq P (\sigma , I)$}; 
    \draw[<-] (2,.15) -- (1,1.5) node[above] {$ \frac {\lvert  I\rvert } { \lvert  I\rvert ^2 + (x-x_I) ^2  }$}; 
\end{tikzpicture}
\caption{The value of $ P (\sigma , I)$ is approximately the Poisson extension of $ \sigma $ evaluated at point in the upper half-plane given by  the center of $ I$, and the length of $ I$.}
\label{f:sP}
\end{figure}

The point of this paper is to  
\begin{enumerate}[(a)]
\item  state and prove  the Theorem, in all detail. 
\item give the proof under the influential \emph{pivotal condition}, which serves to highlight where the difficulties arise in the general case; 
\item  collect  relevant, explicit,  counterexamples; 
\item  give complements and extensions of the theorem, and the proof techniques;  
\item and point to areas of  applications.
\end{enumerate}
Sections  proceed directly towards proofs, but many conclude with some context and discussion.  
The proof is entirely elementary, assuming only the well known facts about martingale differences.

\subsection{An Overview of the Proof}

The result is an \emph{individual two weight inequality.}  It characterizes the boundedness of the Hilbert transform, 
and no other operator.  Therefore, particular properties of this transform must guide the proof. 
The elementary examples of these are the \emph{monotonicity principle,}  Lemma~\ref{mono}, valid for all pairs of weights, and then the 
\emph{energy inequality}, Lemma~\ref{l:energy},  valid under the assumption of interval testing and the $ A_2 $ condition.  
These properties  are a last vestige of positivity:  The kernel $ \frac 1y$ is monotone increasing on $ \mathbb R \setminus \{0\}$. 
This feature will deliver to us the \emph{energy inequality}; finding it, and  unlocking its secrets is the key to the proof. 

The main line of the argument begins with   the bilinear form $ \langle H _{\sigma } f,g \rangle _{w}$.
It's decomposition is made to `regularize' all four quantities in the expression, the two functions $ f$ and $ g$, 
as well as the `irregularities' of the pair of weights, as expressed by the energy inequality.  
Only half of the decomposition needs to be specified, due to the self-dual nature of the question, and some of these 
considerations are familiar to experts in both the $ T1$ and the $ Tb$ theorems. 
But the underlying difficulties do not have any classical analog.

The proof strategy is outlined in Figure~\ref{f:proof}.  
The passage to the `triangular forms'  in Lemma~\ref{l:above} is a rather standard step in many $ T1$-type theorems.   
The \emph{Calder\'on-Zygmund stopping data} defined in \S\ref{s:global} is   the foundational tool. 
It (a) controls the values of certain telescoping sums  of martingale differences; 
(b) regularizes the weights, from the point of view of the energy inequality;
and (c) allows the use of the  \emph{quasi-orthogonality} argument, an important simplification.  
The  triangular forms are of a `local' and a `global' form, and have dual forms as well. 
There are two steps in the analysis, a  `global to local' reduction in \S\ref{s:global}, and an analysis of the `stopping form'  in the \S\ref{s:stop}.   

The stopping data is essential to the  `global to local reduction' in Theorem~\ref{t:aboveCorona}.  
A simple appeal to the testing condition, allows an application of the monotonicity principle to rephrase the inequality  in this Theorem as a certain two-weight inequality for the Poisson integral.
In this inequality, the Poisson integral maps functions on $ \mathbb R $ to those on $ \mathbb R ^2 _+$. 
The weight on $ \mathbb R $ is, say, $ \sigma $.  The weight on $ \mathbb R ^2 _+$ is then derived from $ w$ in a specific fashion from the stopping data, and hence depend upon $ f$ and the pair of weights.   
But the Poisson operator is a positive operator, and one has a quite adequate understanding 
of their two weight inequalities. 
We directly implement this understanding, without proving any more general result.

The local term is then dominated by the analysis of the \emph{stopping form} \eqref{e:stop}.  
This is again a familiar object, to experts in $ T1$ theorem, addressed by \emph{ad hoc} off-diagonal estimates, 
which absolutely do not apply in  the current context. 
Control of the irregularities of the weights is now the main point, complicated by the fact that the stopping form 
is not  intrinsically defined.
A notion of `size' is introduced---it serves as an approximate of the operator norm of the  stopping form, 
and again is most naturally defined in terms of  a measure on $ \mathbb R ^2 _+$, derived from the two given weights. 
The \emph{size lemma}, Lemma~\ref{l:Decompose},  decomposes a stopping form into 
constituent parts.  Those of large size have a simpler form, which allows one to estimate their operator norm by 
size. What is left has smaller size, and so one can recurse.   This argument relies heavily on the Hilbertian structure of the question.

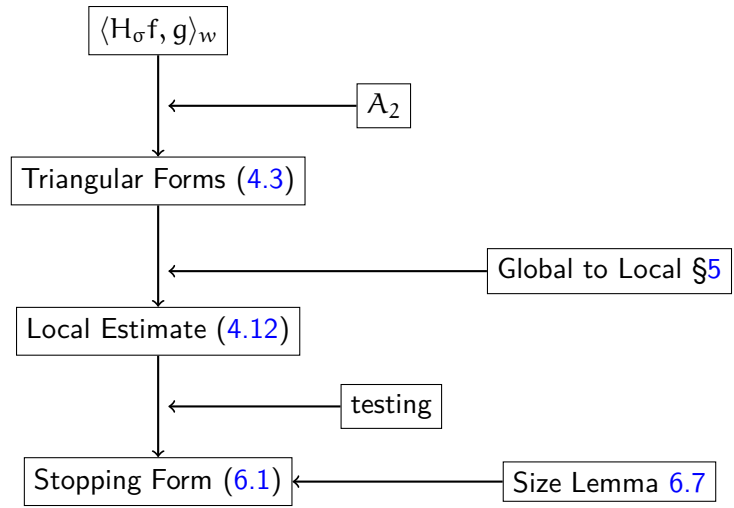
\begin{figure}[bottom] 
\begin{tikzpicture}
\node[rectangle,draw] (h) at (0,0) { $ \langle H _{\sigma }f,g \rangle_{w}$};
\node (a2) at (0,-1) {};   
\node[rectangle,draw] (T) at  (0,-2)  {Triangular Forms \eqref{e:ABOVE}}; 
\node (global) at (0,-3.2)   {} ;
\node[rectangle,draw] (L) at (0,-4)  {Local Estimate \eqref{e:BF}}; 
\node (t) at (0,-5)   {} ;
\node[rectangle,draw] (S) at   (0,-6)  {Stopping Form \eqref{e:stop}}; 

\node[rectangle,draw]  (A2) at (3,-1) {$ A_2$}; 
\node[rectangle,draw]  (F) at (6,-3.2) {Global to Local \S\ref{s:fe}}; 
\node[rectangle,draw]  (s) at (6,-6) {Size Lemma \ref{l:Decompose}}; 
\node[rectangle,draw]  (test) at (3.1,-5) {testing}; 

\draw[->,thick] (A2) -- (a2); 
\draw[->,thick] (h) -- (T);
\draw[->,thick] (T) -- (L);
\draw[->,thick] (L) -- (S);
\draw[->,thick] (F) -- (global) ;
\draw[->,thick] (s) -- (S);
\draw[->,thick] (test) -- (t);
\end{tikzpicture}
\caption{A schematic tree of the proof of the main theorem.}
\label{f:proof}
\end{figure}

Some readers will have noticed that a very common set of objects, Carleson measures, are not mentioned, and indeed, they do not appear in the 
proof at all.
The wide spread prevalence of Carleson measures in $ T1 $ theorems can be traced to two  facts, first  that associated paraproducts operators are the  principle 
obstacle to a simple proof, and second, the paraproduct operators have an essentially canonical form.  
In this theorem, neither of these facts hold, and so we have abandoned the notions  of Carleson measures and paraproducts.  

Carleson measures are also used to, indirectly, control the sums of martingale differences. 
Rather than this, we use the simpler method of  stopping data, as described in \S\ref{s:global}.

\subsection{The $ A_2$ Theory}

\begin{figure}
\begin{tikzpicture}
\draw[->] (-5,0) -- (5,0);  
\draw[->] (0,-.5) -- (0,3.5);  
\draw plot[domain=0:5,samples=80,smooth] (\x,{sqrt{\x+.05}});
\draw plot[domain=0:5,samples=80,smooth] (-\x,{sqrt{\x+.05}}); 
\draw plot[domain=0.3:2.2,samples=40,smooth] (\x^2,1/\x); 
\draw plot[domain=0.3:2.2,samples=40,smooth] (-\x^2,1/\x); 
\draw (4.5,.75) node {$ \sigma $}; 
\draw (4.5,2.5) node {$ w $}; 
\end{tikzpicture}
\caption{For $ 0< \epsilon < 1$, the function $ w (x) = \lvert  x\rvert ^{1- \epsilon } $ is an $ A_2$ weight. It and the dual weight $ \sigma (x) = \lvert  x\rvert ^{\epsilon -1} $ are graphed above.  One can check that $ [w] _{A_2} \simeq \epsilon ^{-1} $.} 
\label{f:A2}
\end{figure}
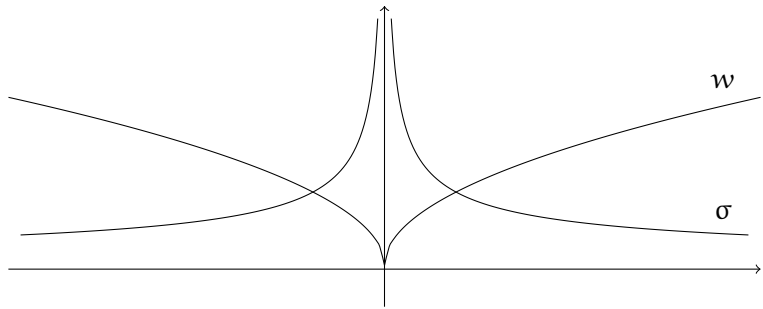

The classical case of an $ A_2$ weight corresponds to the case of $ w (dx) = w (x) dx $, and $ w (x) >0$ a.e. Moreover, the weight $ \sigma $ also has density given by  $ \sigma (x) \coloneqq  w (x) ^{-1} $.  It is assumed that both $ w$ and $ \sigma $ are locally integrable, 
so that they are both weights.  See Figure~\ref{f:A2}.  Note that $ w (x) \cdot \sigma (x) \equiv 1$.  The Muckenhoupt $ A_2$ condition asserts that this 
same equality approximately holds, uniformly over location and scale. 
\begin{equation*}
[w] _{A_2} \coloneqq  \sup _{I} \frac {w (I)} {\lvert  I\rvert } \cdot \frac {\sigma (I)} {\lvert  I\rvert } < \infty \,. 
\end{equation*}
These are `simple' averages.  
This condition  is equivalent to the uniform norm bound  on $ L ^2 (w)$ for  the class of simple averaging operators 
\begin{equation*}
f \mapsto  \frac 1 {\lvert  I\rvert } \int _{I} f \; dx \cdot \mathbf 1_{I} \,, \qquad \textup{$ I$ is an interval.}
\end{equation*}
From this condition flows a rich theory, including the boundedness of all Calder\'on-Zygmund operators.  
The classical result of Hunt-Muckenhoupt-Wheeden \cite{MR0312139} states that $ w $ in in $ A_2$ if and only if the Hilbert 
transform maps $ L ^2 (w) $ to $ L ^2 (w)$.  By a basic change of variables argument, first noted by Sawyer \cite{MR676801},  this is equivalent to $ H _{\sigma }$ mapping  $ L ^2 (\sigma )$ to  $ L ^2 (w)$.  
Stefanie Petermichl \cite{MR2354322} quantified the Hunt-Muckenhoupt-Wheeden theorem as follows.   

\begin{priorResults}\label{t:stef}  A weight  $ w \in A_2$ if and only if $ H $ is bounded from $ L ^2 (w)$ to $ L ^2 (w)$, and moreover the constant $ \mathscr N$ in \eqref{e:N} satisfies  $ \mathscr N \simeq [w] _{A_2}$.  
\end{priorResults}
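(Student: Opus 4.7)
The plan is to derive this as a consequence of Theorem~\ref{t:noCommon} applied to the pair $(\sigma, w) = (w^{-1}, w)$. Since both weights are absolutely continuous with positive density, they share no common point mass, and Theorem~\ref{t:noCommon} gives $\mathscr{N} \simeq \mathscr{A}_2^{1/2} + \mathscr{T}$. Necessity $[w]_{A_2} \lesssim \mathscr{N}$ is the classical argument: the Hilbert transform $H(w \mathbf{1}_I)$ has the pointwise lower bound $\gtrsim w(I)/|I|$ on the adjacent interval $I'$ of equal length, and testing the norm inequality on $\mathbf{1}_{I'}$ in $L^2(\sigma)$, combined with its dual, yields $w(I)\sigma(I)/|I|^2 \lesssim \mathscr{N}^2$ after symmetrization.

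For the sufficiency, I would first control the Poisson $A_2$ constant by $\mathscr{A}_2 \lesssim [w]_{A_2}^2$. Expand each Poisson integral into dyadic shells around $I$:
\begin{equation*}
 P(w,I) \lesssim \sum_{k\ge 0} 2^{-k}\, \langle w\rangle_{2^{k}I}, \qquad P(\sigma,I) \lesssim \sum_{k\ge 0} 2^{-k}\, \langle \sigma\rangle_{2^{k}I}.
\end{equation*}
Multiplying, the diagonal terms are immediately controlled by $[w]_{A_2}$ via the one-weight $A_2$ condition $\langle w\rangle_J \langle\sigma\rangle_J \le [w]_{A_2}$. The off-diagonal cross terms, in which the two averages are over different-sized intervals, are handled using the Coifman--Fefferman reverse H\"older self-improvement: an $A_2$ weight satisfies reverse H\"older with exponent $1+\delta$, $\delta \simeq 1/[w]_{A_2}$, which suffices to re-sum the geometric series and yield $\mathscr{A}_2^{1/2}\lesssim [w]_{A_2}$.

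The hard part is the testing bound $\mathscr{T} \lesssim [w]_{A_2}$, which is essentially the sharp $A_2$ theorem for $H$ itself. The cleanest route is Petermichl's dyadic representation theorem, expressing $H$ as an average over a random family of dyadic grids of Haar shift operators $S_\omega$ of complexity one. Each Haar shift satisfies the sharp linear weighted bound $\lVert S_\omega\rVert_{L^2(w)\to L^2(w)} \lesssim [w]_{A_2}$, proved originally by a Bellman function argument of Petermichl and now most transparently by pointwise sparse domination followed by the elementary $L^2(w)$ bound for positive sparse operators. Integrating over the grid parameters transfers this bound to $H$, and the interval testing inequality against $\mathbf{1}_I$ is then an immediate corollary. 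The main obstacle here is not the reduction, which is formal, but the sharp weighted bound for the shifts: this is the substantive content of the $A_2$ conjecture and cannot be treated as elementary.
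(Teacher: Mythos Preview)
This statement is a cited prior result (Petermichl's theorem), and the paper does not supply its own proof; it only offers a short contextualizing paragraph immediately following the statement. Your outline matches that paragraph exactly: the Poisson bound $\mathscr{A}_2 \lesssim [w]_{A_2}^2$ is declared classical, and the testing bound $\mathscr{T} \lesssim [w]_{A_2}$ is attributed to Petermichl's Haar shift representation together with the sharp weighted bound for the shifts.

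The one structural point worth flagging is that your route through Theorem~\ref{t:noCommon} is redundant, and you essentially acknowledge this yourself. Establishing $\mathscr{T} \lesssim [w]_{A_2}$ via Haar shifts already requires the full operator bound $\lVert S_\omega\rVert_{L^2(w)\to L^2(w)} \lesssim [w]_{A_2}$ for each shift; averaging these over the grid parameter gives $\mathscr{N} \lesssim [w]_{A_2}$ directly, with no appeal to Theorem~\ref{t:noCommon} at all. The paper makes the same point (``This is what Petermichl's original proof did''), and then stresses that all known proofs of the sharp $A_2$ bound ultimately rest on unweighted Lebesgue-measure estimates for the shifts---ingredients that are irrelevant to the two-weight theorem. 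So Theorem~\ref{t:noCommon} does not yield a new proof of Petermichl's result; it merely absorbs it as a special case once the hard work (the sharp bound for the shifts) is already in hand.
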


To place this result in the context of our main result, it is classical and easy to see that the Poisson $ A_2$ characteristic 
satisfies $ \mathscr A_2 \lesssim [w] _{A_2} ^2 $.  And, using the remarkable Haar shift representation of the Hilbert transform 
due to Petermichl \cite{MR1756958}, one can check that the testing condition satisfies $ \mathscr T \lesssim [w] _{A_2}$.  
This is what Petermichl's original proof did.  
All existing proofs of Petermichl's Theorem (see \cites{MR2657437,MR3085756,2912709,MR3085756}) depend ultimately on known Lebesgue measure estimates for the Hilbert transform, or closely related operators. For instance, \cites{150105818,MR3085756} use the weak-$L^1 (dx)$ bound for sparse shift operators.  
Estimates of these type are irrelevant for the two weight theorem.  

It is perhaps worth emphasizing that the powerful Haar shift technique of Petermichl, even with its impressive extension by 
Hyt\"onen \cite{2912709}, seems to be of little use in the general two weight problem.  There are two obstacles: 
Firstly, in order to use it, one must essentially have control on a Haar shift operator, independently of how the grid defining the shift is defined. 
The resulting  condition on the pair of weights is  more subtle than the two weight inequality for the 
Hilbert transform.  Secondly, one should recover the energy inequality of Lemma~\ref{l:energy}.  But, the energy of any fixed Haar shift is zero, 
and indeed, the two weight inequality for Haar shift operators  \cite{MR2407233} has just a few difficulties in its proof.

By the \emph{$ A_2$ Theorem}, it is meant the linear in $ A_2$ bound for all Calder\'on-Zygmund operators.  
This result, pursued by many, and established by Hyt\"onen \cite{2912709}, has many points of contact with the subject of this note. 
But, we refer the reader to  \cite{MR3204859} and references there in for more information, and 
see \cite{150105818} for what is arguably the most elementary proof. ?

\smallskip 

In the $ A_2$ theory, it is essential that $ w (x) >0 $ a.e.  Suppose one relaxes this condition to $ w (x)$ is positive on a measurable set $ E \subsetneq \mathbb R $, and define $ \sigma (x)$ to be supported on $ E$, and equal to $ w (x) ^{-1}$.  One can then ask if the 
Hilbert transform is bounded for this pair of weights, and Theorem~\ref{t:noCommon} applies here.  This question is an instance of the 
non-homogeneous $ A_2$ theory advocated by A.~Volberg.    One can hope that  specificity in the way the weights are prescribed 
could introduce some additional simplifications in the characterization of the two weight inequality in this setting. 
But, none has yet been found.

\subsection{The Individual Two Weight Problem}

Given an operator $ T$, the \emph{individual $ L ^{p}$ two weight inequality  for $ T$} is the inequality 
\begin{equation} \label{e:TT}
\lVert T _{\sigma } f \rVert_{L ^{p} (w)} \le \mathscr N_T \lVert f\rVert_{L ^{p} (\sigma ) } \,. 
\end{equation}
Here and throughout we use the notation $ T _{\sigma } f \coloneqq  T (\sigma f)$. 
We understand that $ T$ applied to a signed measure  $ \sigma \cdot f$ should make sense. 
And, the inequality above is the preferred form of the inequality as duality is expressed in the natural way: 
The inequality \eqref{e:TT} is equivalent to 
\begin{equation*}
\lVert T _{w }  ^{\ast} g \rVert_{L ^{p'} (\sigma )} \le \mathscr N_T \lVert g\rVert_{L ^{p'} (w ) } \,. 
\end{equation*}
The question is then to characterize the pairs of weights for which \eqref{e:TT} holds.  

This specificity of the question is of interest for a few canonical operators, ones for which the corresponding two 
weight inequality will naturally present itself.  
The leading examples of this are, for positive operators, the Hardy operator by Muckenhoupt \cite{MR0311856}, 
the maximal function, Sawyer's Theorem of 1981 \cite{MR676801} and  Sawyer's 1988 theorem for the  fractional integrals \cite{MR930072}.  It is 
noteworthy that the two weight inequalities for the Hardy and the Poisson integral are used in the proof of our main theorem, 
as are various purely dyadic variants of these Theorems.  

It is interesting to that this is not only a chronological 
list, but it also reflects the depth of the results as well. The Hardy operator is easiest, characterized by an `$ A_2$-type condition,' 
as recalled in Theorem~\ref{p:hardy}.  
It was Sawyer's insight, however, that the maximal function  characterization requires a testing condition. 
The fractional integrals are harder still.
For the sake of comparison, let us state a special case of the result for the fractional integrals in one dimension. Besides Sawyer's results, one should also consult Casscante-Ortega-Verbitsky \cite{MR2269591}, 
as well as those of Vuorinen \cite{14122127}.  
Both results give a characterization in terms of testing conditions.  
And, while we state just one case of the general result, one should note that there is no Sobolev condition imposed on the $ L ^{p}$ indices.

\begin{priorResults} For two weights $ w, \sigma $, and $ 0< \alpha <1$, the operator $ R _{\sigma } f (x) \coloneqq  \int f (x-y) \frac {\sigma (dy)} {\lvert  y\rvert ^{\alpha } }$ maps $ L ^2 (\sigma )$ to $ L ^2 (w)$ if and only if the testing inequalities below hold. 
\begin{gather*}
\int _{I} R _{\sigma } (\mathbf 1_{I}) ^2 \; dw \le \mathscr T ^2 \sigma (I) \,, 
\qquad 
\int _{I } R _{w } (\mathbf 1_{I}) ^2 \; d \sigma  \le \mathscr T ^2 w  (I) \,.  
\end{gather*}
Moreover the norm of the operator is equivalent to $ \mathscr T$, the best constant in the inequalities above. 
\end{priorResults}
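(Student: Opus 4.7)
Necessity is immediate: taking $ f = \mathbf 1_I $ in the norm inequality gives the first testing inequality, and the second follows by duality since the kernel $ \lvert x-y\rvert ^{-\alpha } $ is symmetric and positive. The content is sufficiency, and what distinguishes this result from Theorem~\ref{t:noCommon} is the \emph{positivity} of the kernel: no auxiliary Poisson $ A_2 $ condition is needed, since there is no cancellation to exploit, and pure testing suffices. My plan is to bound the positive bilinear form $ \langle R _{\sigma } f, g\rangle _w $ for nonnegative $ f, g $, after discretization. Using the standard shifted-dyadic device, one pointwise dominates $ \lvert x-y\rvert ^{-\alpha } $ by a finite sum, over translated dyadic grids $ \mathcal D $, of positive kernels $ \sum _{I\in \mathcal D} \lvert I\rvert ^{-\alpha } \mathbf 1_I(x) \mathbf 1_I(y) $, reducing matters to
\begin{equation*}
B(f,g) \coloneqq \sum _{I\in \mathcal D} \lvert I\rvert ^{-\alpha } \Bigl(\int _I f\, d\sigma \Bigr)\Bigl(\int _I g\, dw\Bigr) \lesssim \mathscr T \lVert f\rVert_{\sigma } \lVert g\rVert_{w} .
\end{equation*}

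The next step is a pair of simultaneous Calder\'on--Zygmund stopping-time constructions, yielding sparse families $ \mathcal F $ and $ \mathcal G $ at which the $ \sigma $-average of $ f $ and the $ w $-average of $ g $, respectively, double. These deliver Carleson packings $ \sum _{F'\subseteq F,\, F' \in \mathcal F} \sigma (F') \lesssim \sigma (F) $ and the quasi-orthogonality $ \sum _{F\in \mathcal F} \alpha _F ^{2}\, \sigma (F) \lesssim \lVert f\rVert_{\sigma } ^{2} $, where $ \alpha _F $ is the stopping value on $ F $, along with the symmetric statements for $ g $ and $ \mathcal G $ with values $ \beta _G $. Assign to each dyadic $ I $ its minimal stopping parents $ (F, G) \in \mathcal F \times \mathcal G $, which are necessarily nested, and split $ B(f,g) $ accordingly. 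On the diagonal $ F = G $, the interior averages are controlled by $ \alpha _F $ and $ \beta _F $, so the inner sum over $ I $ in the common corona is dominated by $ \alpha _F \beta _F \int _F R _{\sigma } (\mathbf 1_F)\, dw $; the testing inequality combined with Cauchy--Schwarz upgrades this to $ \mathscr T \alpha _F \beta _F \sigma (F) ^{1/2} w (F) ^{1/2} $, and a final Cauchy--Schwarz against the two quasi-orthogonality bounds produces $ \mathscr T \lVert f\rVert_{\sigma } \lVert g\rVert_{w} $.

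The off-diagonal case $ F \subsetneq G $ (and its mirror) is where the main obstacle lies: the finer-scale control of $ f $ mismatches the coarser-scale control of $ g $, and the natural pairing does not telescope. Positivity is again decisive. Each $ F $ has a unique minimal $ \mathcal G $-ancestor $ G = G(F) $, so the off-diagonal sum organizes as $ \sum _{F \in \mathcal F} \alpha _F \beta _{G(F)} \int _F R _{\sigma } (\mathbf 1_{G(F)})\, dw $. Using positivity to split $ R _{\sigma } (\mathbf 1_{G(F)}) = R _{\sigma } (\mathbf 1_F) + R _{\sigma } (\mathbf 1_{G(F)\setminus F}) $, the first piece is controlled directly by the testing inequality, and its contribution is absorbed by Cauchy--Schwarz together with a Carleson-packing estimate for $ \mathcal F $ inside $ \mathcal G $. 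The tail $ R _{\sigma } (\mathbf 1_{G(F)\setminus F}) $, which is the most delicate step, is handled by invoking the \emph{dual} testing inequality (with the roles of $ \sigma $ and $ w $ interchanged) together with a secondary telescoping along the $ \mathcal G $-tower between $ F $ and $ G $; each level of the tower produces a Carleson-packed sum that is absorbed into $ \lVert f\rVert_{\sigma } $ or $ \lVert g\rVert_{w} $. Assembling the diagonal and off-diagonal bounds gives $ B(f,g) \lesssim \mathscr T \lVert f\rVert_{\sigma } \lVert g\rVert_{w} $, and hence $ \mathscr N_R \simeq \mathscr T $.
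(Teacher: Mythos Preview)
Your reduction to a dyadic positive operator via shifted grids and the parallel-corona setup with stopping families $\mathcal F,\mathcal G$ is the right architecture, and it matches how the paper handles its dyadic model (Theorem~\ref{t:dy}; the fractional-integral statement itself is quoted from Sawyer without proof). The diagonal case is fine. The off-diagonal case, however, does not close as you have written it.

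Two issues. First, when $F\subsetneq G=G(F)$ and $\pi I=(F,G)$, every such $I$ lies in $F$, so after bounding both averages by stopping values the inner sum is at most $\alpha_F\,\beta_{G(F)}\int_F R_\sigma(\mathbf 1_F)\,dw$, not $R_\sigma(\mathbf 1_{G(F)})$. The argument $G(F)\setminus F$ never appears; your splitting addresses a term that is not there, and there is no ``$\mathcal G$-tower between $F$ and $G$'' to telescope along, since $G$ is already the immediate $\mathcal G$-parent of $F$. Second, and more seriously, even with the correct local bound $\mathscr T\,\alpha_F\,\beta_{G(F)}\,\sigma(F)^{1/2}w(F)^{1/2}$, Cauchy--Schwarz over $F$ requires
\[
\sum_{F:\,\pi_{\mathcal G}F=G} w(F)\ \lesssim\ w(G).
\]
The Carleson packing you have for $\mathcal F$ is in $\sigma$-measure, not $w$-measure, and the $F$'s in a fixed $G$-corona are nested, not disjoint; this sum need not be finite. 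Your appeal to a ``Carleson-packing estimate for $\mathcal F$ inside $\mathcal G$'' is precisely the missing ingredient, not an available one.

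The paper's remedy (see the proof of Theorem~\ref{t:dy}) is to \emph{not} bound both averages by stopping values. Organize by the larger stopping interval and keep the finer function intact. For $F\subsetneq G$: fix $G$, bound only $\int_I g\,dw\lesssim(\mathbb E^{w}_{G}g)\,w(I)$, observe that $\sum_{I}\lvert I\rvert^{-\alpha}w(I)\mathbf 1_I\le T_w\mathbf 1_G$ on $G$ by positivity, and pair against $\sum_{F:\,\pi_{\mathcal G}F=G}f_F$. Dual testing on $G$ gives
\[
\mathscr T\,(\mathbb E^{w}_{G}g)\,w(G)^{1/2}\Bigl\lVert\sum_{F:\,\pi_{\mathcal G}F=G}f_F\Bigr\rVert_\sigma,
\]
and Cauchy--Schwarz in $G$ now closes: $\sum_G(\mathbb E^{w}_{G}g)^2 w(G)\lesssim\lVert g\rVert_w^2$ by quasi-orthogonality, while the $F$'s are partitioned by their $\mathcal G$-parent and the $f_F$ are nonnegative, so $\sum_G\lVert\sum_F f_F\rVert_\sigma^2\le\lVert\sum_{F\in\mathcal F}f_F\rVert_\sigma^2\lesssim\lVert f\rVert_\sigma^2$ by \eqref{e:FFF}. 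No tail term, no telescoping, no mixing of the two Carleson conditions.
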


The analysis of the individual two weight inequality for  positive operators is much simpler, 
as is the case of dyadic operators.  For certain non-positive  dyadic operators, see the result of Nazarov-Treil-Volberg \cite{MR2407233}, and the much more recent works of Vuorinen \cites{14122127,150405759}.  
These results have found significant interest, due to the Haar shift operators of Petermichl \cite{MR1756958},  the remarkable 
median inequality of Lerner \cite{MR2721744} and its extension in \cite{150105818}, and the Hyt\"onen representation theorem \cite{2912709}. 

The Hilbert transform is the first non-positive continuous operator for which the individual two weight problem has been solved. 
And, one would only ever expect that the solution would be of interest (or even possible) for a few canonical choices of 
operators, such as Hilbert, Cauchy and Riesz transforms. 
Foundational to the solution for the Hilbert transform is the monotonicity of the kernel.  
No other canonical choice  will satisfy such a simple condition.  
For a special case of the Cauchy transform \cite{13104820} one can make progress. But the case of Riesz transforms is much harder \cites{13126163,14010467}.  

The individual two weight question makes sense for any $ 1< p < \infty $, and there are characterizations in this, and other 
off-diagonal cases for positive operators.   For dyadic analogs of singular integrals Vuorinen \cite{14122127} 
has shown that these inequalities can be characterized by \emph{quadratic} testing conditions. 
Also see \cite{150705570}.  
The extension of this characterization to the setting of the Hilbert transform  is challenging.

\subsection{The Hilbert Transform}

The two weight inequality for the Hilbert transform was addressed as early as 1976 by Muckenhoupt and Wheeden \cite{MR0417671}.\footnote{In particular, they noted that the simple $ A_2$ condition was not sufficient for the boundedness of the Hilbert transform, and conjectured that half-Poisson $ A_2$ conditions would be sufficient, an indication of the powerful sway held by the Muckenhoupt $ A_2$ condition in the early years of the weighted theory.}%
But, it received much wider recognition as an important problem with the  1988 work of Sarason \cite{MR1038352}.  The latter was part of important sequence of investigations that identified 
de Branges  spaces as an essential tool in operator theory.  
His question concerning the composition of Toeplitz operators, see \S\ref{s:sarason}, was raised therein, and 
advertised again in  \cite{sarasonConj}.  
This question related the individual two weight problem for the Hilbert transform to a profound question from operator theory.  

While not stated in the language of the Hilbert transform, Sarason wrote that it was `tempting' to conjecture that the full Poisson $ A_2$ condition would be sufficient for the two weight inequality.  
In an important development, F.~Nazarov \cite{N1} showed that this was not the case.  
The two weight problem was seen to be  important to  Model spaces, namely certain embedding questions for Model spaces can be 
realized as a two weight inequality for the Hilbert transform.  
In particular, a more delicate counterexample was developed by Nazarov-Volberg \cite{NV} to disprove a 
conjectured characterization of the Carleson measures for a model space.   
The Nazarov counterexample was also used by Nikol{\cprime }ski{\u \i }-Treil  \cite{MR1945291}, in the context of spectral theory.  

The Nazarov counterexample is by way of a Bellman function approach. 
In \S\ref{s:examples}, we give an explicit example.     It is worth noting that in Sarason's question, the weights have a density $ \lvert  f\rvert ^2  $, for analytic $ f$, and the subharmonicity could be an important part of the problem.  But, in the context of model spaces, completely singular arbitrary measures can arise.  
In \S\ref{s:examples},  one of the weights is uniform measure on a  Cantor set.  

 Nazarov-Treil-Volberg were creating the field of non-homogeneous Harmonic Analysis, in a series of ground-breaking  papers \cites{NTV4,NTV2,MR1998349}.  
Their work, and a revitalization of the perspective of Eric Sawyer from the 1980's, lead them to conjecture the characterization proved in this paper. 
Moreover, their influential proof strategy, devised in \cites{10031596,V}, lead to a verification of the conjecture in the case that both weights were doubling.   This paper uses their strategy, with several additional features.  
At the same time, their approach is generic, in that it applies to general Calder\'on-Zygmund operators. 
Specific properties of the Hilbert transform had to be used in the characterization. 
These properties were identified in \cites{10014043,11082319,MR3285857,MR3285858}, and the more precise description of what 
was accomplished at each stage is spread out throughout the paper.

\subsection{The Circle}

The two weight inequality has an equivalent formulation on the circle, which we formulate now. 
Given two weights $ w ,\sigma $ on the circle group $ \mathbb T \equiv \mathbb R /2 \pi  \mathbb Z $, we 
consider the norm inequality 
\begin{equation}\label{e.Hcircle}
\int _{\mathbb T }  \biggl\lvert \int f (y) \cdot \cot 
\bigl(\frac {x-y} 2  \bigr)
\; \sigma (dy)  \biggr\rvert 
^2 \; d w \le \mathscr N ^2 \lVert f\rVert _{L ^2 (\mathbb T , \sigma )} ^2 . 
\end{equation}
This is abbreviated to $ \lVert H ^{\mathbb T } _{\sigma } f \rVert _{L ^2 (w)} \le \mathscr N \lVert f\rVert _{L ^2 (\sigma ) }$.

\begin{theorem}\label{t:circle}
The inequality \eqref{e.Hcircle} holds if and only if the pair of weights below satisfy the 
conditions below and their duals. For all intervals $ I\subset \mathbb T $, with $ \lvert  I\rvert \le 1 $, 
there are finite constants $ \mathscr A_2$ and $ \mathscr T$, such that 
\begin{align}\label{e:A2-circ}
\frac {\sigma (I) } {\lvert  I\rvert } \cdot P ^{\mathbb T } (w \mathbf 1_{\mathbb T \setminus I}) (x_I, 1- \lvert  I\rvert ) &< \mathscr A_2 , 
\\ \label{e:test-circ}
\int _{I} \lvert  H ^{\mathbb T } _{\sigma } \mathbf 1_{I} \rvert ^2 \; d w 
\le \mathscr T ^2 \sigma (I). 
\end{align}
Moreover, letting $ \mathscr A_2$ and $ \mathscr T$ be the best constants in these inequalities and their 
duals, there holds $ \mathscr N \simeq \mathscr A_2 ^{1/2} + \mathscr T$.  
\end{theorem}

In \eqref{e:A2-circ}, the term $  P ^{\mathbb T } w(x_I, r )$ is the standard 
Poisson operator on the disk, evaluated at a point in the unit disk given by the center of the 
of the interval $ x_I$, and the radial factor $ r $.  

\medskip 

Let us indicate how to prove the theorem above from Theorem~\ref{t:noCommon}.  
Fix $ \sigma $ and $ w$ be two weights on $ \mathbb T $. 
Embed the weight $ w$ into $ [0,1]$ in the natural way, and call the resulting measure $ w'$. 
Place three copies of $ \sigma $ on the intervals $ [-1,0]$, $ (0,1]$ and $ (1,2]$, 
and call the resulting measure $ \sigma '$.  
Thus, $ \sigma ' $ and $ w'$ are two weights on $ \mathbb R $. 
It is clear that $ \sigma '$ and $ w'$ satisfy the Poisson $ A_2$ condition with holes on $ \mathbb R $.

For a function $ f \in L ^2 (\mathbb T ; \sigma  )$, let $ f'$ be three copies of $ f$ on the intervals 
$ [-1,0]$, $ (0,1]$ and $ (1,2]$.  
Viewing $ \mathbb T $ as $ [0,1]$, there is a subtle difference between $  H ^{\mathbb T } _{\sigma } f (x)$ 
and $ H _{\sigma ' } f '(x)$, the former computed on $ \mathbb T $, and the latter on $ \mathbb R $.  Namely 
\begin{equation*}
\lvert   H ^{\mathbb T } _{\sigma } - \tfrac 12 H _{\sigma '} f' (x)\rvert 
\lesssim \int _{-3  } ^{3}  \lvert  f' (y)\rvert   \cdot \lvert  x-y\rvert ^2 \; \sigma (dy) . 
\end{equation*}
It is easy to see that the $ A_2$ condition implies that the operator on the right is bounded. 
Hence, the testing conditions on $ \mathbb T$ imply those for $ w'$ and $ \sigma '$.  
Hence $ H _{\sigma '}$ maps $ L ^2 (\sigma ')$ to $ L ^2 (w')$. From that, we deduce the boundedness of $  H ^{\mathbb T } _{\sigma } $. 

\textbf{Cora Sadosky.}  Cora Sadosky and I met only a couple of times, which is a pity, since my research 
has been so strongly influenced by her passions and interests.  Her work with Cotlar on the $ L ^{p}$ variant 
of the Helson-Szeg\H o theorem is a beautiful complex variable result well beyond  the reach of  the current real-variable 
techniques.  Her  interest in Hankel forms on two and more complex variables has been my own for several years. 
And, in a number of small ways, I work to support more diversity in the profession, again following her lead.  

Cora Sadosky's family came up in 2005, during a three month stay in Argentina, in a antiquarian bookstore just a few steps from the Casa Rosada in  Buenos Aires.  The proprietor, upon hearing I was a mathematician, remembered his own 
youth and a compelling Professor Manuel Sadosky.  He remembered that the Professor had a daughter and 
asked after her.  This was the third or fourth conversation of this type I had in that lovely city! 
It is a privilege to work on the beautiful subject of mathematics. Even more so to have passion, and insights 
that others will carry forward.

\section{Preliminaries} 

\subsection{Principal Values}\label{s:principal_value}
We make no assertion about principal values of the Hilbert transform, and do not expect them to exist 
in the generality in which we are considering. One can then be concerned about how the definition 
is made.  There are a couple of different options.  One can impose some sort of truncation on the integrals, 
and the statements of the theorems are then understood to be uniform over all truncations.  
Many of the different possible truncations will be equivalent, since the $ A_2$ condition will hold, 
see \cite{MR3010121} for a general discussion of this issue.  
Alternatively, one can formally define 
\begin{equation*}
\langle H _{\sigma } f ,g  \rangle_w \coloneqq  
\int\int f (y) g (x) \frac {dy\,dx} {y-x}
\end{equation*}
for all $ f, g$ which have closed supports that are a positive distance apart, and extend $ H$ linearly from there. 

In our proof, all of the essential difficulties in the proof  arise when 
$ f$ and $ g$ have widely separated supports. The  definition of $ H _{\sigma }$ in this case is of course by the formula above.

\subsection{Dyadic Grids and Haar Functions}
A \emph{grid} is a collection $ \mathcal D$ of left closed, right open intervals so that for all $ I,J\in \mathcal D$, $ I\cap J = \emptyset , I, J$.  
Further say that $ \mathcal D$ is a \emph{dyadic grid} if for all integers $ n$, the collection $ \{ I \in \mathcal D \::\: \lvert  I\rvert= 2 ^{n} \}$ 
partitions $ \mathbb R $, aside from the endpoints of the intervals.  

For a sub collection $ \mathcal F $ of a dyadic grid $ \mathcal D$,  set $ \pi _{\mathcal F} I$ to be the minimal element of $ \mathcal F$ that contains $ I$; $ I$ need not be a member of $ F$. 
Set $ \pi _{\mathcal F} ^{1} I  $ to be the minimal member of $ \mathcal F$ that strictly contains $ I$, 
inductively define $ \pi _{\mathcal F} ^{t+1} I= \pi _{\mathcal F} ^{1} (\pi _{\mathcal F} ^{t}I)$.

Say that the collection $ \mathcal D$ is \emph{admissible for weight $ \sigma $} if $ \sigma $ does not have a point mass at any endpoint of an interval $ I\in \mathcal D$.

\subsection{Haar Functions}
Let  $ \mathcal D$ be admissible for $ \sigma $ be a weight on $ \mathbb R $.
If $ I\in \mathcal D$ is such that $ \sigma $ assigns non-zero weight to 
both children of $ I$, the associated Haar function is chosen to have a non-negative inner product with 
the independent variable, $ \langle x, h ^{\sigma }_I (x) \rangle _{\sigma } \ge 0$, a convenient choice due to the central role of the energy inequality, \eqref{e:energy}. 
\begin{align}\label{e:hs1}
	h_{I}^{\sigma} (x)& \coloneqq  \sqrt{\frac{\sigma(I_{-}) \sigma( I_{+})} {\sigma( I)}} 
\Biggl( \frac{{I_{+} (x)}}{\sigma( I_{+})} - \frac{{I_{-}(x)}}{ \sigma( I_{-})}  \Biggr)\,.    
\end{align}
In this definition, we are identifying an interval with its indicator function, and we will do so  throughout the remainder of the paper.  This  is  an $ L ^2 (\sigma )$-normalized  function, and   has $ \sigma $-integral zero. 
If $ \sigma $ is supported only on one child of $ I$, then we set $ h_I ^{\sigma } \equiv 0$.

For any dyadic interval $ I_{0}$ with $ \sigma (I_0) >0$, the non-zero functions among 
$  \{\sigma (I_0) ^{-1/2} {I_0}\} \cup \{ h ^{\sigma} _I \::\: I\in \mathcal D\,, I \subset I_0\}$ form an orthonormal basis for $ L ^2 ( I_0, \sigma )$.  
We will use the notation $ L ^2 _0(I_0, \sigma )$ for the subspace of $ L ^2 (I_0, \sigma ) $ of functions with mean zero. 
It has orthonormal basis  consisting of the non-zero functions in $  \{ h ^{\sigma} _I \::\: I\in \mathcal D\,, I \subset I_0\}$.  These are familiar properties.  But, another familiar property, that the positive and negative values of $ h ^{\sigma }_I$ are comparable in absolute value, fails in a dramatic fashion for non-doubling measures.  See Figure~\ref{f:haar}. 

\begin{figure}
\begin{tikzpicture}
\draw[|-|] (0,0) -- (2,0); 
\draw  (0,-1) -- (1,-1) -- (1,.5) -- (2,.5); 

\draw[|-|] (4,0) -- (6,0); 
\draw  (4,-1) -- (5,-1) -- (5,.1) -- (6,.1); 
\end{tikzpicture}
\caption{Two Haar functions. For the left function, the weight is nearly equally distributed between the two halves of the interval, in sharp contrast to the function on the right, in which the weight on the right half is much larger than on the left. } 
\label{f:haar}
\end{figure}
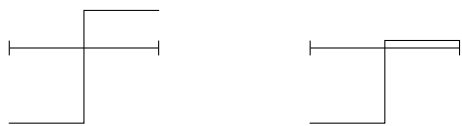

We will use the notations $ \mathbb E ^{\sigma } _{I} f = \sigma (I) ^{-1} \int _{I} f \; d \sigma $,   $ \hat f (I) = \langle f, h ^{\sigma } _{I} \rangle _{\sigma } $, as well as the equality below, holding for 
those $ I$ with $ h ^{\sigma }_I\not \equiv 0$. 
\begin{align}\label{e:mart}
\Delta ^{\sigma} _{I}f &= \langle f, h ^{\sigma} _{I} \rangle _{\sigma} h ^{\sigma} _{I}
= {I _{+}} \mathbb E ^{\sigma} _{I _{+}} f + 
{I _{-}} \mathbb E ^{\sigma} _{I _{-}}f - {I} 
\mathbb E ^{\sigma} _{I} f \,.  
\end{align}
This is the familiar martingale difference equality, and so 
we will refer to $ \Delta ^{\sigma} _{I} f$ as a martingale difference.  It implies the familiar telescoping identity 
$
\mathbb E _{J} ^{\sigma }f = \sum_{ I \::\: I\supsetneqq J} \mathbb E _{J} ^{\sigma } \Delta ^{\sigma } _{I} f \,. 
$

The \emph{Haar support} of a function $ f \in L ^2 (\sigma )$ is the collection $ \{I \::\: \hat f (I) \neq 0  \}$.

\subsection{Random Dyadic Grids}\label{s:rd}
Let $ \widehat{\mathcal D}$ be the standard dyadic grid in $ \mathbb R $, thus  all intervals $[0, 2 ^{n}] $ for $ n\in \mathbb N $ are in $ \widehat{ \mathcal D}$.  
A \emph{random dyadic grid}  $ \mathcal D$ is specified by $ \omega = \{\omega _n\} \in \{0,1\} ^{\mathbb Z }$, and the elements are 
\begin{equation*}
I = \hat   I \dot+ \omega \coloneqq \hat I + \sum_{n \::\: 2 ^{-n} < \lvert  I\rvert  }  2 ^{-n} \omega _{n}\,, \qquad \hat I\in \widehat{\mathcal D} \,.   
\end{equation*}
The natural uniform probability measure $ \mathbb P $ is placed upon $ \{0,1\} ^{\mathbb Z }$.

Fix  $ 0< \varepsilon < 1$ and $ r \in \mathbb N $.  
An interval $ I\in \mathcal D$ is said to \emph{$ (\varepsilon ,r)$-good} if for all intervals  
  $ J\in \mathcal D$ with $ \lvert  J\rvert \ge 2 ^{r-1}  \lvert  I\rvert $, the distance from $ \partial J$ and \emph{either child of $ I$} 
  is at least $ \lvert  I\rvert ^{\varepsilon } \lvert  J\rvert ^{1- \varepsilon }  $.  
Otherwise $ I$ is said to be \emph{$ (\varepsilon ,r)$-bad}. These are the basic properties of this definition.

\begin{proposition}\label{p:good}  These three properties hold. 
\begin{enumerate}
\item  The property of $I = \hat  I \dot+ \omega  $ being  $ (\varepsilon ,r)$-good only depends upon $ \omega $ and $ \lvert  I\rvert $.  
\item  $ \mathbf p _{\textup{good}} \coloneqq  \mathbb P (I   \textup{$ $ is $ (\varepsilon ,r)$-good} )$ is independent of  $ I$. 
\item  $ \mathbf p _{\textup{bad}} \coloneqq  1 - \mathbf p _{\textup{good}}  \lesssim \varepsilon ^{-1} 2 ^{- \varepsilon r}$. 
\end{enumerate}
\end{proposition}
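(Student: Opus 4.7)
My plan is to handle the three parts in order, with (1) being essentially structural, (2) following from (1) by a measure-preservation (ergodicity) argument, and (3) by a union bound over scales.

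For (1), I would exploit the specific form of the shift: for $ |I|=2^{-k} $ one has $ s_I \coloneqq \sum_{n>k} 2^{-n}\omega_n $, depending only on $ \omega $ and $ |I| $. For any larger scale $ |J|=2^{-l}$, $ l<k $, the increment is
\[
s_J - s_I \;=\; \sum_{l<n\le k} 2^{-n}\omega_n \;\in\; \{m|I| : 0\le m \le 2^{j}-1\},\qquad j \coloneqq k-l,
\]
a multiple of $ |I| $ determined by $ \omega $ and the two scales alone. The distances $ \operatorname{dist}(\partial J, I_\pm) $ controlling the goodness condition can be expressed entirely in terms of these increments and the offset of $ I $ within its scale-$|J|$ ancestor in $ \mathcal D $. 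The point to verify is that, as $ \hat I $ varies over $ \widehat{\mathcal D} $-intervals of size $ |I| $, the resulting \emph{collection} of distances $ \{\operatorname{dist}(\partial J, \partial I_\pm) : J\in\mathcal D,\ |J|\ge 2^{r-1}|I|\} $ is preserved: changing $ \hat I $ merely permutes which $ J $ plays the role of the ``near-ancestor'' at each scale. Hence the truth value of goodness is a measurable function of $ \omega $ depending only on $ |I| $.

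For (2), given (1), $ \mathbf p_{\text{good}} $ is a function of $ |I| $ alone. The index-shift $ T \colon \omega \mapsto (\omega_{n+1})_{n\in\mathbb Z} $ is a measure-preserving bijection of $ (\{0,1\}^{\mathbb Z}, \mathbb P) $, and under $ T $ the shift at scale $ |I| $ is conjugated (up to a global dilation by $ 2 $) to the shift at scale $ |I|/2 $. Since both $ |I|^\varepsilon|J|^{1-\varepsilon} $ and all relevant distances scale identically, the goodness condition is scale-invariant. Therefore $ \mathbf p_{\text{good}} $ is constant across all dyadic scales, so it is independent of $ I $.

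For (3), I would union-bound the bad event across the scales $ |J|=2^j|I| $ for $ j\ge r-1 $. At each such scale, the $ 2^j $ possible values of $ s_J-s_I $ are equally likely, and the bad event requires some $ \partial J $ to land within a window of radius $ |I|^\varepsilon|J|^{1-\varepsilon} = |I|\cdot 2^{j(1-\varepsilon)} $ about a child boundary $ \partial I_\pm $. This window contains $ O(2^{j(1-\varepsilon)}) $ of the $ 2^j $ allowed positions, giving scale-$j$ bad-probability $ \lesssim 2^{-j\varepsilon} $. Summing the geometric series,
\[
\mathbf p_{\text{bad}} \;\lesssim\; \sum_{j\ge r-1} 2^{-j\varepsilon} \;\lesssim\; \varepsilon^{-1}\, 2^{-\varepsilon r}.
\]
The main obstacle will be the clean formulation of (1), which requires tracking how the relative position of $ I $ within its scale-$|J|$ ancestor in $ \mathcal D $ is determined by the construction and verifying that the \emph{multiset} of distances driving goodness is indeed $ \hat I $-invariant; once (1) is nailed down, (2) is an application of the Bernoulli shift's measure-preservation, and (3) is a routine small-ball estimate plus geometric summation.
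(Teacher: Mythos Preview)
Your proposal is correct and, for part (3), essentially identical to the paper's argument: a union bound over scales $t>r$, with the scale-$t$ bad probability bounded by $\lesssim 2^{-\varepsilon t}$, summed geometrically.

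For parts (1) and (2) you take a slightly more elaborate route than the paper. You frame (1) as a permutation argument on the multiset of boundary distances and then invoke the Bernoulli shift $T$ separately for (2). The paper collapses both into a single observation: the position of $I$ within its $t$-th $\mathcal D$-ancestor $\pi^t_{\mathcal D}I$ is determined entirely by the bits $\{\omega_n,\dots,\omega_{n+t-1}\}$ (where $|I|=2^n$), and takes each of the $2^t$ possible values with equal probability. This immediately gives (1), since the relative position does not depend on $\hat I$ at all, and (2), since these bits are i.i.d.\ regardless of $n$. Your shift-map argument is valid, but the paper's formulation is more direct and in particular dissolves what you flag as ``the main obstacle'': there is no multiset to track, only the single relative position of $I$ within each ancestor, which is manifestly a function of finitely many coordinates of $\omega$.
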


\begin{proof}
An interval  $ I = \hat  I \dot+ \omega $ is equally likely to be the left or right half of its parent $ \pi ^{1} _{\mathcal D} I$, 
depending only on $ \omega _{n}$, where $ \lvert  I\rvert= 2 ^{n} $.  
Similarly, $ I$ is equally likely to be any one of the $ 2 ^{t}$ potential positions in  $ \pi ^{t} _{\mathcal D} I$, 
and its exact position is determined by $ \{\omega _{n} ,\dotsc, \omega _{n+t-1}\}$.  This proves the first two claims.  

For the last, if $ I$ is bad, then for some $t >r$,  
there holds $ \textup{dist} (I , \partial \pi ^{t} _{\mathcal D} I) \le  2 ^{ (1- \varepsilon )t}\lvert  I\rvert  $.  
For this to happen, it is necessary that the numbers $  \{\omega _{s} \::\: n + \lceil (1-\varepsilon) t \rceil < u \le n + t -1\}$ 
all be equal, and hence are either all $ 0$ or all $ 1$.  This clearly proves that 
\begin{equation*}
\mathbf p _{\textup{bad}} \le \sum_{t=r+1} ^{\infty } 2 ^{1-  (t -\lceil (1-\varepsilon) t \rceil )} \lesssim \varepsilon ^{-1}  2 ^{- \varepsilon r} \,. 
\end{equation*}
\end{proof}

This elementary proposition is used in the following fundamental way.  
Fix two weights $ w, \sigma $.   
With probability one, a random $ \mathcal D$ is admissible for both $ w$ and $ \sigma $. 
Indeed, the collection of points that are point masses for one of the two weights is a fixed countable collection of points. 
And any fixed point has probability zero of being an endpoint of an interval in $ \mathcal D$. 
Hence, we can, with probability one, define the Haar basis adapted to these  two weights.  
Write the identity operator on $ L ^2 (\sigma )$ by 
\begin{gather*}
  P ^{\sigma} _{\textup{good}} f + P ^{\sigma } _{\textup{bad}} f
\qquad 
\textup{where} \quad 
P ^{\sigma} _{\textup{good}} f \coloneqq  
\sum_{\substack{I \in \mathcal D \::\:  \textup{$ I$ is  $ (\varepsilon ,r)$-good}}}\langle f, h ^{\sigma } _{I} \rangle _{\sigma } h ^{\sigma }_I \,. 
\end{gather*}
Use the same notation for the weight $ w$. 

\begin{proposition}\label{p:bad} There holds  
\begin{equation*}
\mathbb E \lVert  P ^{\sigma} _{\textup{bad}} f \rVert_{ \sigma }  ^2 \lesssim  \varepsilon ^{-1} 2 ^{- \varepsilon r} \lVert f\rVert_{\sigma } ^2  \,. 
\end{equation*}
\end{proposition}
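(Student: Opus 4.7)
The plan is to expand the squared norm via orthonormality of the Haar system, and then to exploit an independence: for each fixed interval $I \in \mathcal D$, the event ``$I$ is bad'' is independent of the value of $|\langle f, h^\sigma_I\rangle_\sigma|^2$.

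First, almost surely $\mathcal D$ is admissible for $\sigma$, so the non-zero $h^\sigma_I$, $I\in\mathcal D$, form an orthonormal family in $L^2(\sigma)$, giving the deterministic identity
\[ \lVert P^\sigma_{\textup{bad}} f\rVert^2_\sigma = \sum_{\substack{I \in \mathcal D\\ I\text{ bad}}} |\langle f,h^\sigma_I\rangle_\sigma|^2. \]
Parameterizing $I = \hat I \dot+ \omega$ with $\hat I \in \widehat{\mathcal D}$ of length $2^n$, the sum is reindexed by $\hat I$. It then suffices to establish, for each $\hat I$,
\[ \mathbb E\bigl[\mathbf 1_{I\text{ bad}} \cdot |\langle f,h^\sigma_I\rangle_\sigma|^2\bigr] = \mathbf p_{\textup{bad}} \cdot \mathbb E |\langle f,h^\sigma_I\rangle_\sigma|^2. \]
Summing over all $\hat I \in \widehat{\mathcal D}$, applying Bessel's inequality pointwise in $\omega$ to the full Haar system in $L^2(\sigma)$, and invoking Proposition~\ref{p:good}(3) then yields
\[ \mathbb E\lVert P^\sigma_{\textup{bad}}f\rVert^2_\sigma \le \mathbf p_{\textup{bad}} \lVert f\rVert^2_\sigma \lesssim \varepsilon^{-1}2^{-\varepsilon r}\lVert f\rVert^2_\sigma. \]

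To justify the factorization, I would separate the coordinates $\omega_m$ by scale. Because the shift at length $2^n$ is $\tau_n = \sum_{m > -n} 2^{-m}\omega_m$, the position of $I$, and hence $h^\sigma_I$, is measurable with respect to the $\sigma$-algebra $\mathcal F_{\textup{fine}}$ generated by $\{\omega_m : m > -n\}$. On the other hand, given $I$, the ancestors $\pi^t_{\mathcal D} I$ for $t \ge 1$ are determined by the coarser coordinates $\omega_{-n}, \omega_{-n-1}, \dots$, and the defining property of ``$I$ is bad'' depends only on the translation-invariant relative configuration of $I$ inside these ancestors. Hence the badness event lies in the $\sigma$-algebra $\mathcal F_{\textup{coarse}}$ generated by $\{\omega_m : m \le -n\}$, which is independent of $\mathcal F_{\textup{fine}}$ under the product measure $\mathbb P$. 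By Proposition~\ref{p:good}(2) the corresponding probability is the constant $\mathbf p_{\textup{bad}}$, producing the factorization above.

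The main delicate point is this clean separation of fine and coarse coordinates; once that is in place the remainder of the argument is routine. It is also worth noting that, with probability one, $\mathcal D$ avoids any atom of $\sigma$ (a countable set), so the Haar system and the projection $P^\sigma_{\textup{bad}}$ are well-defined on a full-measure event, which is all that is needed for the expectation.
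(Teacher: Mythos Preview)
Your proof is correct and follows essentially the same approach as the paper: expand the bad projection via Haar orthogonality, invoke the independence of the location of $I$ (hence $\hat f(I)^2$) from the badness of $I$, factor out $\mathbf p_{\textup{bad}}$, and conclude via Bessel and Proposition~\ref{p:good}(3). Your explicit separation into fine and coarse coordinates is more detailed than the paper's one-line appeal to independence; the only imprecision is that the ancestors $\pi^t_{\mathcal D}I$ themselves are not determined by the coarse coordinates alone, but the \emph{relative position} of $I$ inside them is, which is what you actually use.
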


\begin{proof}
The location of $ I$ and the property of $ I$ being bad are independent, hence 
\begin{align*}
\mathbb E \lVert  P ^{\sigma} _{\textup{bad}} f \rVert_{ \sigma }  
&= \mathbb E  \sum_{I \in \mathcal D} \mathbf 1_{\textup{ $ I$ is bad}}  \hat f (I) ^2 
 = \mathbf p _{\textup{bad}} 
\mathbb E  \sum_{I \in \mathcal D}    \hat f (I) ^2 =  \mathbf p _{\textup{bad}} \lVert f\rVert_{\sigma } ^2 
\end{align*}
and then the proposition follows. 
\end{proof}

\begin{lemma}\label{l:goodBad}  
For any constant $ 1\leq C < \infty $,  $ 0 < \varepsilon < 1$, there is a choice of $ r \in \mathbb N $ sufficiently large so that this holds. 
Let  $ w, \sigma $ be a  pair of weights  for which the  constant $ \mathscr H$ and the constant $ \mathscr N$ 
in \eqref{e:N} are finite.  
Suppose   there holds uniformly over admissible dyadic grids $ \mathcal D$, 
\begin{equation} \label{e:gg}
\lvert  \langle H _{\sigma } P ^{\sigma } _{\textup{good}} f,  P ^{w } _{\textup{good}} g\rangle_w\rvert 
\leq  C\mathscr H \lVert f\rVert_{\sigma } \lVert g\rVert_{w} \,, 
\end{equation}
then,   $ \mathscr N \leq  2 C\mathscr H$. 

\end{lemma}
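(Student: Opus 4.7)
The plan is the standard random-grid good/bad decomposition, executed one truncation at a time and then taken to the supremum. Fix $0<\alpha<\beta<\infty$ and let $\mathscr N_{\alpha,\beta}$ be the operator norm of the single truncated map $H_{\alpha,\beta}\colon L^2(\sigma)\to L^2(w)$. Because $\mathscr H<\infty$ forces $\mathscr A_2<\infty$ and the kernel of $H_{\alpha,\beta}$ is bounded and supported in an annulus, this individual norm is a priori finite; the whole point is to bound it by $C\mathscr H$ with $C$ independent of $\alpha,\beta$, and then take the supremum to conclude $\mathscr N\lesssim\mathscr H$.

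Select near-extremizers $f\in L^2(\sigma)$ and $g\in L^2(w)$ of unit norm realizing $|\langle H_{\alpha,\beta}(\sigma f),g\rangle_w|\ge\tfrac12\mathscr N_{\alpha,\beta}$. Draw a random dyadic grid $\mathcal D$, which is almost surely admissible for both weights, and split
\begin{equation*}
f = P^{\sigma}_{\textup{good}} f + P^{\sigma}_{\textup{bad}} f, \qquad g = P^{w}_{\textup{good}} g + P^{w}_{\textup{bad}} g.
\end{equation*}
Expanding the bilinear form produces one good/good term and three terms with at least one bad projection. The hypothesis \eqref{e:gg} holds for every admissible grid, so the good/good term is pointwise in $\omega$ bounded by $C\mathscr H \lVert f\rVert_\sigma \lVert g\rVert_w$, and this survives the expectation. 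Each bad term is handled crudely by the definition of $\mathscr N_{\alpha,\beta}$: for instance,
\begin{equation*}
\bigl|\langle H_{\alpha,\beta}(\sigma P^{\sigma}_{\textup{bad}} f),\, g\rangle_w\bigr|
\le \mathscr N_{\alpha,\beta}\, \lVert P^{\sigma}_{\textup{bad}} f\rVert_\sigma \, \lVert g\rVert_w,
\end{equation*}
and the expectation of the right-hand side is bounded via Jensen and Proposition~\ref{p:bad} by $C\sqrt{\varepsilon^{-1}2^{-\varepsilon r}}\,\mathscr N_{\alpha,\beta}$. The two remaining mixed terms are estimated identically, using the dual $P^{w}_{\textup{bad}}$ estimate.

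Combining these four bounds gives
\begin{equation*}
\tfrac12\mathscr N_{\alpha,\beta}
\le C\mathscr H + C\sqrt{\varepsilon^{-1}2^{-\varepsilon r}}\,\mathscr N_{\alpha,\beta}.
\end{equation*}
With $r$ chosen (depending only on $\varepsilon$) so that $C\sqrt{\varepsilon^{-1}2^{-\varepsilon r}}\le \tfrac14$, the bad contribution is absorbed into the left-hand side, leaving $\mathscr N_{\alpha,\beta}\le 4C\mathscr H$. Since $r$ and the final constant are independent of $\alpha,\beta$, taking the supremum over truncations yields $\mathscr N\lesssim\mathscr H$.

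The main obstacle is the \emph{absorption step}: one must know $\mathscr N_{\alpha,\beta}<\infty$ in advance in order to subtract a multiple of it from both sides. This is harmless for a fixed truncation but is exactly the reason the argument is carried out level by level rather than directly for $\mathscr N$. A minor technical point is that Proposition~\ref{p:bad} requires the location of an interval and its goodness to be independent, which is why $P^{\sigma}_{\textup{bad}} f$ and $P^{w}_{\textup{bad}} g$ behave well under expectation; this is built into the random-grid construction of \S\ref{s:rd} and is what makes the scheme run.
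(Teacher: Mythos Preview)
Your proof is correct and follows essentially the same route as the paper's: fix a truncation level so that the $A_2$ condition yields an a priori finite operator norm, split the bilinear form over a random grid into a good/good piece and three pieces containing a bad projection, control the good piece by hypothesis and the bad pieces by the a priori norm times the small bad-projection bound from Proposition~\ref{p:bad}, then absorb. The only cosmetic difference is that the paper works with a restricted range of truncation parameters at once rather than a single pair $(\alpha,\beta)$, but the logic and the absorption step are identical.
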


\begin{proof}
Use Proposition~\ref{p:bad} on the good and bad projections, as written and the same version for $ L ^2 (w)$.  
\begin{align*}
\lvert  \langle  H _{\sigma } f,g \rangle _{w}\rvert 
& \le  \mathbb E \bigl\{ \lvert  \langle H _{\sigma } P ^{\sigma } _{\textup{good}} f,  P ^{w } _{\textup{good}} g\rangle_w\rvert  
+   \lvert  \langle H _{\sigma } P ^{\sigma } _{\textup{good}} f,  P ^{w } _{\textup{bad}} g\rangle_w\rvert 
\\ & \qquad 
+ \lvert  \langle H _{\sigma } P ^{\sigma } _{\textup{bad}} f,  P ^{w } _{\textup{good}} g\rangle_w\rvert 
+ \lvert  \langle H _{\sigma } P ^{\sigma } _{\textup{bad}} f,  P ^{w } _{\textup{bad}} g\rangle_w\rvert \bigr\}\,. 
\end{align*}
The first term is controlled by the assumption \eqref{e:gg}, and the remaining terms are controlled by the  finiteness of $ \mathscr N $ and average-norm estimate on the 
bad projection.  By appropriate selection of $ f \in L ^2 (\sigma )$ and $ g\in L ^2 (w)$, there holds 
\begin{equation*}
\mathscr N_{\tau _0} \leq C\mathscr H +  C'\varepsilon ^{-1}2 ^{- \varepsilon r /2 } \mathscr N_{\tau_0}\,. 
\end{equation*}
 For any fixed $ \varepsilon $, we can take $ r \gtrsim \varepsilon ^{-1}  \log \varepsilon ^{-1} $, so that the second term can be absorbed into the left hand side. 
\end{proof}

\subsection{Context and Discussion}

\subsubsection{}
The random grid method was pioneered in \cite{NTV2}, and is a critical tool in non-homogeneous analysis \cite{V}, 
where the weights need not be doubling.  
It has a broader set of uses, as witnessed by a powerful representation of a general Calder\'on-Zygmund operator as 
a rapidly convergent sum of dyadic operators due to  Hyt\"onen \cite{2912709}. 

\subsubsection{}
The parameterization of the grids used here follows  Hyt\"onen \cite{MR2464252}, but the statistics of this parameterization are those of the random shift in Nazarov-Treil-Volberg \cites{NTV4,NTV2}.

\section{Necessary Conditions} 

Herein, we take up the necessity of  the  $ A_2$ condition from the norm inequality. 
Following that is the monotonicity property, an essential property of the Hilbert transform, 
and then showing  the necessity of the energy inequality from the $ A_2$ and interval testing condition.  
The energy inequality is foundational to the proof.

\subsection{The $ A_2$ Condition}\label{s:A2}
The $ A_2$ condition has  different forms, and so we clarify the language associated with the $ A_2$ condition here. 
The \emph{simple $ A_2$} condition is 
\begin{equation*}
\sup _{I} \frac {\sigma (I)} {\lvert  I\rvert } \cdot \frac {w (I)} {\lvert  I\rvert }, 
\end{equation*}
the supremum formed over all intervals $ I$.  
This reduces to the classical Muckenhoupt condition if $ w (dx)= w (x)dx$, where $ w (x) >0$ a.e., and $ \sigma (dx) = w (x) ^{-1} dx$.  
Next, are the \emph{half-Poisson} conditions: 
\begin{equation*}
\sup _{I}  P (\sigma , I)  \frac {w (I)} {\lvert  I\rvert } < \infty . 
\end{equation*}
Finally there is the full Poisson $ A_2$ condition   
\begin{equation} \label{e:full}
\sup _{I}  P (\sigma , I)  \cdot P (w, I) < \infty 
\end{equation}
and of course, we only use the Poisson condition with holes, of Hyt\"onen \cite{13120843}.   
We verify that the  Poisson $ A_2$ condition \eqref{e:A2}   is necessary for the two weight inequality \eqref{e:N}. 

\begin{proposition}\label{p:A2} Assume that the pair of weights do not share a common point mass, and that the norm inequality \eqref{e:N} holds. Then, the $ A_2$ condition \eqref{e:A2} holds. 
\end{proposition}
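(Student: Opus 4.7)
\medskip

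\noindent\emph{Proof plan.} Fix an interval $I$ with center $c_I$. The starting point is the elementary pointwise bound
\[
\frac{1}{y - x} \;\ge\; p_I(x)^{1/2}\, p_I(y)^{1/2}, \qquad x < y,
\]
where $p_I(\xi) = |I|/(|I| + |\xi - c_I|)^2$. Indeed, this is equivalent to
\[
(|I| + |x - c_I|)(|I| + |y - c_I|) \;\ge\; |I|(y - x),
\]
and on expanding the left-hand side and discarding positive terms it suffices that $|I|(|x - c_I| + |y - c_I|) \ge |I|(y - x)$, which is the triangle inequality. This is the most elementary incarnation of the monotonicity that will later underlie the energy inequality.

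Next, for any splitting point $z \in \mathbb R$ that is not a point mass of either weight (possible since the set of point masses is countable), I test the norm inequality \eqref{e:N} against
\[
f = p_I^{1/2}\,\mathbf 1_{(z,\infty)}\in L^2(\sigma), \qquad g = p_I^{1/2}\,\mathbf 1_{(-\infty,z)}\in L^2(w).
\]
On the supports of $f$ and $g$ one has $y > z > x$, so the kernel $(y-x)^{-1}$ is positive and the pointwise bound above yields
\[
\langle H_\sigma f,\, g\rangle_w \;\ge\; A^+_\sigma(z)\cdot A^-_w(z),
\qquad A^+_\sigma(z)\coloneqq\int_{(z,\infty)}\!p_I\,d\sigma,\ \ A^-_w(z)\coloneqq\int_{(-\infty,z)}\!p_I\,dw.
\]
Combined with $\langle H_\sigma f,\, g\rangle_w \le \mathscr N\,\|f\|_\sigma\|g\|_w = \mathscr N\,[A^+_\sigma(z)\,A^-_w(z)]^{1/2}$, this forces $A^+_\sigma(z)\,A^-_w(z)\le \mathscr N^2$. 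The mirror-image choice (interchanging the two half-lines) gives $A^-_\sigma(z)\,A^+_w(z)\le \mathscr N^2$.

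To close the argument, select $z$ as a Poisson median of $\sigma$: $A^-_\sigma(z) = A^+_\sigma(z) = \tfrac12 P(\sigma,I)$. The two bounds of Step 2 then yield $A^\pm_w(z)\le 2\mathscr N^2/P(\sigma,I)$, and adding the two halves gives $P(w,I) = A^+_w(z) + A^-_w(z)\le 4\mathscr N^2/P(\sigma,I)$, i.e.\ $P(\sigma,I)\,P(w,I)\le 4\mathscr N^2$, which is \eqref{e:A2}. The one delicate point is the existence of the median: the distribution function $z\mapsto A^-_\sigma(z)$ is monotone and may jump at a point mass of $\sigma$, in which case that atom is split continuously between the two sides to achieve the exact half-half decomposition. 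This is precisely where the no-common-point-mass hypothesis earns its keep, since it guarantees that $w$ is atomless at the chosen $z$, so the test functions $f$ and $g$ are unambiguously defined. Aside from this routine technicality, the argument is a single application of the norm inequality to a carefully chosen pair of test functions.
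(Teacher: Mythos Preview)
Your argument is correct and is essentially the paper's proof: the same pointwise kernel inequality $1/(y-x)\ge p_I(x)^{1/2}p_I(y)^{1/2}$, the same test of \eqref{e:N} against $p_I^{1/2}$ restricted to complementary half-lines, and the same median-type choice of the splitting point. Two technicalities are handled more carefully in the paper: first, the norm inequality \eqref{e:N} is stated uniformly over truncations $H_{\alpha,\beta}$, so one fixes $\alpha>0$, obtains the half-line product bound with a gap of width $\alpha$, and only then lets $\alpha\downarrow 0$; second, when a single $\sigma$-atom at $z_0$ carries most of $P(\sigma,I)$, your phrase ``split the atom continuously'' and the stated reason for invoking the no-common-point-mass hypothesis are imprecise---the test functions are always well-defined, and the real point is that $w(\{z_0\})=0$ lets you take $z\to z_0^\pm$ so that $A^-_w(z_0)+A^+_w(z_0)=P(w,I)$ exactly (equivalently, the paper swaps the roles of $\sigma$ and $w$ in this case). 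These are routine fixes; the substance of your proof is the same as the paper's.
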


\begin{proof}
Fix the interval $ I = (a,b)$ as in \eqref{e:A2}, and  let $ a\in I $. 
We will estimate half of the Poisson integral of $ w$ using the notation 
\begin{equation}\label{e:pI}
p_I (x) ^2 \coloneqq   \frac  {\lvert  I\rvert } { (\lvert  I\rvert + \textup{dist} (x,I)  ) ^2 } \mathbf 1_{ [b , \infty )}
\end{equation}
so that $ P (w\cdot [b, \infty ) ,I) = \lVert p_I  \cdot [ b, \infty )\rVert_{ L^2(w)} ^2 $.  
Below, we estimate the right half of the Poisson integral of $ w$.  
\begin{align*}
\frac {\sigma (I)} {\lvert  I\rvert ^{1/2} }  \cdot P ( w  \cdot [b,  \infty ) , I)  
& \leq 
\int _{I} \int _{b } ^{\infty }  \frac 1 {\lvert  I\rvert + \textup{dist} (x,I)   } \cdot  \frac {1} {y-x} 
\; w  (dx) \, \sigma (dy) 
\\ & = 
\langle   H _{\sigma } (I) , p_I \rangle _{w } 
\lesssim  \mathscr N  \sigma (I) ^{1/2} \lVert p_I\rVert _{w }. 
\end{align*}
Rearranging,  
\begin{equation} \label{e:a'}
\frac {\sigma (I') }  {\lvert  I\rvert }  \cdot P ( w \cdot (a, \infty ), I)
\lesssim \mathscr N ^2 . 
\end{equation}
Clearly, the same inequality holds for $ (- \infty, a]$. 

\end{proof}

\subsection{The Monotonicity Principle}

Certain kinds of off-diagonal estimates for the Hilbert transform have  concrete estimates in terms of the Poisson integral. 
This estimate makes this precise, and shows moreover that we need not be that careful about exactly which function appears in the Poisson integral. 
It is at the core of the entire proof. 

\begin{lemma}[Monotonicity Principle] \label{mono}
Suppose that 
the two weights $ \sigma $ and $ w$ satisfy the $ A_2$ bound, and 
neither  has a point mass at an endpoint of $ I$. 
Let $  J\subset I$. 
There holds for any $ g\in L ^2(J, w )$, with $ w$-integral zero,  
\begin{equation} \label{e:mono0}
P (\sigma (\mathbb R - I) , I)  \Bigl\langle\frac  x { \vert I\vert }, \overline  g \Bigr\rangle _{w }
\lesssim  \langle H (\sigma (\mathbb R -I)) ,  \overline g\rangle _{w} \,. 
\end{equation}  
Here, $ \overline g = \sum_{J'} \lvert \widehat g (J') \rvert h ^{w} _{J'}$, is a Haar multiplier applied to $ g$. 
Suppose that $ J\subset I$ is good, with $ 2 ^{r} \lvert  J\rvert \le \lvert  I\rvert  $. Then for any two compactly supported 
weights $ \lvert  \nu \rvert \le \mu  $ supported off of the interval $ I$, there holds 
\begin{equation} \label{e:mono1}
	 \lvert  \langle H\nu , g \rangle _{w }\rvert
	\lesssim   \langle H\mu , \overline  g \rangle _{w }\simeq  {P( \mu, J )} 
	\bigl\langle\frac  x { \vert J \vert }, \overline  g \bigr\rangle _{w }.
\end{equation}  
\end{lemma}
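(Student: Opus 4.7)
The plan is to reduce both inequalities to a Haar-by-Haar estimate, which is where the monotonicity of $1/y$ is exploited. Expand $\bar g = \sum_{J' \subseteq J} \lvert \widehat g(J')\rvert\, h^w_{J'}$ with non-negative coefficients, and for each term apply the $w$-mean-zero property of $h^w_{J'}$: for any measure $\rho$ supported in $\mathbb R - I$,
\begin{equation*}
\langle H_{\alpha,\beta}\rho, h^w_{J'}\rangle_w = \int\!\!\int \bigl[K_{\alpha,\beta}(y-x) - K_{\alpha,\beta}(y-x_{J'})\bigr]\, h^w_{J'}(x)\, \rho(dy)\, w(dx).
\end{equation*}
Proposition~\ref{p:grad} then rewrites the kernel difference as $C_{x,x_{J'},y}\,(x-x_{J'})/[(y-x)(y-x_{J'})]$, where $C \in [0,1]$ is identically $1$ in the interior truncation range and increases to $1$ pointwise as $\alpha \downarrow 0$ and $\beta \uparrow \infty$.

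The pivotal observation --- the manifestation of ``monotonicity'' --- is that the resulting integrand is pointwise non-negative. Indeed, for $y \notin I$ and $x, x_{J'} \in I$ the product $(y-x)(y-x_{J'})$ has a definite sign, while the sign convention built into \eqref{e:hs1} ensuring $\langle x, h^w_{J'}\rangle_w \geq 0$ forces $(x - x_{J'})\, h^w_{J'}(x) \geq 0$ pointwise $w$-a.e. For \eqref{e:mono0}, lower-bound by noting $\lvert y-x\rvert, \lvert y-x_{J'}\rvert \leq \lvert I\rvert + \textup{dist}(y, I)$ for $x, x_{J'} \in I$ and $y \notin I$, which yields
\begin{equation*}
\langle H_{\alpha,\beta}(\sigma \mathbf 1_{\mathbb R - I}), h^w_{J'}\rangle_w \;\gtrsim\; C_{\alpha,\beta}\cdot \frac{P(\sigma\mathbf 1_{\mathbb R - I}, I)}{\lvert I\rvert}\, \langle x, h^w_{J'}\rangle_w,
\end{equation*}
where the scalar $C_{\alpha,\beta}\to 1$ by dominated convergence (against the $A_2$-integrable majorant $1/(\lvert I\rvert+\textup{dist}(y,I))^2$). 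Multiplying by $\lvert \widehat g(J')\rvert \geq 0$, summing over $J' \subseteq J$, and taking the $\liminf$ delivers \eqref{e:mono0}.

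For \eqref{e:mono1} the analogous computation applied to $\nu$, together with $\lvert\nu\rvert\leq\mu$, produces an upper bound in absolute value with $\mu(dy)$ replacing $\rho(dy)$. The goodness of $J$ in $I$ with $2^r \lvert J\rvert \leq \lvert I\rvert$ is essential here: it forces $\textup{dist}(y, J) \geq \lvert J\rvert^\varepsilon \lvert I\rvert^{1-\varepsilon} \geq \lvert J\rvert$ for $y \notin I$, so that both $\lvert y-x\rvert$ and $\lvert y-x_{J'}\rvert$ are comparable to $\lvert J\rvert + \textup{dist}(y, J)$ uniformly in $x \in J' \subseteq J$. The inner integral in $y$ then collapses to $P(\mu, J)/\lvert J\rvert$ up to constants, and summing in $J'$ gives the stated $\lesssim$. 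The equivalence $\langle H^c\mu, \bar g\rangle_w \simeq P(\mu, J)\langle x/\lvert J\rvert, \bar g\rangle_w$ follows by combining this upper bound (specialized to $\nu = \mu$) with the matching lower bound from \eqref{e:mono0} applied with $J$ in the role of $I$ and $\mu$ in the role of $\sigma\mathbf 1_{\mathbb R - I}$, noting that $\bar g$ is itself $w$-mean zero and its Haar coefficients are already non-negative so $\overline{\bar g}=\bar g$.

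The principal obstacle is the positivity verification itself, which hinges entirely on the non-standard sign choice for $h^w_{J'}$ in \eqref{e:hs1}: without it, one recovers only an inequality in absolute value and loses the crucial lower bound underpinning \eqref{e:mono0}. A secondary technical concern is justifying the passage $C_{\alpha,\beta}\uparrow 1$ and identifying the limit with the canonical value of \eqref{e:independent}, but this is a routine monotone/dominated convergence argument made possible by the convex-minorant definition of the kernel in \eqref{e:Kabdef} and the $A_2$ integrability of the majorant.
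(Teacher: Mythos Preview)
Your core strategy---exploiting the pointwise non-negativity of $(x-x_{J'})\,h^w_{J'}(x)$ together with the monotonicity of the kernel---is exactly the paper's, and for \eqref{e:mono1} your argument is essentially identical to the one given there (goodness supplies the separation needed for Proposition~\ref{p:grad}). For \eqref{e:mono0}, however, you take a somewhat different route, and the justification as written has a gap.

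The problem is your invocation of Proposition~\ref{p:grad}. That proposition carries the hypothesis $2\lvert x-x'\rvert < \lvert x-y\rvert$, which you do not verify and which can genuinely fail here: nothing prevents $J'$ from abutting $\partial I$ (indeed $J'=I$ is allowed), while $y\in\mathbb R\setminus I$ may sit arbitrarily close to that boundary. Without that hypothesis you cannot conclude $C\in[0,1]$. Relatedly, the ``$A_2$-integrable majorant'' $1/(\lvert I\rvert+\textup{dist}(y,I))^2$ you name is a \emph{lower} bound for $1/[(y-x)(y-x_{J'})]$, not an upper bound, so it cannot serve as a dominating function; dominated convergence does not apply as you describe. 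The repair is straightforward: the non-negativity of $[K_{\alpha,\beta}(y-x)-K_{\alpha,\beta}(y-x_{J'})]\,h^w_{J'}(x)$ follows directly from $K_{\alpha,\beta}$ being monotone on each half-line (no separation hypothesis needed once $y\notin I$, $x,x_{J'}\in I$), the pointwise limit as $\alpha\downarrow 0$, $\beta\uparrow\infty$ is the untruncated integrand, and then \emph{Fatou's lemma} (not dominated convergence) delivers the $\liminf$ lower bound.

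For comparison, the paper avoids this limiting issue altogether. It first uses the $A_2$-based weak-boundedness inequality (Proposition~\ref{p:weakB}) to show that the contribution of $\sigma$ on the thin collar $\lambda I\setminus I$ is negligible as $\lambda\downarrow 1$---this is where both the $A_2$ hypothesis and the ``no point mass at $\partial I$'' hypothesis enter---and then works with the canonical inner product on the region where supports are genuinely separated, so that the exact identity $\tfrac{1}{y-x}-\tfrac{1}{y-x_J}=\tfrac{x-x_J}{(y-x)(y-x_J)}$ applies without any truncation constant. Your route, once patched with Fatou, is more direct and bypasses the weak-boundedness detour; the paper's route makes the role of the hypotheses more transparent.
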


Note that in the first estimate, the Poisson term is always estimated above by an inner product involving the Hilbert transform. 
In the second, note that the inner product can always be made larger by making the weight positive.  
Moreover, under moderate assumptions on the support of the weight, the first inequality can be reversed.    See Figure~\ref{f:mono}. 
In that figure, the function $ \mu $ is outside of $ 2 ^{r (1- \epsilon )} J$, 
so that $ H \mu $ is a smooth increasing function on $ J$. Moreover, the derivative of $ H \mu $ is approximately $ \lvert  J\rvert ^{-1} P (\mu  , J)$. 
So, if we form an inner product with the Haar function $ h ^{w} _{J}$, we only need to be concerned with the linear approximation to $ H \mu $.
However, the conditions to get the reversal are particular, and this drives  the case analysis in different sections of the proof. 

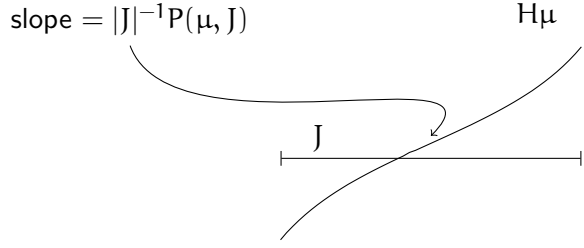
\begin{figure}
\begin{tikzpicture}[samples=50,domain=-2:2]
\draw plot  (\x,{tan(\x r/2.2)});   \draw (1.4,1.7) node {$ H \mu $}; 
\draw[|-|]  (2,-.2) -- (-2,-.2)  node[above, very near end] {$ J$}; 
\draw[<-]  (0,.1) to [out=45,in=-70]  (-4,1.3) node[above] { slope $= \lvert  J\rvert ^{-1} P (\mu  , J)  $}; 
\end{tikzpicture}
\caption{An illustration of the monotonicity principle.} 
\label{f:mono}
\end{figure}

\begin{proof}

We consider the first estimate.  
By linearity, it suffices to consider the case of $ g (x) = h ^{w} _{J} (x)$, for $ J\subset I$, and indeed we can take $ J=I$.  
We need to separate the two weights involved. 
The $ A_2$ condition is the only condition needed for the weak-boundedness principle, Proposition~\ref{p:weakB}. 
Applying  it in this setting, notice that it shows that for $ \lambda >1$, 
\begin{equation*}
\lvert  \langle  H _{\sigma } ( \lambda I - I)  ,   h ^{w} _{I}\rangle\rvert 
\lesssim \mathscr A_2 ^{1/2}  \sqrt { \sigma (\lambda I - I )  } . 
\end{equation*}
The assumption that $ \sigma $ does not have mass at the endpoints of $ I$ implies that $ \sigma (\lambda I - I) $ can be made 
arbitrarily small, as $ \lambda \downarrow 1$.  Therefore, it suffices to consider $ H _{\sigma } ( \mathbb R - \lambda I)$, for some fixed $ \lambda >1$. 

Then estimate 
\begin{align*}
\langle H ^{c}(\sigma \cdot (\mathbb R - \lambda I)) , h ^{w} _{I} \rangle_w & = 
\int _{\mathbb R - \lambda I} \int _{J} \frac {1} {y-x}  h ^{w} _{J} (x) \; w (dx) \sigma (dy) 
\\
&= \int _{\mathbb R- \lambda I } \int _{I}   \biggl[ \frac {1} {y-x} - \frac 1 {y-x_J} \biggr]    h ^{w} _{I} (x) \; w (dx)  \, \sigma (dy)  
\\
&= \int _{\mathbb R- \lambda I } \int _{I}   \ \frac {x-x_J} {(y-x) (y- x_J)}     h ^{w} _{I} (x) \; w (dx)  \, \sigma (dy)  
\\
& \gtrsim 
\int _{\mathbb R -I} \int _{I}   \frac { \lvert  I\rvert } { (\lvert  I\rvert + \textup{dist} (y,I)) ^2 }  
\cdot \frac {x-x_J} {\lvert  I\rvert }h ^{w} _{I} (x) \; w (dx) \,\sigma (dy)  
\\
& = P (\sigma \cdot (\mathbb R- \lambda I) , I) \bigl\langle  \frac x {\lvert  I\rvert },  h ^{w} _{I}\bigr\rangle_{w} \,. 
\end{align*}
Here, $ x_J$ is the center of $ J$, and it can be inserted for the usual reason that $ h ^{w} _{J}$ has $ w$-integral zero.  
Then, use the fact that  $ (x-x_J ) h ^{w}_J \geq 0$, and that $(y-x) (y-x_J) >0$.  
 So \eqref{e:mono0} holds.

\smallskip 

The second inequality  \eqref{e:mono1} comes with the assumption that $ J\subset I$, $ 2 ^{r}\lvert J  \rvert < \lvert  I\rvert  $, whence 
$ \textup{dist} (J,I) > \lvert  J\rvert ^{\epsilon } \lvert  I\rvert ^{1 - \epsilon } \ge 2 ^{r (1- \epsilon ) } \lvert  J\rvert  $.  Namely, the support of $ h ^{w} _{J}$ and that 
of $ \mu $ are separated.  Then, inserting a constant as we can since the Haar function has integral zero, 
\begin{align*}
 \langle H\nu ,h^{w}_J \rangle _{w } 
&= \int _{\mathbb R - I} \int _{J} \bigl\{ \frac 1 {y-x}- \frac 1{y - x_J}\bigr\}  h ^{w} _{J} (x) \;\nu(dy)\,w (dx)   
\\
& =  \int _{\mathbb R - I} \int _{J}  \frac { x- x_J} {(y-x) (y- x _J)}   h ^{w} _{J} (x) \;\nu(dy)\,w (dx)  
\end{align*}
Notice that the integrand is non-negative, hence we can make the integral bigger in absolute value 
by replacing $ \lvert  \nu \rvert $ by $ \mu $. This is the first inequality in \eqref{e:mono1}.  
 For the second equivalence,  by the separation in supports, we have $ \frac 1 {(y-x) (y- x _J)}  \simeq  \frac 1 {  (y- x _J) ^2 } $ 
in the range of integration. And this finishes the proof. 
\end{proof}

\subsection{The Energy Inequality}

The energy inequality is phrased in terms of the quantity 
\begin{align}\label{e:E}
E (w, I) ^2 &\coloneqq   \lvert  I\rvert ^{-2} \mathbb E _{I}  ^{w}\bigl\lvert  x \cdot I - \mathbb E _{I} ^{w} x \bigr\rvert ^2 
= \lvert  I\rvert ^{-2} \sum_{J \::\: J\subset I} \langle x, h ^{w} _{J} \rangle_w ^2 \,.  
\end{align}

\begin{lemma}\label{l:energy}[The Energy Inequality]   For any interval $ I_0$ and any partition $ \mathcal P$ of $ I_0$ into intervals such that neither $ \sigma $ nor $ w $ have point masses at the endpoints,  there holds 
\begin{equation}\label{e:energy}
\begin{split}
\sum_{I \in \mathcal P  }  
P (  \sigma  (I_0 \setminus I), I) ^2 E (w,I) ^2 w (I) 
\le C_0 \mathscr H ^2 \sigma (I_0) \,. 
\end{split}
\end{equation}
Here, $ C_0$ is an absolute constant.  
\end{lemma}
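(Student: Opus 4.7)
The plan is to use the monotonicity principle \eqref{e:mono0} to convert the energy sum into an $L^{2}(w)$-norm estimate for the Hilbert transform, and then dispatch that estimate using the interval testing hypothesis \eqref{e:T}, handling the ``self'' Poisson contribution by the $A_{2}$ condition. The key test function is
\begin{equation*}
g_{I} \;\coloneqq\; \sum_{J\subset I} \langle x, h_{J}^{w}\rangle_{w}\, h_{J}^{w}, \qquad I\in\mathcal P.
\end{equation*}
Because the Haar sign convention forces $\langle x, h_{J}^{w}\rangle_{w}\ge 0$, one has $\overline{g_{I}}=g_{I}$. Parseval in the $w$-Haar basis together with the definition of $E(w,I)$ yield
\begin{equation*}
\lVert g_{I}\rVert_{w}^{2} \;=\; \int_{I}\bigl(x-\mathbb E_{I}^{w}x\bigr)^{2}\,dw \;=\; \lvert I\rvert^{2}\, E(w,I)^{2}\,w(I), \qquad \bigl\langle x/\lvert I\rvert,\, g_{I}\bigr\rangle_{w} \;=\; \lvert I\rvert\,E(w,I)^{2}\,w(I).
\end{equation*}
Also, the functions $\{g_{I}\}_{I\in\mathcal P}$ are disjointly supported in $L^{2}(w)$.

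Split $P(\sigma,I)^{2} \lesssim (\sigma(I)/\lvert I\rvert)^{2} + P(\sigma\mathbf 1_{\mathbb R\setminus I}, I)^{2}$, using $P(\sigma\mathbf 1_{I},I)\lesssim \sigma(I)/\lvert I\rvert$. The \emph{inside} Poisson piece is disposed of immediately by the $A_{2}$ condition: since $E(w,I)^{2}\le 1$,
\begin{equation*}
\sum_{I\in\mathcal P}\frac{\sigma(I)^{2}}{\lvert I\rvert^{2}}\,E(w,I)^{2}\,w(I) \;\le\; \sum_{I\in\mathcal P} \frac{\sigma(I)\,w(I)}{\lvert I\rvert^{2}}\cdot \sigma(I) \;\le\; \mathscr A_{2}\,\sigma(I_{0}).
\end{equation*}
For the \emph{outside} Poisson piece, apply the monotonicity principle \eqref{e:mono0} to $g_{I}$ to obtain
\begin{equation*}
P(\sigma\mathbf 1_{\mathbb R\setminus I}, I)\cdot \lvert I\rvert\, E(w,I)^{2}\,w(I) \;\lesssim\; \bigl\langle H_{\sigma}\mathbf 1_{\mathbb R\setminus I},\, g_{I}\bigr\rangle_{w}.
\end{equation*}
Cauchy--Schwarz on the right and division by $\lVert g_{I}\rVert_{w}=\lvert I\rvert\,E(w,I)\sqrt{w(I)}$ give the central intermediate estimate
\begin{equation*}
P(\sigma\mathbf 1_{\mathbb R\setminus I}, I)\,E(w,I)\sqrt{w(I)} \;\lesssim\; \bigl\lVert H_{\sigma}\mathbf 1_{\mathbb R\setminus I}\bigr\rVert_{L^{2}(w,\,I)}.
\end{equation*}

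Squaring and summing reduces the outside-Poisson contribution to $\sum_{I\in\mathcal P}\int_{I}\lvert H_{\sigma}\mathbf 1_{\mathbb R\setminus I}\rvert^{2}\,dw$. Writing $\mathbf 1_{\mathbb R\setminus I} = (\mathbf 1_{I_{0}}-\mathbf 1_{I}) + \mathbf 1_{\mathbb R\setminus I_{0}}$ and expanding, the two pieces supported in $I_{0}$ are dispatched by the testing hypothesis \eqref{e:T}: testing on $I_{0}$ gives $\sum_{I}\int_{I}\lvert H_{\sigma}\mathbf 1_{I_{0}}\rvert^{2}\,dw=\int_{I_{0}}\lvert H_{\sigma}\mathbf 1_{I_{0}}\rvert^{2}\,dw\le \mathscr T^{2}\sigma(I_{0})$, and testing on each atom gives $\sum_{I}\int_{I}\lvert H_{\sigma}\mathbf 1_{I}\rvert^{2}\,dw\le \sum_{I}\mathscr T^{2}\sigma(I)=\mathscr T^{2}\sigma(I_{0})$. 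The main obstacle is the ``far'' term $\int_{I_{0}}\lvert H_{\sigma}\mathbf 1_{\mathbb R\setminus I_{0}}\rvert^{2}\,dw$, which is invisible to testing on $I_{0}$ alone. The expected remedy exploits the smoothness of $H_{\sigma}\mathbf 1_{\mathbb R\setminus I_{0}}$ on $I_{0}$: its oscillation is controlled by $P(\sigma\mathbf 1_{\mathbb R\setminus I_{0}}, I_{0})\cdot \lvert x-x_{I_{0}}\rvert/\lvert I_{0}\rvert$ (the same monotonicity-type computation used in the proof of Proposition~\ref{p:A2}), whence the full Poisson $A_{2}$ condition \eqref{e:A2} absorbs this term into $\mathscr A_{2}\sigma(I_{0})$. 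Collecting the pieces yields $\sum_{I\in\mathcal P} P(\sigma,I)^{2}E(w,I)^{2}w(I)\lesssim (\mathscr A_{2}+\mathscr T^{2})\,\sigma(I_{0})\simeq \mathscr H^{2}\sigma(I_{0})$, as claimed.
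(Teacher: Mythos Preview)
Your overall strategy---use monotonicity \eqref{e:mono0} to convert the Poisson-energy terms into $L^{2}(w)$ bounds for the Hilbert transform, then invoke the interval testing condition---is exactly the paper's.  The gap is in the ``far'' term.  After Cauchy--Schwarz you are left with
\[
\int_{I_{0}}\bigl\lvert H_{\sigma}\mathbf 1_{\mathbb R\setminus I_{0}}\bigr\rvert^{2}\,dw,
\]
and you claim this is $\lesssim \mathscr A_{2}\,\sigma(I_{0})$ via an oscillation bound.  That is false: the oscillation argument only controls
$\int_{I_{0}}\lvert H_{\sigma}\mathbf 1_{\mathbb R\setminus I_{0}}(x)-H_{\sigma}\mathbf 1_{\mathbb R\setminus I_{0}}(x_{I_{0}})\rvert^{2}\,dw$,
not the full $L^{2}$ norm.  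The constant part $\lvert H_{\sigma}\mathbf 1_{\mathbb R\setminus I_{0}}(x_{I_{0}})\rvert^{2}\,w(I_{0})$ has no reason to be bounded by any multiple of $\sigma(I_{0})$---the value $H_{\sigma}\mathbf 1_{\mathbb R\setminus I_{0}}(x_{I_{0}})=\int_{\mathbb R\setminus I_{0}}(y-x_{I_{0}})^{-1}\,d\sigma(y)$ can be made arbitrarily large by placing $\sigma$-mass far outside $I_{0}$ on one side.  (Even the oscillation piece only yields $P(\sigma,I_{0})^{2}w(I_{0})\le \mathscr A_{2}\,\lvert I_{0}\rvert\,P(\sigma,I_{0})$, which is \emph{not} $\lesssim \sigma(I_{0})$ in general.)  The mean-zero structure of $g_{I}$ that would kill the constant was discarded when you applied Cauchy--Schwarz in step 6.

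The paper avoids this entirely by applying monotonicity with the measure $\sigma\cdot(I_{0}\setminus I)$ rather than $\sigma\cdot(\mathbb R\setminus I)$.  Then the bound is $P(\sigma(I_{0}-I),I)^{2}E(w,I)^{2}w(I)\lesssim \lVert H_{\sigma}(I_{0}-I)\rVert_{L^{2}(I,w)}^{2}\le 2\lVert H_{\sigma}I_{0}\rVert_{L^{2}(I,w)}^{2}+2\lVert H_{\sigma}I\rVert_{L^{2}(I,w)}^{2}$, and both terms are handled by testing exactly as you do.  The ``fill in the holes'' step is just your inside-Poisson estimate via $A_{2}$, taking $P(\sigma(I_{0}-I),I)$ up to $P(\sigma\cdot I_{0},I)$---and that is the form in which the energy inequality is actually used throughout the paper (see Definition~\ref{d:energy}).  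Simply replace $\mathbb R\setminus I$ by $I_{0}\setminus I$ at the monotonicity step and the far term never arises.
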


\begin{proof}
It follows from \eqref{e:mono0}, viewed in dual fashion, that 
\begin{align*}
P (  \sigma  (I_0 \setminus I), I) ^2 E (w,I) ^2 w (I) & \lesssim \lVert  H (\sigma (I_0 \setminus I))  \cdot I\rVert _{w} ^2 
\\
& \lesssim \lVert  H (\sigma \cdot I_0)  \cdot I\rVert _{w} ^2 +  \lVert  H (\sigma \cdot I)  \cdot I\rVert _{w} ^2 
\\
 & \lesssim \lVert  H (\sigma \cdot I_0)  \cdot I\rVert _{w} ^2 + \mathscr T ^2  \sigma (I). 
\end{align*}
Above, we have appealed to the testing assumption \eqref{e:T}. 
Summing over $ I\in \mathcal P$, the second term above is clearly no more than $ \mathscr T ^2  \sigma (I_0)$. 
And the second term is no more than 
\begin{equation*}
 \lVert  H (\sigma \cdot I_0)  \cdot I_0\rVert _{w} ^2 \leq \mathscr T ^2 \sigma (I_0). 
\end{equation*}
\end{proof}

\subsection{Context and Discussion}

\subsubsection{}
In the absence of common point masses, the necessity of the \emph{full} $\mathscr  A_2 $ condition, namely \eqref{e:full}, was easily available, with an argument of Sergei Treil already pointed out by Sarason in his note \cite{sarasonConj}.   This argument, based upon complex variables, has close analogs in \cites{V,10031596}.  A real variable proof   is  in \cite{10014043}, 
it is essentially an elaboration of the argument in the  early paper of Muckenhoupt and Wheeden \cite{MR0417671}. 
Despite the necessity, only the half Poisson $ A_2$ condition is used, together with testing, in the 
proof of sufficiency, in the case of no common point masses. 

\subsubsection{}
Higher dimensional extensions of the $ A_2$ which are not straight forward, are discussed in \cite{08070246}.
There are notable distinctions important to higher dimensions. First, the necessary Poisson type condition 
only comes in its `half' form.  Second, the power on the Poisson kernel comes as the square of the dimension of the kernel involved, 
a feature familiar from the analysis of reproducing kernel spaces.  
Third, the degree of the Poisson kernel matches the important derivative Poisson decay, important to 
energy considerations, only when the dimension  of the kernel is one.

\subsubsection{}
The energy inequality was influenced by the following assumption placed upon the pair of weights in \cites{V,10031596}.  
Assume that there is a finite constant $ \mathscr P$ so that for all intervals $ I_0$, and all partitions $ \mathcal P$ of $ I_0$ into intervals, 
\begin{equation} \label{e:pivotal}
\sum_{I\in \mathcal P} P ( \sigma \cdot  I_0, I) ^2 w (I) \le \mathscr P ^2 \sigma (I_0) \,.
\end{equation}
Also assume that the dual inequality holds.  In the language of Nazarov-Treil-Volberg, this is the \emph{pivotal condition}.   They proved 
\begin{priorResults}\label{t:pivotal} Assume that $ w$ and $ \sigma $ do not share a common point mass.  
Then, there holds $ \mathscr N \lesssim \mathscr A_2 ^{1/2} + \mathscr T + \mathscr P$. 
\end{priorResults}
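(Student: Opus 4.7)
My plan is to follow the Nazarov--Treil--Volberg strategy as sketched in Figure~\ref{f:proof}, with the simplification that, under the pivotal assumption, the \emph{functional energy inequality} is no longer needed: the pivotal condition \eqref{e:pivotal} directly absorbs the sums that would otherwise require that deeper tool. After invoking Lemma~\ref{l:goodBad} to reduce to a random dyadic grid $\mathcal D$ (admissible for both $w$ and $\sigma$) and restricting to $(\varepsilon,r)$-good Haar projections, I expand
\begin{equation*}
\langle H_\sigma f, g\rangle_w = \sum_{I,J \in \mathcal D} \hat f(I)\,\hat g(J)\,\langle H_\sigma h^\sigma_I, h^w_J\rangle_w,
\end{equation*}
and split the double sum into three regimes: the \emph{diagonal/near-diagonal} regime $2^{-r}|I| \le |J| \le 2^{r}|I|$, the \emph{above} triangular form $J \supsetneq 2^r I$ (with $I$ good), and its dual \emph{below} form. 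The diagonal regime collapses, after at most $O(r)$ shifts, to a bounded number of copies of the weak-boundedness principle, handled solely by the $\mathscr A_2$ bound.

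For the triangular form (treat ``below'', where $J \subsetneq I$), I set up Calder\'on--Zygmund stopping data $\mathcal F \subset \mathcal D$ for $f$ on $\sigma$ in the usual way: add $F' \in \mathcal F$ whenever $\mathbb E^\sigma_{F'} |f|$ exceeds a large multiple of $\mathbb E^\sigma_{F} |f|$, where $F = \pi_{\mathcal F} F'$ is the ``parent'' stopping interval. This has the standard $\sigma$-Carleson property $\sum_{F' \subset F,\, F' \in \mathcal F} \sigma(F') \lesssim \sigma(F)$, which will feed quasi-orthogonality at the end. For each good $J$ with $J \subsetneq I$, I use the martingale-difference identity \eqref{e:mart} to rewrite $\Delta^\sigma_I f$ on $J$ as $\mathbb E^\sigma_{I_J}f \cdot \mathbf 1_{I_J} - \mathbb E^\sigma_I f \cdot \mathbf 1_I$, where $I_J$ is the child of $I$ containing $J$. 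Telescoping down through the stopping tree reduces the below form to a ``stopping form''
\begin{equation*}
\mathsf B(f,g) \coloneqq \sum_{F \in \mathcal F} \sum_{\substack{J \::\: \pi_{\mathcal F} J = F \\ J\ \text{good}}} \bigl(\mathbb E^\sigma_{F_J} f - \mathbb E^\sigma_F f\bigr) \,\langle H_\sigma(\sigma\cdot (F \setminus F_J)), h^w_J\rangle_w \,\hat g(J),
\end{equation*}
plus remainders (involving the $\mathbb E^\sigma_F f$ term paired with $H_\sigma(\sigma F)$) that are controlled directly by the interval testing constant $\mathscr T$ after telescoping and Cauchy--Schwarz.

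The crux is bounding $\mathsf B(f,g)$, and this is exactly where the pivotal hypothesis pays off. Since $J$ is good and $J \subset F_J \subsetneq F$, the weight $\sigma\cdot(F\setminus F_J)$ is supported off of the parent of $J$ at scale $\ge 2^r|J|$, so the monotonicity principle (Lemma~\ref{mono}, eq.~\eqref{e:mono1}) gives
\begin{equation*}
|\langle H_\sigma(\sigma\cdot(F\setminus F_J)), h^w_J\rangle_w| \;\lesssim\; P(\sigma\cdot F, J)\,\bigl\langle x/|J|,\,h^w_J\bigr\rangle_w.
\end{equation*}
Using the stopping-data control $|\mathbb E^\sigma_{F_J} f - \mathbb E^\sigma_F f| \lesssim \mathbb E^\sigma_F |f|$ and Cauchy--Schwarz in $J$,
\begin{equation*}
|\mathsf B(f,g)| \;\lesssim\; \sum_{F \in \mathcal F} \mathbb E^\sigma_F|f| \,\Bigl(\sum_{\pi_{\mathcal F} J = F} P(\sigma F, J)^2\,w(J)\Bigr)^{1/2}\! \Bigl(\sum_{\pi_{\mathcal F} J = F} \tfrac{\langle x/|J|, h^w_J\rangle_w^2}{w(J)}\,\hat g(J)^2\Bigr)^{1/2}.
\end{equation*}
The first inner sum is bounded by $\mathscr P^2 \sigma(F)$ directly from \eqref{e:pivotal} applied to the partition of $F$ by its stopping children (and a tail). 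The second inner sum is at most $\sum \hat g(J)^2$, since $|\langle x/|J|,h^w_J\rangle_w| \le \|h^w_J\|_w \sqrt{w(J)} = \sqrt{w(J)}$ by Cauchy--Schwarz on $h^w_J$. A final Cauchy--Schwarz in $F$, together with the Carleson embedding $\sum_F (\mathbb E^\sigma_F|f|)^2 \sigma(F) \lesssim \|f\|_\sigma^2$ implied by the stopping construction, yields $|\mathsf B(f,g)| \lesssim \mathscr P \|f\|_\sigma \|g\|_w$. The main obstacle is arranging the stopping-tree bookkeeping so that the monotonicity principle applies cleanly to every good $J$ and so that the remainder terms collapse to pure testing; once this is set up, the pivotal hypothesis trivializes what, in the general theorem, would require the functional energy inequality of \S\ref{s:fe}.
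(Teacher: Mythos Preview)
Your strategy matches the paper's in broad outline—random grids, good projections, triangular forms, Calder\'on--Zygmund stopping data, and a stopping form—but the key estimate at the end has a genuine gap.

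You claim that
\[
\sum_{J:\,\pi_{\mathcal F}J=F} P(\sigma F, J)^2\, w(J)\;\lesssim\;\mathscr P^2\,\sigma(F)
\]
follows ``directly from \eqref{e:pivotal} applied to the partition of $F$ by its stopping children.'' But the pivotal condition \eqref{e:pivotal} is an inequality for a \emph{partition} of $F$, whereas your sum runs over the full nested family of $J$'s in the corona of $F$, at every dyadic scale. Take $\sigma=w$ equal to Lebesgue measure on $F=[0,1]$ and let $f$ be a single Haar function on $F$: your purely CZ stopping (based only on averages of $|f|$) then produces no children below $F$, so every dyadic $J\subset F$ has $\pi_{\mathcal F}J=F$. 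For such $J$ one has $P(\sigma F,J)\gtrsim 1$, so the left side contributes $\simeq 1$ per dyadic generation and diverges, even though $\mathscr P<\infty$ for this pair.

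The paper's remedy is twofold. First, the stopping data are augmented by \emph{pivotal stopping intervals} (Definition~\ref{d:pivotal}): maximal $F'\subsetneq F$ with $P(\sigma F,F')^2 w(F')>10\,\mathscr P^2\sigma(F')$. This forces every interval $I$ that remains in the corona of $F$ to satisfy the \emph{pointwise} reverse inequality $P(\sigma F,I)^2 w(I)\lesssim\mathscr P^2\sigma(I)$—a bound on a single interval, not on a sum. Second, the stopping form is estimated scale by scale (Lemma~\ref{l:stopT}): fixing $|I|=2^s|J|$, goodness and \eqref{e:donotuse} give geometric decay $2^{-(1-2\varepsilon)s}$ in the Poisson term, and the pointwise pivotal bound is applied to the single interval $I_\theta$. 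The decay in $s$ is what makes the sum over scales converge; a single Cauchy--Schwarz over all $J$ at once cannot substitute for it. The same combination (pivotal stopping plus geometric decay along the $\mathcal F$-tree) is also what drives the global-to-local step in Lemma~\ref{l:aboveCorona}, which your sketch absorbs into ``remainders controlled by $\mathscr T$'' without justification.
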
 
 
This is a very strong Theorem, with an important proof. It decisively used the tools of non-homogeneous harmonic analysis, namely random grids, and  good-bad projections.  The pivotal condition controlled certain degeneracies in the pair of weights, compare to Definition~\ref{d:stopping}.  
To illustrate the difficulties in the general case, we prove this theorem in \S\ref{s:pivotal}. 

The pivotal condition holds if the pair of maximal function estimates hold, namely $ M _{\sigma } \::\: L ^2 (\sigma ) \mapsto L ^2 (w)$ and $ M _{w} \::\: L ^2 (w) \mapsto L ^2 (\sigma )$.  This is easy to see. From \eqref{e:pivotal}, 
\begin{align*}
\sum_{I\in \mathcal P} P ( \sigma \cdot  I_0, I) ^2 w (I)  &\le \sum_{I\in \mathcal P} \inf _{x\in I} M (\sigma \cdot I_0) (x) ^2 w (I)
\\
&\le \int _{I_0} M (\sigma \cdot I_0) ^2 \; d w \lesssim \sigma (I_0)\,, 
\end{align*}
by the assumed norm bound on the maximal function. 
One sees that Theorem~\ref{t:pivotal}  offered a complete characterization of the two weight inequality for the triple of operators 
$ (H _{\sigma }, M _{\sigma }, M _{w})$.  If the pair of weights are doubling, then the boundedness of the maximal functions is a consequence of the $ A_2$ condition.\footnote{Alternatively, under the assumption of $ w$ being doubling, check that the energy satisfies $ E (w, I) \gtrsim 1$, 
with the implied constant depending upon the doubling constant.  Thus, the necessary energy inequality implies the pivotal condition.} 
The full characterization of the boundedness of the Hilbert transform was thus known for doubling measures.  See \cite{V}.

The pivotal condition is generic in the following sense. Assuming the pivotal condition, the Hilbert transform can be replaced by a generic Calder\'on-Zygmund operator with one derivative on its kernel.   
This, and its extension to operators with a rougher kernel,  was fundamental in the paper \cite{ptv}, whose main result was an important intermediate one in the solution of the $ A_2$ conjecture \cite{2912709}.

\subsubsection{}
Nazarov-Treil-Volberg, in language reminiscent of Sarason, wrote that `perhaps the pivotal condition is necessary' for the 
boundedness of the Hilbert transform.  
This turned out to have a strong measure of truth, in that using the specific structure of the Hilbert transform, the \emph{energy inequality} was shown  necessary in \cite{10014043}.  Note that one can formally obtain the pivotal condition \eqref{e:pivotal} from the energy inequality \eqref{e:energy} by raising the energy term $ E (w,I)$ to the zero power, rather than the necessary power $ 2$. 
The  paper \cite{10014043} then adapted the approach of \cites{V,10031596}, essentially imposing a new weaker condition on the pair of weights in which one raised the energy to a power intermediate between $ 0$ and $ 2$.   In addition, that paper provided an explicit example, recounted in \S\ref{s:examples}, that showed that the pivotal condition \eqref{e:pivotal} is \emph{not necessary} for the boundedness of the Hilbert transform.

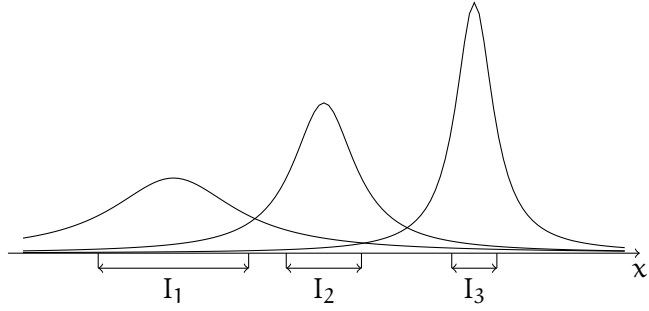
\begin{figure}
\begin{tikzpicture}[domain=0:8, samples=125]
  \draw[->] (-0.2,0) -- (8.2,0) node[right,below] {$x$};
  \foreach \a/\b/\c in {2/1/1, 4/.5/2, 6/.3/3}  
  {\draw  plot (\x,{ \b/(\b^2+(\x-\a)^2});      \draw (\a+\b,0) -- (\a+\b,-.2);    	    \draw (\a-\b,0) -- (\a-\b,-.2); 
    \draw[<->] (\a-\b, -.2) -- (\a +\b, -.2) node[midway,below] {$ I _{\c}$};};    
\end{tikzpicture}
\caption{The function $ \frac { \lvert  I\rvert } { \lvert  I\rvert ^2 + \lvert  x- x_I\rvert ^2   }$ are graphed for three separate intervals.}
\label{f:P}
\end{figure}

\subsubsection{}
The energy inequality is rather subtle.  The Poisson term  $P (\sigma , I) $ can be much larger than the simple average, 
but this is compensated for with the terms $ E (w, I) ^2 w (I)$.  The Figure~\ref{f:P} is offered to provide some insight into the 
`long tails' that the Poisson term can have. 

Another indication of this subtlety is the observation that the energy inequality will not follow from just the $ A_2$ condition.  
Given interval $ I_0$, and partition  $ \mathcal P$ of $ I_0$, one can write 
\begin{align*}  
\sum_{I \in \mathcal P  }  
P ( \sigma , I) ^2 E (w,I) ^2 w (I) 
&\le \mathscr A_2 
\sum_{I \in \mathcal P  }  
 \lvert  I\rvert \cdot P (  \sigma, I) ^2 
\\
&= \mathscr A_2 
\int _{I_0}   \sum_{I \in \mathcal P}   \frac {\lvert  I\rvert ^2  } {(\lvert  I\rvert + \textup{dist} (x,I)) ^2 } \;  \sigma (dx)\,. 
\end{align*}
To finish, one would have to know that the  function inside the integral is bounded. 
But, this is not true in general.  
Though a very tame $ BMO$ function, this fact does not help, since 
$ \sigma $  is a general measure, and need not satisfy any $ A _{\infty }$ type condition.  
Indeed, the  proof of the main theorem would be  more or less classical if the weights satisfy a  $ A _{\infty }$ type conditions.

\subsubsection{}
The monotonicity principle, Lemma~\ref{mono}, was noted in \cite{11082319}.  It, with the energy inequality, are essential aspects of the proof.

\section{Global to Local Reduction} \label{s:global}

Our aim is to prove  the estimate \eqref{e:gg}, 
\begin{equation*}
\sup 
\lvert  \langle H _\sigma   P ^{\sigma } _{\textup{good}}f, P ^{w} _{\textup{good}}g \rangle _{w}\rvert \lesssim \mathscr H \lVert f\rVert_{\sigma } \lVert g\rVert_{w} \,.  
\end{equation*}
That is, the bilinear form only needs to be controlled for \emph{$ (\varepsilon ,r)$-good functions $ f= P ^{\sigma } _{\textup{good}}f$ and 
similarly for $ g$, goodness being defined with respect to a fixed dyadic grid.} 
Suppressing the notation, we write `good' for `$(\varepsilon,r)$-good,' and  it is always   assumed that  the dyadic grid $ \mathcal D$ is fixed, and only good intervals are in the Haar support of $ f$ and $ g$.  
We clearly remark on goodness when the property is used; any value of $ 0< \varepsilon \le \tfrac 14 $ is sufficient for our purposes.  
The symbol $ \varepsilon $ is kept throughout, as a guide to the appearance of the good property of intervals. 

The inequality above is reduced to the local estimate, \eqref{e:BF}, at the end of this section. 
It is sufficient to assume that $ f$ and $ g$ are supported  on an interval $ I^0$; by trivial use of the interval testing condition, we can further assume that $ f$ and $ g$ are of integral zero in their respective spaces.  
Thus, $ f $ is in the linear span of (good) Haar functions $ h ^{\sigma } _I$ for $ I\subset I^0$, and similarly for $ g$. 

The distinction between $ J\subset I$ and $ J\Subset I$ ($ J\subset I$ and $ 2 ^{r} \lvert  J\rvert\le \lvert  I\rvert  $) forces some 
case analysis. This is further simplified by this assumption on the Haar supports of $ f, g$. There are two integers $ s _{f}, s _{g}$ such that 
\begin{equation} \label{e:mod}
f = \sum_{\substack{I \::\: I\subset I^0\\ \log_2 \lvert  I\rvert \in s_f + r \mathbb Z  }} \Delta ^{\sigma } _{I} f
\end{equation}
and similarly for $ g$. Thus, the lengths of the (good) intervals $ I$ are restricted to an equivalence class mod $ r$, 
which is to say that the \emph{scales of $ f$ are separated by $ r$}, and the same for $ g$.  This will be a convenience 
at a few technical points below.  
Set $ \mathcal D _{f} \coloneqq  \{  I \::\: \log_2 \lvert  I\rvert \in s_f -1 + r \mathbb Z \}$, so these are the children of the intervals that 
appear in \eqref{e:mod}.   Due to the probabilistic way in which the grids are constructed, we can further assume that 
$ I ^{0} \in \mathcal D _{f}$.   Also set $ \mathcal D _{g} \coloneqq   \{  I \::\: \log_2 \lvert  I\rvert \in s_g  + r \mathbb Z $.

We are to control the bilinear form 
\begin{equation} \label{e:00}
\langle H _{\sigma } f,g \rangle_w = \sum_{I, J \::\: I,J\subset I^0} 
\langle H _{\sigma } \Delta ^{\sigma }_I f, \Delta ^{w} _{J} g \rangle _w \,. 
\end{equation}
The sum is broken into many summands, as is typical in these arguments, but the manner in which it is done 
has some important points below. The most important of these are the two `triangular' forms 
\begin{equation}  \label{e:ABOVE} 
B ^{\textup{above}} (f,g) \coloneqq  \sum_{I \::\: I\subset I^0} \sum_{J \::\: J\Subset I_J} 
\mathbb E ^{\sigma } _{I_J} \Delta ^{\sigma }_I f \cdot   \langle H _{\sigma } I_J, \Delta ^{w} _{J} g \rangle _w 
\end{equation} 
and the dual form, $ B ^{\textup{below}} (f,g) $.  
Here, $ J\Subset I$ means that $ J\subset I$ and $ 2 ^{r} \lvert  J\rvert \le \lvert  I\rvert  $, in words `$ J$ is strongly contained in $ I$'. And the interval $ I_J$ is the child of $ I$ that contains $ J$.  
Goodness of $ J$ justifies the use of this condition.
A basic fact, proved in \S\ref{s:elem}, is 

\begin{lemma}\label{l:above}  There holds 
\begin{equation*}
\begin{split}
\bigl\lvert  \langle H _{\sigma } f ,g \rangle _{w} - B ^{\textup{above}}   (f,g) &
- B ^{\textup{below}}   (f,g)  \bigr\rvert 
\lesssim \mathscr H \lVert f\rVert_{\sigma } \lVert g\rVert_{w}  \,. 
\end{split}
\end{equation*}
\end{lemma}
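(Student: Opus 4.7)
The plan is to expand $\langle H_\sigma f, g\rangle_w$ in the two Haar bases, classify each pair of intervals $(I, J)$ by relative position, and match the resulting contributions to $B^{\textup{above}}$, $B^{\textup{below}}$, and error pieces that are controllable via the monotonicity principle (Lemma~\ref{mono}) and the energy inequality (Lemma~\ref{l:energy}).

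Expanding
\[
\langle H_\sigma f, g\rangle_w = \sum_{I, J \subset I_0} \langle H_\sigma \Delta^\sigma_I f, \Delta^w_J g\rangle_w,
\]
the residue-class condition \eqref{e:mod}, together with goodness, forces every pair $(I,J)$ into one of four configurations: $J \Subset I$, $I \Subset J$, $I = J$, or $I \cap J = \emptyset$. When $J \Subset I$, the function $\Delta^\sigma_I f$ is constant on each child of $I$, so writing $I_J$ for the child that contains $J$,
\[
\Delta^\sigma_I f = (\mathbb{E}^\sigma_{I_J} \Delta^\sigma_I f)\,\mathbf{1}_{I_J} + (\mathbb{E}^\sigma_{I \setminus I_J} \Delta^\sigma_I f)\,\mathbf{1}_{I \setminus I_J}.
\]
Inserting the first summand and summing over $(I,J)$ produces exactly $B^{\textup{above}}(f, g)$; the second is an \emph{other-child} residual. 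The configuration $I \Subset J$ dually yields $B^{\textup{below}}(f, g)$ plus its own residual.

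It remains to control the two other-child residuals, the disjoint sum ($I \cap J = \emptyset$), and the diagonal sum ($I = J$). The diagonal sum is handled by Cauchy--Schwarz in the Haar coefficients plus interval testing $\int_I |H_\sigma \mathbf 1_I|^2\,dw \le \mathscr T^2 \sigma(I)$, which gives $\lesssim \mathscr T \lVert f\rVert_\sigma \lVert g\rVert_w$. For the other-child and disjoint pieces, each inner product $\langle H_\sigma \mathbf 1_E, \Delta^w_J g\rangle_w$ has $E$ disjoint from, and by $(\varepsilon,r)$-goodness of $J$ well-separated from, $J$; so Lemma~\ref{mono} dominates it by $P(\sigma \mathbf 1_E, J) \cdot |J|^{-1}\lvert\langle x, h^w_J\rangle_w\rvert \cdot \lvert\hat g(J)\rvert$. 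Cauchy--Schwarz in $J$ together with Bessel's inequality for $g$, the identity $\sum_{J \subset I} \langle x, h^w_J\rangle_w^2 = |I|^2 E(w, I)^2$, and the energy inequality bound the total by $\lesssim \mathscr H \lVert f\rVert_\sigma \lVert g\rVert_w$; the $\lVert f\rVert_\sigma$ factor is recovered from quasi-orthogonality of the $\Delta^\sigma_I f$ after telescoping the $\sigma$-averages over $I \supset J$.

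The principal obstacle is the organizational step in the off-diagonal estimate: for each fixed $J$, the $\sigma$-masses aggregated from the other-child and disjoint contributions across all scales of $I \supset J$ must be pooled so that a single invocation of the energy inequality at level $J$ absorbs them without double-counting. The geometric separation $\textup{dist}(J, I \setminus I_J) \gtrsim |J|^\varepsilon |I|^{1-\varepsilon}$ guaranteed by goodness is precisely the input that makes the monotonicity principle yield a Poisson factor at the scale of $J$ suitable for this summation.
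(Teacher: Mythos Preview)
Your classification of pairs $(I,J)$ is incorrect, and this creates a genuine gap. The residue-class assumption \eqref{e:mod} fixes $\log_2|I| \pmod r$ and $\log_2|J| \pmod r$ \emph{separately}, with possibly distinct residues $s_f \neq s_g$. Hence $|I|/|J|$ can equal $2^{k_0}$ for some $1 \le k_0 \le r-1$, giving pairs with $J \subset I$ but $J \not\Subset I$ (and, dually, $I \subset J$ but $I \not\Subset J$), as well as disjoint pairs of comparable length whose triples overlap. These are exactly the ``nearby'' pairs in \eqref{e:nearby}, and your four-case split omits them entirely. The paper handles them via the weak-boundedness inequality (Proposition~\ref{p:weakB}), a Hardy-type estimate that follows from $A_2$ alone; nothing in your outline substitutes for this. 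Relatedly, your diagonal argument is incomplete: interval testing controls $\lVert H_\sigma \mathbf 1_{I_\pm}\rVert_{L^2(I_\pm,w)}$, but $\langle H_\sigma h^\sigma_I, h^w_I\rangle_w$ also involves the cross terms $\lVert H_\sigma \mathbf 1_{I_+}\rVert_{L^2(I_-,w)}$, which again require weak boundedness.

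Beyond the missing cases, your route through monotonicity and the energy inequality for the other-child and disjoint terms is both heavier than needed and not actually carried out: you yourself flag the ``principal obstacle'' of pooling the Poisson contributions so that a single energy application absorbs them, and then stop. The paper avoids this entirely. Its proof of Lemma~\ref{l:above} (all of \S\ref{s:elem}) uses neither the energy inequality nor the testing hypothesis; it relies only on the (half-)Poisson $A_2$ condition. The far, close, and adjacent terms are handled by the elementary off-diagonal kernel estimates of Propositions~\ref{p:Eip} and~\ref{p:EEip}, which combine with goodness to produce geometric decay $2^{-(1-2\varepsilon)s}$ in the scale ratio $s=\log_2(|I|/|J|)$, after which a single Cauchy--Schwarz and the simple $A_2$ bound finish each case. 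Monotonicity, energy, and quasi-orthogonality are reserved for the genuinely hard parts of the main theorem (the global-to-local reduction and the stopping form); invoking them here is unnecessary.
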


Thus, the main technical result is as below; it immediately supplies our main theorem. 

\begin{theorem}\label{t:above}  There holds 
\begin{equation*}
\lvert B ^{\textup{above}} (f,g)\rvert 
\lesssim \mathscr H \lVert f\rVert_{\sigma } \lVert g\rVert_{w}  \,.
\end{equation*}
The same inequality holds for the dual form $ B ^{\textup{below}} (f,g) $.
\end{theorem}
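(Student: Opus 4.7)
The plan is to prove Theorem~\ref{t:above} by following the blueprint of Figure~\ref{f:proof}; the bound for $B^{\text{below}}$ is dual. The first step is to introduce Calder\'on--Zygmund stopping data $\mathcal F \subset \mathcal D$ for $f$ with respect to $\sigma$: starting from $I_0$, recursively adjoin to $\mathcal F$ the maximal subintervals $F' \subsetneq F \in \mathcal F$ on which $\mathbb E^\sigma_{F'}|f| > 4\,\mathbb E^\sigma_F |f|$, and set stopping values $\alpha_F := \mathbb E^\sigma_F |f|$. The collection is then $\sigma$-Carleson, one has the quasi-orthogonality estimate $\sum_{F} \alpha_F^2 \sigma(F) \lesssim \lVert f\rVert_\sigma^2$, and martingale averages satisfy $\lvert \mathbb E^\sigma_I f\rvert \le 4 \alpha_{\pi_{\mathcal F} I}$. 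The dual construction produces a collection $\mathcal G$ and values $\beta_G$ on the $w$-side.

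Using $\mathcal F$, partition the sum defining $B^{\text{above}}$ according to whether $\pi_{\mathcal F}(I_J) = \pi_{\mathcal F}(J)$ (same corona) or $\pi_{\mathcal F}(J) \subsetneq \pi_{\mathcal F}(I_J)$ (deeper coronas), writing
\begin{equation*}
B^{\text{above}}(f,g) \;=\; B^{\text{stop}}(f,g) \;+\; B^{\text{global}}(f,g).
\end{equation*}
For $B^{\text{global}}$, within each corona the inner factors $\mathbb E^\sigma_{I_J} \Delta^\sigma_I f$ telescope against all $I$ sitting above a given $F$, producing (up to errors absorbed by interval testing and $A_2$) a sum of expressions of the shape $\alpha_F\, \langle H_\sigma \mathbf 1_F, \Delta^w_J g\rangle_w$ with $J$ strictly below $F$ in the $\mathcal F$-tree. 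The monotonicity principle (Lemma~\ref{mono}) dominates each such inner product by $P(\sigma \mathbf 1_F, J)\,\langle x/\lvert J\rvert, \overline{\Delta^w_J g}\rangle_w$. Cauchy--Schwarz across the two sides, together with quasi-orthogonality of $\{\alpha_F\}$ in $L^2(\sigma)$, reduces the task to a bilinear Poisson embedding of $f\sigma$ into a measure on $\mathbb R^2_+$ built from the $w$-energies $E(w,J)^2 w(J)$. This is precisely the functional energy inequality of \S\ref{s:fe}, which is proved by invoking Sawyer's two-weight Poisson theorem of \S\ref{s:poisson} and checking its Sawyer-type testing hypotheses directly from the Poisson $A_2$ condition \eqref{e:A2} and the energy inequality (Lemma~\ref{l:energy}).

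For the local piece $B^{\text{stop}}$, the stopping estimate $\lvert \mathbb E^\sigma_{I_J} \Delta^\sigma_I f\rvert \lesssim \alpha_F$ reduces matters to bounding, for each $F$, a \emph{stopping form} in the variables $(\alpha_F, \Delta^w_J g)_{J}$, with $J$ ranging over the corona of $F$. This is attacked through the size lemma (Lemma~\ref{l:Decompose}), which introduces a scalar size functional approximating the operator norm of the stopping form on an admissible sub-collection and exhibits a canonical decomposition into a "large-size" piece whose norm is controlled explicitly in terms of the current size, plus a residual piece of strictly smaller size. Iterating the decomposition, summing a geometric series in the size levels, and invoking quasi-orthogonality of both $\{\alpha_F\}$ and $\{\beta_G\}$ yields the bound $\mathscr H \lVert f\rVert_\sigma \lVert g\rVert_w$.

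The main obstacle is $B^{\text{stop}}$. Unlike classical $T1$ arguments, there is no paraproduct/Carleson-measure reduction, and the pair $(\sigma,w)$ is too rough for any off-diagonal estimate to apply. All control must be squeezed out of the monotonicity of the kernel of $H$ (through Lemma~\ref{mono}) and the Poisson--energy tail behavior of Lemma~\ref{l:energy}. The size recursion must be arranged so that the geometric decay of the size at each step outpaces the multiplicity of the constituent pieces; calibrating this decay for a bilinear form that is neither intrinsically defined nor symmetric in its two arguments is the technical heart of the proof.
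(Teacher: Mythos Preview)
Your outline matches the paper's architecture (global-to-local via functional energy, then a size-lemma recursion on the stopping form), but there is a genuine gap in the construction of your stopping data $\mathcal F$.  You build $\mathcal F$ using only the Calder\'on--Zygmund average condition $\mathbb E^\sigma_{F'}|f|>4\,\mathbb E^\sigma_F|f|$.  The paper's $\mathcal F$ also adjoins the \emph{energy stopping} intervals of Definition~\ref{d:energy}: maximal $F'\subsetneq F$ with $P(\sigma\cdot F,F')^2 E(w,F')^2 w(F')>10C_0\mathscr H^2\sigma(F')$.  This second selection rule is what makes the stopping form tractable.  The recursion in Lemma~\ref{l:Decompose} terminates only because the \emph{initial} size of the admissible collection $\mathcal Q_0$ satisfies $\textup{size}(\mathcal Q_0)\lesssim\mathscr H$; that bound comes precisely from the fact that no $Q_2$ can lie inside an energy stopping interval (property (3) of Definition~\ref{d:admiss}), which in turn forces the failure of \eqref{e:Estop} on every $K\in\tilde{\mathcal Q}_1\cup\mathcal Q_2$.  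With only the average-stopping rule, nothing prevents $\textup{size}(\mathcal Q_0)$ from being arbitrarily large, and the geometric recursion on size never gets off the ground.  The hypothesis of Lemma~\ref{l:stop<} (``$f$ and $g$ adapted to $\mathcal F_{\textup{energy}}(I_0)$'') is not decorative.

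Two smaller points.  First, the dual collection $\mathcal G$ with values $\beta_G$ on the $w$-side is not used in the paper's proof of Theorem~\ref{t:above}; the argument is deliberately asymmetric, with stopping data only on $f$ and orthogonality of the projections $Q^w_F g$ on the $g$-side (the parallel corona you may have in mind belongs to the separate Theorem~\ref{t:infty}).  Second, in your handling of $B^{\textup{stop}}$ you write that the pointwise bound $|\mathbb E^\sigma_{I_J}\Delta^\sigma_I f|\lesssim\alpha_F$ ``reduces matters'' to a stopping form in $(\alpha_F,\Delta^w_J g)$.  This is misleading: the proof of Lemma~\ref{l:holes} relies essentially on the telescoping cancellation among the $\Delta^\sigma_I f$ (absolute values must be taken \emph{outside} the sum over $I$), not on a term-by-term bound.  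Replacing the martingale structure of $f$ by a single constant $\alpha_F$ at that stage would forfeit exactly the cancellation the size lemma exploits.
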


The remainder of this section is devoted to a  reduction of the global Theorem~\ref{t:above} to a 
local estimate described in \S\ref{s:stop}. 
In the local estimate,   the function $ f$ is more structured in that it has bounded averages  on a fixed interval,   and the pair of functions $ f,g$ are   more structured in that their Haar supports avoid intervals that strongly violate the energy inequality.  Still the argument to control this term requires a subtle recursion.

We construct stopping data, which accomplishes two ends, in that it will control certain telescoping sums of martingale 
differences of $ f$, and that it controls certain degeneracies  in an energy estimate on the weights.

\begin{definition}\label{d:stopping} Define $ \mathcal F$, the stopping intervals, recursively by 
initializing $ I ^{0} \in \mathcal F$, and in the recursive step, if $ F \in \mathcal F$ is minimal, 
add to $ \mathcal F$ the maximal subintervals $F'\subset F  $, with $ F'\in \mathcal D _{f}$, 
either 
\begin{description}
\item[$ f$ stopping]   $ \mathbb E ^{\sigma } _{F'} \lvert  f\rvert > C \alpha _{f} (F) \coloneqq   \mathbb E ^{\sigma } _{F} \lvert  f\rvert $. 
\item[Energy Stopping]    $  \lVert   H _{\sigma } (F \setminus F') \cdot F' \rVert ^2 _{w} \ge C \mathscr H ^2 \sigma (F')$. 
\end{description}
That is, we stop if either the average of $ f$ becomes too large, or, essentially, the energy condition becomes too large. 
\end{definition}

For appropriate constant $ C$, it follows that $ \mathcal F$ is $ \sigma $-Carleson, namely 
\begin{equation} \label{e:s-Car}
\sum_{F' \in \mathcal F \;:\; F'\subset F} \sigma (F') \leq \tfrac 1 {10} \sigma (F), \qquad F\in \mathcal F. 
\end{equation}
Many properties of the $ \sigma $-Carleson property are used below.  But, also note the following property: 
\begin{equation}\label{e:<alpha}
 \lvert \mathbb E _{I} ^{\sigma }  f \rvert  \le  C\alpha _{f} (\pi _{\mathcal F} I)
\end{equation}
 
We will  use the notation 
\begin{equation} \label{e:PQ}
 P ^{\sigma } _{F} f \coloneqq  \sum_{ I \in \mathcal D \::\: \pi _{\mathcal F}I=F} \Delta ^{\sigma } _{I} f\,, \qquad F\in \mathcal F\,. 
\end{equation} 
and  a dual projection $ Q ^{w} _{F}g$, is defined similarly, but importantly, we replace $ \pi _{\mathcal F} J=F$ 
by $ \dot \pi _{\mathcal F} J=F$, meaning that $ F$ is the smallest interval in $ \mathcal F$ such that 
$ J\Subset F$.  
(Note that both are projections, but $ P ^{\sigma } _{F} f$ is a structured function, while $ Q ^{w} _{F}g$ is not.)
The $ \sigma $-Carleson property  allows us to estimate  
\begin{align}  \label{e:quasi} 
	\begin{split}
		\sum_{F\in \mathcal F} 
	\{ \alpha _{f} (F) \sigma (F) ^{1/2}& + \lVert P ^{\sigma } _{F} f\rVert_{\sigma }\} 
	\lVert Q ^{w} _{F} g\rVert_{w} 
\\	& \le 
	\Biggl[
	\sum_{F\in \mathcal F} 
	\{\alpha _{f} (F) ^2  \sigma (F) + \lVert P ^{\sigma } _{F} f\rVert_{\sigma } ^2 \} 
	\times 
	\sum_{F\in \mathcal F} 
	\lVert Q ^{w} _{F} g\rVert_{w} ^2 
	\Biggr] ^{1/2}   \lesssim   \lVert f\rVert_{\sigma } \lVert g\rVert_{w} \,. 
	\end{split}
\end{align}
We will refer to as the \emph{quasi-orthogonality}  argument.  It holds only under the assumption that the projections $ Q ^{w} _{F}$ are pairwise 
orthogonal.   It is very useful. 

\medskip 
We henceforth concentrate on the `above' forms, with all considerations applying in their dual formulation to control the `below' forms. 
Return to the double sum \eqref{e:00}, and define  
\begin{align}
B ^{\textup{above}} _{\mathcal F, \textup{loc} } (f,g)
& \coloneqq  \sum_{F\in \mathcal F} B ^{\textup{above}} (P ^{\sigma } _{F} f, Q ^{w} _{F} g). 
\end{align}
The  global to local reduction is:  

\begin{corollary}\label{t:aboveCorona}[Global to Local Reduction] 
There holds 
\begin{gather*}
\bigl\lvert
B ^{\textup{above}}  (f,g)
- 
B ^{\textup{above}} _{\mathcal F,\textup{loc}}  (f,g)\bigr\rvert \lesssim \mathscr H 
\lVert f\rVert_{\sigma } \lVert g\rVert_{w} . 
\end{gather*}
\end{corollary}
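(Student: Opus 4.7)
The plan is to combine a martingale telescoping in $I$ with the monotonicity principle, reducing the global form to a two-weight Poisson inequality controlled by functional energy. I would fix $F \in \mathcal{F}$ and consolidate the sum in \eqref{e:xGlob} over $I \supsetneq F$. Pulling $H_\sigma$ outside and decomposing $\mathbf{1}_{I_J} = \mathbf{1}_F + \mathbf{1}_{I_J \setminus F}$, the telescoping identity $\sum_{I \supsetneq F} \mathbb{E}^\sigma_{I_J} \Delta^\sigma_I f = \mathbb{E}^\sigma_F f - \mathbb{E}^\sigma_{I_0} f = \mathbb{E}^\sigma_F f$ (the latter because $f$ has $\sigma$-mean zero on $I_0$) should, after reindexing the annular contributions, produce
\begin{equation*}
B^{\textup{above}}_{\mathcal{F},\textup{glob}}(f,g) = \sum_{F \in \mathcal{F}} \mathbb{E}^\sigma_F f \cdot \langle H_\sigma \mathbf{1}_F, Q^w_F g \rangle_w + \sum_{F \in \mathcal{F}} \langle H_\sigma \varphi_F, Q^w_F g \rangle_w,
\end{equation*}
where, writing $F_j := \pi^j_{\mathcal{D}} F$, the function $\varphi_F := \sum_{j \ge 1} \mathbb{E}^\sigma_{F_j} f \cdot \mathbf{1}_{F_j \setminus F_{j-1}}$ is independent of $J$, supported on $\mathbb{R} \setminus F$, and satisfies $\|\varphi_F\|_\infty \le 10\,\alpha_f(F)$ by Lemma~\ref{l:CZ}(2)--(3).

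For the first (``testing'') sum, since $Q^w_F g$ is supported on $F$, Cauchy--Schwarz together with the testing inequality \eqref{e:T} yield $|\langle H_\sigma \mathbf{1}_F, Q^w_F g \rangle_w| \le \mathscr{T}\,\sigma(F)^{1/2}\,\|Q^w_F g\|_w$; combining with $|\mathbb{E}^\sigma_F f| \le 10\,\alpha_f(F)$ from Lemma~\ref{l:CZ}(2) and the quasi-orthogonality inequality \eqref{e:quasi} then produces the target bound $\mathscr{T}\|f\|_\sigma\|g\|_w$.

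For the second (``off-diagonal'') sum, I would apply the monotonicity principle \eqref{e:mono1} to each term $\langle H_\sigma \varphi_F, \Delta^w_J g \rangle_w$ with $J \Subset F$, using the majorant $|\sigma \varphi_F| \le 10\,\alpha_f(F)\,\sigma\mathbf{1}_{\mathbb{R}\setminus F}$ supported off $F$, to obtain
\begin{equation*}
|\langle H_\sigma \varphi_F, \Delta^w_J g \rangle_w| \lesssim \alpha_f(F) \cdot P(\sigma\mathbf{1}_{\mathbb{R}\setminus F}, J) \cdot |J|^{-1}\,\langle x, h^w_J \rangle_w\,|\hat g(J)|.
\end{equation*}
A Cauchy--Schwarz in $(F,J)$, peeling off $\sum_J |\hat g(J)|^2 \le \|g\|_w^2$, then factors the total into a copy of $\|g\|_w$ and
\begin{equation*}
\Big(\sum_{F \in \mathcal{F}} \alpha_f(F)^2 \sum_{J\,:\,\pi_{\mathcal{F}} J = F} P(\sigma\mathbf{1}_{\mathbb{R}\setminus F}, J)^2 \cdot \frac{\langle x, h^w_J \rangle_w^2}{|J|^2}\Big)^{1/2}.
\end{equation*}

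The hard part will be bounding this last expression by $\mathscr{H}\,\|f\|_\sigma$: this is exactly the content of the \emph{functional energy inequality} developed in \S\ref{s:fe}, the deepest auxiliary input in the paper. Its proof leans on Sawyer's two-weight Poisson theorem (\S\ref{s:poisson}), the full Poisson $A_2$ condition \eqref{e:A2}, and the interval energy inequality (Lemma~\ref{l:energy})---and, as the overview indicates, this is the only point at which the \emph{full} Poisson $A_2$ condition enters. The technicality of the boundary $J$'s with $J \not\Subset F$, of which there are only $O(r)$ scales per $F$, should be absorbed by a separate direct appeal to interval testing. Granting the functional energy inequality, concatenating the estimates above delivers the claimed bound $\mathscr{H}\,\|f\|_\sigma\|g\|_w$.
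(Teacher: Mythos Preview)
Your overall architecture---split $I_J=F+(I_J\setminus F)$, handle the $F$-piece by telescoping plus interval testing plus quasi-orthogonality, handle the $(I_J\setminus F)$-piece by monotonicity followed by functional energy, and mop up the boundary scales $J\not\Subset\pi_{\mathcal F}J$ separately---is exactly the paper's ``Hilbert--Poisson exchange'' (see the proof in \S\ref{s:global}). The testing piece is fine.

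The off-diagonal piece, however, has a genuine gap. You majorize $|\varphi_F|\le 10\,\alpha_f(F)\,\mathbf 1_{I_0\setminus F}$ and, after Cauchy--Schwarz, are left with
\[
\sum_{F\in\mathcal F}\alpha_f(F)^2\sum_{J:\ \pi_{\mathcal F}J=F}P\bigl(\sigma\mathbf 1_{I_0\setminus F},J\bigr)^2\,\frac{\langle x,h^w_J\rangle_w^2}{|J|^2}.
\]
This is \emph{not} what Theorem~\ref{t:funcEnergy} controls. Functional energy (in its dual, two-weight Poisson form) bounds $\sum_F\sum_{J^\ast}P(h\sigma,J^\ast)^2\|P^w_{F,J^\ast}\tfrac{x}{|J^\ast|}\|_w^2$ by $\mathscr H^2\|h\|_\sigma^2$ for a \emph{single} nonnegative $h$. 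In your display the weight inside the Poisson is $\alpha_f(F)\mathbf 1_{I_0\setminus F}$, which varies with $F$; there is no single $h$ with $\|h\|_\sigma\lesssim\|f\|_\sigma$ that dominates all of these, because for $y$ far from a deep stopping interval $F$ the value $\alpha_f(F)$ can vastly exceed $\Phi(y):=\sum_{F'\ni y}\alpha_f(F')$. Nor does the would-be per-$F$ estimate $S_F\lesssim\mathscr H^2\sigma(F)$ follow from Lemma~\ref{l:energy}: that lemma (see its proof) controls $P(\sigma\cdot F,J^\ast)$, i.e.\ the Poisson of $\sigma$ \emph{inside} the partitioned interval, not $P(\sigma\mathbf 1_{I_0\setminus F},J^\ast)$.

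The fix is to sharpen the majorant. For $y\in F_m\setminus F_{m-1}$ one has $|\varphi_F(y)|=|\mathbb E^\sigma_{F_m}f|\le 10\,\alpha_f(\pi_{\mathcal F}F_m)\le 10\,\Phi(y)$, so $|\varphi_F|\le 10\,\Phi$ pointwise on $I_0\setminus F$, with $\Phi$ \emph{independent of $F$}. Monotonicity then produces $P(\Phi\sigma,J)$, and Theorem~\ref{t:funcEnergy} applies directly as a bilinear inequality with $h=\Phi$, using $\|\Phi\|_\sigma\lesssim\|f\|_\sigma$ from Lemma~\ref{l:CZ}(5). With this correction your argument coincides with the paper's.
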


\begin{proof}
Observe that $ B ^{\textup{above}} (f,g)$ is a sum over pairs of intervals $ (I,J)$ 
with $ J\Subset I_J$, whence $ \dot \pi _{\mathcal F} J \subset \pi _{\mathcal F} I$.  
Now, the case of $  \dot \pi _{\mathcal F} J = \pi _{\mathcal F} I$ is contained in the 
form $ B ^{\textup{above}} _{\mathcal F, \textup{loc}}  (f,g)$, hence we need only concern ourselves 
with the case of $  \dot \pi _{\mathcal F} J \subsetneq \pi _{\mathcal F} I$, that is, 
we need only bound 
\begin{equation*}
\sum_{F \in \mathcal F} \sum_{\substack{F'\in \mathcal F\\ F'\subsetneq F }} 
B ^{\textup{above}} _{\mathcal F}  (P ^{\sigma } _{F}f, Q ^{w} _{F'}g). 
\end{equation*}

Set $ g_F \coloneqq  Q ^{w} _{F} g$.  The sum in question is 
\begin{equation}  
\sum_{F\in \mathcal F} \sum_{I \::\: I\supsetneq F} 
\mathbb E ^{\sigma } _{I_J} \Delta ^{\sigma }_I f \cdot   \langle H _{\sigma } I_F,  g_F\rangle _w .
\end{equation}
We invoke, for the first time, the \emph{Hilbert-Poisson exchange argument}:  (a) Replace the argument of the Hilbert transform by  a stopping interval. 
(b) Invoke the stopping tree construction to control the sum of martingale differences of $ f$.  (c) Apply interval testing, on the stopping interval. 
 (d) Use the monotonicity principle to dominate the complementary term in terms of a Poisson integral.  (e)  
 Analyze the  Poisson term.  (f) Use quasi-orthogonality, as needed. 
 
 The argument of the Hilbert transform is $ I_F$, the child of $ I$ that contains $ F$. 
	Write $ I_F= F + (I_F-F)$, and use linearity of $ H _{\sigma }$.  Note that by the standard martingale difference identity and 
	the construction of stopping data, 
\begin{equation*}
	\Bigl\lvert \sum_{I \;:\; I\supsetneq  F} \mathbb E ^{\sigma } _{I _{F}} \Delta ^{\sigma } _{I} f  \Bigr\rvert \lesssim \alpha _{f} (F)\,, 
	\qquad F\in \mathcal F \,. 
\end{equation*}
Hence, invoking interval testing, 
\begin{align*}
	\Bigl\lvert \sum_{F\in \mathcal F} 
		\sum_{I \;:\; I\supsetneq  F} \mathbb E ^{\sigma } _{I _{F}} \Delta ^{\sigma } _{I} f \cdot 
		\langle H _{\sigma } F, g_F \rangle_w \Bigr\rvert & \lesssim
		\sum_{F\in \mathcal F}  \alpha _{f} (F) 
		\bigl\lvert \langle H _{\sigma } F, g_F \rangle_w \bigr\rvert
		\\
		& \lesssim \mathscr  H 	\sum_{F\in \mathcal F}  \alpha _{f} (F) \sigma (F) ^{1/2} \lVert g_F\rVert_{w} \,.
\end{align*}
 Quasi-orthogonality bounds this last expression. 

For the second expression, when the argument of the Hilbert transform is $ I_F - F$, 
is the objective of \S \ref{s:fe}.    We have proved 
\begin{equation}\label{e:xTildeB}
\Bigl\lvert 
\sum_{F\in \mathcal F} \sum_{I \::\: I\supsetneq F} 
\mathbb E ^{\sigma } _{I_J} \Delta ^{\sigma }_I f \cdot   \langle H _{\sigma } I_F,  g_F\rangle _w 
\Bigr\rvert
\lesssim \mathscr H \lVert f\rVert_{\sigma } \lVert g\rVert_{w}.   
\end{equation}
 This completes the Hilbert-Poisson exchange argument.

\begin{equation*}
\Bigl\lvert 
\sum_{I \;:\; I\supsetneq  F} \mathbb E ^{\sigma } _{I _{F}} \Delta ^{\sigma } _{I} f \cdot  (I_F - F)
\Bigr\rvert \lesssim \Phi \coloneqq  \sum_{F'\in \mathcal F} \alpha _{f} (F') \cdot F' \,, \qquad F\in \mathcal F \,. 
\end{equation*}
Therefore,   the monotonicity property \eqref{e:mono1} applies, and yields  
\begin{equation}  \label{e:HPx}
\Bigl\lvert 
\sum_{I \;:\; I\supsetneq  F} \mathbb E ^{\sigma } _{I _{F}} \Delta ^{\sigma } _{I} f \cdot \langle H _{\sigma } (I_F - F),  g_F\rangle _{w}
\Bigr\rvert \lesssim \sum_{J\in \mathcal J ^{\ast} (F)}P _{\sigma } (\Phi  \cdot F ^{c} , J) 
\Bigl\langle  \frac {x} {\lvert  J\rvert } , J  \overline  g_F\Bigr\rangle_{w} \,, \qquad F\in \mathcal F\,. 
\end{equation}
Here $ \overline g  _F \coloneqq  \sum_{J\in \mathcal J (F) \::\: J\Subset F} \lvert  \hat g (J) \rvert \cdot h ^{w} _{J} $, so that every term has a positive inner product with $ x$, and $ \mathcal J ^{\ast} (F)$ are the maximal good intervals $ J\Subset F$, 
and $ J \in \mathcal D _{g}$.  (If $ J\not\in \mathcal D _{g}$, then $ \langle g, h ^{w}_J \rangle_w =0$, by choice of $ g$ at the beginning of the proof.) 

The control of the  sum over $ F\in \mathcal F$ of \eqref{e:HPx}

\end{proof}

It remains to control $ B _{\mathcal F, \textup{loc}} ^{\textup{above}} (f,g)$. 
Keeping the quasi-orthogonality argument in mind,   appropriate control on the individual summands is enough to control it.
To describe what has been done, one must note that the functions $ P ^{\sigma }_F f$ 
\emph{need not be bounded}.  But, we are only concerned with averages over intervals where the average will be  bounded. 
In addition this function and $ Q ^{w} _{F}g$ are   well-adapted to the pair of weights $ w, \sigma $. 
The next lemma, combined with the quasi-orthogonality estimate clearly completes the proof of the Theorem.

\begin{lemma}\label{l:BF}[The Local Estimate] For each $ F\in \mathcal F$, there holds 
\begin{equation}  \label{e:BF}
\lvert  B ^{\textup{above}} (P ^{\sigma } _Ff, Q ^{w} _{F}g)\rvert \le \mathscr H \{\alpha _{f} (F)\sigma (F) ^{1/2} +  \lVert  P ^{\sigma } _F f\rVert_{\sigma }\} \lVert Q ^{w} _{F}g\rVert_{w}. 
\end{equation}
\end{lemma}

The first step in the proof of the Lemma above is to invoke the  Hilbert-Poisson exchange argument again, but we will arrive at a Poisson term which falls outside the immediate scope of the  energy inequality.  
Focusing on the argument of the Hilbert transform in \eqref{e:BF}, we write $ I_J = F - (F - I_J)$. 
When the interval is $ F$, and $ J$ is in the Haar support of $ Q ^{w} _{F} g$, notice that the scalar 
\begin{equation*}
\alpha _{f} (F)\varepsilon _J \coloneqq  \sum_{I \::\: J\Subset I \subset F} 
\mathbb E ^{\sigma } _{J} \Delta ^{\sigma } _{I} f   
\end{equation*}
is bounded by an absolute constant, by construction of the stopping intervals. 
Indeed, by the telescoping identity for martingale differences, 
\begin{equation*}
\alpha _{f} (F)\varepsilon _J = \sum_{I \::\:  I ^{-}\subsetneq I \subset F} 
\mathbb E ^{\sigma } _{I ^{-}} \Delta ^{\sigma } _{I} f =  \mathbb E ^{\sigma } _{I_J} f \,,    
\end{equation*}
which is at most $ C \alpha _{f} (F)$, since $ \dot \pi _{\mathcal F} J=F$.   Therefore, we can write 
\begin{align} 
 \Bigl\lvert \sum_{I \::\: I\subset F} \sum_{J \::\: J\Subset I} 
 \mathbb E ^{\sigma } _{J} \Delta ^{\sigma } _{I} f \cdot \langle H _{\sigma } F, \Delta ^{w} _{J} g\rangle  \Bigr\rvert
 & \leq  \alpha _{f} (F)
\Bigl\lvert 
\bigl\langle  H _{\sigma } F ,  \sum_{\substack{J \::\: J\Subset F }}  \varepsilon _J \Delta ^{w} _{J} g \bigr\rangle_{w}
\Bigr\rvert
 \\ \label{e:badMart}
 & \le \mathscr T \alpha _{f} (F) \sigma (F) ^{1/2} 
 \Bigl\lVert    \sum_{\substack{J \::\: J\Subset F }}  \varepsilon _J \Delta ^{w} _{J} g  \Bigr\rVert_{w} \le  \mathscr T \sigma (F) ^{1/2}  \lVert g\rVert_{w} \,. 
\end{align}
This uses only interval testing  and orthogonality of the martingale differences, and it matches the first half of the right hand side of \eqref{e:BF}.  

\smallskip 

When the argument of the Hilbert transform is $ F - I_J$, this is the 
\emph{stopping form}, the last component  of the local part of the problem.  It requires a subtle recursion, described in \S\ref{s:stop}.

\subsection{Context and Discussion}

\subsubsection{}
Many $ T1$ theorems have arguments, sometimes subtle ones, about telescoping sums which collapse. These arguments are systematically handled 
herein with the stopping data, as opposed to more intricate Carleson measure arguments.  

\subsubsection{}
The use of the energy stopping intervals is motivated by the use of the corresponding intervals, under the pivotal condition \eqref{e:pivotal},  in \cites{V,10031596}.  However, the pivotal condition is not necessary for the two weight inequality, while the energy inequality is necessary from the $ A_2$ and interval testing conditions. 

\subsubsection{}
Initial arguments had largely ignored the structure of the pair of functions $ f, g$ in the inner product $ \langle H _{\sigma } f,g \rangle _{w}$, 
instead concentrating on proving an intricate series of Carleson measure type estimates.  
This changed with the argument of \cite{11082319}, which introduced Calder\'on-Zygmund stopping intervals, and the quasi-orthogonality argument into the subject.  It was only then that the role of the global to local step was identified, but not proved.
Stopping data also allows us to avoid the subtle problem of  \emph{absence of canonical paraproducts}.  
Attempts to introduce them induce \emph{ad hoc} elements into the proof.

\subsubsection{}
This section begins with the elementary and familiar Lemma~\ref{l:above}, and then argues that the control of the triangular form $ B ^{\textup{above}} (f,g)$ splits into the `global to local' and the `local' part.  
The authors of \cite{MR3285857} only had the first reduction. And, using the techniques of that paper, could prove 

\begin{priorResults}\label{t:partial}\cite{MR3285857} There holds 
$ \lvert  B ^{\textup{above}} (f,g)  \rvert \lesssim \{\mathscr H + \mathscr B _{\infty }\} \lVert f\rVert_{\sigma } \lVert g\rVert_{w} $, 
where $ \mathscr H = \mathscr A_2 ^{1/2} + \mathscr T$, and the remaining constant is the best constant in 
\begin{equation*}
\lvert  B ^{\textup{above}} (f,g)  \rvert \lesssim \mathscr B _{\infty } \sigma (I_0) ^{1/2}  \lVert g\rVert_{w} \,, 
\end{equation*}
where $ \lvert  f\rvert\le \mathbf 1_{I_0} $, and $ I_0$ is any interval.  The corresponding estimate holds for the dual from $ B ^{\textup{below}} (f,g)$. 
\end{priorResults}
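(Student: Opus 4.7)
The plan is to apply the Calder\'on--Zygmund stopping data construction of this section to $f$ and decompose the triangular form into local and global pieces via \eqref{e:ABOVE}:
\[
B^{\textup{above}}(f,g) = B^{\textup{above}}_{\mathcal{F},\textup{loc}}(f,g) + B^{\textup{above}}_{\mathcal{F},\textup{glob}}(f,g).
\]
The global piece is controlled exactly as in Corollary~\ref{t:aboveCorona}: the Hilbert--Poisson exchange argument reduces it to the functional energy inequality, giving $|B^{\textup{above}}_{\mathcal{F},\textup{glob}}(f,g)| \lesssim \mathscr{H}\|f\|_\sigma\|g\|_w$. This step uses only $\mathscr{A}_2$ and interval testing; the hypothesis on bounded inputs is not needed here.

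For the local piece, I would write $B^{\textup{above}}_{\mathcal{F},\textup{loc}}(f,g) = \sum_{F\in\mathcal{F}} B^{\textup{above}}(P^\sigma_F f, Q^w_F g)$ and aim for the per-$F$ bound
\[
\bigl|B^{\textup{above}}(P^\sigma_F f, Q^w_F g)\bigr| \lesssim \mathscr{B}_\infty \bigl\{\alpha_f(F)\sigma(F)^{1/2} + \|P^\sigma_F f\|_\sigma\bigr\}\|Q^w_F g\|_w.
\]
Summing over $F\in\mathcal{F}$ via the quasi-orthogonality argument \eqref{e:quasi} produces $\lesssim \mathscr{B}_\infty\|f\|_\sigma\|g\|_w$; together with the global estimate this yields the theorem, and the bound for $B^{\textup{below}}$ follows by symmetric (dual) reasoning.

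The hard part will be establishing the per-$F$ bound from the hypothesis, which is stated only for pointwise bounded $f$. By the CZ construction $(C\alpha_f(F))^{-1}P^\sigma_F f$ is uniform with respect to the $\mathcal{F}$-children $\mathcal{S}_F$ of $F$ in the sense of Definition~\ref{d:BF}---its $\sigma$-averages on intervals in the corona are bounded by $1$---but on each $\mathcal{F}$-child $F'$ it is the constant $\mathbb{E}^\sigma_{F'}f - \mathbb{E}^\sigma_F f$, which the CZ construction permits to be arbitrarily large, so the hypothesis does not apply verbatim. My remedy is to split $P^\sigma_F f = u_F + v_F$, where $u_F := P^\sigma_F f \cdot \mathbf{1}_{F\setminus \bigcup_{F'\in \mathcal{S}_F} F'}$ is $\sigma$-a.e.\ bounded by $C\alpha_f(F)$ (using the CZ stopping rule together with Lebesgue differentiation in the corona), and $v_F := \sum_{F'\in\mathcal{S}_F} c_{F'}\mathbf{1}_{F'}$ collects the constants on the children. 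Linearity of $B^{\textup{above}}$ feeds $u_F/(C\alpha_f(F))$ into the hypothesis and yields the $\alpha_f(F)\sigma(F)^{1/2}$ summand. The complementary contribution $B^{\textup{above}}(v_F, Q^w_F g)$ is handled by exploiting the disjointness of $\mathcal{S}_F$, the $\sigma$-Carleson property \eqref{e:sCarleson} of $\mathcal{F}$, and orthogonality of the $\Delta^w_J g$; this accounts for the extra $\|P^\sigma_F f\|_\sigma$ summand and is the crux where the techniques of \cite{12014319} are decisive.
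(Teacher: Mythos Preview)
Your global piece is handled correctly by Corollary~\ref{t:aboveCorona}, and the Lebesgue-differentiation argument that $u_F$ is pointwise bounded by $C\alpha_f(F)$ on the $\sigma$-leaves of the corona is sound, so the $\mathscr B_\infty$ hypothesis does apply to $u_F$ as you claim.

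The gap is $v_F$, and it is not a detail to be filled in later. After your split, $v_F=\sum_{F'\in\mathcal S_F}c_{F'}\mathbf 1_{F'}$ is itself \emph{uniform} in the sense of Definition~\ref{d:BF}: it is constant on each child, and its $\sigma$-averages over corona intervals satisfy $\lvert\mathbb E^\sigma_I v_F\rvert=\lvert\mathbb E^\sigma_I(P^\sigma_F f)-\mathbb E^\sigma_I u_F\rvert\lesssim\alpha_f(F)$. But $v_F$ is \emph{not} pointwise bounded, since the constants $c_{F'}=\mathbb E^\sigma_{F'}f-\mathbb E^\sigma_F f$ can be arbitrarily large relative to $\alpha_f(F)$ (this is exactly why stopping occurred). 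So you are left needing to bound $B^{\textup{above}}_F$ on a uniform-but-unbounded input, which is precisely the hypothesis of Lemma~\ref{l:BF} and not that of Theorem~E: the split has reproduced the difficulty rather than reduced it.

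Concretely, applying the $\mathscr B_\infty$ hypothesis to each $\mathbf 1_{F'}$ only yields $\mathscr B_\infty\,\sigma(F)^{1/2}$, because the Haar support of $\mathbf 1_{F'}$ within $F$ consists of corona intervals $I\supsetneq F'$, so one cannot shrink the base interval to $F'$; summing $\lvert c_{F'}\rvert\,\sigma(F)^{1/2}$ over the children is hopeless. Nor can you replace $P^\sigma_F f$ by a bounded surrogate with identical corona Haar coefficients, since those coefficients are determined by the $c_{F'}$ themselves. The ingredients you cite---disjointness of $\mathcal S_F$, the $\sigma$-Carleson property \eqref{e:sCarleson}, and orthogonality in $g$---do not close this estimate on their own, and invoking ``the techniques of \cite{12014319}'' without specifying which technique is not an argument. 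The paper itself, immediately after stating the theorem, flags exactly this obstruction: partial control of $B^{\textup{above}}$ on pointwise-bounded inputs does not obviously transfer to the merely average-bounded inputs that the corona decomposition produces.
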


This is a powerful Theorem, strongly suggesting that the $ A_2$ condition and  testing the Hilbert transform over bounded functions is sufficient for the $ L ^2 $ boundedness of $ H _{\sigma }$.  
But, there is no obvious way to deduce such a result from the Theorem above.  
Phrasing things differently, it can be very difficult to translate partial information about the triangular form $ B ^{\textup{above}} (f,g)$ to information about $ \langle H _{\sigma } f, g \rangle _{w}$, a potentially serious obstacle if a richer theory of two weight inequalities for singular integrals is to be developed.  

The \emph{parallel corona} was introduced in  \cite{parallel} to surmount this obstacle.  With it, 
the result that could be proved  the  first real variable characterization of the two weight inequality for any continuous singular integral. 

\begin{priorResults}\label{t:infty}[Lacey Sawyer Shen Uriarte-Tuero \cite{parallel}] 
There holds $ \mathscr N \simeq \mathscr A_2 ^{1/2} + \mathscr T _{\infty }$, where 
the latter constant  is the best constant in the inequalities below, uniform over all intervals $ I$, and  Borel subsets $ E\subset I$.  
\begin{gather*}
\int _{I} \lvert  H _{\sigma } \mathbf 1_{E}\rvert ^2 \; d w \le \mathscr T _{\infty } ^2 \sigma (I) \,, 
\qquad 
\int _{I} \lvert  H _{w } \mathbf 1_{E}\rvert ^2 \; d \sigma  \le \mathscr T _{\infty } ^2 w (I) \,.  
\end{gather*}
(One tests the Hilbert transform on $ \mathbf 1_{E} $, but only the weight of the interval $ I$ appears on the right.) 
\end{priorResults}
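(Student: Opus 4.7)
The necessity direction is immediate: $\mathscr A_2^{1/2} \lesssim \mathscr N$ is Proposition~\ref{p:A2}, and for $\mathscr T_\infty \leq \mathscr N$, one applies the norm inequality with $f = \mathbf 1_E$ and notes $\lVert \mathbf 1_E\rVert_\sigma = \sigma(E)^{1/2} \leq \sigma(I)^{1/2}$. So the content lies in the bound $\mathscr N \lesssim \mathscr A_2^{1/2} + \mathscr T_\infty$.

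My plan is to route the sufficiency through Theorem~\ref{t:partial}. Combined with the random-grid reduction Lemma~\ref{l:goodBad} and the triangular-form splitting Lemma~\ref{l:above}, that theorem yields $\mathscr N \lesssim \mathscr A_2^{1/2} + \mathscr T + \mathscr B_\infty$. Since $\mathscr T \leq \mathscr T_\infty$ trivially, everything comes down to the key estimate $\mathscr B_\infty \lesssim \mathscr A_2^{1/2} + \mathscr T_\infty$; that is, for every $f$ with $\lvert f\rvert \leq \mathbf 1_{I_0}$ and every $g$ of mean zero in $L^2(I_0,w)$,
\begin{equation*}
\lvert B^{\textup{above}}(f,g)\rvert \lesssim \{\mathscr A_2^{1/2} + \mathscr T_\infty\}\, \sigma(I_0)^{1/2}\, \lVert g\rVert_w.
\end{equation*}

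To prove this I would introduce the \emph{parallel corona}: build stopping data independently on the $\sigma$-side and $w$-side. Since $\lvert f\rvert \leq 1$, no Calder\'on--Zygmund refinement of $f$ is required; one only installs the energy stopping intervals of Definition~\ref{d:energy}, which regulate the degeneracies of the pair $(\sigma,w)$. For $g$, take standard Calder\'on--Zygmund stopping data on $w$-averages of $\lvert g\rvert$ as in Lemma~\ref{l:CZ}. Decompose $B^{\textup{above}}(f,g)$ according to pairs $(F_\sigma, F_w)$ of stopping intervals from the two trees, separating the diagonal piece (the two intervals agree) from global pieces (one strictly contains the other) and from incomparable cross pieces. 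Inside a diagonal cell $F$, perform a Hilbert--Poisson exchange as in Corollary~\ref{t:aboveCorona}: the argument of $H_\sigma$ reduces to $\mathbf 1_E$, where $E \subset F$ is the union of a subfamily of $F$'s stopping children and is in general \emph{not} an interval. This is precisely the set-up for which $\mathscr T_\infty$, rather than $\mathscr T$, is required. A final quasi-orthogonality appeal via \eqref{e:quasi} produces the desired $\sigma(I_0)^{1/2} \lVert g\rVert_w$. The off-diagonal global terms are handled by the monotonicity principle (Lemma~\ref{mono}), the energy inequality, and $A_2$, precisely as in the argument for Theorem~\ref{t:above}.

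The main obstacle will be the interaction between the two stopping trees: since the trees for $f$ and $g$ are built on different measures, their corona cells can be badly misaligned, and one must bookkeep carefully to show that every cross term whose support straddles the boundary between two siblings of one tree can be absorbed either into the $A_2$ term (through a monotonicity-principle reduction to a Poisson estimate) or into the testing term (by peeling off a subset of a stopping interval against which $\mathscr T_\infty$ may be invoked). This double combinatorics is the raison d'\^etre of the parallel corona, and the payoff is that $\mathscr T$ never has to be used on its own: it is always $\mathscr T_\infty$, applied to unions of stopping children, that closes the estimate.
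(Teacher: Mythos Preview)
The paper does not prove this statement; it is cited as a prior result from \cite{parallel}. The only information the paper offers about the argument is the remark following the statement: the parallel corona \emph{delays the application of Lemma~\ref{l:above}}, and a ``special function theory specific to Haar expansions for non-doubling measures'' is a critical ingredient. Your proposal should be measured against those two hints.

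On the first point your architecture is inverted. You invoke Theorem~\ref{t:partial}, which already sits downstream of Lemma~\ref{l:above}, and then propose to run the parallel corona on $B^{\textup{above}}(f,g)$ to bound $\mathscr B_\infty$. The paper says the opposite: in \cite{parallel} the two-sided corona is applied to $\langle H_\sigma f,g\rangle_w$ \emph{before} the triangular splitting. Indeed, the paragraph just before Theorem~\ref{t:partial} states explicitly that ``there is no obvious way to deduce such a result from the Theorem above,'' i.e., routing through $\mathscr B_\infty$ is precisely the obstacle the parallel corona was designed to circumvent, not the vehicle for the deduction.

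On the second point, your sketch is silent. You write that inside a diagonal cell the argument of $H_\sigma$ ``reduces to $\mathbf 1_E$'' for $E$ a union of stopping children, and that the off-diagonal pieces go ``precisely as in the argument for Theorem~\ref{t:above}.'' But the local part of the argument for Theorem~\ref{t:above} contains the stopping form \eqref{e:stop}, whose control occupies all of \S\ref{s:stop}; if you are appealing to that machinery you have already proved $\mathscr N\lesssim \mathscr A_2^{1/2}+\mathscr T$, which is stronger than the present claim and renders the parallel corona superfluous. The whole point of \cite{parallel} is that $\mathscr T_\infty$ together with the double corona allows one to \emph{avoid} the stopping-form recursion, and the mechanism for that avoidance---the ``special function theory'' for non-doubling Haar expansions the paper names---is exactly what your outline does not supply. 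Without it, the step ``the argument of $H_\sigma$ reduces to $\mathbf 1_E$'' is an assertion, not an argument: the sum defining $B^{\textup{above}}$ has the argument $I_J$ varying with both coordinates of the pair, and collapsing it to a single set requires more than a telescoping identity.
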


The parallel corona delays the application of Lemma~\ref{l:above}, this feature combined with a  special function theory specific to Haar expansions for non-doubling measures, were the critical ingredients. 

The parallel corona has been used to give short transparent proofs of two weight inequalities for singular integrals. See the last page of 
Hyt\"onen's survey \cite{MR3204859} and the article of Tanaka \cite{MR3232035}.

\subsubsection{}

It is natural to wonder if there are any $ L ^{p}$ analogs of the main Theorem.  
We have some clues as to how this might work, in the more complicated testing conditions of Vuorinen \cites{14122127,150405759}.  One could see that the global to local reduction would work under variants 
of these more complicated testing conditions.  The control of the local term is however a heavily Hilbertian 
argument, and so potentially very difficult to extend to an $ L ^{p}$-setting.

\section{The Remaining Part of the Global Estimate} \label{s:fe}

The last  part of the global-to-local part of the arugment is this Lemma. 

\begin{lemma}\label{l:remaining} Using the notation of \S \ref{s:global}, there holds 
\begin{equation}\label{e:remaining}
\Bigl\lvert \sum_{F\in \mathcal F} 
\sum_{I \;:\; I\supsetneq  F} \mathbb E ^{\sigma } _{I _{F}} \Delta ^{\sigma } _{I} f \cdot 
\langle H _{\sigma } (I_ F \setminus F), g_F \rangle_w \Bigr\rvert
		\lesssim 
\mathscr H \lVert f\rVert _{\sigma } \lVert g\rVert _{w}. 
		\end{equation}
\end{lemma}

Our method of proof has these elements. 
(a) Use monotonicity to pass to a positive operator. 
(b) Identify the inequality needed as an instance of a 
two weight inequality, but not for general functions, only 
one fixed function, and a derived weight $ \mu = \mu _{\sigma , w , f}$ that is 
well-adapted to the function;  
(c) Invoke the \emph{parallel corona} method to prove the desired two weight inequality. 
Along the way, we will identify simplifications of the general case of  a two weight inequality 
for a positive operator. 

\medskip 
Begin the proof by observing that 
\begin{equation*}
\Bigl\lvert 
\sum_{I \;:\; I\supsetneq  F} \mathbb E ^{\sigma } _{I _{F}} \Delta ^{\sigma } _{I} f \cdot  (I_F - F)
\Bigr\rvert \lesssim \Phi \coloneqq  \sum_{\substack{F'\in \mathcal F\\ F'\supsetneq F }} \alpha _{f} (F') \cdot \overline F'_F \,, \qquad F\in \mathcal F \,. 
\end{equation*}
where $\overline F'_F = F' \setminus F''$, with $ F''$ being the $ \mathcal F$-child of $ F'$ that contains $ F$. 
Also, by monotonicity,  the left-side of \eqref{e:remaining} is at most 
\begin{equation*}
 \sum_{F\in \mathcal F} \sum_{J ^{\ast} \in \mathcal J ^{\ast} (F)}  
P \Bigl(  \sum_{\substack{F'\in \mathcal F\\ F'\supsetneq F }} \alpha _{f} (F') \cdot \overline F'_F , J ^{\ast}    \Bigr)
 \sum_{ \substack{J \;:\; J \subset J ^{\ast} \\ \dot \pi _{\mathcal F}   J=  J ^{\ast} } }  
\Bigl\langle  \frac x {\lvert  J ^{\ast} \rvert } , h ^{\sigma } _{J} \Bigr\rangle _{\sigma } \lvert  \langle g, h ^{w} _{J} \rangle_w \rvert.   
\end{equation*}

The desired estimate is a consequence of new $ L ^2 $-estimate for the modified Poisson operator 
\begin{equation} \label{e:tilde}
\tilde P f (x,t) = \int \frac {f (y)} { t ^2 + (x-y) ^2 } \; dy
\end{equation}
which is extended to $ \tilde P f (I) = P f( x_I, \lvert  I\rvert )$.   
The relevant measure on the upper half-plane is given by 
\begin{equation} \label{e:mu}
\mu := \sum_{F\in \mathcal F} \sum_{J ^{\ast} \in \mathcal J ^{\ast} (F)}  
\delta _{ x_ {J ^{\ast} }, \lvert  J ^{\ast} \rvert } 
\sum_{ \substack{J \;:\; J \subset J ^{\ast} \\ \dot \pi _{\mathcal F}   J=  J ^{\ast} } }  \langle x, h ^{w} _{J} \rangle_w ^2 . 
\end{equation}
Finally, the estimate we need is as below, in which we have eliminated  the sum of $ J\in \mathcal J ^{\ast} $.    
\begin{equation}\label{e:n1}
\Biggl\lVert 
\sum_{\substack{F'\in \mathcal F \\F \in \textup{Ch} _{\mathcal F} (F')}} 
\alpha _{f} (F')   \sum_{J  \in \mathcal J ^{\ast} (F)}  
\tilde P ( \overline F'_F  , J   ) \cdot Q _{J   } 
\Biggr\rVert _{\mu } \lesssim \mathscr H \lVert f\rVert_ \sigma .  
\end{equation}
Here, $ \textup{Ch} _{\mathcal F} (F')$ is the collection of $ \mathcal F$-children of $ F'$, 
and $ Q_J = J \times [0, \lvert  K\rvert] $ is the Carleson box over interval $J$.

This last inequality is in fact \emph{universal}, in that we could fix the measuer $ \mu $, 
replace  $ f$ by an arbitrary function, and the  inequality is still true. But this fact is not 
needed.  And, we can use the fact that $ f$ and the measure $ \mu $ are related through 
the stopping data, to simplify the proof of \eqref{e:n1}.  

Our knowledge of two weight estimates suggest that the inequality \eqref{e:n1} is easiest to prove 
by duality, and using the joint stopping data on $ f$ and the dual function $ \gamma  \in L ^2 (\mathbb R ^2 _+, \mu )$, 
a technique refered to as the \emph{parallel corona.} 
We will reduce the inequality \eqref{e:n1} to two testing inequalities. One will be a reformulation of the energy 
inequality and the other will be a consequence of the $ A_2 $ condition. 

By duality,  the inequality we  establish is 
\begin{equation}\label{e:n2}
\sum_{\substack{F'\in \mathcal F \\F \in \textup{Ch} _{\mathcal F} (F')}} 
\alpha _{f} (F')   \sum_{J  \in \mathcal J ^{\ast} (F)}  
\tilde P ( \overline F'_F  , J   )  \int _{Q_J} \gamma \; d \mu  \lesssim 
\mathscr H \lVert f\rVert _{\sigma } \lVert \gamma \rVert _{\mu }.  
\end{equation}
Here, $ \gamma $ is a non-negative function, supported  on a Carleson cube $ Q _{J_0} $, 
where $ J_0 \in \mathcal J ^{\ast} := \bigcup _{F\in \mathcal F} \mathcal J ^{\ast} (F) $. 
We construct stopping intervals $ \mathcal G$ for  $ \gamma $, by initializing $ \mathcal G = \{J_0\}$, 
and setting $ \alpha _{J_0} (g) = \mathbb E _{Q _{J_0}} ^{\mu } \gamma $.  
In the recursive step, for minimal $ J\in \mathcal G$, we add to $ \mathcal G$ the maximal 
subintervals $ J' \subsetneq J$ with $ J'\in \mathcal J ^{\ast} $ such that 
$ \alpha _{g} (J') :=  \mathbb E _{Q _{J'}} ^{\mu } \gamma > 10 \alpha _{g} (J)$.  
We let $ \pi _{\mathcal G} I$ be the minimal element of $ G$ that contains $ I$.

Now, in the sum \eqref{e:n2}, a given interval $ J$ that occurs satisfies 
either $ \pi _{\mathcal G} J \subset   F'$ or $ F'\subsetneq \pi _{\mathcal G} J$.  
(Keep in mind that there could be many intervals $ G\in \mathcal G$ that lie between $ J$ and $ F'$.)
This division splits the sum into two terms, the first is the sum over $ F'\in \mathcal F$ of 
\begin{gather}\label{e:n3}
\alpha _{f} (F')  
\sum_{\substack{F \in \textup{Ch} _{\mathcal F} (F')}} 
 \sum_{\substack{J  \in \mathcal J ^{\ast} (F)\\ \pi _{\mathcal G} J \subset F' }}  
\tilde P ( \overline F'_F  , J   )  \int _{Q_J} \gamma \; d \mu  . 
\end{gather}
And the second is sum over $ G\in \mathcal G$ of 
\begin{equation}
\label{e:n4}
\sum_{\substack{F'\in \mathcal F \\ \pi _{\mathcal G} F'=G \\ F'\neq G}}
\alpha _{f} (F')  \sum_{\substack{F \in \textup{Ch} _{\mathcal F} (F')}} 
 \sum_{\substack{J  \in \mathcal J ^{\ast} (F)\\ \pi _{\mathcal G} J=G }}  
\tilde P ( \overline F'_F  , J   )  \int _{Q_J} \gamma \; d \mu  . 
\end{equation}
The first testing inequality is this inequality, uniform over $ F'\in \mathcal F$.  
\begin{equation}\label{e:n33}
\eqref{e:n3} \lesssim \mathscr H 
\alpha _{f} (F') \sigma (F') ^{1/2} 
\Biggl[  
\sum_{\substack{G\in \mathcal G\\ \pi _{\mathcal F} G=F' }}  \alpha _{\gamma } (G) ^2 \mu (Q_G)  
\Biggr] ^{1/2} . 
\end{equation}
That this completes the bound of \eqref{e:n3} is an immediate consequence of quasi-orthogonality. 
By Cauchy-Schwarz applied to the right of \eqref{e:n33}, note that 
\begin{equation*}
\sum_{F\in \mathcal F} \alpha _{f} (F') ^2 \sigma (F')  \lesssim \lVert f\rVert _{\sigma } ^2 , 
\end{equation*}
and as well, by the construction of the stopping data for $ \gamma $, 
\begin{equation*}
\sum_{F\in \mathcal F} \sum_{\substack{G\in \mathcal G\\ \pi _{\mathcal F} G=F' }}  \alpha _{\gamma } (G) ^2 \mu (Q_G)   
\lesssim \lVert \gamma \rVert _{\mu } ^2 . 
\end{equation*}
This completes half of the proof of \eqref{e:n2}.  The other half follows from the second testing inequality: 
Uniformly in $ G\in \mathcal G$, there holds 
\begin{equation}\label{e:n44}
\eqref{e:n4} \lesssim \mathscr H 
\alpha _{\gamma } (G) \mu (Q_G) ^{1/2} 
\Biggl[
\sum_{\substack{F'\in \mathcal F \\ \pi _{\mathcal G} F'=G}}
\alpha _{f} (F')  ^2 \sigma (F') 
\Biggr] ^{1/2} . 
\end{equation}
It is bounded again by quasi-orthogonality.   It remains to prove the two testing inequalities \eqref{e:n33} and \eqref{e:n44}. 

\begin{proof}[Proof of \eqref{e:n33}] 
This is just the energy inequality.  By construction 
\begin{align*}
\eqref{e:n3} \lesssim \alpha _{f} (F')  
\sum_{F\in \textup{Ch} _{\mathcal F} (F)}   \sum_{\substack{J\in \mathcal J ^{\ast} (F) \\ \pi _{\mathcal G} J \subset F' }} 
\alpha _{\gamma } ( \pi _{\mathcal G} J)  
\tilde P  _{\sigma }( F' \setminus F,  J)  \mu ( \{( x_J , \lvert  J\rvert ) \}) . 
\end{align*}
Of course we use Cauchy-Schwarz on the right above.
Recall the definition of $ \mu $, to see that this inequality 
\begin{align*}
\sum_{F\in \textup{Ch} _{\mathcal F} (F)}   \sum_{\substack{J\in \mathcal J ^{\ast} (F) \\ \pi _{\mathcal G} J \subset F' }}  
&
\tilde P  _{\sigma }( F' \setminus F,  J) ^2 \mu ( \{( x_J , \lvert  J\rvert ) \}) 
\\& \leq 
\sum_{F\in \textup{Ch} _{\mathcal F} (F)}   \sum_{\substack{J\in \mathcal J ^{\ast} (F) \\ \pi _{\mathcal G} J \subset F' }}  
\tilde P  _{\sigma }( F' \setminus F,  J) ^2 
\sum_{J' \;:\; J'\subset J} \langle x, h ^{w} _{J'} \rangle_w ^2 
\lesssim \mathscr H ^2  \sigma (F') 
\end{align*}
is simply a reformulation of the energy inequality \eqref{e:energy}.  

The other part of the application of Cauchy-Schwarz is 
\begin{align*}
\sum_{F\in \textup{Ch} _{\mathcal F} (F)}   \sum_{\substack{J\in \mathcal J ^{\ast} (F) \\ \pi _{\mathcal G} J \subset F' }} 
&
\alpha _{\gamma } ( \pi _{\mathcal G} J)   ^2\mu ( \{( x_J , \lvert  J\rvert ) \}) 
\lesssim \sum_{\substack{G\in \mathcal G\\ \pi _{\mathcal F} G \in  \{ F' \} \cup \textup{Ch} _{\mathcal F} (F')}}   
\alpha _{\gamma } ( \pi _{\mathcal G} J)  ^2 \mu (  Q _{G}).  
\end{align*}
This completes the proof of \eqref{e:n33}. 
\end{proof}

\begin{proof}[Proof of \eqref{e:n44}]  
In \eqref{e:n4}, we dominate $ \int _{Q_J} \gamma \; d \mu  \leq 10 \mathbb E ^{\mu } _{G} \gamma \cdot  \mu (Q_J)$, 
and then express \eqref{e:n4} using the dual to the operator $ \tilde P $ defined in \eqref{e:tilde}.  
We have 
\begin{align}
\eqref{e:n4} & \lesssim 
\mathbb E ^{\mu } _{G} \gamma  \times 
 \sum_{\substack{F'\in \mathcal F \\ \pi _{\mathcal G} F'=G \\ F'\neq G}}
\alpha _{f} (F')  \sum_{\substack{F \in \textup{Ch} _{\mathcal F} (F')}} 
 \sum_{\substack{J  \in \mathcal J ^{\ast} (F)\\ \pi _{\mathcal G} J=G }}  
\tilde P ( \overline F'_F  , J   )  \mu  (Q_J) 
\\
&= 
\mathbb E ^{\mu } _{G} \gamma  \times  \int _{G} 
  \sum_{\substack{F'\in \mathcal F \\ \pi _{\mathcal G} F'=G \\ F'\neq G}}
 \sum_{\substack{F \in \textup{Ch} _{\mathcal F} (F')}} \alpha _{f} (F')  \cdot F' _{F}
 \sum_{\substack{J  \in \mathcal J ^{\ast} (F)\\ \pi _{\mathcal G} J=G }}    \tilde P ^{\ast}_ \mu  (Q_J) \; d \sigma  
\end{align}
Apply Cauchy-Schwartz in the variable $ F'$, and $ L ^2 (\sigma )$.  One of the terms that result is 
\begin{align*}
 \sum_{\substack{F'\in \mathcal F \\ \pi _{\mathcal G} F'=G}} \alpha _{f} (F') ^2 \sigma  (F') . 
\end{align*}
Compare to the right side of \eqref{e:n44}. 
The other term is the following   inequality, holding uniformly in $ G\in \mathcal G$: 
\begin{equation}  \label{e:n6}
\int _{G} 
  \sum_{\substack{F'\in \mathcal F \\ \pi _{\mathcal G} F'=G \\ F'\neq G}}
\Biggl[
\sum_{\substack{F \in \textup{Ch} _{\mathcal F} (F')}} 
 \sum_{\substack{J  \in \mathcal J ^{\ast} (F)\\ \pi _{\mathcal G} J=G }}   
  F'_F   \cdot \tilde P ^{\ast}_ \mu  (Q_J) 
\Biggr] ^2 \; d \sigma \lesssim \mathscr A_2 \mu (Q_G).  
\end{equation}
As the inequlaity shows, this follows from the $ A_2$ condition.  
  
An obstacle to a proof is that the sets  $ Q_J$ overlap.  This is addressed with the 
definition  $ W _{J} ^{j } = J\times  (2 ^{- j -1}, 2 ^{- j }]$, for $ j\geq 0$.  These sets 
are disjoint in $ j$ and $ J$.   We will then show that for each $ F'\in \mathcal F$, 
\begin{equation}\label{e:n7}
\int _{ \mathbb R } 
\Biggl[
\sum_{\substack{F \in \textup{Ch} _{\mathcal F} (F')}} 
 \sum_{\substack{J  \in \mathcal J ^{\ast} (F)\\ \pi _{\mathcal G} J=G }}   
  F'_F   \cdot \tilde P ^{\ast}_ \mu  (W ^{j}_J) 
\Biggr] ^2 \; d \sigma \lesssim  2 ^{-j}\mathscr A_2   
\sum_{\substack{F \in \textup{Ch} _{\mathcal F} (F')}} 
 \sum_{\substack{J  \in \mathcal J ^{\ast} (F)\\ \pi _{\mathcal G} J=G }}   \mu  (W ^{j}_J) .  
\end{equation}
This easily implies \eqref{e:n6}.

Two additional summing variables are convenient.  
For integers $ k\geq r$, we restrict the sum to $ J\in \mathcal J ^{\ast} (F)$ with $ 2 ^{k} \lvert  J\rvert= \lvert  F\rvert  $. 
And, for integers $ \ell \geq k (1- \epsilon )$, we further require that 
\begin{equation}  \label{e:n7}
 2 ^{\ell -1} \lvert  J\rvert \leq \textup{dist} (J, \partial F) < 2 ^{\ell } \lvert  J\rvert.   
\end{equation}
By goodness, $ \ell \ge k (1 - \epsilon )$, but there is in general no other condition that we have here.  
Then, we prove this estimate, which  is \eqref{e:n7}, with these two additional restrictions on $ J$.  
Uniformly in $ F' \in \mathcal F$, 
\begin{equation}\label{e:n8}
\int _{ \mathbb R } 
\Biggl[
\sum_{\substack{F \in \textup{Ch} _{\mathcal F} (F')}} 
 \sum_{\substack{J  \in \mathcal J ^{\ast} (F)\\ \pi _{\mathcal G} J=G \\  2 ^{k} \lvert  J\rvert= \lvert  F\rvert ,\  
 \textup{\eqref{e:n7} holds }}}    
  F'_F   \cdot \tilde P ^{\ast}_ \mu  (W ^{j}_J) 
\Biggr] ^2 \; d \sigma \lesssim  2 ^{-j - k - \ell }\mathscr A_2   
\sum_{\substack{F \in \textup{Ch} _{\mathcal F} (F')}} 
 \sum_{\substack{J  \in \mathcal J ^{\ast} (F)\\ \pi _{\mathcal G} J=G }}   \mu  (W ^{j}_J) .  
\end{equation}

In \eqref{e:n8}, there are at most $ 2 ^{\ell }$ intervals $ J$. We can therefore pass the square inside the 
sum, at cost of a factor of $ 2 ^{\ell }$.  But, 
\begin{align}
\int _{ F'_F }   \tilde P ^{\ast}_ \mu  (W ^{j}_J)  (x) ^2 \; d \sigma (x) 
& \lesssim 
\mu (W ^{j}_J)  \int _{ F'_F } \int _{W ^{j}_J}  \frac 1 { [ y_2 ^2 + \lvert  x - y_1\rvert ^2 ] ^2  } \; d \mu (y) \, d \sigma (x) 
\\
& \lesssim \frac { \mu (W ^{j}_J)  ^2 } { 2 ^{2 \ell } \lvert  J\rvert ^2  }  \tilde P _{\sigma } (F'_F, J) 
\\ \label{e:n9}
& \lesssim 2 ^{-2 \ell - 2 j}   \mu (W ^{j}_J)    w (J) \tilde P _{\sigma } (F'_F, J)  
\lesssim  2 ^{-2 \ell - 2 j} \mathscr A_2   \mu (W ^{j}_J)  .  
\end{align}
Here, we have used Cauchy-Schwartz, followed by the estimate below, which holds for $ x \in F'_F$,  
\begin{equation*}
 \int _{W ^{j}_J}  \frac 1 { [ y_2 ^2 + \lvert  x - y_1\rvert ^2 ] ^2  } \; d \mu (y) 
 \lesssim \frac {\mu (W ^{j}_J)  } { 2 ^{2 \ell } \lvert  J\rvert ^2  ( \lvert  J\rvert ^2 + \lvert  x- x_J\rvert ^2   ) } 
\end{equation*}
Then, besides disjointness, the sets $ W ^{j}_J$ enjoy the estimate $ \mu (W ^{j}_J)   \lesssim 2 ^{-2j} \lvert  J\rvert ^2 w (J) $, 
which follows from the definition of $ \mu $ in \eqref{e:mu}, and the estimate $ \lvert  x, h ^{w} _J\rvert ^2 
\leq \lvert  J\rvert ^2 w (J) $.  Finally, we just appeal to the $ \mathscr A_2$ condition.  
The bound in \eqref{e:n9} is multiplied by $ 2 ^{ \ell }$, to prove \eqref{e:n8}. This finishes the proof.

\end{proof}

\subsection{Context and Discussion}

The inequality \eqref{e:n1} is universal.  This was first proved in \cite{MR3285857}, 
in the case tthat the weights did not share a common point mass.  It was down by appealing to the 
Sawyer theorem \cite{MR930072} on two weight inequalities for the Poisson operator.   
This technique does not allow common point masses, however. 
Addressing this, Hyt\"onen \cite{13120843} found a clever way to use dyadic approximates to the 
`Poisson operator with holes,' by using dyadic approximates to an arbitrary interval, and 
prinving a novel dyadic two weight inequality. 

The proof herein does not attempt to prove the \emph{universal} form of \eqref{e:n1}. 
Indeed, this inequality is not needed.  Indeed, the close relationship between the function $ f$, 
and the derived measure $ \mu $ in \eqref{e:mu} permits a short self-contained proof.

\section{The Stopping Form} \label{s:stop}

The last step in the proof of Theorem~\ref{t:above}, hence in the proof of the main theorem,
is to show that the local inequality \eqref{e:BF} holds. 
Using the discussion  at the end of the previous section, this amounts to controlling the \emph{stopping form}.  
Given an interval $F\in \mathcal F$, the stopping form is 
\begin{equation}  \label{e:stop}
B ^{\textup{stop}} _{F} (f,g) \coloneqq  
 \sum_{I \::\:  \pi _{\mathcal F} I=F} \sum_{J \::\: J\Subset I_J, \dot \pi _{\mathcal F} J=F} 
 \mathbb E ^{\sigma } _{I_J} \Delta ^{\sigma }_I f \cdot \langle H _{\sigma } (I_0 - I_J) ,  \Delta ^{w} _{J} g\rangle _{w}\,. 
\end{equation}

\begin{lemma}\label{l:stop<}  There holds for each $ F\in \mathcal F$,  
\begin{equation}\label{e:stop<}
\lvert  B ^{\textup{stop}} _{I_0} (f,g) \rvert \lesssim \mathscr H \lVert P ^{\sigma } _{F}f\rVert_{\sigma } \lVert Q ^{w} _{F} g\rVert_{w} \,.  
\end{equation}
\end{lemma}

The stopping form arises naturally in any proof of a $ T1$ theorem using Haar or other bases.  
In the non-homogeneous case, or in the $ Tb$ setting, where (adapted) Haar functions are important tools, it frequently appears in more or less this form.  
Regardless of how it arises, the stopping form is treated as a error, in that it is bounded by some simple geometric series, obtaining decay as e.\thinspace g.\thinspace the ratio $ \lvert  J\rvert/\lvert  I\rvert  $ is held fixed. 
(See for instance \cite{10031596}*{(7.16)}.) 

These sorts of arguments, however, implicitly require some additional hypotheses, such as the  weights being  mutually $ A _{\infty }$. 
Of course, the two weights above can be mutually singular. There is no \emph{a priori}  control of the stopping form in terms of simple parameters  like $ \lvert  J\rvert/\lvert  I\rvert  $, even supplemented by  additional pigeonholing of various parameters.

Our method is inspired by proofs of Carleson's Theorem on Fourier series \cites{lacey-thiele-carleson,fefferman,MR0199631}, and has one particular precedent in the current setting, a much simpler bound for the stopping form in \cite{parallel}

\subsection{Admissible Pairs}
We can assume that $ f = P ^{\sigma } _{F} f$ and $ g = Q ^{w} _{F} g$. For all 
pair of intervals $ J \Subset I \subset F$ that we need to consider, we have  $ \dot \pi _{\mathcal F} J=F$, and hence 
by the Energy Stopping condition, there holds 
\begin{equation}\label{e:Estop}
P (\sigma (F - I_J), I_J) E (w, I_J) ^2 w (I_J) \leq C \sigma (I_J).  
\end{equation}
For if not, by monotonicity \eqref{e:mono0}, we would have that the interval $ I_J$ would be an energy stopping interval, 
hence $ I_J\in \mathcal F$, and $ \dot \pi _{\mathcal F} J= I_J$.  It is this condition that is our starting point for the recursion.

A range of decompositions  of the stopping form necessitate a somewhat heavy notation that we introduce here. 
The individual summands in the stopping form involve four distinct intervals, namely $ F, I, I_J$, and $ J$.  
The interval $ F$ will not change in this argument, and the pair $ (I,J)$ determine $ I_J$. 
Subsequent decompositions are  easiest to phrase as actions on collections $ \mathcal Q$ of pairs of intervals 
$
Q= (Q_1, Q_2)  $ with $F\supset Q_1\Supset Q_2  
$. 
(The letter $ P $ is already taken for the Poisson integral.) 
And we consider the bilinear forms 
\begin{equation*}
B _{\mathcal Q} (f,g) \coloneqq  \sum_{Q\in \mathcal Q}    \mathbb E ^{\sigma } _{(Q_1)_{Q_2}}  \Delta ^{\sigma }_ {Q_1} f \cdot \langle H _{\sigma } (F - (Q_1)_{Q_2}) ,  \Delta ^{w} _{Q_2} g\rangle _{w} \,. 
\end{equation*}
We will have the standing assumption that for all collections $ \mathcal Q$ that we consider are \emph{admissible}.

\begin{definition}\label{d:admiss} A collection of pairs $ \mathcal Q$ is \emph{admissible} if it meets these criteria. 
For any $ Q = (Q_1, Q_2) \in \mathcal Q$, 
\begin{enumerate}
\item    $Q_2\Subset Q_1\subset F$,  and both $ Q_1 $ and $ Q_2$ are good. 
 \item (convexity in $ Q_1$) If  $ Q''\in \mathcal Q$ with $ Q''_2=Q_2$ and  $ Q_1'' \subset I\subset Q_1$, with $ I$ good, then there is a $ Q' \in \mathcal Q$ with $ Q '_1= I$ and $ Q_2'=Q_2$.  
\end{enumerate}
The first  property is self-explanatory. The second property is convexity in $ Q_1$,  subject to goodness, holding $ Q_2$ fixed, which is used in the estimates 
on the stopping form which conclude the argument. 
A third property is described below. 

We exclusively use the notation $ \mathcal Q _{k}$, $ k=1,2$ for the collection of intervals $ \bigcup \{ Q_k \::\: Q\in \mathcal Q\}$, not  counting multiplicity.  Similarly, set $ \tilde {\mathcal Q}_1 \coloneqq  \{ (Q _{1}) _{Q_2} \::\: Q\in \mathcal Q\}$, and $ \tilde Q_1 \coloneqq  (Q_1)_{Q_2}$.  
\begin{enumerate}
\item[(3)]  Every interval $ Q_2\in \mathcal Q_2$ satisfies $ \dot\pi _{\mathcal F} Q_2 =F$
(And so,  every $  \tilde {\mathcal Q}_1$ has $ \mathcal F$-parent $ F$.)  
\end{enumerate}

\end{definition}

The last requirement comes from the assumption that the  functions $ f$ and $ g$ be adapted to $ \mathcal F _{\textup{energy}} (F)$. 
We will be appealing to different Hilbertian arguments below, so we prefer to make this  an assumption about the pairs rather than the functions $ f, g$. 
The Hilbert space will be the space of good functions in $ L ^2 (\sigma )$ and $ L ^2 (w)$.  

Typically, one only ever needs goodness of the \emph{small} interval, in this case $ Q_2$. We will use the term $ \textup{size} (\mathcal Q)$ below, 
in which it will be apparent that goodness of the intervals $Q_1 $ will be helpful.  Namely, at this point goodness is used to 
as in the monotonicity principle, to estimate off-diagonal inner products involving the Hilbert transform by Poisson averages, and to 
regularize Poisson averages. Both are made more explicit in \S\ref{s:UB}.

The stopping form is obtained with the admissible collection of pairs given by 
\begin{equation} \label{e:Q0}
\mathcal Q_0 =\{ (I, J) \::\:    J\Subset I\subset F\,,  \textup{ $ I$ and $ J$ are good},\    \dot \pi _{\mathcal F} J=F \} \,. 
\end{equation}
There holds $ B ^{\textup{stop}} _{F} (f,g) = B _{\mathcal Q_0} (f,g)$.  

\medskip 

There is a very important notion of the size of $ \mathcal Q$.  
\begin{equation}\label{e:size}
\textup{size} (\mathcal Q) ^2  \coloneqq  
\sup _{ K \in \tilde {\mathcal Q}_1 \cup \mathcal Q_2}   \frac {\mathsf P(\sigma  (F -K), K) ^2 } {\sigma (K) \lvert  K\rvert ^2  } 
\sum_{J \in \mathcal Q _2 \::\: J\subset K}  \langle  x , h ^{w} _{J} \rangle_{w} ^2 \,. 
\end{equation}
For admissible $ \mathcal Q$, there holds $ \textup{size} (\mathcal Q) \lesssim \mathscr H$, as follows \eqref{e:Estop}.

More definitions follow.  
Set the norm $  \mathbf B _{\mathcal Q}$ of the bilinear form $ \mathcal Q$ to be the best constant in the inequality 
\begin{equation*}
\lvert  B _{\mathcal Q} (f,g)\rvert \le \mathbf B _{\mathcal Q} \lVert f\rVert_{\sigma } \lVert g\rVert_{w } \,.  
\end{equation*}
Thus, our goal is show that $ \mathbf B _{\mathcal Q} \lesssim \textup{size} (\mathcal Q)$ for admissible $ \mathcal Q$, but we will only be able to do this directly in the case that the pairs $ (Q_1, Q_2)$ are weakly decoupled in a collection $ \mathcal Q$. 
The relevant decoupling is precisely described in \S\ref{s:UB}.

Say that collections of pairs $ \mathcal Q ^{j}$, for $ j\in \mathbb N $, are \emph{mutually orthogonal} if  on the one hand, the collections 
$ (\mathcal Q ^{j}) _{2}$, of second coordinates of the pairs, are pairwise disjoint,  and on the other, that the collections$ \widetilde {(\mathcal Q ^{j})}_1$ are pairwise disjoint. 
The concept has to be different in the first and second coordinates of the pairs, due to the different role of the intervals $ \tilde Q_1$ and $ Q_2$, 
which comes up again in the next paragraph. 

The meaning of  mutual orthogonality is best expressed through the norm of the associated bilinear forms. 
Under the assumption that $ B _{\mathcal Q} = \sum_{j\in \mathbb N } B _{ \mathcal Q ^{j}}$, 
and that the $ \{\mathcal Q ^{j} \::\: j\in \mathbb N \} $ are mutually orthogonal, the following essential inequality holds. 
\begin{equation}\label{e:subadd}
\mathbf B _{\mathcal Q} \le  \sqrt 2\sup _{j\in \mathbb N } \mathbf B _{\mathcal Q ^{j}} \,. 
\end{equation}
Indeed, for $ j\in \mathbb N $, let $ \Pi ^{w} _{j}$ be the projection onto the linear span of the Haar functions $\{ h ^{w} _{J} \::\: J\in \mathcal Q ^{j}_2\}$, and use a similar notation for $ \Pi ^{\sigma  }_j$.  We then have the two inequalities 
\begin{equation*}
\sum_{j \in \mathbb N } \lVert \Pi ^{w}_j g \rVert_{w} ^2 \le \lVert g\rVert_{w} ^2 \,, \qquad 
\sum_{j\in \mathbb N } \lVert \Pi ^{\sigma }_j f \rVert_{\sigma } ^2 \le  2\lVert f\rVert_{\sigma} ^2 \,. 
 \end{equation*}
Since a given interval $ I$ can be in two collections $ \mathcal Q^j_1$, we have the factor of $ 2$ in the second inequality.  
Therefore, we have 
\begin{align*}
\lvert  B _{\mathcal Q} (f,g)\rvert  & \le \sum_{j \in \mathbb N }  \lvert  B _{\mathcal Q ^{j}} (f,g) \rvert 
\\
&=  \sum_{j\in \mathbb N }\lvert   B _{\mathcal Q ^{j}} (\Pi ^{\sigma }_jf, \Pi ^{w} _{j}g) \rvert 
\\
&\le \sum_{j\in \mathbb N } \mathbf B _{\mathcal Q ^{j}}  \lVert \Pi ^{\sigma }_j f \rVert_{\sigma }
 \lVert \Pi ^{w}_j g \rVert_{w} 
 \le   \sqrt 2 \sup _{j\in \mathbb N } \mathbf B _{\mathcal Q ^{j}}   \cdot \lVert f\rVert_{\sigma   } \lVert g\rVert_{w} \,. 
\end{align*}
This proves \eqref{e:subadd}.

\subsection{The Recursive Argument}

This is the essence of the matter.

\begin{lemma}\label{l:Decompose}[Size Lemma] 
An admissible  collection of pairs $ \mathcal Q$  can be partitioned into  collections 
$ \mathcal Q ^{\textup{large}}$ and admissible  $ \mathcal Q ^{\textup{small}} _{t}$, 
for $  t \in \mathbb N $ such that 
\begin{gather}\label{e:BQ<}
\mathbf B _{\mathcal Q} \le 
C\textup{size} (\mathcal Q) + (1+ \sqrt 2) 
\sup _t
\mathbf B _{ \mathcal Q ^{\textup{small}} _{t}} \,,
\\ \label{e:small2}
\text{and} \quad \sup _{t\in \mathbb N }\textup{size} ( \mathcal Q ^{\textup{small}} _{t}) \le \tfrac 14   \textup{size} (\mathcal Q) \,.
\end{gather}
Here, $ C>0 $ is an absolute constant. 
\end{lemma}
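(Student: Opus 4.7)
My plan is a stopping-time decomposition keyed to the size functional \eqref{e:size}. Let $\mathcal{K}^{*}$ denote the collection of maximal intervals $K \in \tilde{\mathcal{Q}}_{1} \cup \mathcal{Q}_{2}$ for which
\begin{equation*}
\frac{P(\sigma(I_{0}-K),K)^{2}}{\sigma(K)\,\lvert K\rvert^{2}}\sum_{J \in \mathcal{Q}_{2}\,:\, J \subset K} \langle x, h^{w}_{J}\rangle_{w}^{2} \;>\; \tfrac{1}{16}\,\textup{size}(\mathcal{Q})^{2},
\end{equation*}
and enumerate $\mathcal{K}^{*} = \{K_{t}\}_{t \in \mathbb{N}}$. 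Define $\mathcal{Q}^{\textup{small}}_{t} := \{Q \in \mathcal{Q}: \tilde{Q}_{1} \subsetneq K_{t}\}$ and $\mathcal{Q}^{\textup{large}} := \mathcal{Q} \setminus \bigsqcup_{t}\mathcal{Q}^{\textup{small}}_{t}$. Any interval $K' \in \widetilde{(\mathcal{Q}^{\textup{small}}_{t})}_{1} \cup (\mathcal{Q}^{\textup{small}}_{t})_{2}$ is strictly contained in $K_{t}$, so by the maximality of $K_{t}$ the local quantity at $K'$ is at most $\tfrac{1}{16}\,\textup{size}(\mathcal{Q})^{2}$, giving $\textup{size}(\mathcal{Q}^{\textup{small}}_{t}) \le \tfrac{1}{4}\,\textup{size}(\mathcal{Q})$, which is \eqref{e:small2}. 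Each $\mathcal{Q}^{\textup{small}}_{t}$ inherits admissibility from $\mathcal{Q}$: conditions (1) and (3) of Definition~\ref{d:admiss} are clear, and the convexity condition (2) is preserved by the upper-bound constraint $\tilde{Q}_{1} \subsetneq K_{t}$.

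Maximality also forces the $\{K_{t}\}$ to be pairwise disjoint, so that the families $(\mathcal{Q}^{\textup{small}}_{t})_{2}$ and $\widetilde{(\mathcal{Q}^{\textup{small}}_{t})}_{1}$ are each pairwise disjoint in $t$. Thus $\{\mathcal{Q}^{\textup{small}}_{t}\}$ is mutually orthogonal in the sense of \S\ref{s:stop}, and \eqref{e:subadd} gives $\mathbf{B}_{\bigsqcup_{t}\mathcal{Q}^{\textup{small}}_{t}} \le \sqrt{2}\,\sup_{t}\mathbf{B}_{\mathcal{Q}^{\textup{small}}_{t}}$.

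The main obstacle is the direct estimate $\mathbf{B}_{\mathcal{Q}^{\textup{large}}} \lesssim \textup{size}(\mathcal{Q})$, up to one extra copy of $\sup_{t}\mathbf{B}_{\mathcal{Q}^{\textup{small}}_{t}}$. For each $Q \in \mathcal{Q}^{\textup{large}}$, the weight $\sigma \cdot (I_{0}-\tilde{Q}_{1})$ is supported off the good interval $Q_{2} \Subset \tilde{Q}_{1}$, and the monotonicity principle \eqref{e:mono1} delivers
\begin{equation*}
\lvert \langle H_{\sigma}(I_{0}-\tilde{Q}_{1}),\Delta^{w}_{Q_{2}}g\rangle_{w} \rvert \lesssim P(\sigma(I_{0}-\tilde{Q}_{1}),Q_{2})\cdot \frac{\lvert \langle x, h^{w}_{Q_{2}}\rangle_{w}\rvert}{\lvert Q_{2}\rvert}\cdot \lvert \hat{g}(Q_{2})\rvert.
\end{equation*}
I would group the resulting double sum by the top interval $\tilde{Q}_{1}$: convexity condition (2) of admissibility says that for each fixed $\tilde{Q}_{1}$ the intervals $Q_{1}$ contributing to it form a contiguous block in the dyadic tree, so the sum $\sum_{Q_{1}} \mathbb{E}^{\sigma}_{\tilde{Q}_{1}}\Delta^{\sigma}_{Q_{1}}f$ collapses via telescoping to a bounded multiple of an average of $f$. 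A Cauchy--Schwarz weighted by $\sigma(\tilde{Q}_{1})$ then splits the sum into (i) a square-function estimate on $f$, bounded by $\lVert f\rVert_{\sigma}^{2}$, and (ii) precisely the quantity whose supremum over $K = \tilde{Q}_{1}$ defines $\textup{size}(\mathcal{Q})^{2}$, tested against $\lvert \hat{g}(Q_{2})\rvert^{2}$ and hence producing $\textup{size}(\mathcal{Q})^{2}\lVert g\rVert_{w}^{2}$ by orthogonality of the Haar system. The delicacy, and the reason this is the main obstacle, is that the weights $\sigma, w$ admit no $A_{\infty}$-type control whatsoever: every factor in the Cauchy--Schwarz must be matched to the size functional with surgical precision, and both the goodness of $Q_{1}, Q_{2}$ and the full strength of admissibility are spent in arranging this.

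Finally, since $\mathcal{Q}$ is the disjoint union of $\mathcal{Q}^{\textup{large}}$ and the $\mathcal{Q}^{\textup{small}}_{t}$'s,
\begin{equation*}
\mathbf{B}_{\mathcal{Q}} \le \mathbf{B}_{\mathcal{Q}^{\textup{large}}} + \sqrt{2}\,\sup_{t}\mathbf{B}_{\mathcal{Q}^{\textup{small}}_{t}} \le C\,\textup{size}(\mathcal{Q}) + (1+\sqrt{2})\,\sup_{t}\mathbf{B}_{\mathcal{Q}^{\textup{small}}_{t}},
\end{equation*}
where the additional copy of $\sup_{t}\mathbf{B}_{\mathcal{Q}^{\textup{small}}_{t}}$ accounts for the boundary pairs with $\tilde{Q}_{1}=K_{t}$, which are naturally split off from the large estimate via one more application of the orthogonality \eqref{e:subadd}. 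This yields \eqref{e:BQ<} and completes the strategy.
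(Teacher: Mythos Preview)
Your stopping-time instinct is right, but the construction as written has a logical error and, even repaired, misses the main difficulty.

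\textbf{The maximality error.} You select the \emph{maximal} intervals $K$ in $\tilde{\mathcal Q}_1\cup\mathcal Q_2$ at which the size quantity exceeds $\tfrac1{16}\,\textup{size}(\mathcal Q)^2$, and then claim that for $K'\subsetneq K_t$ the quantity at $K'$ is at most $\tfrac1{16}\,\textup{size}(\mathcal Q)^2$. That inference is backwards: maximality of $K_t$ among intervals satisfying the condition only tells you that intervals \emph{strictly containing} $K_t$ fail it, not that intervals inside $K_t$ do. With your construction the size bound \eqref{e:small2} is simply false. The paper begins instead with the \emph{minimal} intervals satisfying \eqref{e:Kdef}, for which the downward inference is valid.

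\textbf{The real obstruction.} Even with minimal intervals, a single stopping generation is not enough. Your sketch of the large estimate (``group by $\tilde Q_1$, telescope, Cauchy--Schwarz'') founders on the fact that the intervals $\tilde Q_1$ in $\mathcal Q^{\textup{large}}$ are heavily nested, not disjoint: after Cauchy--Schwarz you must control $\sum_{\tilde Q_1}\sigma(\tilde Q_1)$, and no Carleson condition is available. The coupling between $Q_2$ and the argument $I_0-\tilde Q_1$ is precisely what makes the stopping form hard, and it does not dissolve under a single telescoping step. The paper instead builds a full tree $\mathcal L$ by a recursion (see \eqref{e:Lconstruct}): after the initial minimal intervals, one repeatedly adds minimal intervals $S$ at which the $\mu$-mass of $T_S$ has grown by a factor $\rho=\tfrac{17}{16}$ over its $\mathcal L$-children. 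This manufactures the geometric decay \eqref{e:ddecay} along the $\mathcal L$-tree, which is exactly what is spent in Lemma~\ref{l:beta} to sum the ``large'' pieces. The large collection is then split by how $\tilde Q_1$ and $Q_2$ sit relative to $\mathcal L$ (same $\mathcal L$-parent, different $\mathcal L$-parents at depth $t$, $Q_2$ comparable but $\tilde Q_1$ not), and each case is fed to one of the decoupling Lemmas~\ref{l:holes}, \ref{l:Holes}, \ref{l:equal}, whose hypotheses demand an intermediate family $\mathcal S$ of pairwise disjoint intervals separating $Q_2$ from $\tilde Q_1$. Your one-generation $\mathcal K^*$ supplies no such separation for the bulk of the large pairs.
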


The point of the lemma is that all of the constituent parts are better in some way, and that the right hand side of \eqref{e:BQ<} involves  a favorable  supremum. We can quickly prove the main result of this section. 

\begin{proof}[Proof of Lemma~\ref{l:stop<}] 
The stopping form of this Lemma is of the form $ B _{\mathcal Q} (f,g)$ for admissible choice of $ \mathcal Q$, 
with $ \textup{size} (\mathcal Q) \le C\mathscr H$, as we have noted in \eqref{e:Q0}.  
Define 
\begin{equation*}
\zeta (\lambda ) \coloneqq  \sup \{  \mathbf B _{\mathcal Q} \::\: \textup{size} (\mathcal Q) \le C \lambda \mathscr H \}\,, \qquad 0< \lambda \le 1\,,
\end{equation*}
where $ C>0$ is a sufficiently large, but absolute constant, and the supremum is over admissible choices of $ \mathcal Q$.  
We are free to assume that $ \mathcal Q_1$ and $ \mathcal Q_2$ are further constrained to be in some fixed, but large, collection of intervals $ \mathcal I$. 
Then, it is clear that $ \zeta (\lambda )$ is finite, for all $ 0< \lambda \le 1$.
Because of the way the constant $ \mathscr H$ enters into the definition, it remains to show that $ \zeta (1)$ admits an absolute upper bound, independent of how $ \mathcal I$ is chosen. 

\smallskip 

It is the consequence of Lemma~\ref{l:Decompose} that there holds 
\begin{align}
\zeta (\lambda) &\le C \lambda  +  (1+ \sqrt 2)  \zeta ( \lambda /4 )\,, \qquad 0 < \lambda \leq 1 \,. 
\end{align}
Iterating this inequality beginning at $ \lambda  =1$ gives us 
\begin{align*}
\zeta (1 )& \le C +   (1+\sqrt 2)\zeta (1/4) \le \cdots \le C \sum_{t=0} ^{\infty } \bigl[\tfrac {1+ \sqrt 2} 4\bigr]  ^{t} \le  4C  \,. 
\end{align*}
So we have established an absolute upper  bound on $ \zeta (1)$.
\end{proof}

\subsection{Proof of Lemma~\ref{l:Decompose}}

  We restate the conclusion of Lemma~\ref{l:Decompose} to more closely follow the  line of argument to follow. 
The collection $ \mathcal Q$ can be partitioned into two collections $ \mathcal Q ^{\textup{large}}$ and $ \mathcal Q ^{\textup{small}}$ 
such that 
\begin{enumerate}
\item  $ \mathbf  B _{\mathcal Q ^{\textup{large}}}   \lesssim   \tau    $, where $ \tau \coloneqq  \textup{size} (\mathcal Q) $. 

\item $ \mathcal Q ^{\textup{small}} =\mathcal Q ^{\textup{small}}_1  \cup \mathcal Q ^{\textup{small}}_{2} $.  

\item The collection $\mathcal Q ^{\textup{small}}_1 $ is admissible, 
 and $ \textup{size} (\mathcal Q ^{\textup{small}} _1) 
\le \frac \tau 4  $.

\item For a collection of  dyadic intervals $ \mathcal L$,   the collection $ \mathcal Q ^{\textup{small}}_2$ is the union of mutually orthogonal admissible  collections $  \mathcal Q ^{\textup{small}}_{2,L}$, for $ L\in \mathcal L $,  with 
\begin{equation*}
 \textup{size} (\mathcal Q ^{\textup{small}} _{2,L} )  
\le  \tfrac \tau 4  \,, \qquad L\in \mathcal L\,. 
\end{equation*}

\end{enumerate}
Thus, we have by inequality \eqref{e:subadd} for mutually orthogonal collections, 
\begin{align*} 
\mathbf B _{\mathcal Q} & \le \mathbf B _{\mathcal Q ^{\textup{large}}}
+ \mathbf B _{\mathcal Q ^{\textup{small}}_1 \cup  \mathcal Q ^{\textup{small}}_2 }
\\ & \le 
 \mathbf B _{\mathcal Q ^{\textup{large}}} 
+ \mathbf B _{\mathcal Q ^{\textup{small}}_1 } + \mathbf B _{ \mathcal Q ^{\textup{small}}_2 }
\\& \le 
C \tau + (1+ \sqrt 2) \max\bigl\{\mathbf B _{\mathcal Q ^{\textup{small}}_1}, 
\sup _{ L\in \mathcal L } 
\mathbf B _{\mathcal Q ^{\textup{small}}_{2, L}}  \bigr\}  \,. 
\end{align*}
This, with the properties of size listed above prove Lemma~\ref{l:Decompose} as stated, after a trivial re-indexing. 

\bigskip 
In a manner similar to the argument of \S \ref{s:global}, there is   an induced measure on the upper half-plane that 
is relevant to our considerations.  This time it is given by 
\begin{equation*}
\mu _{\mathcal Q} = \mu \coloneqq  \sum_{J \in \mathcal Q_2\::\: J\subset F} \langle x, h ^{w} _{J} \rangle_w ^2 \delta _{ (x_J, \lvert  J\rvert )}\,, \qquad \textup{$ x_J$ is the center of $ J$.} 
\end{equation*}
The tent over $ L$ is the triangular region $ T_L \coloneqq  \{  (x,y) \::\:  \lvert  x-x_L\rvert \le \lvert  L\rvert - y  \}$, so that 
\begin{equation*}
\mu (T_L) =  \sum_{J \in \mathcal Q_2 \::\: J\subset L} 
\langle   x , h ^{w } _{J} \rangle_{w} ^2 \,. 
\end{equation*}
Observe that 
\begin{equation*}
\textup{size} (\mathcal Q) ^2  = 
\sup _{ K \in \tilde {\mathcal Q}_1 \cup \mathcal Q_2}   \frac {\mathsf P(\sigma  (F -K), K) ^2 } {\sigma (K) \lvert  K\rvert ^2  } 
\mu (T_K).  
\end{equation*}

All else flows from this construction of  a  subset  $ \mathcal L $ of dyadic subintervals of $ F$.  
The initial intervals in $ \mathcal L$ are  the minimal  intervals $ L  \in \tilde {\mathcal Q}_1 \cup \mathcal Q_2$  such that 
\begin{equation} \label{e:Kdef}
\frac {\mathsf P(\sigma  (F- L) , L) ^2   } {\lvert  L\rvert ^2  }
\mu (T_L) 
\ge \frac {\tau ^2 } {16}  \sigma (L)\,.  
\end{equation}
Since $ \textup{size} (\mathcal Q)=\tau $, there are such intervals $ L$. 

Initialize $ \mathcal S $ (for `stock' or `supply') to be all the dyadic intervals in $   \tilde {\mathcal Q}_1 \cup \mathcal Q_2 $ which 
strictly contain some interval in $ \mathcal L$. 
In the recursive step,   let $\mathcal L'$ be the minimal elements $ S\in \mathcal S$ such that 
\begin{equation}\label{e:Lconstruct}
\mu (T_S)
\ge  \rho    \sum_{\substack{L\in \mathcal L \::\: L\subset S\\ \textup{$ L$ is maximal} }} \mu (T_L)\,, \qquad  \rho =  \tfrac  {17} {16} \,. 
\end{equation}
(The inequality would be trivial if $ \rho =1$.)  
If $ \mathcal L'$ is empty the recursion stops. 
Otherwise,  update $ \mathcal L \leftarrow \mathcal L \cup \mathcal L'$, 
and 
$
\mathcal S \leftarrow \{ K\in \mathcal S \::\:  K \not\subset L\ \forall L\in \mathcal L\} 
$.
See Figure~\ref{f:tents}.

Once the recursion stops, report the collection $ \mathcal L$.   It has this crucial  property: 
For $ L\in \mathcal L$, and integers $ t\ge 1$, 
\begin{equation} \label{e:ddecay}
\sum_{ L' \::\: \pi _{\mathcal L} ^{t} L'=L}  \mu (T _{L'})
\le \rho ^{-t} \mu (T _{L}) \,. 
\end{equation}
Indeed, in the case of $ t=1$,  is  a criteria for membership in $ \mathcal L$, and a simple induction proves the statement for all $ t\ge 1$.

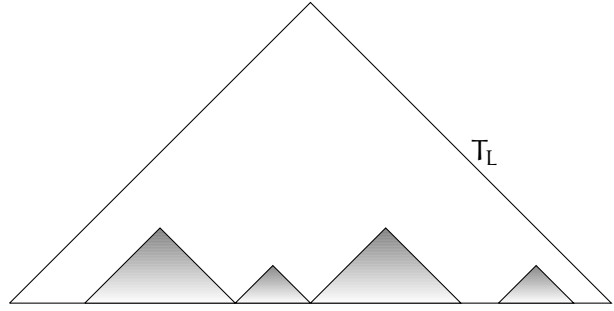
\begin{figure}
 \begin{tikzpicture}
 \draw (0,0) -- (4,4) -- (8,0)  node[right,midway] () {$ T_L$} -- (0,0); 
 \shadedraw (1,0) -- (2,1) -- (3,0) -- (1,0); 
  \shadedraw (3,0) -- (3.5,.5) -- (4,0) -- (3,0); 
   \shadedraw (4,0) -- (5,1) -- (6,0) -- (4,0);  
   \shadedraw (6.5,0) -- (7,.5) -- (7.5,0) -- (6.5,0);  
 \end{tikzpicture}
\caption{The shaded smaller tents have been selected, and $ T_L$ is the minimal tent with $ \mu (T_L)$ larger than $ \rho $ times 
the $ \mu $-measure of the shaded tents.}
\label{f:tents}
\end{figure}

The decomposition of $ \mathcal Q$ is based upon the relation of the pairs to the collection $ \mathcal L$, namely a pair $ \tilde Q_1 ,Q_2$ can (a) both have the same parent in $ \mathcal L$; (b) have distinct parents in $ \mathcal L$; (c) $ Q_2$ can have a parent in $ \mathcal L$, but  not $ \tilde Q_1$; and  (d) $ Q_2$ does not have a parent in $ \mathcal L$.  

A particularly vexing aspect of the stopping form is the linkage between the martingale difference on $ g$,  which is given by $ J$,  and the argument of the Hilbert transform, $ F- I_J$.  
The  `large' collections constructed below will, in a certain way,  decouple the   $ J$ and the $ F-I_J$, enough so that norm of the associated bilinear form can be estimated by the size of $ \mathcal Q$. 

In the `small' collections, there is however no decoupling, but critically,  the size of the collections is smaller, and  we only have to estimate 
the maximal operator norm among the small collections.

\subsubsection*{Pairs comparable to $ \mathcal L$}
Define 
\begin{equation*}
\mathcal Q   _{L,t} \coloneqq  
\{  Q\in \mathcal Q \::\:  \pi _{\mathcal L} \tilde Q_1 = \pi ^{t} _{\mathcal L} Q_2=L \} \,, \qquad L\in \mathcal L\,,\ t\in \mathbb N \,.  
\end{equation*}
These are admissible collections, as the convexity property in $ Q_1$, holding $ Q_2$ constant, is clearly inherited from $ \mathcal Q$. 
Now, observe that for each $ t\in \mathbb N $, the collections $ \{ \mathcal Q  _{L,t} \::\: L\in \mathcal L\}$ 
are mutually orthogonal.   
The collection of intervals $ ( \mathcal Q  _{L,t}) _{2}$ are obviously disjoint in $ L\in \mathcal L$, with $ t\in \mathbb N $ held fixed.  
And, since membership in these collections is determined in the first coordinate by the interval $ \tilde  Q_1$, and the two children of $ Q_1$ can have two different parents in $ \mathcal L$, a given interval $ I$ can appear in at most two collections  $ (\widetilde{\mathcal Q  _{L,t}}) _{1}$, 
as $ L\in \mathcal L$ varies, and $ t \in \mathbb N $  held fixed.

Define $  \mathcal Q ^{\textup{small}}_1 $ to be  the union over $ L\in \mathcal L$ of the collections 
\begin{equation*}
  \mathcal Q ^{\textup{small}} _{L,1} \coloneqq  \{ Q \in  \mathcal Q _ { L , 1}    \::\: \tilde Q_1 \neq L\} \,. 
\end{equation*}
Note in particular that we have only  allowed $ t=1$ above, and $ \tilde Q_1 =L$ is not allowed. 
For these collections, we need only verify that 

\begin{lemma}\label{l:1small} There holds 
\begin{equation} \label{e:small1<}
\textup{size}  (\mathcal Q  ^{\textup{small}}_ {L,1} ) \le \sqrt { (\rho -1)}  \cdot \tau =\frac \tau {4}  \,, \qquad L\in \mathcal L \,,\ t \in \mathbb N \,.  
\end{equation}
\end{lemma}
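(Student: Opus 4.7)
Fix $K \in \widetilde{(\mathcal{Q}^{\textup{small}}_{L,1})_1} \cup (\mathcal{Q}^{\textup{small}}_{L,1})_2$. The selection rules $\pi_{\mathcal{L}}\tilde{Q}_1 = L$, $\pi^1_{\mathcal{L}} Q_2 = L$, and $\tilde{Q}_1 \neq L$ force $\tilde{Q}_1, Q_2 \subsetneq L$, so in particular $K \subsetneq L$ and $K \notin \mathcal{L}$. Let $\{L_i^K\}_i$ be the maximal $\mathcal{L}$-intervals contained in $K$. A quick check gives $\pi^1_{\mathcal{L}} L_i^K = L$ for every $i$: any $\mathcal{L}$-interval $L'$ with $L_i^K \subsetneq L' \subsetneq L$ would either sit inside $K$ (contradicting the maximality of $L_i^K$) or strictly contain $K$ (contradicting $\pi_{\mathcal{L}} K = L$).

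The combinatorial key is that every $J \in (\mathcal{Q}^{\textup{small}}_{L,1})_2$ has $\pi^1_{\mathcal{L}} J = L$, hence is not strictly contained in any $\mathcal{L}$-interval properly below $L$, in particular not strictly inside any $L_i^K$. With the convention $\mu_{\mathcal{Q}}(T_K) = \sum_{J \in \mathcal{Q}_2,\, J\subseteq K} \langle x, h^w_J\rangle_w^2$, this separates the $J$'s contributing to $\mu_{\mathcal{Q}^{\textup{small}}_{L,1}}(T_K)$ into those sitting in $T_K \setminus \bigcup_i T_{L_i^K}$ and a boundary part coming from the possibility $J = L_i^K$, yielding
\[
\mu_{\mathcal{Q}^{\textup{small}}_{L,1}}(T_K) \;\leq\; \mu_{\mathcal{Q}}(T_K) - \sum_i \mu_{\mathcal{Q}}(T_{L_i^K}) + \sum_i \langle x, h^w_{L_i^K}\rangle_w^2\,\mathbf 1_{L_i^K \in (\mathcal{Q}^{\textup{small}}_{L,1})_2}.
\]
I then split into two cases for $K$. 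If $K$ fails \eqref{e:Kdef}, then $\mu_{\mathcal{Q}^{\textup{small}}_{L,1}}(T_K) \leq \mu_{\mathcal{Q}}(T_K) < \tau^2 \sigma(K)|K|^2/(16\,\mathsf P(\sigma(I_0-K),K)^2)$, bounding the size at $K$ by $\tau/4$ directly. If instead $K$ satisfies \eqref{e:Kdef}, then by minimality of the initial layer of $\mathcal{L}$ the interval $K$ strictly contains an initial $\mathcal{L}$-element, so $K$ belongs to the stock $\mathcal{S}$ at every recursion stage up to the moment its $\mathcal{L}$-parent $L$ was added; the minimality of $L$ within $\mathcal{S}$ at that moment rules out $K \subsetneq L$ as a competitor, so $K$ must have failed \eqref{e:Lconstruct}. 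Because all subsequently added $\mathcal{L}$-intervals lie outside $L$, the maximal $\mathcal{L}$-intervals inside $K$ at that moment agree with those at the end, giving
\[
\mu_{\mathcal{Q}}(T_K) \;<\; \rho \sum_i \mu_{\mathcal{Q}}(T_{L_i^K}).
\]
Combining with the decomposition above yields $\mu_{\mathcal{Q}^{\textup{small}}_{L,1}}(T_K) \leq (\rho - 1)\mu_{\mathcal{Q}}(T_K)$, and hence size at $K$ at most $\sqrt{\rho - 1}\,\tau = \tau/4$, matching the first case.

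Taking the supremum over $K$ proves \eqref{e:small1<}. Admissibility of $\mathcal{Q}^{\textup{small}}_{L,1}$ is inherited from $\mathcal{Q}$ with no trouble: any good interpolating interval $I$ arising in the convexity-in-$Q_1$ clause sits between $\tilde{Q}_1'' \subset \tilde{Q}_1$ (both with $\pi_{\mathcal{L}} = L$) and satisfies $I \supsetneq Q_2$, so $\pi_{\mathcal{L}} I \supseteq \pi^1_{\mathcal{L}} Q_2 = L$ from below while $\pi_{\mathcal{L}} I \subseteq L$ from above, forcing $\pi_{\mathcal{L}} I = L$. The main technical obstacle I anticipate is the boundary term in the decomposition: it arises precisely when some $L_i^K$ is itself paired, as a $Q_2$, with a good $\tilde{Q}_1$ lying strictly between $L_i^K$ and $L$. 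The cleanest resolution is to observe that each such $\langle x, h^w_{L_i^K}\rangle_w^2$ is a single summand of the corresponding $\mu_{\mathcal{Q}}(T_{L_i^K})$ and can be absorbed — either by passing to the strict-containment version of the tent at the excluded intervals, or by noting that the constant $\rho = 17/16$ leaves enough slack that the extra $\sum_i \langle x, h^w_{L_i^K}\rangle_w^2$, bounded individually via the hypothesis $\textup{size}(\mathcal{Q}) \leq \tau$ applied at each $L_i^K$, produces only a lower-order correction to the factor $\sqrt{\rho - 1}$.
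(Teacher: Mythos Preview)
Your argument is essentially the paper's: the same two-case split on whether $K$ contains an $\mathcal L$-interval, the same appeal to the failure of \eqref{e:Kdef} in the first case and the failure of \eqref{e:Lconstruct} in the second, and the same conclusion $\mu_{\mathcal Q^{\textup{small}}_{L,1}}(T_K)\le(\rho-1)\,\mu(T_K)$. You are in fact more careful than the paper in one respect: you notice the boundary contribution $\sum_i \langle x,h^{w}_{L_i^K}\rangle_w^{2}\,\mathbf 1_{L_i^K\in(\mathcal Q^{\textup{small}}_{L,1})_2}$, which arises precisely when some $Q_2$ in the small collection happens to equal an $\mathcal L$-child of $L$. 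The paper's proof asserts that $K\notin\mathcal L$ ``by construction'' and proceeds as though every contributing $J$ avoids all $T_{L_i^K}$, without addressing this edge case.

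Where your proof falls short is in the resolution of that boundary term. Neither of your two suggestions is actually carried out: ``passing to the strict-containment tent'' does not by itself recover the needed inequality, and the ``slack in $\rho$'' idea is not made quantitative --- bounding $\langle x,h^{w}_{L_i^K}\rangle_w^{2}$ via $\textup{size}(\mathcal Q)\le\tau$ at $L_i^K$ gives control in terms of $\sigma(L_i^K)$ and $\mathsf P(\sigma(I_0-L_i^K),L_i^K)$, not in terms of the corresponding quantities at $K$, so there is no automatic ``lower-order correction.'' A clean way to close this is to observe that if $Q_2\in\mathcal L$ then $Q_2$ is an $\mathcal L$-child of $L$ and the pair $(Q_1,Q_2)$ can be moved into $\mathcal Q^{\textup{large}}_{L,1}$ (or handled alongside the $t\ge 2$ collections) without harm to those estimates; alternatively one can simply exclude such pairs from $\mathcal Q^{\textup{small}}_{L,1}$ at the outset. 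Either way the fix is a bookkeeping adjustment, not a new idea --- but as written your last paragraph is a promissory note rather than a proof.
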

\begin{proof}
An interval $ K\in \widetilde {(\mathcal Q  ^{\textup{small}}_ {L,1}) }_1\cup \mathcal Q_2 $ is not in $ \mathcal L$, by construction. 
Suppose that $ K$ does not contain any interval in $ \mathcal L$.  By the selection of the initial intervals in $ \mathcal L$, 
the minimal intervals in $ \tilde {\mathcal Q}_1 \cup \mathcal Q_2$ which satisfy \eqref{e:Kdef}, 
it follows that the interval $ K$ must fail  \eqref{e:Kdef}.  And so we are done.  

Thus, $ K$ contains some element of $ \mathcal L$, whence the inequality \eqref{e:Lconstruct} must fail. 
Namely, rearranging that inequality, and using the measure $ \mu $ associated with $\mathcal Q  ^{\textup{small}}_ {L,1} $,  
\begin{align} \label{e:18}
\mu _{\mathcal Q  ^{\textup{small}}_ {L,1} } (T_L)
& \le  (\rho-1) \sum_{\substack{L'\in \mathcal L \::\: L'\subset K\\ L' \textup{ is maximal}}}  \mu (T_L) 
\\
&\le \frac 1 {16} \mu (T_L) \le  \frac {\tau ^2} {16} \cdot   \frac{\lvert  K\rvert ^2 \cdot \sigma (K)}{ \mathsf P(\sigma (L-K), K) ^2}  \,. 
\end{align}
Here, note that we begin with the measure $ \mu _{\mathcal Q  ^{\textup{small}}_ {L,1} }$; use $ \rho = 1+ \frac 1 {16}$;  and 
the last inequality follows from the definition of size.  This finishes the proof of \eqref{e:small1<}. 
\end{proof}

The collections below are the first contribution to $  \mathcal Q ^{\textup{large}}$.  
Take $  \mathcal Q ^{\textup{large}}_1 \coloneqq  \bigcup \{  \mathcal Q ^{\textup{large}} _{L,1} \::\: L\in \mathcal L\}$, where 
\begin{equation*}
  \mathcal Q ^{\textup{large}} _{L,1} \coloneqq  \{ Q \in  \mathcal Q _ { L , 1}    \::\: \tilde Q_1 = L\} \,. 
\end{equation*}
Note that Lemma~\ref{l:holes} applies to this Lemma, take the collection $ \mathcal S$ of that Lemma to be $ \{L\}$, and the 
quantity $ \eta $ in \eqref{e:S<} satisfies $ \eta \lesssim \tau = \textup{size} (\mathcal Q)$, by \eqref{e:eta<size}.
From the mutual orthogonality \eqref{e:subadd},  we then have 
\begin{equation*}
\mathbf B _{\mathcal Q ^{\textup{large}} _{1} } \le \sqrt 2 \sup _{L\in \mathcal L} \mathbf B _{\mathcal Q ^{\textup{large}} _{L,1} }  
\lesssim \tau \,. 
\end{equation*}

The collections $ \mathcal Q _{L,t}$, for $ L\in \mathcal L$, and $ t\ge 2$ are the second contribution to $  \mathcal Q ^{\textup{large}}$, namely 
\begin{equation*}
 \mathcal Q ^{\textup{large}} _{2} \coloneqq  \bigcup _{L
 \in \mathcal L} \bigcup _{t \ge 2}  \mathcal Q _ { L,t} \,. 
\end{equation*}
For them, we need to estimate $ \mathbf B _{\mathcal Q _{L,t}}$.  

\begin{lemma}\label{l:Y} There holds 
$
 \mathbf B _{\mathcal Q _{L,t}} \lesssim \rho ^{-t/2} \tau 
$. 
\end{lemma}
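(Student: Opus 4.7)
The critical structural observation is that every pair $Q=(Q_1,Q_2)\in \mathcal Q_{L,t}$ with $t\ge 2$ admits a well-defined intermediate $\mathcal L$-scale $L':=\pi_{\mathcal L}Q_2$, which is a strict $\mathcal L$-descendant of $L$ at depth comparable to $t-1$ and satisfies the chain of inclusions $Q_2\subset L'\subsetneq \tilde Q_1\subset L$; the non-containment of $\tilde Q_1$ in any $\mathcal L$-interval strictly between $L'$ and $L$ is forced by the hypothesis $\pi_{\mathcal L}\tilde Q_1=L$. This intermediate scale $L'$ is the object that will simultaneously carry the Poisson regularization and the geometric decay \eqref{e:ddecay}.

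The first step is to transfer the Poisson scale from $Q_2$ up to $L'$ so as to decouple the Hilbert-transform inner product from the particular $\tilde Q_1$. Apply the monotonicity principle \eqref{e:mono1} (valid because $Q_2$ is good and $2^{r-1}|Q_2|\le |\tilde Q_1|$) and follow it by a Poisson-kernel comparison across the nested scales $Q_2\subset L'\subsetneq \tilde Q_1$ (using goodness of $L'$ inside $\tilde Q_1$) to obtain
\[
\bigl|\langle H_\sigma(I_0-\tilde Q_1),\Delta^w_{Q_2}g\rangle_w\bigr|\;\lesssim\; \frac{P(\sigma(I_0-L'),L')}{|L'|}\cdot |\hat g(Q_2)|\,\langle x,h^w_{Q_2}\rangle_w,
\]
whose right-hand side is independent of $\tilde Q_1$. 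The bounded exceptional range $|L'|\le |\tilde Q_1|<2^{r-1}|L'|$ in which goodness of $L'$ fails is handled by pigeonholing $\log_2(|\tilde Q_1|/|L'|)$ into $O(r)$ classes and absorbing the loss into the implied constant.

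Partition $\mathcal Q_{L,t}=\bigsqcup_{L'} \mathcal Q^{L'}$ by $L'$, and within each $\mathcal Q^{L'}$ exploit the convexity of $\mathcal Q$ in the first coordinate (admissibility condition~(2)): for each fixed $Q_2$ the $\tilde Q_1$'s form a chain of consecutive ancestors of $Q_2$ strictly between $L'$ and $L$, so $\sum_{Q_1}\mathbb E^\sigma_{\tilde Q_1}\Delta^\sigma_{Q_1}f$ telescopes signedly to a difference $\mathbb E^\sigma_{R_{\min}}f-\mathbb E^\sigma_{R_{\max}}f$ of $\sigma$-averages on intervals containing $L'$, each of absolute value at most $\|f\|_\sigma/\sqrt{\sigma(L')}$. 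Pairing the telescoping with Cauchy--Schwarz in $Q_2$ yields the per-$L'$ bound
\[
|B_{\mathcal Q^{L'}}(f,g)|\;\lesssim\;\|f\|_\sigma\,\|g\|_w\cdot \sqrt{\frac{P(\sigma(I_0-L'),L')^2\,\mu(T_{L'})}{|L'|^2\,\sigma(L')}}\,,
\]
in which the square-root factor is precisely the $L'$-contribution to $\textup{size}(\mathcal Q)^2$.

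Finally, sum over $L'$ by Cauchy--Schwarz with weighting $\sqrt{\mu(T_{L'})}$, factoring the sum as $\bigl(\sum_{L'}\mu(T_{L'})\bigr)^{1/2}\bigl(\sum_{L'}P(\sigma(I_0-L'),L')^2/(|L'|^2\sigma(L'))\bigr)^{1/2}$. The first factor is $\le \sqrt{\rho^{-(t-1)}\mu(T_L)}$ by \eqref{e:ddecay}, which combined with the size-at-$L$ estimate $\mu(T_L)\le \tau^2|L|^2\sigma(L)/P(\sigma(I_0-L),L)^2$ contributes $\rho^{-(t-1)/2}\tau$; the second factor is absorbed into an $O(1)$ constant by a Schur-type argument exploiting the monotonicity of $L'\mapsto P(\sigma(I_0-L'),L')$ along $\mathcal L$-chains together with the Carleson packing of $\mathcal L$-tents. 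Combining gives $\mathbf B_{\mathcal Q_{L,t}}\lesssim \rho^{-t/2}\tau$. The main obstacle is precisely this final Schur-type step: the naive per-$L'$ size bound $P(\sigma(I_0-L'),L')^2/(|L'|^2\sigma(L'))\le \tau^2/\mu(T_{L'})$ is only local and not summable, so one must combine the tree structure of $\mathcal L$ with the full Poisson $A_2$ condition to absorb the excess into the geometric decay, rather than trying to iterate size alone.
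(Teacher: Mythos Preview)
Your proposal has a genuine gap at exactly the point you flag as ``the main obstacle.'' The sum
\[
\sum_{L'}\frac{P(\sigma(I_0-L'),L')^2}{|L'|^2\,\sigma(L')}
\]
over the depth-$(t-1)$ $\mathcal L$-descendants $L'$ of $L$ is \emph{not} controllable by any of the available hypotheses. The collection $\mathcal L$ carries no $\sigma$-Carleson structure (that is a property of $\mathcal F$, not of $\mathcal L$), the Poisson $A_2$ condition relates $P(\sigma,\cdot)$ to $w$-densities rather than to $\sigma(L')^{-1}$, and the only size information available is the local bound $P^2\mu(T_{L'})/(|L'|^2\sigma(L'))\le\tau^2$, which you correctly note is not summable. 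The ``Schur-type argument'' you invoke does not exist here: there is no monotonicity of $P(\sigma(I_0-L'),L')$ along $\mathcal L$-chains that would make such a scheme work, and nothing in the construction of $\mathcal L$ provides the packing you need.

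The paper's argument is organized around a different placement of the intermediate scale. Rather than anchoring the Poisson average at the \emph{deep} interval $L'=\pi_{\mathcal L}Q_2$, it anchors at the \emph{shallow} scale $K\in\mathcal S_L$ (the $\mathcal L$-children of $L$), invoking Lemma~\ref{l:holes} with $\mathcal S=\mathcal S_L$; this lemma in turn uses stopping data and quasi-orthogonality on the $f$-side, not the crude bound $\|f\|_\sigma/\sqrt{\sigma(L')}$. The geometric decay then enters inside the estimate for the constant $\boldsymbol\beta(t)$ of \eqref{e:S<}: one transfers the Poisson from $K$ down to intervals $L'$ at $\mathcal L$-depth $r+1$ (so that $L'\Subset K$ and \eqref{e:PP} applies cleanly), applies \eqref{e:ddecay} between depths $r+1$ and $t-1$, and then uses the size bound in the form $P(\sigma(I_0-K),L')^2\mu(T_{L'})/|L'|^2\le\tau^2\sigma(L')$. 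The crucial point is that the right-hand side is $\sigma(L')$, which \emph{is} summable over the disjoint $L'\subset K$ to $\sigma(K)$. Your factorization never reaches a $\sigma$-summable quantity because you placed the Poisson too deep in the tree.
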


From this, we can conclude from \eqref{e:subadd} that 
\begin{align*}
\mathbf B _{ \mathcal Q ^{\textup{large}} _2} 
& \le \sum_{t \ge 2 } \mathbf B _{\bigcup  \{\mathcal Q   _{L,t} \::\: L\in \mathcal L \} } 
\\
& \le  \sqrt 2 \sum_{t \ge2 } \sup _{L\in \mathcal L}\mathbf B _{\mathcal Q _{L,t} } 
 \lesssim \tau \sum_{t\ge 2 } \rho  ^{-t/2} \lesssim \tau \,. 
\end{align*}

\begin{proof}
For $ L \in \mathcal L$, let  $ \mathcal S_{L}$, the $ \mathcal L$-children of $ L$.  
For each $ Q\in  \mathcal Q_{L,t} $, we must have $ Q_2 \subset \pi _{\mathcal S_L} Q_2 \subset \tilde Q_1 $. 
Then, divide the collection $ \mathcal Q _{L,t}$ into  three collections  $ \mathcal Q ^ \ell _{L,t} $, $ \ell =1,2,3$,  where  
\begin{align*}
 \mathcal Q ^{1} _{L,t} &\coloneqq  \{Q \in \mathcal Q _{L,t} \::\: Q_2 \Subset \pi _{\mathcal S_L} Q_2\} \,, 
\\
\mathcal Q ^{2} _{L,t} &\coloneqq  \{Q \in \mathcal Q _{L,t} \::\: Q_2 \not\Subset \pi _{\mathcal S_L} Q_2 \Subset \tilde Q_1\} \,, 
\end{align*}
and $  \mathcal Q ^{3} _{L,t} \coloneqq  \mathcal Q _{L,t} - ( \mathcal Q ^{1} _{L,t}\cup  \mathcal Q ^{2} _{L,t}  ) $ is the complementary collection. 
Notice that $  \mathcal Q ^{1} _{L,t} $ equals the whole collection $  \mathcal Q  _{L,t} $ for $ t>r+1$.

\smallskip 

We treat them in turn. The collections $  \mathcal Q ^{1} _{L,t} $  fit the hypotheses of Lemma~\ref{l:holes},  just take the collection of intervals $ \mathcal S$ of that Lemma to be  $ \mathcal S_{L}$.
It follows that  $\mathbf B _{ \mathcal Q ^{1} _{L,t} } \lesssim  \boldsymbol \beta (t)$, where the latter is the best constant in the inequality 
\begin{equation} \label{e:CS}   
\sum_{J\in (\mathcal Q_ {L,t})_2 \::\: J\Subset K} 
\mathsf P(\sigma (F- K), J) ^2 \bigl\langle \frac {x} {\lvert  J\rvert } , h ^{w} _{J} \bigr\rangle_{w} ^2 
\le \boldsymbol \beta (t) ^2   \sigma (K)\,, \qquad  K \in \mathcal S _L\,,\  L\in \mathcal L\,,\ t \ge 2 \,.  
\end{equation}
We will prove the estimate below, which is clearly summable in $ t \in \mathbb N $ to the estimate we want. 

\begin{lemma}\label{l:beta} There holds $ \boldsymbol \beta (t) \lesssim \rho ^{-t/2} \tau $. 

\end{lemma}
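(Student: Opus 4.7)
The plan is to combine the geometric decay \eqref{e:ddecay} of the induced measure $\mu$ along the $\mathcal L$-tree with the Poisson regularization for good intervals and the size bound at $K$.

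First, I would observe that every $J \in (\mathcal Q_{L,t})_2$ appearing in the sum \eqref{e:CS} satisfies $\pi^{t-1}_{\mathcal L} J = K$ (as in the proof of Lemma~\ref{l:Y}), so grouping the $J$'s by their immediate $\mathcal L$-parent $L' = \pi^1_{\mathcal L} J$, the set of such $L'$'s is contained in $\mathcal K(t-2) := \{L' \in \mathcal L : \pi^{t-2}_{\mathcal L} L' = K\}$. The trivial bound $\sum_{\pi^1_{\mathcal L} J = L'} \langle x, h^w_J\rangle_w^2 \le \mu(T_{L'})$ combined with \eqref{e:ddecay} applied at $K \in \mathcal L$ yields
\[
\sum_{\substack{J \in (\mathcal Q_{L,t})_2 \\ J \Subset K}} \langle x, h^w_J\rangle_w^2 \;\le\; \sum_{L' \in \mathcal K(t-2)} \mu(T_{L'}) \;\le\; \rho^{-(t-2)}\, \mu(T_K) \,.
\]
This is the source of the geometric decay in $t$.

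Second, since $J$ is good with $J \Subset K$, the distance estimate $\textup{dist}(J,\partial K) \ge |J|^\epsilon|K|^{1-\epsilon}$ yields the pointwise regularization
\[
\frac{|J|}{(|J|+\textup{dist}(y,J))^2} \;\lesssim\; \Bigl(\frac{|J|}{|K|}\Bigr)^{1-2\epsilon}\frac{|K|}{(|K|+\textup{dist}(y,K))^2},\qquad y\notin K ,
\]
and integrating against $\sigma(I_0-K)$ and dividing by $|J|^2$ gives
\[
\frac{P(\sigma(I_0-K),J)^2}{|J|^2} \;\lesssim\; \Bigl(\frac{|J|}{|K|}\Bigr)^{-4\epsilon} \frac{P(\sigma(I_0-K),K)^2}{|K|^2} \,.
\]
Combining this with the first step and the size inequality at $K \in \tilde{\mathcal Q}_1 \cup \mathcal Q_2$, namely $\mu(T_K) \le \tau^2 |K|^2\sigma(K) / P(\sigma(I_0-K),K)^2$, would produce (modulo the regularization loss)
\[
\sum_{\substack{J \in (\mathcal Q_{L,t})_2 \\ J \Subset K}} P(\sigma(I_0-K),J)^2 \,\langle x/|J|, h^w_J\rangle_w^2 \;\lesssim\; \rho^{-(t-2)} \tau^2 \sigma(K) \;\simeq\; \rho^{-t}\tau^2 \sigma(K),
\]
which is exactly the claim $\boldsymbol \beta(t) \lesssim \rho^{-t/2}\tau$.

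The hard part will be dispatching the regularization loss $(|J|/|K|)^{-4\epsilon}$, which is a priori unbounded since $|J|$ can be much smaller than $|K|$ with no scale control from the $\mathcal L$-tree. The remedy is to telescope the regularization through the intermediate $\mathcal L$-ancestors of $J$: applying the pointwise bound relative to each consecutive $\mathcal L$-interval along the chain $J \subsetneq \pi^1_{\mathcal L} J \subsetneq \cdots \subsetneq K$ rather than directly between $J$ and $K$, the per-level cost is at most $(|L''|/|L'|)^{-4\epsilon} \le 2^{4r\epsilon}$ whenever $L'' \Subset L'$ (which the good property ensures). One then pairs each per-level Poisson loss with the per-level $\mu$-decay factor of $\rho^{-1}$ coming from \eqref{e:ddecay}, and chooses $\epsilon = \epsilon(r,\rho) > 0$ so small that $2^{4r\epsilon} < \rho^{1/2}$. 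This ensures that the cumulative Poisson loss is dominated by the cumulative $\mu$-decay, leaving the net factor $\rho^{-t/2}$ as required.
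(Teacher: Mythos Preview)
Your identification of the two ingredients---geometric decay of $\mu$ along the $\mathcal L$-tree from \eqref{e:ddecay}, and Poisson regularization---is right, but the proposed fix for the regularization loss breaks down.

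First, the per-level bound $(|L''|/|L'|)^{-4\epsilon}\le 2^{4r\epsilon}$ has the inequality reversed. The relation $L''\Subset L'$ gives $|L''|/|L'|\le 2^{-r}$, which is an \emph{upper} bound on the ratio and hence a \emph{lower} bound $(|L''|/|L'|)^{-4\epsilon}\ge 2^{4r\epsilon}$. There is no lower bound on $|L''|/|L'|$ for consecutive $\mathcal L$-intervals: $\mathcal L$ is just a subset of $\tilde{\mathcal Q}_1\cup\mathcal Q_2$, and successive elements may differ in length by arbitrarily large factors, so the per-level cost is genuinely uncontrolled. Second, even if such a bound held, the constraint $2^{4r\epsilon}<\rho^{1/2}$ is incompatible with the good/bad machinery: Lemma~\ref{l:goodBad} requires $r\gtrsim\epsilon^{-1}\log\epsilon^{-1}$, so $r\epsilon$ is large and $2^{4r\epsilon}$ cannot be pushed below the fixed constant $\rho^{1/2}=\sqrt{17/16}$.

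The paper sidesteps any loss. The key is that \eqref{e:PP} is an \emph{equivalence}, not a lossy inequality: for $J\subset L'\Subset K$ with $L'$ good (or the child of a good interval) one has $P(\sigma(I_0-K),J)/|J|\simeq P(\sigma(I_0-K),L')/|L'|$ exactly, because $L'\Subset K$ already places $L'$ deep inside $K$. The paper obtains such $L'$ by descending a \emph{fixed} number of $\mathcal L$-levels below $K$ (namely $r+1$, since each $\mathcal L$-step at least halves the length), then applies the size bound at $L'$ rather than at $K$: $P(\sigma(I_0-K),L')\le P(\sigma(I_0-L'),L')$ and the definition of $\tau$ give $\frac{P(\sigma(I_0-K),L')^2}{|L'|^2}\mu(T_{L'})\le\tau^2\sigma(L')$. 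The remaining $t-r-2$ levels of $\mu$-decay come from \eqref{e:ddecay}, and summing $\sigma(L')\le\sigma(K)$ finishes. For the finitely many small values of $t$ one simply uses $\boldsymbol\beta(t)\lesssim\tau$ without decay, via \eqref{e:eta<size}.
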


\begin{proof}

We have the estimate without decay in $ t$, $ \boldsymbol \beta (t) \lesssim \textup{size} (\mathcal Q)$, as follows from \eqref{e:eta<size}. 
Use this estimate for $ 1\le t \le r+3$, say.  
In the case of $ t> r+3$, the  essential property  is \eqref{e:ddecay}.
The left hand side of \eqref{e:CS} is dominated by the sum below.  
Note that we index the sum first over $ L'$, which are  $ r+1$-fold  $ \mathcal L$-children of $ K$, whence $ L'\Subset K$,  followed by $ t-r-2$-fold $ \mathcal L$-children of 
$ L'$.  
\begin{align}
\sum_{\substack{L'\in \mathcal L \\   \pi _{\mathcal L} ^{r+1} L'=K }}  &
\sum_{\substack{L''\in \mathcal L \\   \pi ^{t-r-2} _{\mathcal L} L''= L'}}  
\sum_{J \in \mathcal Q_2 \::\: J\subset L''}
\mathsf P(\sigma (F-K), J) ^2 \bigl \langle\frac x {\lvert  J\rvert }  , h ^{w} _{J} \bigr\rangle _{w} ^2 
\\
&\stackrel {{\eqref{e:PP}}} \lesssim 
\sum_{\substack{L'\in \mathcal L \\   \pi _{\mathcal L} ^{r+1} L'=K }}  
\frac {\mathsf P(\sigma (F-K), L') ^2} {\lvert  L'\rvert ^2  }
\sum_{\substack{L''\in \mathcal L \\   \pi ^{t-r-2} _{\mathcal L} L''= L'}}   
\mu (T _{L''})
 \\  \label{e:inOut}
 &\stackrel {\eqref{e:ddecay}} \lesssim  \rho ^{-t+r+2} 
\sum_{\substack{L'\in \mathcal L \\   \pi _{\mathcal L} ^{r+1} L'=K }}  
\frac {\mathsf P(\sigma (F-K), L') ^2} {\lvert  L'\rvert ^2  } 
\mu (T_{L'})
\\
&\stackrel {\phantom{\eqref{e:ddecay}}} \lesssim \rho ^{-t} \tau ^2 
\sum_{\substack{L'\in \mathcal L \\   \pi _{\mathcal L} ^{r+1} L'=K }}   \sigma (L') 
\lesssim \tau ^2 \rho ^{-t}  \sigma (K) \,. 
 \end{align}
We have also used \eqref{e:PP}, and then   the central property 
\eqref{e:ddecay} following from the construction of $ \mathcal L$,  finally appealing  to  the definition of size.   
Hence,  $  \boldsymbol \beta (t) \lesssim \tau \rho ^{-t/2}$. 
This completes the analysis of $  \mathcal Q ^{1} _{L,t} $. 

\end{proof}

We need only consider the collections $  \mathcal Q ^{2} _{L,t} $ for $ 1\le t \le r+1$, and they fall under the scope of Lemma~\ref{l:Holes}. 
A variant of \eqref{e:eta<size} shows that  $ \mathbf B _{  \mathcal Q ^{2} _{L,t}} \lesssim \tau $. 
Similarly, we  need only consider the collections $  \mathcal Q ^{3} _{L,t} $ for $ 1\le t \le r+1$.   
It follows that we must have $ 2 ^{r} \le \lvert  Q_1\rvert/\lvert  Q_2\rvert \le 2 ^{2r+2}  $. Namely, this ratio can take only one of a 
finite number of values, implying that Lemma~\ref{l:equal} applies easily to this case to complete the proof.  
\end{proof}

\subsubsection*{Pairs not strictly comparable to $ \mathcal L$}
It remains to consider the pairs $ Q\in \mathcal Q$ such that $ \tilde Q_1$ does not have a parent in $ \mathcal L$. 
The collection $  \mathcal Q ^{\textup{small}}_2 $  is taken to be the (much smaller) collection 
\begin{equation*}
\mathcal Q  ^{\textup{small}}_2  \coloneqq  \{Q \in \mathcal Q \::\:   \textup{$ {Q_2}$ does not have a parent in $ \mathcal L$}\}\,.  
\end{equation*}
Observe that 
$
\textup{size} (\mathcal Q  ^{\textup{small}}_2 ) \le \sqrt { (\rho -1)}  \tau \le \frac \tau 4 
$.
This is as required for this collection. (The collections $ \mathcal Q  ^{\textup{small}}_1$ and $ \mathcal Q  ^{\textup{small}}_2$ are also mutually orthogonal, but this fact is not needed for our proof.)

\begin{proof}
Suppose  $ \eta <\textup{size} (\mathcal Q ^{\textup{small}}_2 ) $. Then, there is an interval $ K \in \widetilde {(\mathcal Q ^{\textup{small}}_1 )}_1 \cup (\mathcal Q  ^{\textup{small}}_2)_2 $ so that 
\begin{align*}
\eta ^2   \sigma (K) \le 
\frac {P (\sigma (F-K), K) ^2 } {\lvert  K\rvert ^2  }
\mu _{\mathcal Q ^{\textup{small}}_2} (T_K)\,. 
\end{align*}
Suppose that $ K$ does not contain any interval in $ \mathcal L$. 
It follows from the initial intervals added to $ \mathcal L$, see \eqref{e:Kdef}, that we must have $ \eta \le \frac \tau 4$.

Thus, $ K$ contains an interval in $ \mathcal L$. This means that $ K$ must fail the inequality \eqref{e:Lconstruct}.  
Therefore,   we have 
\begin{align*}
\eta ^2 \sigma (K) & \le (\rho -1)
\frac {P (\sigma (F-K), K) ^2 } {\lvert  K\rvert ^2  }
\mu (T_K) \le \frac {\tau ^2 } {16} \sigma (K) \,. 
\end{align*}
This relies upon the definition of size, and proves our claim.
\end{proof}

For the pairs not yet in one of  our collections, it must be that $ Q_2$ has a parent in $ \mathcal L$, but not $ \tilde Q_1$. 
Using $ \mathcal L ^{\ast} $, the maximal intervals in $ \mathcal L$, divide them into the three collections 
\begin{align}
 \mathcal Q ^{\textup{large}} _{3} 
& \coloneqq  \{ Q\in \mathcal Q \::\:  
Q_2 \Subset  \pi _{\mathcal L ^{\ast} } Q_2 \subset \tilde Q_1 
 \} \,,
 \\
 \mathcal Q ^{\textup{large}} _{4} 
& \coloneqq  \{ Q\in \mathcal Q \::\:  
Q_2 \not\Subset  \pi _{\mathcal L ^{\ast} } Q_2 \Subset \tilde Q_1 
 \} \,, 
 \\
 \mathcal Q ^{\textup{large}} _{5} & 
 \coloneqq  \{ Q\in \mathcal Q \::\:  
Q_2 \not\Subset  \pi _{\mathcal L ^{\ast} } Q_2 \subsetneq \tilde Q_1 
 \,, \textup{and}\  \pi _{\mathcal L ^{\ast} } Q_2 \not\Subset  \tilde Q_1  \} \,. 
\end{align}

Observe that Lemma~\ref{l:holes}, with \eqref{e:eta<size}, gives  
\begin{equation} \label{e:Q3}
\mathbf B _{ \mathcal Q ^{\textup{large}}_3} \lesssim \tau \,. 
\end{equation}
Take the collection $ \mathcal S$ of Lemma~\ref{l:holes} to be $ \mathcal L ^{\ast} $. 

Observe that Lemma~\ref{l:Holes} applies to show that the  estimate \eqref{e:Q3} holds for $  \mathcal Q ^{\textup{large}}_4$. 
Take $ \mathcal S$ of that Lemma to be $ \mathcal L ^{\ast} $.  The estimate from Lemma~\ref{l:Holes} is given in terms of $ \eta $, as defined in \eqref{e:Holes}.  But,  is at most $ \tau $.  

In the last collection, $  \mathcal Q ^{\textup{large}}_5$, notice that the conditions placed upon the pair implies that 
$ \lvert  Q_1\rvert\le 2 ^{2r+2} \lvert  Q_2\rvert  $, for all $ Q\in  \mathcal Q ^{\textup{large}}_5$.  It therefore follows from a straight forward application of Lemma~\ref{l:equal}, that \eqref{e:Q3} holds for this collection as well.

\subsection{Upper Bounds on the  Stopping Form}\label{s:UB}
We   prove upper bounds on the norm of the stopping form in a situation in which 
there is some decoupling between the martingale difference on $ g$, and the argument of the Hilbert transform.
First, an elementary observation. 
 
\begin{proposition}\label{p:}
For intervals $ J\subset L\Subset K$,  with $ L$ either good, or the child of a good interval, 
 \begin{equation} \label{e:PP}
\frac {P(\sigma (F-K), J) } {\lvert  J\rvert } \simeq   \frac {P(\sigma (F-K), L) } {\lvert  L\rvert } \,. 
\end{equation}
\end{proposition}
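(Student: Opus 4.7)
The plan is to verify the equivalence pointwise in the integrand. Writing out the definition in \eqref{e:P},
\begin{equation*}
\frac{P(\sigma(I_0 - K), J)}{|J|} = \int_{I_0 \setminus K} \frac{\sigma(dx)}{(|J| + \textup{dist}(x,J))^2},
\end{equation*}
and similarly with $L$ in place of $J$. So it suffices to show that $|J| + \textup{dist}(x,J) \simeq |L| + \textup{dist}(x,L)$ uniformly for $x \in I_0 \setminus K$.

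The key input is goodness: if $L$ itself is $(\varepsilon, r)$-good, then since $L \Subset K$ means $|K| \ge 2^r |L|$, the definition of goodness (applied with the larger interval being $K$) gives
\begin{equation*}
\textup{dist}(L, \partial K) \ge |L|^{\varepsilon} |K|^{1-\varepsilon} \ge 2^{r(1-\varepsilon)} |L|.
\end{equation*}
If $L$ is a child of a good interval $\hat L$, the same estimate applies with $\hat L$ in place of $L$, losing only a factor of $2$. In either case, taking $r$ large enough (and $\varepsilon \le \tfrac14$), we have $\textup{dist}(L, \partial K) \ge 2^{r/2} |L|$, which is much larger than $|L|$.

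For any $x \in I_0 \setminus K$, the line segment from $x$ to $L$ crosses $\partial K$, so $\textup{dist}(x, L) \ge \textup{dist}(L, \partial K) \gg |L|$, hence $|L| + \textup{dist}(x,L) \simeq \textup{dist}(x,L)$. Since $J \subset L$, we have $\textup{dist}(x,L) \le \textup{dist}(x,J) \le \textup{dist}(x,L) + |L|$, and also $|J| \le |L|$; combined with $\textup{dist}(x,L) \gg |L| \ge |J|$, this yields
\begin{equation*}
|J| + \textup{dist}(x, J) \simeq \textup{dist}(x, J) \simeq \textup{dist}(x, L) \simeq |L| + \textup{dist}(x, L).
\end{equation*}
Integrating against $\sigma$ over $I_0 \setminus K$ delivers \eqref{e:PP}. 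There is no real obstacle; the only point to watch is making sure the goodness hypothesis is invoked with the correct pair of intervals so that the separation $\textup{dist}(L, \partial K) \gg |L|$ is available.
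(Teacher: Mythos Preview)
Your proof is correct and follows essentially the same approach as the paper: use goodness to get $\textup{dist}(L, I_0 - K) \ge |L|^{\varepsilon}|K|^{1-\varepsilon} \ge 2^{r(1-\varepsilon)}|L|$, then conclude by comparing the Poisson integrands pointwise. The paper's proof just says ``by inspection of the Poisson integrals'' where you have spelled out the comparison $|J| + \textup{dist}(x,J) \simeq |L| + \textup{dist}(x,L)$ in detail.
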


\begin{proof}
The property of interval $ I$ being good,  says that if $ I \subset \tilde I$, and $ 2 ^{r-1} \lvert  I\rvert \le \lvert  \tilde I\rvert  $, 
then the distance of either child of $ I$ to the boundary of $ \tilde I$ is at least $ \lvert  I\rvert ^{\epsilon } \lvert  \tilde I\rvert ^{1- \epsilon } $.   
Thus, in the case that $ L$ is the child of a good interval,  the parent $ \hat L  $ of $ L$ is contained in $ K$, and $ 2 ^{r-1} \lvert  \hat  L\rvert\le \lvert  K\rvert  $, so by the definition of goodness, 
 \begin{align*}
\textup{dist} (J, F - K)  & \ge \textup{dist} (L, F - K)  
\\
& \ge   \lvert    L\rvert ^{\epsilon } \lvert  K\rvert ^{1- \epsilon } 
\ge 2 ^{ r(1- \epsilon) } \lvert  L\rvert\,.   
\end{align*}
The same inequality holds if $ L $ is good.  
Then, one has the equivalence above, by inspection of the Poisson integrals.   
\end{proof}

\begin{lemma}\label{l:holes}  Let $ \mathcal S$ be a collection of pairwise disjoint intervals in $ F$. 
Let $ \mathcal Q$ be admissible such that for each $ Q\in \mathcal Q$, there is an $ S\in \mathcal S $ 
with $ Q_2 \Subset S \subset \tilde Q_1$.  
 Then, there holds 
\begin{gather}
\lvert  B _{\mathcal Q} (f,g)\rvert \lesssim  \eta  \lVert f\rVert_{\sigma } \lVert g\rVert_{w} \,, 
\\
\label{e:S<} 
\textup{where} \quad 
\eta ^2 \coloneqq  
\sup _{S\in \mathcal S} \frac {1} {\sigma (S)    }
\sum_{J \in \mathcal Q_2 \::\: J\Subset S} 
P(\sigma (F- S), J) ^2 \bigl\langle \frac x {\lvert  J\rvert } , h ^{w} _{J} \rangle_{w} ^2 \,. 
\end{gather}
\end{lemma}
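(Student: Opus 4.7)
The plan is to combine the monotonicity principle with a weighted Cauchy-Schwarz tailored to the definition of $\eta$, exploiting the pairwise disjointness of $\mathcal S$ and the chain structure of the $Q_1$'s provided by admissibility.

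First, since $\mathcal S$ is pairwise disjoint, each $Q\in\mathcal Q$ has a unique $S=S_Q\in\mathcal S$ with $Q_2\Subset S\subset\tilde Q_1$. The measure $\sigma\!\cdot\!(I_0-\tilde Q_1)$ is then dominated by $\sigma\!\cdot\!(I_0-S)$, supported outside $S$, so Lemma~\ref{mono}\,\eqref{e:mono1} applied with $I=S$, $J=Q_2$ yields
\begin{equation*}
	\bigl|\langle H_\sigma(I_0-\tilde Q_1),\Delta^w_{Q_2}g\rangle_w\bigr|
	\lesssim P\bigl(\sigma(I_0-S),Q_2\bigr)\,|\widehat g(Q_2)|\,\bigl\langle x/|Q_2|,\, h^w_{Q_2}\bigr\rangle_w.
\end{equation*}
The right-hand side is independent of $Q_1$, which decouples the Hilbert transform factor from the $f$-side martingale differences.

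Next, expand $\mathbb E^\sigma_{\tilde Q_1}\Delta^\sigma_{Q_1}f=\widehat f(Q_1)\,h^\sigma_{Q_1}(\tilde Q_1)$, and for each $S\in\mathcal S$ set $\mathcal Q_S\coloneqq\{Q\in\mathcal Q:S_Q=S\}$ and $g_S\coloneqq\sum_{J\in(\mathcal Q_S)_2}\widehat g(J)\,h^w_J$. Grouping by $S$ and applying Cauchy-Schwarz in $J=Q_2\Subset S$ against the Poisson term, the definition of $\eta$ delivers the local estimate
\begin{equation*}
	\sum_{J\in(\mathcal Q_S)_2}|\widehat g(J)|\,P(\sigma(I_0-S),J)\,\bigl\langle x/|J|,\, h^w_J\bigr\rangle_w
	\le \eta\,\sqrt{\sigma(S)}\,\|g_S\|_w.
\end{equation*}
An outer Cauchy-Schwarz then pairs this bound with $\sum_{Q_1}\widehat f(Q_1)^2\le\|f\|_\sigma^2$, using the $L^2(\sigma)$-normalization $\sum_{\pm}|h^\sigma_{Q_1}(Q_1^\pm)|^2\sigma(Q_1^\pm)=1$, the pairwise disjointness $\sum_{S\in\mathcal S}\sigma(S)\le\sigma(I_0)$, and the Haar orthogonality $\sum_S\|g_S\|_w^2\le\|g\|_w^2$, to produce the desired $|B_{\mathcal Q}(f,g)|\lesssim\eta\,\|f\|_\sigma\,\|g\|_w$.

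The principal obstacle is arranging the Cauchy-Schwarz so that the chain of $Q_1$'s paired with a fixed $J$ -- a chain of good ancestors of $S_J$ by convexity in $Q_1$ -- is not overcounted. The weights must exploit the pointwise bound $|h^\sigma_{Q_1}(\tilde Q_1)|^2\sigma(\tilde Q_1)\le 1$ together with the nestedness $\sigma(\tilde Q_1)\ge\sigma(S)$ along the chain, so that the chain length is absorbed by the collapsing $\sigma$-factors and leaves behind the clean factor $\sqrt{\sigma(S)}$ that sums correctly against $\eta$ over the disjoint collection $\mathcal S$.
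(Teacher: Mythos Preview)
Your argument applies the monotonicity principle to each summand separately, i.e., you take absolute values \emph{inside} the sum over $Q_1$. The paper's proof explicitly flags this as the step that fails: ``Absolute values must be taken \emph{outside} the sum defining the stopping form!'' The reason is precisely the overcounting you identify in your last paragraph, and your proposed fix does not close the gap.

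Concretely: after Cauchy--Schwarz you need $\sum_{S}A_S^2\,\sigma(S)\lesssim\|f\|_\sigma^2$, where $A_S=\sum_{Q_1\in\mathcal C_S}|\hat f(Q_1)|\,|h^\sigma_{Q_1}((Q_1)_S)|$ and $\mathcal C_S$ is the chain of $Q_1$'s above $S$. The telescoping you allude to does give $\sum_{Q_1\in\mathcal C_S}|h^\sigma_{Q_1}((Q_1)_S)|^2\le 1/\sigma(S)$ (since $\sum_k\bigl(\tfrac1{\sigma((I_k)_S)}-\tfrac1{\sigma(I_k)}\bigr)\le\tfrac1{\sigma(S)}$ along the nested chain), hence $A_S^2\,\sigma(S)\le\sum_{Q_1\in\mathcal C_S}\hat f(Q_1)^2$. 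But a fixed $Q_1$ lies in $\mathcal C_S$ for \emph{every} $S\in\mathcal S$ contained in a child of $Q_1$ (subject to $(Q_1,J)\in\mathcal Q$ for some $J\Subset S$), and there is no bound on the number of such $S$. Summing over $S$ therefore overcounts $\hat f(Q_1)^2$ without limit. The alternate ordering (Cauchy--Schwarz in $Q_1$ first) fails for the dual reason: one ends up with $\sum_{Q_1}\sum_{S\subset Q_1}\|g_S\|_w^2$, again uncontrolled.

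The paper circumvents this by \emph{not} taking termwise absolute values. It introduces the auxiliary function $\varphi_J=\sum_{Q:Q_2=J}\mathbb E^\sigma_{\tilde Q_1}\Delta^\sigma_{Q_1}f\cdot(I_0-\tilde Q_1)$ and observes that, by the convexity hypothesis on $\mathcal Q$, at each point $x$ the sum telescopes to a difference of two averages of $f$. Constructing ordinary Calder\'on--Zygmund stopping data $(\mathcal F,\alpha_f)$ for $f$, this difference is bounded by $\alpha_f(\pi_{\mathcal F}J)$, giving the pointwise estimate $|\varphi_J|\lesssim\alpha_f(\pi_{\mathcal F}J)\cdot(I_0-S_J)$. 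Only \emph{then} is monotonicity applied (once, to $\langle H_\sigma\varphi_J,\Delta^w_Jg\rangle_w$), followed by Cauchy--Schwarz and the definition of $\eta$, and the argument closes via quasi-orthogonality \eqref{e:quasi} over $F\in\mathcal F$. The cancellation in the $Q_1$-sum is what replaces the chain by a single stopping value; without it the proof does not go through.
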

 
It is useful to note that  $ \eta $ is always smaller than the size: 
For $ S\in \mathcal S$,   let $ \mathcal J ^{\ast} $ be the maximal intervals $ J\in \mathcal Q_2$ with $ J\Subset S$, 
and note that \eqref{e:PP} applies to see that 
\begin{align}
\sum_{J \in \mathcal Q_2 \::\: J\Subset S} 
P(\sigma (F- S), J) ^2 \bigl\langle \frac {x} {\lvert  J\rvert } , h ^{w} _{J} \bigr\rangle_{w} ^2 
& = 
\sum_{ J ^{\ast}  \in \mathcal J ^{\ast} } 
\sum_{J \in \mathcal Q \::\: J\subset J ^{\ast} } 
P(\sigma (F- S), J) ^2 \bigl\langle \frac {x} {\lvert  J\rvert } , h ^{w} _{J} \bigr\rangle_{w} ^2 
\\
& \lesssim 
\sum_{ J ^{\ast}  \in \mathcal J ^{\ast} }  \frac {P(\sigma (F- S), J ^{\ast} ) ^2 } {\lvert  J ^{\ast} \rvert ^2  }
\sum_{J \in \mathcal Q \::\: J\subset J ^{\ast} }  \langle x, h ^{w} _{J} \rangle_{w} ^2 
\\  \label{e:eta<size}   
& \lesssim   \sum_{ J ^{\ast}  \in \mathcal J ^{\ast} } \sigma (J ^{\ast} ) \lesssim 
 \textup{size} (\mathcal Q) \sigma (S).  
\end{align}

\begin{proof}
An  interesting part of the proof is that it depends very much on cancellative properties of the martingale differences of $ f$. 
(Absolute values must be taken \emph{outside} the sum defining the stopping form!)  
This argument will invoke the stopping data, and part of the Hilbert-Poisson exchange argument.

Assume, as we can,  that the Haar support of $ f$ is contained in $ \mathcal Q_1$.  
Take $ \mathcal F$ and $ \alpha _{f} ( \cdot )$ to be stopping data 
defined in this way:  First, add to $ \mathcal F$ the interval $ F$, and set 
$ \alpha _{f} (F)\coloneqq  \mathbb E ^{\sigma } _{F} \lvert  f\rvert $. 
Inductively, if $ F\in \mathcal F$ is minimal, add to $ \mathcal F$ the maximal children $ F'$ such that  $ \alpha _{f} (F') \coloneqq    \mathbb E _{F'}  ^{\sigma } \lvert f \rvert > 4 \alpha _{f} (F)$.   This is a simple form of the stopping data construction in \S\ref{s:global}. In particular quasi-orthogonality \eqref{e:quasi} holds.

Write the bilinear form as 
\begin{align} 
B _{\mathcal Q} (f,g) 
&= \sum_{ J  }\langle  H _{\sigma } \varphi _{J} ,  \Delta _{J} ^{w} g \rangle _{w} 
\\
\label{e:zvf} \textup{where} \quad 
\varphi _{J } &  \coloneqq  \sum_{ \substack{Q  \in \mathcal Q\::\: Q_2=J }}  
 \mathbb E ^{\sigma } _{J} \Delta ^{\sigma }_ {Q_1} f \cdot (F - \tilde Q_1)  \,. 
\end{align}
The function $ \varphi _J$ is well-behaved, as we now explain.  
At each point  $ x$ with  $  \varphi _{J} (x) \neq 0$, the sum above is over pairs $ Q$ such that 
 $ Q_2=J$ and $ x\in F-\tilde Q_1 $. 
 By the convexity property of admissible collections,   the sum is over consecutive (good)  martingale  differences of $ f$. 
The basic telescoping property of these differences shows that the sum is bounded by the stopping value $ \alpha _{f} (\pi_{\mathcal F}  J)$. 
Let $ I ^{\ast }$ be the maximal interval of the form $ \tilde Q_1$ with $ x\in F- \tilde Q_1$, and let $ I _{\ast} $
be the child of the minimal such interval which contains $ J$.  
Then, 
\begin{align} \label{e:Zvf}
\begin{split}
\lvert  \varphi _  {J} (x) \rvert& = \Bigl\lvert  \sum_{\substack{Q \in \mathcal Q \::\:   Q_2=J \\ x \in I - \tilde Q_1} } 
\mathbb E ^{\sigma } _{J} \Delta ^{\sigma } _{Q_1} f  (x)
\Bigr\rvert
\\&= \bigl\lvert \mathbb E ^{\sigma } _{I^\ast } f  -  \mathbb E ^{\sigma } _{I_ \ast } f  \bigr\rvert 
\lesssim  \alpha _{f} (\pi_{\mathcal F}  J) (F -S)\,,  
\end{split}
\end{align}
 where $ S$ is the $ \mathcal S$-parent of $ J$. 

We can estimate as below, for $ F\in \mathcal F$:  
\begin{align} 
\Xi (F)&  \coloneqq  \Bigl\lvert   \sum_{Q\in \mathcal Q \::\: \pi _{\mathcal F} Q_2 =F} 
\mathbb E _{Q_2} \Delta ^{\sigma } _{Q_1 }f \cdot \langle  H _{\sigma } ( F - \tilde Q_1) ,  \Delta _{J} ^{w} g \rangle _{w} 
\Bigr\rvert
\\  & \stackrel{\eqref{e:zvf}}= 
 \Bigl\lvert\sum_{J \in \mathcal Q_2 \::\:  \pi _{\mathcal F}J =F }    
 \langle H _{\sigma } \varphi _{J},  \Delta ^{w} _{J} g\rangle _{w} 
\Bigr\rvert
\\&  
 \stackrel {\eqref{e:Zvf}}\lesssim  
\alpha _{f} (F) 
\sum_{\substack{S\in \mathcal S \\ \pi _{\mathcal F}S=F}} 
 \sum_{\substack{J\in \mathcal Q_2  \\  J \subset S }}  
 P(\sigma (F - S), J ) \bigl\lvert \bigl\langle  \frac x {\lvert   J\rvert }
 ,  \Delta ^{w} _{J} g \bigr\rangle_w\bigr\rvert 
 \\
 &
 \stackrel { \phantom{\eqref{e:Zvf}}}\lesssim  \alpha _{f} (F)  \Bigl[
 \sum_{\substack{S\in \mathcal S \\ \pi _{\mathcal F}S=F}} 
 \sum_{\substack{J\in \mathcal Q_2  \\  J \subset S }}  
 P(\sigma (F - S), J ) ^2  \bigl\langle \frac x {\lvert  J\rvert }, h ^{w} _{J} \bigr\rangle_{w} ^2  
  \times  \sum_{\substack{ J\in \mathcal Q_2\\  \pi _{\mathcal F}J =F }}   \hat g (J) ^2  
 \Bigr] ^{1/2} 
 \\
 & \stackrel {\eqref{e:S<}}\lesssim \textup{size}(\mathcal Q)  \alpha _{f} (F)  
  \Bigl[
   \sum_{\substack{S\in \mathcal S \\ \pi _{\mathcal F}S=F}}  \sigma (S) \times 
\sum_{\substack{J\in \mathcal Q_2 \\  \pi _{\mathcal F}J =F }}   \hat g (J) ^2  
 \Bigr] ^{1/2} 
 \\
 &  \stackrel {\phantom{\eqref{e:S<}}}\lesssim  \textup{size}(\mathcal Q)  \alpha _{f} (F)  \sigma (F) ^{1/2} 
   \Bigl[
\sum_{\substack{J\in \mathcal Q_2 \::\:   \pi _{\mathcal F}J =F }}   \hat g (J) ^2  
 \Bigr] ^{1/2}  \,. 
\end{align}
The top line follows from \eqref{e:zvf}. 
In the second, we appeal to \eqref{e:Zvf} and  monotonicity  principle, the latter being available to us since $ J \subset S$ implies $ J\Subset S$, by hypothesis.   
We also take advantage of the strong assumptions on the intervals in $ \mathcal Q_2$: If $ J\in \mathcal Q_2$, we must have $ \pi _{\mathcal F}J = \pi _{\mathcal F} (\pi _{\mathcal S} J)$.  
The third line is Cauchy--Schwarz, followed by the appeal to the hypothesis \eqref{e:S<}, while the last line uses the fact that the intervals in $\mathcal S $ are pairwise disjoint.  

The  quasi-orthogonality argument \eqref{e:quasi} completes the proof, namely we have 
\begin{equation} \label{e:Xi}
\sum_{F\in \mathcal F} \Xi (F) \lesssim \textup{size}(\mathcal Q) \lVert f\rVert_{\sigma } \lVert g\rVert_{w} \,. 
\end{equation}
\end{proof}

\begin{lemma}\label{l:Holes}  Let $ \mathcal S$ be a collection of pairwise disjoint intervals in $ F$. 
Let $ \mathcal Q$ be admissible such that for each $ Q\in \mathcal Q$, there is an $ S\in \mathcal S $ 
with $ Q_2 \subset S \Subset \tilde Q_1$.  
 Then, there holds 
\begin{gather}
\lvert  B _{\mathcal Q} (f,g)\rvert \lesssim \eta  \lVert f\rVert_{\sigma } \lVert g\rVert_{w} \,, 
\\ \label{e:Holes}
\textup{where} \quad \eta ^2  \coloneqq  \sup _{S\in \mathcal S} 
 \frac {\mathsf P(\sigma  (Q_1 -\pi _{ \tilde {\mathcal Q}_1} S), S) ^2 } {\sigma (S) \lvert  S\rvert ^2 } 
\sum_{J \in \mathcal Q _2 \::\: J\subset S}  \langle  x , h ^{w} _{J} \rangle_{w} ^2  \,. 
\end{gather}
\end{lemma}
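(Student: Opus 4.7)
The plan is to mimic the proof of Lemma~\ref{l:holes}, with one essential modification: here the containments between $J=Q_2$ and $S$ are swapped. In Lemma~\ref{l:holes} we had $J\Subset S\subset \tilde Q_1$, so the monotonicity principle could be applied at scale $J$ with weight supported off $S$. Now we have $J\subset S\Subset \tilde Q_1$, so monotonicity must be applied at the scale of $\tilde Q_1$, producing a Poisson term at scale $J$; we then \emph{regularize it up} to scale $S$ using Proposition~\ref{p:}.

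First I would set up Calder\'on--Zygmund stopping data $\mathcal F$, $\alpha_f(\cdot)$ on $f$ as in Lemma~\ref{l:holes}. Write
\[ B_\mathcal Q(f,g) = \sum_{J \in \mathcal Q_2} \langle H_\sigma \varphi_J, \Delta^w_J g\rangle_w, \qquad \varphi_J := \sum_{Q \in \mathcal Q:\ Q_2 = J} \mathbb E^\sigma_J \Delta^\sigma_{Q_1} f \cdot (I_0 - \tilde Q_1), \]
and let $K_J := \min\{\tilde Q_1 : (Q_1, J) \in \mathcal Q\}\in \tilde{\mathcal Q}_1$. Convexity in $Q_1$ and the telescoping computation of \eqref{e:Zvf} give $|\varphi_J(x)| \lesssim \alpha_f(\pi_\mathcal F J)\cdot (I_0-K_J)(x)$. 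Let $S(J)\in\mathcal S$ be the unique interval containing $J$; the hypothesis gives $S(J) \Subset K_J$, hence $J \subset S(J) \Subset K_J$, and in particular $J\Subset K_J$.

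Apply the monotonicity principle \eqref{e:mono1} with large interval $K_J$ and dominating weight $\alpha_f(\pi_\mathcal F J)(I_0-K_J)$ to obtain
\[ |\langle H_\sigma \varphi_J, \Delta^w_J g\rangle_w| \lesssim \alpha_f(\pi_\mathcal F J)\, P(\sigma(I_0 - K_J), J)\, \langle x/|J|, |\widehat g(J)| h^w_J\rangle_w. \]
Since $S(J)$ is good (inherited from the fact that $\mathcal S$ is extracted from $\tilde{\mathcal Q}_1\cup\mathcal Q_2$ in all applications within Section~\ref{s:stop}) and $J\subset S(J)\Subset K_J$, Proposition~\ref{p:} regularizes the Poisson term to scale $S(J)$, and monotonicity of Poisson in the weight ($\pi_{\tilde{\mathcal Q}_1} S(J) \subseteq K_J$) then yields
\[ \frac{P(\sigma(I_0 - K_J), J)}{|J|} \lesssim \frac{P(\sigma(I_0 - K_J), S(J))}{|S(J)|} \le \frac{P(\sigma(I_0 - \pi_{\tilde{\mathcal Q}_1} S(J)), S(J))}{|S(J)|}. \]

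Fix $F\in\mathcal F$ and sum over $J$ with $\pi_\mathcal F J = F$, grouping intermediately by those $S\in\mathcal S$ with $\pi_\mathcal F S = F$ and $J\subset S$. A double Cauchy--Schwarz (first over $J\subset S$, then over $S$), the definition \eqref{e:Holes} of $\eta$, and the pairwise disjointness of $\mathcal S$ within $F$ yield
\[ \Xi(F) := \sum_{J:\ \pi_\mathcal F J = F} |\langle H_\sigma \varphi_J, \Delta^w_J g\rangle_w| \lesssim \eta\, \alpha_f(F)\, \sigma(F)^{1/2}\, \|Q^w_F g\|_w, \]
and the quasi-orthogonality argument \eqref{e:quasi} concludes the proof after summation over $F\in \mathcal F$. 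The main obstacle in the plan is the scale mismatch: the monotonicity principle naturally sees only the scale $J$, whereas $\eta$ is defined at the scale of $S$; bridging the two through Proposition~\ref{p:} is the crux, and requires the (routinely available) goodness of $S$.
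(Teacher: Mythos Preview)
Your proof is correct and follows essentially the same route as the paper's: build Calder\'on--Zygmund stopping data for $f$, use the telescoping bound \eqref{e:Zvf} on $\varphi_J$, apply the monotonicity principle, regularize the Poisson term to scale $S$, then Cauchy--Schwarz against the definition of $\eta$ and finish with quasi-orthogonality. The only difference is presentational: the paper collapses the monotonicity step and the Poisson regularization into a single line, writing $P(\sigma(I_0-\pi_{\tilde{\mathcal Q}_1}S),S)\langle x/|S|,h^w_J\rangle_w$ directly, whereas you make the use of Proposition~\ref{p:} explicit. Your observation that this step requires $S$ to be good (or the child of a good interval) is well taken; the paper's proof uses this implicitly, and it does hold in every application within \S\ref{s:stop}, since there $\mathcal S\subset\mathcal L\subset\tilde{\mathcal Q}_1\cup\mathcal Q_2$.
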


\begin{proof}
Construct stopping data $ \mathcal F$ and $ \alpha _{f} ( \cdot )$ as in the  proof of Lemma~\ref{l:holes}.   
The fundamental inequality \eqref{e:Zvf} is again used. 
Then,  by  the monotonicity principle \eqref{e:mono1}, there holds for $ F\in \mathcal F$, 
\begin{align*}
\Xi (F) \coloneqq  &
\Bigl\lvert 
\sum_{Q\in \mathcal Q \::\: \pi _{\mathcal F}Q_2=F} \mathbb E _{Q_2} \Delta ^{\sigma } _{Q_1} f \cdot 
\langle  H _{\sigma } (F- \tilde Q_1),  \Delta ^{w} _{Q_2} g\rangle_w 
\Bigr\rvert
\\
& \lesssim  \alpha _{f} (F)
\sum_{S\in \mathcal S \::\: \pi _{\mathcal F}S=F} 
\mathsf P(\sigma  (F-\pi _{ \tilde {\mathcal Q}_1} S),S ) \sum_{J\in \mathcal Q_2 \::\:   J \subset S}  
 \bigl\langle \frac x {\lvert  S\rvert }, h ^{w} _{J} \bigr\rangle_{w} \cdot \lvert   \hat g (J)\rvert  
\\
& \lesssim 
\alpha _{f} (F) 
\Bigl[
\sum_{S\in \mathcal S \::\: \pi _{\mathcal F}S=F} 
\mathsf P(\sigma  (F-\pi _{ \tilde {\mathcal Q}_1} S),S ) ^2 
 \sum_{J\in \mathcal Q_2 \::\:   J \subset S}  
 \bigl\langle \frac x {\lvert  S\rvert }, h ^{w} _{J} \bigr\rangle_{w} ^2 
\times 
 \sum_{J\in \mathcal Q_2 \::\:  J \subset S} \hat g (J)  ^2 
\Bigr] ^{1/2} 
\\
& \lesssim \eta 
\alpha _{f} (F) 
\Bigl[
\sum_{S\in \mathcal S \::\: \pi _{\mathcal F}S=F}  \sigma (S)
\times 
 \sum_{J\in \mathcal Q_2 \::\:   J \subset S } \hat g (J)  ^2 
\Bigr] ^{1/2} 
\\
& \lesssim 
\eta  \alpha _{f} (F) \sigma (F) ^{1/2} 
\Bigl[
\sum_{\substack{J\in \mathcal Q_2  \::\:  \pi _{\mathcal F}J =F }}   \hat g (J) ^2  
\Bigr] ^{1/2}  \,. 
\end{align*}
After the monotonicity principle \eqref{e:mono1}, we have used Cauchy-Schwarz, and the definition of $ \eta $.  
The quasi-orthogonality argument \eqref{e:quasi} then completes the analysis of this term, see \eqref{e:Xi}.
\end{proof}

The last Lemma that we need  is elementary, and is contained in the methods of \cite{10031596}. 

\begin{lemma}\label{l:equal} Let $ u\ge r+1$ be an integer, and  $ \mathcal Q$ be an admissible collection of pairs such that $\lvert  Q_1\rvert= 2 ^{u} \lvert  Q_2\rvert  $ for all $ Q\in \mathcal Q$.  There holds 
\begin{equation*}
\lvert  B _{\mathcal Q} (f,g) \rvert \lesssim \textup{size} (\mathcal Q)  \lVert f\rVert_{\sigma } \lVert g\rVert_{w}  \,. 
\end{equation*}
\end{lemma}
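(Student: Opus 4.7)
Since $|Q_1| = 2^u |Q_2|$ is a fixed ratio, each second coordinate $Q_2 \in \mathcal Q_2$ determines $Q_1$ uniquely (as its dyadic ancestor of length $2^u |Q_2|$), and the hypothesis $u \ge r+1$ forces $Q_2 \Subset \tilde Q_1$, where $\tilde Q_1 = (Q_1)_{Q_2}$ is the child of $Q_1$ containing $Q_2$. Every inner product appearing in $B_{\mathcal Q}(f,g)$ therefore satisfies the hypotheses of the monotonicity principle \eqref{e:mono1}: choosing the sign of $h^{w}_{Q_2}$ so that $\langle x, h^{w}_{Q_2} \rangle_w \ge 0$,
\begin{equation*}
	\bigl| \langle H_\sigma (I_0 - \tilde Q_1), \Delta^{w}_{Q_2} g \rangle_w \bigr|
	\lesssim |\widehat g(Q_2)| \cdot P(\sigma (I_0 - \tilde Q_1), Q_2) \cdot \Bigl\langle \tfrac{x}{|Q_2|}, h^{w}_{Q_2} \Bigr\rangle_w.
\end{equation*}
The explicit form of $h^\sigma_{Q_1}$ also yields $|\mathbb E^\sigma_{\tilde Q_1} \Delta^\sigma_{Q_1} f| \le |\widehat f(Q_1)|/\sqrt{\sigma(\tilde Q_1)}$.

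Cauchy--Schwarz in $Q \in \mathcal Q$ separates the two functions:
\begin{equation*}
	|B_{\mathcal Q}(f,g)|^2 \le \Bigl(\sum_{Q} \widehat g(Q_2)^2\Bigr) \cdot \Bigl(\sum_Q \frac{\widehat f(Q_1)^2}{\sigma(\tilde Q_1)} \cdot \frac{P(\sigma (I_0 - \tilde Q_1), Q_2)^2 \, \langle x, h^{w}_{Q_2}\rangle_w^2}{|Q_2|^2}\Bigr).
\end{equation*}
Since each $Q_2$ appears in at most one pair, the first factor is at most $\lVert g\rVert_w^2$. The plan for the second factor is to rescale the Poisson integral from $|Q_2|$ to $|\tilde Q_1|$. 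Goodness of $Q_2$ applied to the enclosing interval $\tilde Q_1$ (permitted since $|\tilde Q_1|/|Q_2| = 2^{u-1} \ge 2^r$) gives $\textup{dist}(Q_2, \partial \tilde Q_1) \gtrsim |Q_2|^\varepsilon |\tilde Q_1|^{1-\varepsilon}$; a direct pointwise comparison of the Poisson kernels, splitting on whether $x$ is close to or far from $\tilde Q_1$, then yields
\begin{equation*}
	\frac{P(\sigma (I_0 - \tilde Q_1), Q_2)}{|Q_2|} \lesssim 2^{2\varepsilon u} \, \frac{P(\sigma (I_0 - \tilde Q_1), \tilde Q_1)}{|\tilde Q_1|},
\end{equation*}
where the loss $2^{2\varepsilon u}$ is harmless because $u$ is bounded (by $2r+2$) in every application of this lemma.

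Grouping the remaining sum by $\tilde Q_1$ (each $Q_1$ has two children and each compatible $Q_2$ belongs to exactly one of them), the definition \eqref{e:size} of $\textup{size}(\mathcal Q)$ at $K = \tilde Q_1 \in \tilde{\mathcal Q}_1$ gives
\begin{equation*}
	\sum_{Q_2 \::\: (Q_1)_{Q_2} = \tilde Q_1} P(\sigma (I_0 - \tilde Q_1), Q_2)^2 \, \frac{\langle x, h^{w}_{Q_2}\rangle_w^2}{|Q_2|^2}
	\lesssim \frac{P(\sigma (I_0 - \tilde Q_1), \tilde Q_1)^2}{|\tilde Q_1|^2} \sum_{Q_2 \subset \tilde Q_1} \langle x, h^{w}_{Q_2}\rangle_w^2
	\lesssim \textup{size}(\mathcal Q)^2 \, \sigma(\tilde Q_1).
\end{equation*}
The factor $\sigma(\tilde Q_1)$ cancels the weight $1/\sigma(\tilde Q_1)$, and summing over $\tilde Q_1$ (at most two per $Q_1$) bounds the second factor by $2\,\textup{size}(\mathcal Q)^2 \lVert f\rVert_\sigma^2$, producing the claim upon taking square roots. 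The one delicate point will be the Poisson rescaling: Proposition~\ref{p:} cannot be quoted verbatim (there is no outer $K \Supset \tilde Q_1$ to anchor the Poisson weight), but goodness of $Q_2$ provides an adequate substitute, at the price of a constant depending only on $r$ and $\varepsilon$.
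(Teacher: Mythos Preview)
Your proof is correct and follows the same skeleton as the paper's: the monotonicity principle, the Haar coefficient bound $|\mathbb E^\sigma_{\tilde Q_1}\Delta^\sigma_{Q_1}f| \le |\widehat f(Q_1)|/\sigma(\tilde Q_1)^{1/2}$, Cauchy--Schwarz, and the definition of size. The one substantive difference is where you invoke size. You apply it at $K = \tilde Q_1 \in \tilde{\mathcal Q}_1$, which forces the Poisson rescaling from $Q_2$ to $\tilde Q_1$ and costs you the factor $2^{2\varepsilon u}$. The paper instead (implicitly) applies size at $K = Q_2 \in \mathcal Q_2$: since $I_0 - \tilde Q_1 \subset I_0 - Q_2$ one has $P(\sigma(I_0-\tilde Q_1),Q_2) \le P(\sigma(I_0-Q_2),Q_2)$, and the single term $J=Q_2$ in the size supremum already gives
\[
\frac{P(\sigma(I_0-\tilde Q_1),Q_2)^2}{|Q_2|^2}\,\langle x, h^w_{Q_2}\rangle_w^2 \;\le\; \textup{size}(\mathcal Q)^2\,\sigma(Q_2).
\]
Then the disjointness of the $Q_2$'s of fixed length inside $\tilde Q_1$ gives $\sum_{Q_2 \subset \tilde Q_1}\sigma(Q_2)/\sigma(\tilde Q_1) \le 1$, which plays the role of your cancellation of $\sigma(\tilde Q_1)$, and no rescaling is needed. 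This yields the lemma with a constant independent of $u$, matching the statement as written rather than only its applications. Your observation that $u \le 2r+2$ in every use of the lemma is correct, so your version suffices for the main theorem; but the direct route at $K=Q_2$ is both shorter and sharper, and avoids the caveat in your final paragraph entirely.
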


\begin{proof}
  Recall the form of the stopping form in \eqref{e:stop}.  
Observe, from inspection of the definition of the Haar function \eqref{e:hs1}, that 
\begin{equation*}
\lvert  \mathbb E ^{\sigma } _{I_J} \Delta ^{\sigma }_I f\rvert \le \frac {\lvert  \hat f (I) \rvert  } {\sigma (I_J) ^{1/2} } \,. 
\end{equation*}
Then, an elementary application of the monotonicity principle gives us 
\begin{align*}
\lvert  B _{\mathcal Q} (f,g)\rvert 
& \le \sum_{I \in \mathcal Q_1}  {\lvert  \hat f (I) \rvert  } 
\sum_{\substack{J \::\:  (I,J)\in \mathcal Q   }} {\sigma (I_J) ^{-1/2} }
  \mathsf P(\sigma (F-I_J), J )  \bigl\langle \frac x {\lvert  J\rvert }, h ^{w} _{J} \bigr\rangle_{w} 
\lvert  \hat g (J) \rvert 
\\
& \le 
\lVert f\rVert_{\sigma } 
\biggl[
 \sum_{I \in \mathcal Q_1} 
 \biggl[ 
\sum_{\substack{J \::\:  (I,J)\in \mathcal Q   }}  \frac {1} {\sigma (I_J)  } 
 \mathsf P(\sigma (F-I_J), J )  \bigl\langle \frac x {\lvert  J\rvert }, h ^{w} _{J} \bigr\rangle_{w} 
\lvert  \hat g (J) \rvert 
 \biggr] ^2 
\biggr] ^{1/2} 
\\
& \le \textup{size} (\mathcal Q) \lVert f\rVert_{\sigma } \lVert g\rVert_{w} 
\end{align*}
This follows immediately from Cauchy-Schwarz, and the fact that for each $ J\in \mathcal Q _2$, there is a unique $ I\in \mathcal Q_1$ 
such that the pair $ (I,J)$ contribute  to the sum above. 
\end{proof}

\subsection{Context and Discussion}

\subsubsection{}
The proof herein succeeds because the notion of size approximates the operator norm of the stopping form. 
Moreover,  the `large' portions of the stopping form, there is a decoupling that takes place.

\subsubsection{}
It is very interesting that one can prove unconditional results about the two weight Hilbert transform, following the techniques in \cite{MR3285857}, 
without solving the local problem.

\section{Elementary Estimates} \label{s:elem}

This section is devoted to the proof of Lemma~\ref{l:above}.   
The estimates fall into many subcases, and are of a more classical nature, albeit the $ A_2$ assumption is critical. (In fact, all the estimates in this section 
	depend only on the  half-Poisson $ A_2$ hypothesis, but this is not systematically tracked in the notation.) 
In addition, all estimates should be interpreted as uniform over all smooth truncations. Some of these 
are off-diagonal estimates, for which the smooth truncations are important.  The uniformity over truncations is however 
suppressed in notation. 

First  some basic estimates are collected. 
This is  property of good intervals, which can be effectively used in non-critical situations. 

\begin{lemma}\label{l.donotuse} For three intervals $ J, I, I'\in \mathcal D$ with 
$ J \subset I \subset I'$, $ \lvert  J\rvert = 2 ^{-s} \lvert  I\rvert  $,  with $ s\ge r$ and $ J$ good, then 
\begin{equation}  \label{e:donotuse}
P (\sigma \cdot (I'-I) , J ) \le 2 ^{- (1- \varepsilon ) s} P (\sigma \cdot I ', I )\,.
\end{equation}
\end{lemma}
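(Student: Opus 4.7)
The plan is to establish the inequality by a pointwise comparison of the Poisson kernels on the set $I'\setminus I$ and then integrate against $\sigma$. The only nontrivial input is $(\varepsilon,r)$-goodness of $J$.

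First, I would unpack goodness applied to $J$. Since $\lvert I\rvert = 2^{s}\lvert J\rvert$ with $s\ge r$, the defining property gives $\textup{dist}(\partial I, J_\pm)\ge \lvert J\rvert^{\varepsilon}\lvert I\rvert^{1-\varepsilon}$ for each dyadic child $J_\pm$. Because $J=J_+\cup J_-$, this yields
\[
\textup{dist}(J,\partial I)\ \ge\ d\coloneqq \lvert J\rvert^{\varepsilon}\lvert I\rvert^{1-\varepsilon}=2^{-\varepsilon s}\lvert I\rvert.
\]
For any $x\in I'\setminus I$, one-dimensional geometry dictates that the shortest path from $x$ to $J$ must first cross $\partial I$, so
\[
\textup{dist}(x,J)\ \ge\ \textup{dist}(x,I)+d,
\]
and hence $\lvert J\rvert + \textup{dist}(x,J)\ge d+\textup{dist}(x,I)$. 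This is the only geometric input needed.

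The main step is the pointwise inequality
\[
\frac{\lvert J\rvert}{(\lvert J\rvert+\textup{dist}(x,J))^{2}}\ \le\ C\,2^{-(1-\varepsilon)s}\,\frac{\lvert I\rvert}{(\lvert I\rvert+\textup{dist}(x,I))^{2}},\qquad x\in I'\setminus I,
\]
which after cross-multiplying reduces to controlling $\tfrac{\lvert J\rvert}{\lvert I\rvert}\bigl(\tfrac{\lvert I\rvert+\textup{dist}(x,I)}{\lvert J\rvert+\textup{dist}(x,J)}\bigr)^{2}$. I would split on whether $\textup{dist}(x,I)\le \lvert I\rvert$. In the first case the numerator is at most $2\lvert I\rvert$ and the denominator is at least $d$, giving a bound of $4(\lvert J\rvert/\lvert I\rvert)(\lvert I\rvert/d)^{2}=4\cdot 2^{-(1-2\varepsilon)s}$. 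In the second case both $\lvert I\rvert+\textup{dist}(x,I)$ and $\lvert J\rvert+\textup{dist}(x,J)$ are comparable to $\textup{dist}(x,I)$, so the ratio collapses to $\lesssim 2^{-s}$.

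Integrating the pointwise bound against $\sigma(dx)$ over $I'\setminus I$ produces the claim, since the resulting right-hand side is dominated by $P(\sigma\cdot I', I)$. The only obstacle is cosmetic: the natural exponent produced by the calculation is $1-2\varepsilon$ rather than the stated $1-\varepsilon$, but this is a harmless relabeling, since the parameter $\varepsilon$ is chosen freely in \S\ref{s:rd}; equivalently one may replace the goodness parameter by $\varepsilon/2$ in the preceding setup. No deeper obstruction is expected, as the argument is purely geometric once goodness is invoked.
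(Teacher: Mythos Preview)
Your argument is correct and is essentially the same as the paper's: invoke goodness to get $\textup{dist}(x,J)\ge \lvert J\rvert^{\varepsilon}\lvert I\rvert^{1-\varepsilon}$ for $x\in I'\setminus I$, then compare Poisson kernels pointwise and integrate. Your observation about the exponent is also on target---the paper's own proof concludes with $2^{-(1-2\varepsilon)s}$ rather than the $2^{-(1-\varepsilon)s}$ written in the statement, so the discrepancy you flagged is a typo in the lemma, not a defect in your reasoning.
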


\begin{proof}
Note that for $ x\in I'-I$ we have 
\begin{equation*}
\textup{dist} (x,J) \ge   \lvert  I\rvert ^{1- \varepsilon } \lvert  J\rvert ^{\varepsilon } 
= 2 ^{s (1- \varepsilon )} \lvert  J\rvert\,.  
\end{equation*}
Using this in the definition of the Poisson integral, we get 
\begin{align*}
P (\sigma \cdot (I'-I) , J ) 
& \le 2\int _{I'-I} \frac {\lvert  J\rvert } { \textup{dist} (x,J)^2 }  \; \sigma (dx) 
\\
& \lesssim \frac {\lvert  J\rvert } {\lvert  I\rvert } 
 \int _{I'-I} \frac {\lvert  I\rvert } { (\lvert  J\rvert  + \textup{dist} (x,J) ) ^2 } \; \sigma (dx) 
 \\
 & \lesssim 2 ^{- s (1- 2 \varepsilon )} 
 \int _{I'-I} \frac {\lvert  I\rvert } { (\lvert  I\rvert  + \textup{dist} (x,I) ) ^2 } \; \sigma (dx) 
 = 2 ^{- s (1- 2 \varepsilon )} P (\sigma (I'-I),I) \,. 
\end{align*}
\end{proof}

\begin{proposition}\label{p:Eip} Suppose that two intervals $ I,J \in \mathcal D$ satisfy $ \lvert  I\rvert \ge \lvert  J\rvert  $, and $ 3I\cap J = \emptyset $, then 
\begin{equation}\label{e:Eip}
\sup _{0 < \alpha < \beta }\lvert  \langle H ( \sigma  I), h ^{w} _{J} \rangle _{w}\rvert \lesssim \sigma (I) \sqrt {w (J)} \frac { \lvert  J\rvert } { (\lvert  J\rvert +\textup{dist} (I,J) ) ^2 }
\end{equation}
\end{proposition}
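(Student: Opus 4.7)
The idea is to exploit the $w$-mean-zero property of $h^w_J$ to introduce a centering in the kernel, producing a first-order cancellation that is captured by the gradient estimate in Proposition~\ref{p:grad}. The hypotheses $\lvert I\rvert\ge \lvert J\rvert$ and $3I\cap J=\emptyset$ supply exactly the geometry needed: they force $\textup{dist}(I,J)\ge \lvert I\rvert \ge \lvert J\rvert$, so for any $x\in J$ (with center $x_J$) and $y\in I$,
\[
\lvert y-x\rvert,\ \lvert y-x_J\rvert \ \gtrsim\ \lvert J\rvert+\textup{dist}(I,J),\qquad 2\lvert x-x_J\rvert\le \lvert J\rvert\le \lvert y-x\rvert.
\]

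First I would use Fubini and the $w$-integral zero condition on $h^w_J$ to rewrite
\[
\langle H_{\alpha,\beta}(\sigma I), h_J^w\rangle_w
= \int_I \!\int_J \bigl[K_{\alpha,\beta}(y-x)-K_{\alpha,\beta}(y-x_J)\bigr]\, h_J^w(x)\, w(dx)\,\sigma(dy).
\]
The inequality $2\lvert x-x_J\rvert\le \lvert y-x\rvert$ puts us in the scope of Proposition~\ref{p:grad} (with $x'=x_J$), which gives, uniformly in $\alpha,\beta$,
\[
\bigl\lvert K_{\alpha,\beta}(y-x)-K_{\alpha,\beta}(y-x_J)\bigr\rvert
\;\le\; \frac{\lvert x-x_J\rvert}{\lvert y-x\rvert\,\lvert y-x_J\rvert}
\;\lesssim\; \frac{\lvert x-x_J\rvert}{(\lvert J\rvert+\textup{dist}(I,J))^2}.
\]

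Next I would integrate in $y\in I$ producing the factor $\sigma(I)$, and estimate the remaining $x$-integral by the trivial bound $\lvert x-x_J\rvert\le \lvert J\rvert$ on $J$ together with Cauchy--Schwarz and the $L^2(w)$-normalization of the Haar function:
\[
\int_J \lvert x-x_J\rvert\,\lvert h_J^w(x)\rvert\, w(dx) \;\le\; \lvert J\rvert \sqrt{w(J)}\,\lVert h_J^w\rVert_{L^2(w)} \;=\; \lvert J\rvert\sqrt{w(J)}.
\]
Assembling these estimates yields the desired bound. There is no substantive obstacle here: uniformity over truncations is automatic, since the constant $C_{x,x',y}$ in Proposition~\ref{p:grad} is bounded by one in every regime, so the final estimate is truncation-free.
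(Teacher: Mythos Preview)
Your proof is correct and is essentially the same as the paper's: both use the $w$-mean-zero of $h^w_J$ to introduce the centered difference $K_{\alpha,\beta}(y-x)-K_{\alpha,\beta}(y-x_J)$, invoke Proposition~\ref{p:grad} to bound it by $\lvert x-x_J\rvert/\lvert y-x\rvert\lvert y-x_J\rvert$, and then use the geometry $\textup{dist}(I,J)\ge \lvert I\rvert\ge \lvert J\rvert$ together with $\lVert h^w_J\rVert_{L^2(w)}=1$ to conclude. Your write-up is in fact a bit more explicit than the paper's in checking the hypothesis $2\lvert x-x_J\rvert\le \lvert y-x\rvert$ and in spelling out the Cauchy--Schwarz step.
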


\begin{proof}
Since $ h ^{w} _{J}$ has $ w$-integral zero, estimate as below, where $ x_J$ is the center of $ J$. 
\begin{align*}
\lvert  \langle H I, h ^{w} _{J} \rangle _{w} \rvert &=
\Bigl\lvert \int _{I} \int _{J}  K _{\alpha , \beta}(y-x) \cdot h ^{w} _{J} (x) \; w (dx) \sigma (dy)  \Bigr\rvert
\\
&= 
\Bigl\lvert \int _{I} \int _{J} \bigl \{ K _{\alpha , \beta}(y-x ) - K _{\alpha , \beta}(y-x_J)\bigr\}  h  ^{w} _{J} (x) \; w (dx) \sigma (dy)  \Bigr\rvert
\\
&\lesssim  \int _{I} \int _{J}  \frac { \lvert  J\rvert } { (\lvert  J\rvert +\textup{dist} (I,J) ) ^2 } \lvert   h  ^{w} _{J} (x)\rvert  \; w (dx) \sigma (dy) . 
\end{align*}
The Lemma follows by inspection. 

\end{proof}

\begin{proposition}\label{p:EEip} 
Suppose that two intervals $ I,J \in \mathcal D$ satisfy $ 2 ^{s}\lvert J  \rvert = \lvert  I\rvert  $,  where $ s>r$,  the interval $ J$ is good, and 
$J\subset 3I \setminus I $, then  
\begin{equation}\label{e:EEip}
\sup _{0 < \alpha < \beta }\lvert  \langle H ( \sigma  I), h ^{w} _{J} \rangle _{w}\rvert \lesssim 2 ^{- (1- 2\varepsilon )s }  \sigma (I) \sqrt {w (J)} 
\lvert  I\rvert ^{-1}  
\end{equation}
\end{proposition}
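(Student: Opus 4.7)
The plan is to mimic the proof of Proposition~\ref{p:Eip}, gaining the extra factor $2^{-(1-2\varepsilon)s}$ by exploiting the goodness of $J$ relative to $I$. Since $h^w_J$ has $w$-integral zero, for any truncation parameters $0<\alpha<\beta$,
\begin{equation*}
\langle H_{\alpha,\beta}(\sigma I), h^w_J\rangle_w
= \int_I\!\int_J \bigl\{K_{\alpha,\beta}(y-x)-K_{\alpha,\beta}(y-x_J)\bigr\}\, h^w_J(x)\, w(dx)\,\sigma(dy),
\end{equation*}
where $x_J$ is the center of $J$.

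The first step is to verify the hypothesis $2|x-x_J|<|y-x|$ of Proposition~\ref{p:grad}. Since $s>r$, we have $|I|=2^s|J|\ge 2^{r-1}|J|$, so by the goodness of $J$ applied to the ancestor $I\in\mathcal D$,
\begin{equation*}
\operatorname{dist}(J,\partial I)\ge |J|^\varepsilon |I|^{1-\varepsilon}=2^{s(1-\varepsilon)}|J|.
\end{equation*}
Because $J\subset 3I\setminus I$ is disjoint from $I$, this lower bound is actually $\operatorname{dist}(J,I)\ge 2^{s(1-\varepsilon)}|J|$. Then for $y\in I$ and $x\in J$, one has $|y-x|\ge 2^{s(1-\varepsilon)}|J|$ while $2|x-x_J|\le |J|$, so the hypothesis of Proposition~\ref{p:grad} is satisfied (with room to spare, since $s>r$ and $\varepsilon\le 1/4$).

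The second step applies \eqref{e:grad} to bound
\begin{equation*}
\bigl|K_{\alpha,\beta}(y-x)-K_{\alpha,\beta}(y-x_J)\bigr|
\le \frac{|x-x_J|}{|y-x|\,|y-x_J|}
\lesssim \frac{|J|}{\operatorname{dist}(J,I)^2},
\end{equation*}
uniformly in $0<\alpha<\beta$, since both $|y-x|$ and $|y-x_J|$ are $\ge\operatorname{dist}(J,I)$. Inserting this into the double integral and using $\int_J |h^w_J|\,dw \le \sqrt{w(J)}\,\|h^w_J\|_w=\sqrt{w(J)}$ by Cauchy--Schwarz, together with $\int_I \sigma(dy)=\sigma(I)$, gives
\begin{equation*}
\bigl|\langle H_{\alpha,\beta}(\sigma I), h^w_J\rangle_w\bigr|
\lesssim \sigma(I)\sqrt{w(J)}\,\frac{|J|}{\operatorname{dist}(J,I)^2}.
\end{equation*}

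The final step is the arithmetic: $\operatorname{dist}(J,I)\ge 2^{s(1-\varepsilon)}|J|$ gives $|J|/\operatorname{dist}(J,I)^2\le 2^{-2s(1-\varepsilon)}|J|^{-1}=2^{-2s(1-\varepsilon)}\cdot 2^s|I|^{-1}=2^{-(1-2\varepsilon)s}|I|^{-1}$, which yields \eqref{e:EEip}. There is no real obstacle; the only point that requires attention is the invocation of goodness of $J$ at the parent/ancestor $I$, rather than at some common ancestor of $I$ and $J$, which is legitimate because the definition of $(\varepsilon,r)$-good controls the distance to $\partial\tilde I$ for every $\tilde I\in\mathcal D$ with $|\tilde I|\ge 2^{r-1}|J|$.
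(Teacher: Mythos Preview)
Your proof is correct and follows essentially the same route as the paper's: apply the subtraction and gradient estimate exactly as in Proposition~\ref{p:Eip}, then use goodness of $J$ to obtain $\operatorname{dist}(J,I)\ge 2^{s(1-\varepsilon)}|J|$ and do the arithmetic. One terminological slip: you call $I$ an ``ancestor'' of $J$, but since $J\subset 3I\setminus I$ it is not; however, your final sentence correctly notes that the definition of $(\varepsilon,r)$-good applies to \emph{every} $\tilde I\in\mathcal D$ with $|\tilde I|\ge 2^{r-1}|J|$, not just ancestors, so the invocation is legitimate and the argument stands.
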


\begin{proof}
Under the assumption of the Lemma, the proof of Proposition~\ref{p:Eip} holds, supplying the estimate 
estimate of that Lemma. But, the extra assumption that $ J$ is good implies that 
$ \textup{dist} (J,I)> 2 ^{s (1- \varepsilon )} \lvert  J\rvert $, and then the estimate follows by inspection. 

\end{proof}

\subsection{The Weak Boundedness Inequality} 

The following inequality is a weak-boundedness inequality, a consequence of the $ A_2$ inequality.  
Here, we look at the Hilbert transform inequality on two disjoint intervals. 

\begin{proposition} \label{p:weakB}
There holds for all disjoint intervals  $ I, J$ with no point masses at their endpoints,    
\begin{equation} \label{e:weakB}
\sup _{0 < \alpha < \beta }\lvert  \langle H ( \sigma  f \cdot I),  g \cdot J \rangle _{w}\rvert \lesssim \mathscr A_2 ^{1/2}   \lVert f\rVert_{\sigma } \lVert g\rVert_{w}\,.  
\end{equation}
The constant on the right can in fact be taken as follows. For a point $ a$ that separates the interiors of $ I$ and $ J$, 
with $ I$ to the left of $ a$, 
\begin{equation} \label{e:4compact}
\sup _{r >0}   P (\sigma \mathbf 1_{(- \infty ,a)} , (a, a+r)) \frac {w (a, a+r)} {r} +P (w  \mathbf 1_{(a, \infty )}, (a, a+r)) \frac {\sigma  (a-r, a)} {r} .  
\end{equation}
In particular, for arbitrary intervals $ I$ and $ J$ with no point masses at the endpoints, 
\begin{equation} \label{e:weakIJ}
\lvert  \langle  H _{\sigma } I, J \rangle _{w}\rvert \lesssim \mathscr A_2 ^{1/2}  [ \sigma (I)  w (J)] ^{1/2} 
\end{equation}

\end{proposition}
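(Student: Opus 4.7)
The plan is to reduce, via positivity and a change of variables, to a Stieltjes-type bilinear form on the half-line, and to estimate this by the half-Poisson quantity \eqref{e:4compact}. WLOG $f,g \ge 0$. With $a$ separating the interiors of $I$ (on the left) and $J$ (on the right), $x-y > 0$ throughout the domain of integration, and the pointwise bound $\lvert K_{\alpha,\beta}(y-x)\rvert \le 1/(x-y)$ from \eqref{e:Kabdef} makes the truncation uniformly harmless. The target is then
\[
B := \iint_{J\times I} \frac{f(y)g(x)}{x-y}\,\sigma(dy)\,w(dx) \lesssim \eta^{1/2}\,\lVert f\rVert_\sigma\,\lVert g\rVert_w,
\]
where $\eta$ denotes the supremum in \eqref{e:4compact}. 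The main bound \eqref{e:weakB} with $\mathscr A_2^{1/2}$ then follows since $P(\sigma\mathbf 1_{(-\infty,a)}, (a,a+r)) \cdot w(a,a+r)/r \le P(\sigma,(a,a+r)) P(w,(a,a+r)) \le \mathscr A_2$ by \eqref{e:A2}; and \eqref{e:weakIJ} is the specialization $f = \mathbf 1_I$, $g = \mathbf 1_J$, using $\mathscr A_2^{1/2} \le \mathscr H$.

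First I would substitute $u := x-a$, $v := a-y$ to rewrite $B$ as $\int_0^\infty\!\int_0^\infty \tilde f(v)\tilde g(u)/(u+v)\,d\tilde\sigma(v)\,d\tilde w(u)$ for the pushforward measures $\tilde\sigma, \tilde w$ on $(0,\infty)$. This is a positive Stieltjes form whose two-weight boundedness is governed by the Muckenhoupt-type condition $\sup_{r>0} \tilde\sigma(0,r)\tilde w(0,r)/r^2 < \infty$, which is implied by \eqref{e:4compact} once one notes that $P(\sigma\mathbf 1_{(-\infty,a)}, (a,a+r)) \gtrsim \tilde\sigma(0,r)/r$ just by inspection of the Poisson integrand on $(a-r,a)$.

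Next, I would partition $(0,\infty) = \bigsqcup_k U_k$ with $U_k := [2^k, 2^{k+1})$. On $U_j\times U_k$ one has $u+v \asymp 2^{\max(j,k)}$, so Cauchy--Schwarz on each piece gives
\[
B \lesssim \sum_{j,k} A_{jk}\,c_k d_j, \qquad A_{jk} := \frac{\tilde\sigma(U_k)^{1/2}\,\tilde w(U_j)^{1/2}}{2^{\max(j,k)}},
\]
where $c_k := \lVert\tilde f \mathbf 1_{U_k}\rVert_{\tilde\sigma}$, $d_j := \lVert\tilde g \mathbf 1_{U_j}\rVert_{\tilde w}$ satisfy $\sum c_k^2 \le \lVert f\rVert_\sigma^2$ and $\sum d_j^2 \le \lVert g\rVert_w^2$. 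The remaining step is to bound $(A_{jk})$ as an operator on $\ell^2$ by $\eta^{1/2}$, after which Cauchy--Schwarz in $(c_k)$ and $(d_j)$ (``quasi-orthogonality'' in their support) closes the estimate.

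The hard part will be this final matrix bound. The half-Poisson $A_2$ at scale $r = 2^{m+1}$ immediately yields the \emph{entrywise} estimate $A_{jk} \lesssim \eta^{1/2}$, but that is not an operator bound. To upgrade it via Schur's test, one must exploit the Poisson character of the hypothesis: for $j \ll k$, the mass of $\tilde\sigma$ in $U_k$ still contributes to $P(\sigma\mathbf 1_{(-\infty,a)},(a,a+2^j))$ through a factor of order $2^j/2^k$, and this tail feedback supplies the necessary geometric decay in $\lvert j-k\rvert$. With natural Schur weights (a power of $\tilde\sigma(0,2^k)$ or $\tilde w(0,2^j)$ at the right exponent), the half-Poisson condition should then close both Schur estimates. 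In the Lebesgue case $A_{jk} = 2^{-\lvert j-k\rvert/2}$ is Toeplitz with geometric decay, so boundedness is automatic; for general non-doubling weights this decay must be extracted from \eqref{e:4compact} rather than read off the entries.
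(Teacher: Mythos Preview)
Your reduction to a positive Stieltjes form on $(0,\infty)^2$ via the change of variables $u=x-a$, $v=a-y$ is exactly the paper's first move, and the overall plan is sound. But two things go wrong after that.

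First, the claim that the Stieltjes form is ``governed by the Muckenhoupt-type condition $\sup_{r>0}\tilde\sigma(0,r)\tilde w(0,r)/r^2<\infty$'' is false. Take $\tilde\sigma=\delta_1$ and $\tilde w=\sum_{n\ge 1}2^{2n}\delta_{2^n}$: the simple product stays bounded ($\tilde w(0,2^m)/2^{2m}\simeq 1$), but $\int_0^\infty\frac{\tilde g(u)}{u+1}\,d\tilde w(u)$ fails to be bounded on $L^2(\tilde w)$ because $\sum_n 2^{2n}/(2^n+1)^2=\infty$. The correct conditions are the two \emph{Hardy-type tails}
\[
\sup_r\,\tilde\sigma(0,r)\int_r^\infty\frac{d\tilde w(u)}{u^2}\qquad\text{and}\qquad\sup_r\,\tilde w(0,r)\int_r^\infty\frac{d\tilde\sigma(v)}{v^2},
\]
and these are precisely what \eqref{e:4compact} gives you, since $P(\tilde w,(0,r))\ge r\int_r^\infty u^{-2}\,d\tilde w(u)$.

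Second, the Schur step is both unnecessary and wrongly motivated. The half-Poisson condition does \emph{not} supply geometric decay in $|j-k|$ for the matrix $A_{jk}$; your own computation shows it only gives the entrywise bound $A_{jk}\lesssim\eta^{1/2}$. (The ``tail feedback'' you describe, $\tilde\sigma(U_k)\cdot 2^j/2^{2k}\le P(\tilde\sigma,(0,2^j))$, produces exactly $A_{jk}^2\le\eta$ and nothing more.) Schur \emph{can} be made to work, but the weights must encode the Hardy structure --- e.g.\ $p_k=\tilde\sigma(0,2^{k+1})^{1/2}$ --- and carrying that out is just a disguised proof of Muckenhoupt's Hardy inequality.

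The paper's route is much cleaner: split $B=\iint_{v<u}+\iint_{u<v}$. On $\{v<u\}$ one has $u+v\simeq u$, so the piece becomes $\langle \tilde g,\,\tfrac1u\int_0^u\tilde f\,d\tilde\sigma\rangle_{\tilde w}$, which by Cauchy--Schwarz reduces to the two-weight Hardy inequality $\|\int_0^{\,\cdot}\tilde f\,d\tilde\sigma\|_{L^2(u^{-2}d\tilde w)}\lesssim\|\tilde f\|_{\tilde\sigma}$. Muckenhoupt's criterion (stated in the paper as Prior Result~\ref{p:hardy}) characterizes this by the first tail condition above, which is dominated by the second summand of \eqref{e:4compact}. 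The other triangle is dual and uses the first summand. No dyadic decomposition or Schur test is needed.
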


It is useful to note that the \emph{global} integrability of indicators is then a consequence of the 
$ A_2$ and interval testing conditions.

Since the intervals are disjoint, there is no possibility of cancellation in the estimate, and it therefore is closely relate to the Hardy inequality. 
In the two weight setting, this has been characterized by  Muckenhoupt \cite{MR0311856}. 

\begin{priorResults}\label{p:hardy} 
 For weights $ \widehat{w }$ and $\sigma $  supported on $ \mathbb R _+$.  
\begin{gather}
\Bigl\lVert  \int_{(0,x)}f \;\sigma(dy)  \Bigr\rVert_{\hat  w}
\leq \mathscr B \lVert f \rVert_{\sigma } \label{e:Muck}\,, 
\\ 
\textup{where} \quad 
\mathscr B^{2}\simeq \sup_{0< r < \infty }  \int_{(r, \infty)  }\widehat{w } (dx) \times 
\int_{(0,r)}\sigma(dy)   \,. \label{e:Bis}
\end{gather}
\end{priorResults}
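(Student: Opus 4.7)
The plan is to prove the two directions separately. The necessity of $\mathscr{B}^2 \gtrsim \sup_r \hat{w}([r,\infty)) \sigma([0,r])$ is an immediate testing argument: plug $f = \mathbf{1}_{[0,r]}$ into \eqref{e:Muck}, observe that for $x > r$ one has $\int_0^x f\,\sigma(dy) = \sigma([0,r])$, and compute both sides. So the substance is sufficiency.

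For sufficiency, set $F(x) \coloneqq \int_0^x f\,\sigma(dy)$, and after replacing $f$ by $|f|$ assume $F$ is non-decreasing. Introduce the stopping levels
\begin{equation*}
a_k \coloneqq \inf\{x > 0 \::\: F(x) \geq 2^k\}, \qquad k \in \mathbb Z,
\end{equation*}
so that $\{a_k\}$ is a (possibly partial) increasing sequence and $F(x) < 2^{k+1}$ for $x < a_{k+1}$. Partitioning $\mathbb R_+$ by the intervals $[a_k, a_{k+1})$ gives the pointwise bound
\begin{equation*}
\int_0^\infty F(x)^2 \; \hat{w}(dx) \leq \sum_k 4^{k+1} \hat{w}([a_k, a_{k+1})).
\end{equation*}

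The key step is to relate each $4^k$ to an integral of $f^2$ over a \emph{disjoint} piece of $\mathbb R_+$. Because $F(a_k) \geq 2^k$ and $F(a_{k-1}) \leq 2^{k-1}$ (with equality in the limit when $\sigma$ has no atom there), we have $F(a_k) - F(a_{k-1}) \gtrsim 2^k$; applying Cauchy--Schwarz to the integral representing this difference yields
\begin{equation*}
4^k \lesssim \Bigl(\int_{a_{k-1}}^{a_k} f \,\sigma(dy)\Bigr)^2 \leq \sigma([a_{k-1}, a_k]) \cdot \int_{a_{k-1}}^{a_k} f^2 \,\sigma(dy).
\end{equation*}
Multiplying by $\hat{w}([a_k, a_{k+1}))$ and invoking the hypothesis at the point $r = a_k$,
\begin{equation*}
\sigma([a_{k-1}, a_k]) \cdot \hat{w}([a_k, a_{k+1})) \leq \sigma([0, a_k]) \cdot \hat{w}([a_k, \infty)) \lesssim \mathscr{B}^2,
\end{equation*}
one obtains $4^k \hat{w}([a_k, a_{k+1})) \lesssim \mathscr{B}^2 \int_{a_{k-1}}^{a_k} f^2 \,\sigma(dy)$. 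Summing in $k$ and using that the intervals $[a_{k-1}, a_k]$ are pairwise disjoint gives $\lVert F \rVert_{\hat{w}}^2 \lesssim \mathscr{B}^2 \lVert f \rVert_\sigma^2$, as required.

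The main obstacle is the book-keeping around atoms of $\sigma$ (which can force $F$ to jump across dyadic thresholds, so that $F(a_k)$ may strictly exceed $2^k$) and the degenerate cases where $F$ is bounded or the sequence $\{a_k\}$ terminates. These are handled by replacing the dyadic threshold $2^k$ by $\lambda^k$ for $\lambda$ slightly larger than $1$ if desired, or by truncating at the top and bottom of the range of $F$; the resulting loss is absorbed in the $\simeq$ constant. Everything else is Cauchy--Schwarz plus the testing inequality at the points $a_k$.
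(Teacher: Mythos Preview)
Your argument is correct and is a well-known alternative to the one the paper gives, but it is genuinely different in method. The paper follows Muckenhoupt's original ``multiply--divide'' proof: with $\phi(x)=\sigma((0,x))$, one writes $\int_0^x f\,d\sigma=\int_0^x f\,\phi^{1/4}\cdot\phi^{-1/4}\,d\sigma$, applies Cauchy--Schwarz, and then uses the elementary identity $\int_0^x \phi^{-1/2}\,d\phi=2\phi(x)^{1/2}$ (Proposition~\ref{p:BV}) twice --- once on the $\sigma$ side and once, after Fubini, on the $\hat w$ side --- so that the hypothesis $\sigma((0,r))\,\hat w((r,\infty))\le\mathscr B^2$ enters pointwise. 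Your proof instead discretizes the range of $F(x)=\int_0^x f\,d\sigma$ at dyadic thresholds, uses Cauchy--Schwarz on each gap $[a_{k-1},a_k]$, and invokes the hypothesis only at the stopping points $r=a_k$.

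What each approach buys: the paper's integration-by-parts route is continuous and handles atoms of $\sigma$ and $\hat w$ without any special book-keeping (once Proposition~\ref{p:BV} is stated for general increasing $\phi$), and it is the historically canonical argument. Your stopping-time route is closer in spirit to the Carleson-embedding and corona machinery used throughout the rest of the paper, and makes the role of the testing condition at single scales $r=a_k$ very transparent; the cost is exactly the atom/degeneracy housekeeping you flag. One small correction: the intervals $[a_{k-1},a_k]$ are not literally pairwise disjoint (they share endpoints), and the real issue when $\sigma$ has a large atom is that several consecutive $a_k$ can coincide --- but then the corresponding $\hat w([a_k,a_{k+1}))$ vanish, so those indices drop out of the sum automatically, which is a cleaner fix than changing the base $2$ to $\lambda$.
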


For the sake of completeness, we recall Muckenhoupt's proof of this result.  This preparation is proved by integration by parts. 

\begin{proposition}\label{p:BV} Let $ \phi $ be an increasing  function on $ (0, \infty )$, with $ \phi (0)=0$ and $ \phi $ strictly positive on $ (0, \infty )$.   Then,  
\begin{equation}\label{e:BV}
\int _{(0,x]} \phi (t) ^{-1/2} d \phi (t) \leq  2 \phi (x) ^{1/2}, 
\end{equation}
with equality if $ \phi $ is continuous.  
\end{proposition}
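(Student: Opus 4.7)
The shape of the identity $\int_0^x \phi^{-1/2}\,d\phi = 2\phi(x)^{1/2}$ mirrors the classical antiderivative relation $\frac{d}{du}(2u^{1/2}) = u^{-1/2}$, so the plan is to realize this at the level of Stieltjes differentials: establish the identity
\[
 \phi(t)^{-1/2}\,d\phi(t) \;=\; 2\,d\bigl(\phi(t)^{1/2}\bigr)
\]
and then integrate from $0$ to $x$, using $\phi(0)=0$ to get $\int_0^x \phi^{-1/2}\,d\phi = 2[\phi^{1/2}(x)-\phi^{1/2}(0)] = 2\phi(x)^{1/2}$.

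The cleanest route to the differential identity is the product rule, which is exactly the integration by parts advertised in the paper. Writing $\phi(t) = \phi(t)^{1/2}\cdot\phi(t)^{1/2}$ and applying the Stieltjes product rule gives $d\phi = 2\phi^{1/2}\,d(\phi^{1/2})$; dividing through by $\phi^{1/2}$ (legitimate since $\phi>0$ on $(0,\infty)$) yields the displayed identity, after which the claim follows by integration.

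A parallel route, more literally an integration by parts, proceeds as follows. Set $I := \int_0^x \phi^{-1/2}\,d\phi$ and integrate by parts with $u=\phi^{-1/2}$, $dv = d\phi$. The boundary term is
\[
 \bigl[\phi(t)^{-1/2}\cdot \phi(t)\bigr]_0^x \;=\; \phi(x)^{1/2} - \lim_{t\downarrow 0}\phi(t)^{1/2} \;=\; \phi(x)^{1/2},
\]
using $\phi(0)=0$. The remaining integral $-\int_0^x \phi\,d(\phi^{-1/2})$ is computed by the chain rule $d(\phi^{-1/2}) = -\tfrac12\phi^{-3/2}\,d\phi$, which gives $\tfrac12\int_0^x \phi^{-1/2}\,d\phi = I/2$. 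Solving $I = \phi(x)^{1/2} + I/2$ yields $I = 2\phi(x)^{1/2}$.

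The one step that requires care is the chain rule $d(\phi^{1/2}) = \tfrac12\phi^{-1/2}\,d\phi$, which is transparent when $\phi$ is absolutely continuous (then both sides are ordinary Lebesgue integrals against $\phi'(t)\,dt$); the application to the Hardy inequality that follows only uses this for $\phi$ built out of integrals of the weights, where absolute continuity in the relevant variable is automatic. The $\phi(0)=0$ hypothesis is used exactly once, to kill the boundary term at the left endpoint, and the positivity of $\phi$ on $(0,\infty)$ ensures the integrand is finite on the interior of the interval of integration.
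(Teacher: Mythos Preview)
Your proposal is correct and follows exactly the approach the paper indicates: the paper's entire ``proof'' is the one-line remark that the proposition ``is proved by integration by parts,'' and you have simply supplied the details of that integration by parts (in two equivalent forms). Your caveat about the chain rule for non-absolutely-continuous $\phi$ is a fair observation; strictly speaking the equality can fail at jumps, but only the inequality $\int_0^x \phi^{-1/2}\,d\phi \le 2\phi(x)^{1/2}$ (with the right-continuous convention) is needed in the Hardy-inequality application, and that survives.
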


\begin{proof}[Proof of Theorem \ref{p:hardy}] 
We are free to assume that the function $ \phi (x) = \sigma ((0,x))$ is strictly positive on $ (0, \infty )$.  
Then, multiply and divide by $ \phi (x) ^{1/4}$, and use Cauchy--Schwarz to see that 
\begin{align*}
\Bigl\lVert  \int_{(0,x)}f \;\sigma(dy)  \Bigr\rVert_{\hat  w} ^2 
& \le \int _{(0,\infty) } \int _{(0,x)} f (y) \phi (y) ^{1/2} \; \sigma (dy)  \cdot 
 \int _{(0,x)} \phi (y) ^{-1/2} \; \sigma (dy)  \; \hat w (dx) 
\\& \leq 2 
 \int _{(0,\infty) } \int _{(0,x)} f (y) \phi (y) ^{1/2} \; \sigma (dy)  \cdot  \phi (x) ^{1/2}  \; \hat w (dx) 
 \\
 & = 2\int _{(0,\infty) } f (y) \phi (y) ^{1/2}  \int _{(y,\infty) } \phi (x) ^{1/2}  \; \hat w (dx) \; \sigma (dy)
\end{align*} 
Above, we have used \eqref{e:BV}, and then Fubini.  Concentrate on the inner integral. 
Our definition of $ \mathscr B$ and Proposition~\ref{p:BV} gives us 
\begin{align*}
\mathscr B  \int _{(y,\infty) }  \Biggl[ \int _{(x,\infty) } \hat w (dt)   \Biggr] ^{-1/2}  \; \hat w (dx)  
& \leq 2\mathscr B  \Biggl[ \int _{(y,\infty) } \hat w (dt)   \Biggr] ^{1/2} 
\end{align*}
And, now we can estimate 
\begin{align*}
\Bigl\lVert  \int_{(0,x)}f \;\sigma(dy)  \Bigr\rVert_{\hat  w} ^2  
& \le 4 \mathscr B \int _{(0, \infty ) } f (y) \phi (y) ^{1/2}  \Biggl[ \int _{(y,\infty) } \hat w (dt)   \Biggr] ^{1/2}  \; \sigma (dy) 
\le 4 \mathscr B ^2 \lVert f\rVert_{\sigma } ^2 . 
\end{align*}
The proof is complete. 
\end{proof}

\begin{proof}[Proof of Proposition~\ref{p:weakB}] 
Interval testing and \eqref{e:weakB} prove the estimate \eqref{e:weakIJ}, so we turn to the proof of \eqref{e:weakB}.

After a translation, we can assume that $ 0$ separates the interiors of $ I$ and $ J$. Let us assume that $ I$ is to the left of zero. 
We change the problem.
Set $ \tilde \sigma (dx) = \sigma (-dx)$ for $ x\geq 0$, and for $ f \in L ^2 (I, \sigma )$, set $ \phi (x) = f (-x)$. Then, 
\begin{align}
\langle H _{\sigma } f, g \rangle _{w} &= \int_{(- \infty ,0)}\int_{(0,\infty )}\frac {f (y) g (x)} {y-x}  \; \sigma (dy) w (dx)
\\ \label{e:Xcompact}
&= - \int_{(0,\infty )}\int_{(0,\infty )} \frac {\phi  (y) g (x)} {x+y}  \; \tilde \sigma (dy) w (dx). 
\end{align}
The double integral is split into dual terms, one of which is 
\begin{equation} \label{e:xII}
\int_{(0,\infty )}\int_{(0,x )} \frac {\phi  (y) g (x)} {x+y}   \;\tilde \sigma (dy)\, \; w (dx) . 
\end{equation}
We analyze this bilinear form.  

Note that $ x+y \simeq x$ in \eqref{e:xII}.  Thus, it suffices to estimate 
\begin{align*}
\int _{(0,\infty) }\Bigl\lvert  \int_{(0,x )} \frac {\phi  (y) } {x}   \;\tilde \sigma (dy) \Bigr\rvert ^2 \; w (dx) 
& = 
\int _{(0,\infty) }\Bigl\lvert  \int_{(0,x )} \frac {\phi  (y) } {x}   \;\tilde \sigma (dy) \Bigr\rvert ^2 \; \frac {w (dx) } {x ^2 } 
\leq  \mathscr B ^2  \lVert \phi \rVert_{\tilde \sigma } ^{2}.   
\end{align*}
where $ \mathscr B$ is as in \eqref{e:Bis}, and $ \hat w (dx) =  \frac {w (dx) } {x ^2 } $ and $ \sigma = \tilde \sigma $

It remains to estimate the constant $ \mathscr B$,   For any $0 < r <  \infty $, 
\begin{align*}
 \int_{(0,r)}  \tilde \sigma(dy)  \int_{(r, \infty) }d\widehat{w }& 
= \frac {\sigma (-r,0) } r \int_{(r,\infty )}\frac r {x ^2 } \; w (dx) 
\lesssim \mathscr{A}_{2}.
\end{align*}
The more precise conclusion \eqref{e:4compact} can be read off from this inequality.  
Recall that \eqref{e:Xcompact} is split into two bilinear forms, and we have only considered one of them. 
This explains the symmetric form of \eqref{e:4compact}.  
\end{proof}

\subsection{The Different Subcases of Lemma~\ref{l:above}}

Lemma~\ref{l:above} follows from appropriate bounds on these bilinear forms, and their duals. 
\begin{gather}\label{e:nearby}
B ^{\textup{nearby}} (f,g) 
\coloneqq  \sum_{\substack{I, J \::\: 2 ^{-r-1} \lvert  I\rvert\le \lvert  J\rvert\le  \lvert  I\rvert  \\  3 I \cap  J \neq \emptyset }} 
\lvert  \langle H _{\sigma} \Delta ^{\sigma}_I f , \Delta ^{w} _{J} \phi  \rangle _{w}  \rvert \,, 
\\ \label{e:far}
 B ^{\textup{far}} (f,g) 
\coloneqq  \sum_{\substack{I, J \::\: 3 I \cap  3J = \emptyset }} 
\lvert  \langle H _{\sigma} \Delta ^{\sigma}_I f , \Delta ^{w} _{J} \phi  \rangle _{w}  \rvert \,, 
\\
\\ \label{e:close}
 B ^{\textup{close}} (f,g) 
\coloneqq  \sum_{\substack{I, J \::\:    2 ^{r}\lvert  J\rvert\le  \lvert  I\rvert  \\   J\subset 3I \setminus I }} 
\lvert  \langle H _{\sigma} \Delta ^{\sigma}_I f , \Delta ^{w} _{J} \phi  \rangle _{w}  \rvert \,, 
\\ \label{e:adj}
B ^{\textup{adjacent}} (f,g) 
\coloneqq  \sum_{\substack{I, J \::\:   J \Subset I_J}} 
\lvert  \mathbb E ^{\sigma } _{I- I_J} \Delta ^{\sigma}_I f   \langle H _{\sigma}  (I- I_J), \Delta ^{w} _{J} \phi  \rangle _{w}  \rvert \,. 
\end{gather}

\begin{lemma}\label{l:3}  For $ \star \in \{ \textup{nearby},\ \textup{far},\  \textup{close},\ \textup{adjacent}\}$, there holds 
\begin{equation*}
B ^{\star} (f,g) \lesssim \mathscr A_2 ^{1/2} \lVert f\rVert_{\sigma } \lVert g\rVert_{w} \,. 
\end{equation*}
\end{lemma}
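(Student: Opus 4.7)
I bound each of the four forms in turn, using only the off-diagonal and weak-boundedness estimates of this section; in particular, the full bound $\lesssim \mathscr{A}_2^{1/2}$ requires only the half-Poisson $A_2$ hypothesis, not interval testing. Throughout, I write $\Delta^\sigma_I f = \hat f(I)\, h^\sigma_I$ and $\Delta^w_J g = \hat g(J)\, h^w_J$ with $\|h^\sigma_I\|_\sigma = \|h^w_J\|_w = 1$. The unifying strategy is to obtain a pointwise bound on $|\langle H_\sigma h^\sigma_I, h^w_J\rangle_w|$ in each geometric configuration, then assemble the sum by Cauchy--Schwarz, preceded when necessary by a Schur-type reorganization.

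For the nearby form, the constraints $2^{-r}|I| \le |J| \le |I|$ and $3I \cap J \ne \emptyset$ enforce bounded multiplicity: for each $I$ there are only $O(2^r)$ admissible $J$, and symmetrically. The weak-boundedness inequality (Proposition~\ref{p:weakB}, in the form \eqref{e:weakB}) applied to $h^\sigma_I$ and $h^w_J$ --- after splitting each Haar function into its two constant pieces on its children, to isolate pairs of sub-intervals with disjoint interiors --- yields $|\langle H_\sigma h^\sigma_I, h^w_J\rangle_w| \lesssim \mathscr{A}_2^{1/2}$ term by term. Cauchy--Schwarz over the $O(2^r)$ pairs then delivers the nearby bound.

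For the far and close forms I exploit the cancellation of the smaller Haar function $h^w_J$ via the gradient estimate \eqref{e:grad}. This produces the pointwise Poisson-type bound of Proposition~\ref{p:Eip}, namely
\[
|\langle H_\sigma h^\sigma_I, h^w_J\rangle_w| \lesssim \frac{|J|}{(|J| + \mathrm{dist}(I,J))^2} \sqrt{\sigma(I)\, w(J)},
\]
for the far case, and its sharpened version of Proposition~\ref{p:EEip} with extra decay $2^{-(1-2\varepsilon)s}$ (where $2^s |J| = |I|$) for the close case, in which goodness of $J$ forces separation from $I$. Summing against $|\hat f(I)| \cdot |\hat g(J)|$ is a standard Schur argument: for fixed $J$, group $I$ by the scale of a containing parent, apply Cauchy--Schwarz at each scale, and reduce the resulting Poisson sum by the half-Poisson $A_2$ condition. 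The geometric series $\sum_{s \ge r} 2^{-(1-2\varepsilon)s}$ is summable for $\varepsilon < 1/2$ and controls the close case across its levels.

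The adjacent form is analogous to far/close but with the argument of $H_\sigma$ now being the sibling $I - I_J$ of the child of $I$ containing $J$. Since $J \Subset I$ is good, $\mathrm{dist}(I - I_J,\, J) \gtrsim |J|^\varepsilon |I|^{1-\varepsilon}$, so Proposition~\ref{p:Eip} applied to $H_\sigma(I - I_J)$ against $h^w_J$ yields a Poisson-type bound with the good-interval gap. The coefficient $\mathbb{E}^\sigma_{I - I_J} \Delta^\sigma_I f$ is $\hat f(I)$ times the constant value of $h^\sigma_I$ on $I - I_J$, and together with a Schur summation over scales of $I$ above $J$ this produces the $\mathscr{A}_2^{1/2}$ bound. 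The main obstacle throughout is organizing the Schur step so that the resulting double sum collapses to exactly one instance of the half-Poisson $A_2$ product; once the good-interval decay has tamed the scale variable and the bounded multiplicity has tamed the pair count, the $A_2$ condition alone closes the estimate.
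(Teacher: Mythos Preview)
Your plan mirrors the paper's proof almost exactly: weak boundedness (Proposition~\ref{p:weakB}) for the nearby term, the off-diagonal Poisson estimate Proposition~\ref{p:Eip} plus Cauchy--Schwarz and the $A_2$ condition for the far term, and the good-interval sharpening Proposition~\ref{p:EEip} with its $2^{-(1-2\varepsilon)s}$ decay for the close and adjacent terms. The paper organizes each of the last three by first freezing the scale ratio $s=\log_2(|I|/|J|)$ and then summing the resulting geometric series in $s$; your ``Schur summation over scales'' is the same device described from the other side.

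One inaccuracy worth flagging in the nearby case: splitting each Haar function into its two children does \emph{not} always isolate pairs with disjoint interiors. The nearby constraint $2^{-r}|I|\le|J|\le|I|$ with $3I\cap J\neq\emptyset$ certainly allows $J\subset I$, and then the child $I_J$ of $I$ containing $J$ is \emph{nested} with both children of $J$, not disjoint from them. The paper does exactly the same splitting but then applies the weak-boundedness bound to each child pair regardless of nesting; for the nested pairs this uses \eqref{e:weakIJ} (which invokes testing for the diagonal piece $\langle H_\sigma K,K\rangle_w$), and indeed the paper writes $\mathscr H$ rather than $\mathscr A_2^{1/2}$ in its nearby argument. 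This is harmless for the downstream application to Lemma~\ref{l:above}, but your parenthetical justification that only $A_2$ is needed does not go through as you wrote it.
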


\subsection{The  Nearby Term}\label{s.12}
One can check directly that for each interval $ I$, with child $ I'$, there holds 
$
\lvert  \mathbb E ^{\sigma } _{I'} h ^{\sigma } _{I}\rvert \le \sigma (I') ^{-1/2 } $.
It then follows from \eqref{e:weakB} that 
$
\lvert  \langle H _{\sigma } h ^{\sigma } _I, h ^{w}_J \rangle_w \rvert \lesssim \mathscr H 
$. 
And then, 
\begin{align*}
B ^{\textup{nearby}} (f,g) 
& \lesssim  \mathscr H
\sum_{\substack{I, J \::\: 2 ^{-r-1} \lvert  I\rvert\le \lvert  J\rvert\le  \lvert  I\rvert  \\  3 I \cap  J \neq \emptyset }} 
\lvert  \hat f (I)  \hat g (J) \rvert 
 \lesssim \mathscr H \lVert f\rVert_{\sigma } \lVert g\rVert_{w} \,. 
\end{align*}
The last line follows from the fact that for each $ I$, there are only a bounded number of $ J$ occurring in the sum. 

Here, and below, we will be using the notation $ \hat f (I) = \langle f, h ^{\sigma } _{I} \rangle _{\sigma } $.

 \subsection{The Far Term} 
 We consider the case of $ \lvert  J\rvert \le \lvert  I\rvert  $, and $ 3I \cap 3J \neq \emptyset $. 
 It follows that $ J\subset 3 ^{s+1} I \setminus 3 ^{s} I$ for some integer $ s\ge 1$.  
 For an interval $ K$, integer $ s\ge r$ and $ t\ge 0$, consider the two projections 
 \begin{align*}
\Pi _{K,s,t} ^{\sigma } f &\coloneqq  
\sum _{ \substack{ I \::\: I \subset 3 ^{t+2} K \setminus 3 ^{t+1} K \\ \lvert  I\rvert = \lvert  K\rvert  }} \Delta ^{\sigma } _{I} f 
\\
\Pi _{K,s,t} ^{w} g& \coloneqq  
\sum_{\substack{J \::\: J\subset 3 ^{t}K\\ 2 ^{s}\lvert  J\rvert= \lvert  K\rvert }} \Delta ^{w} _{J} g . 
\end{align*}
These projections satisfy, for fixed $ s, t$, 
\begin{equation} \label{e:pi-2}
\sum_{K} \lVert \Pi _{K,s,t} ^{\sigma } f\rVert_{\sigma } ^2 \le \lVert f\rVert_{\sigma } ^2 ,
\end{equation}
with a similar bound for $ \Pi _{K,s,t} ^{w} g$.  
Also,  we need to bound 
 \begin{equation}  \label{e:bound}
\sum_{s\ge r} \sum_{t \ge 0} 
\bigl\lvert \langle H _{\sigma } \Pi _{K,s,t} ^{\sigma }f ,  \Pi _{K,s,t} ^{w} g \rangle _{w}\bigr\rvert . 
\end{equation}

But, using the fact that $ \Delta ^{w} _{J} g $ has mean zero, and the distance between the support of $ \Pi _{K,s,t} ^{\sigma }f $ 
and $ \Pi _{K,s,t} ^{w} g$ is approximately $ 3 ^{t} \lvert  K\rvert $, we have 
\begin{align*}
\bigl\lvert \langle H _{\sigma } \Pi _{K,s,t} ^{\sigma }f ,  \Pi _{K,s,t} ^{w} g \rangle _{w}\bigr\rvert 
&\lesssim \frac {2 ^{-s} \lvert  K\rvert } { 3 ^{2t} \lvert  K\rvert ^2  } 
\lVert \Pi _{K,s,t} ^{\sigma }f \rVert_{L ^{1} (\sigma )} \lVert \Pi _{K,s,t} ^{w} g\rVert_{L ^{1} (w)} 
\\
& \lesssim 
\frac {\sqrt { \sigma (3 ^{t+2} K) w (3 ^{t} K)}} { 2 ^{s} 3 ^{2t} \lvert  K\rvert }   
\lVert \Pi _{K,s,t} ^{\sigma }f \rVert_{\sigma } \lVert \Pi _{K,s,t} ^{w} g\rVert_{w} 
\\& \lesssim 
2 ^{-s} 3 ^{-t} \mathscr A_2 ^{1/2} 
\lVert \Pi _{K,s,t} ^{\sigma }f \rVert_{\sigma } \lVert \Pi _{K,s,t} ^{w} g\rVert_{w} . 
\end{align*}
Since we have gained geometric decay in $ s$, and $ t$, and we have the inequality \eqref{e:pi-2}, 
we can easily complete the proof of \eqref{e:bound}.

\subsection{The Close Term}
For integers $ s\ge r$, the sum below a relative length of $ J$ with respect to $ I$. Applying \eqref{e:EEip}, 
\begin{align*}
\sum_{\substack{I, J \::\:    2 ^{s}\lvert  J\rvert= \lvert  I\rvert  \\   J\subset 3I \setminus I }} 
\lvert  \langle H _{\sigma} \Delta ^{\sigma}_I f , \Delta ^{w} _{J} \phi  \rangle _{w}  \rvert 
& \lesssim 
2 ^{ (1-2 \varepsilon )s} \sum_{\substack{I, J \::\:    2 ^{s}\lvert  J\rvert= \lvert  I\rvert  \\   J\subset 3I \setminus I }}  
\lvert  \hat f (I)  \hat  g (J)\rvert \frac {\sqrt {\sigma (I) w (J)}} {\lvert  I\rvert } 
\\
& \lesssim 
2 ^{ (1-2 \varepsilon )s} 
\sum_{I} \lvert  \hat f (I) \rvert \frac {\sqrt {\sigma (I)}}  {\lvert  I\rvert } 
 \sum_{\substack{ J \::\:    2 ^{s}\lvert  J\rvert= \lvert  I\rvert  \\   J\subset 3I \setminus I }}  
\lvert  \hat g (J) \rvert  \sqrt {w (J)}
\end{align*}
We have the geometric decay in $ s$. Apply Cauchy--Schwarz, one term is $ \lVert f\rVert_{\sigma }$. The other term, squared, is 
\begin{align*}
\sum_{I}   \frac {  {\sigma (I)}}  {\lvert  I\rvert ^2 } 
 \sum_{\substack{ J \::\:    2 ^{s}\lvert  J\rvert= \lvert  I\rvert  \\   J\subset 3I \setminus I }}   \hat g (J) ^2 
 \times &
 \sum_{\substack{ J \::\:    2 ^{s}\lvert  J\rvert= \lvert  I\rvert  \\   J\subset 3I \setminus I }}  w (J) 
 \lesssim 
\mathscr A_2  \sum_{I} 
 \sum_{\substack{ J \::\:    2 ^{s}\lvert  J\rvert= \lvert  I\rvert  \\   J\subset 3I \setminus I }}   \hat g (J) ^2  
\lesssim \mathscr A_2    \lVert g\rVert_{w} ^2  \,. 
\end{align*}
This completes the estimate. 

\subsection{The Adjacent Term}
We argue as in the previous case. 
It is easy to see that $ \lvert  \mathbb E ^{\sigma } _{I- I_J} \Delta ^{\sigma}_I f  \rvert \lesssim \lvert  \hat f (I) \rvert \sigma (I-I_J) ^{-1/2}$. 

For  $ \theta \neq \theta ' \in \{\pm\}$, and consider the sum below, where $ s$ plays the same role as before. 
\begin{align*} 
\sum _{ \substack{  I, J \;:\; 2 ^{s} \lvert  J\rvert= \lvert  I\rvert  \\ J \subset I + (\theta ' \lvert  I\rvert)   }}  
\bigl\lvert \mathbb E _{I _{\theta }} ^{\sigma } \Delta ^{\sigma } _{I}   f  \cdot 
\langle  H _{\sigma }  I _{\theta }, \Delta  ^{w } _{J} g \rangle _{w }\bigr\rvert  
& \lesssim  2 ^{- (1 - 2\varepsilon )s} 
\sum _{ \substack{  I, J \;:\; 2 ^{s} \lvert  J\rvert= \lvert  I\rvert  \\ J \subset I + (\theta ' \lvert  I\rvert)   }} 
\lvert  \hat f (I) \hat g (J)  \rvert \frac {\sqrt {\sigma (I _{\theta }) w (J)}} {\lvert  I\rvert }  
\\
& \lesssim 2 ^{- (1 - 2\varepsilon )s}  \mathscr A_2 ^{1/2} \lVert f\rVert_{\sigma } \lVert g\rVert_{w}\,. 
\end{align*}
The details are suppressed. 

\subsection{Context and Discussion} 

The techniques of this section are all drawn from the work of Nazarov-Treil-Volberg \cites{V,10031596}, aside from the use of the two weight Hardy inequality, which is drawn from \cite{10014043}.

\section{Proof under the Pivotal Assumption} \label{s:pivotal}

We prove an upper bound for a two weight inequality assuming a 
pivotal condition on a pair of weights.  The setup is as follows. 
Let  $ K  (y)$ satisfy the size and gradient condition 
\begin{gather*}
\lvert  x-y\rvert  \cdot \lvert  \nabla K (x,y)\rvert + \lvert  K (x,y)\rvert \le \lvert  x-y\rvert ^{-1} \,. 
\end{gather*} 
We will consider the operator $ T f$ given formally by $ \textup{p.v.} \int K (x,y) f (y) \; dy$.  In the two weight setting, 
no principal value need exist, so given two weights $ \sigma , w$, we consider the constant  $ \mathscr N _T$, which is be the best constant in the inequality 
\begin{equation*}
\Bigl\lVert  \int   K (x,y) f (y) \; \sigma (dy) \Bigr\rVert_{w} 
\le \mathscr N_T \lVert f\rVert_{\sigma } \,. 
\end{equation*}

Let $ \mathscr P$ be the best constant in the \emph{pivotal inequality}, defined as follows. 
 For any interval $ I_0$ and any partition $ \mathcal P$ of $ I_0$ into intervals such that neither $ \sigma $ nor $ w $ have point masses at the endpoints,  there holds 
\begin{equation}\label{e:Pivotal}
\begin{split}
\sum_{I \in \mathcal P  }  
P (   \sigma (I_0 \setminus I) , I) ^2  w (I) 
\le  \mathscr P ^2 \sigma (I_0) \,. 
\end{split}
\end{equation}
We also require that the dual inequality, with the roles of $w $ and $ \sigma $ reversed, holds.  
One can note that this inequality will hold if the maximal function satisfies the two weight inequality $ \lVert M _{\sigma }f\rVert_{w} \lesssim \lVert f\rVert_{\sigma }$, and its dual.  

\begin{theorem}\label{t:Pivotal}[Nazarov-Treil-Volberg \cite{V}]  Assume that the pair of weights $ w, \sigma $  satisfy the $ A_2$ condition \eqref{e:A2}, and the pivotal conditions hold, namely $ \mathscr P < \infty $. Then, there holds 
$ \mathscr N _{T} \lesssim \mathscr T_T + \mathscr A_2 ^{1/2} + \mathscr P$, where $ \mathscr T$ is the best constant in the inequalities 
\begin{equation*}
\int _{I} \lvert  T _{\sigma } I\rvert ^2 \; w (dx) \le \mathscr T_T ^2 \sigma (I)\,, \qquad  
\int _{I} \lvert  T _{w } I\rvert ^2 \; \sigma  (dx) \le \mathscr T_T ^2 w (I)\,.
\end{equation*}
\end{theorem}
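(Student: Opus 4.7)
The plan is to follow the architecture of the proof of the main theorem (Figure~\ref{f:proof}), replacing the functional energy inequality of Section~\ref{s:fe} and the size-based recursion for the stopping form of Section~\ref{s:stop} by direct applications of the pivotal hypothesis \eqref{e:Pivotal}. Since the pivotal hypothesis is strictly stronger than the energy inequality (take $E(w,I)\le 1$ in \eqref{e:energy}), the preliminary estimates of Section~\ref{s:elem} extend to the kernel under consideration: the off-diagonal kernel estimate \eqref{e:grad} is replaced by its analogue under \eqref{e:ss}, and Lemma~\ref{l:goodBad} lets me restrict attention to $(\varepsilon,r)$-good $f\in L^2(\sigma)$ and $g\in L^2(w)$ supported on a common interval $I_0$. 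Then Lemma~\ref{l:3} (nearby, far, close, adjacent) handles all pieces of the Haar expansion of $\langle T_\sigma f,g\rangle_w$ except for the two triangular forms $B^{\textup{above}}(f,g)$ and $B^{\textup{below}}(f,g)$, of which it suffices by duality to treat the former.

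Next I would construct standard Calder\'on-Zygmund stopping data $(\mathcal F,\alpha_f)$ for $f$ as in Lemma~\ref{l:CZ}, dropping the energy stopping intervals since pivotal will supersede their role. The decomposition \eqref{e:ABOVE} splits $B^{\textup{above}}(f,g)$ into global and local pieces. For the global piece, the Hilbert--Poisson exchange argument of Theorem~\ref{t:aboveCorona} reduces the estimate to an inequality of the shape
\begin{equation*}
\sum_{F\in\mathcal F}\sum_{J^*\in\mathcal J^*(F)} P(h\sigma,J^*)\,\bigl|\bigl\langle\tfrac{x}{|J^*|},g_F J^*\bigr\rangle_w\bigr|\lesssim \mathscr P\,\|h\|_\sigma\Bigl[\sum_F\|g_F\|_w^2\Bigr]^{1/2},
\end{equation*}
valid for $\mathcal F_\Subset$-adapted collections $\{g_F\}$ and the function $h=\sum_F\alpha_f(F)\cdot F$ arising from the exchange. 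Rather than invoking the two-weight Poisson theorem of Section~\ref{s:poisson}, under the pivotal hypothesis this follows from two Cauchy--Schwarz applications: the elementary bound $|\langle x/|J^*|,g_F J^*\rangle_w|^2\le w(J^*)\|g_F J^*\|_w^2$ on one factor, and the pivotal inequality applied to the (disjoint) family $\mathcal J^*(F)$ to control $\sum_{J^*\in\mathcal J^*(F)}P(h\sigma,J^*)^2w(J^*)\lesssim \mathscr P^2\sigma(F)$ on the other, followed by the $\sigma$-Carleson property \eqref{e:sCarleson} of $\mathcal F$ and \eqref{e:Quasi} to sum in $F$ against $\|h\|_\sigma$.

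Third, the local piece is reduced by Lemma~\ref{l:BF} to the local estimate \eqref{e:BF}, and the non-trivial piece is the stopping form \eqref{e:stop}. Under the pivotal hypothesis the recursive size analysis of Section~\ref{s:stop} can be bypassed entirely: I would apply the monotonicity principle \eqref{e:mono1} summand by summand to dominate each term by
\begin{equation*}
|\hat f(I)|\,\sigma(I_J)^{-1/2}\,P(\sigma I_0,J)\,\bigl|\bigl\langle\tfrac{x}{|J|},h^w_J\bigr\rangle_w\bigr|\,|\hat g(J)|,
\end{equation*}
and then apply Cauchy--Schwarz in $J$. One factor collapses to $\|g\|_w$; for the other, the bound $\langle x/|J|,h^w_J\rangle_w^2\le w(J)$ together with the pivotal inequality applied to a (bounded number of) partitions of $I_0$ yields $\mathscr P\,\sigma(I_0)^{1/2}$, matching the right-hand side of \eqref{e:BF} and hence, via quasi-orthogonality \eqref{e:quasi}, giving the desired bound on $B^{\textup{above}}_{\mathcal F,\textup{loc}}$.

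The main obstacle will be the bookkeeping required to apply \eqref{e:Pivotal} to the naturally arising collections of intervals with the right ambient measure on the right-hand side. The family $\mathcal J^*(F)$ is already a set of disjoint intervals inside $F$, so the pivotal inequality there is immediate. In the stopping form, however, the collection $\{J\Subset I\}$ paired with variable $I$ must be regrouped so that the outermost sum runs over a partition of $I_0$; standard pigeonholing on the relative length $|J|/|I|$ and a reorganization by the maximal $J$'s at each scale reduces matters to a bounded number of legitimate pivotal applications. Once this combinatorial step is in place, assembling the pieces yields $\mathscr N_T\lesssim \mathscr A_2^{1/2}+\mathscr T_T+\mathscr P$ as claimed.
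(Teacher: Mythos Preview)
Your overall architecture is right, but there is a genuine gap: you explicitly \emph{drop} the analogue of the energy stopping intervals from the construction of $\mathcal F$, and this is precisely what makes the stopping form estimate go through under the pivotal hypothesis. The paper replaces energy stopping by \emph{pivotal stopping} (Definition~\ref{d:pivotal}): one adds to $\mathcal F$ the maximal $F'\subsetneq F$ with $P(\sigma\cdot F,F')^{2}w(F')>10\mathscr P^{2}\sigma(F')$. The payoff comes in the stopping form (Lemma~\ref{l:stopT}): after Cauchy--Schwarz one must bound
\[
M_{s}^{2}\;=\;\sup_{I,\theta}\ \sigma(I_{\theta})^{-1}\sum_{\substack{J\Subset I_{\theta}\\ |I|=2^{s}|J|}} P\bigl(\sigma\cdot(F-I_{\theta}),J\bigr)^{2}\,w(J),
\]
and Lemma~\ref{l.donotuse} collapses the $J$-sum to $2^{-(1-\varepsilon)s}P(\sigma\cdot F,I_{\theta})^{2}w(I_{\theta})$. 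The \emph{only} reason this is $\lesssim\mathscr P^{2}\sigma(I_{\theta})$ is that $I_{\theta}$ has $\mathcal F$-parent $F$ and therefore \emph{fails} the pivotal stopping inequality \eqref{e:Pstop}. Your proposed route---apply \eqref{e:Pivotal} directly to a ``bounded number of partitions of $I_{0}$''---yields $\sigma(I_{0})$ on the right, not $\sigma(I_{\theta})$; the factor $\sigma(I_{\theta})^{-1}$ coming from $|\mathbb E^{\sigma}_{I_{\theta}}\Delta^{\sigma}_{I}f|\le|\hat f(I)|\sigma(I_{\theta})^{-1/2}$ is then uncontrolled, and no pigeonholing on $|J|/|I|$ repairs this.

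Two smaller points. First, the monotonicity principle \eqref{e:mono1} is specific to the Hilbert transform; for a general kernel $K$ satisfying \eqref{e:ss} the correct substitute is the off-diagonal estimate of Lemma~\ref{l:off}, which gives $P(\sigma|f|,J)w(J)^{1/2}$ rather than the sharper energy expression. Second, your global-to-local step is also too quick: the pivotal inequality \eqref{e:Pivotal} is stated for the weight $\sigma$, not for $h\sigma$ with $h=\sum_{F'}\alpha_{f}(F')\cdot F'$, and $h$ is not bounded. The paper avoids functional energy here altogether, instead exploiting geometric decay $5^{-s}$ of $\sigma$-measure along the $\mathcal F$-tree (Case~A of Lemma~\ref{l:aboveCorona}) together with a direct pivotal application to the disjoint $\mathcal F$-grandchildren of $F$. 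Restoring pivotal stopping to your $\mathcal F$ is the fix for both issues.
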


We give the proof, with the goal of  highlighting some of the difficulties that one must face in the general case. 
In addition, a quantitative higher dimensional version of this Theorem was key to \cite{ptv}. We will use Calder\'on-Zygmund stopping data, to facilitate comparisons to the general case. This will also give an easier proof than is in \cites{V,ptv}.  

\subsection{Off-Diagonal Estimates}

We need a typical off-diagonal estimate, one that is far less refined than the  monotonicity principle. 

\begin{lemma}\label{l:off} For all $ 0 < \alpha < \beta $, good intervals $ J \Subset I$, and function $ f$ is 
supported off of $ I$, there holds 
\begin{equation}\label{e:off}
\lvert  \langle  T  \sigma f ,  g \rangle\rvert 
\lesssim P (\sigma \lvert  f\rvert \cdot I ^{c}, I )  w (J) ^{1/2} \lVert g\rVert_{w}. 
\end{equation}
for any function $ g \in L ^2 (w)$, supported on $ J$ and with integral zero. 

\end{lemma}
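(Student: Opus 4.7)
The plan is to exploit the mean-zero property of $g$ on $J$ to produce kernel cancellation, in the same spirit as the monotonicity principle \eqref{e:mono1}. Let $x_J$ denote the center of $J$. Since $g$ is supported on $J$ and has $w$-integral zero, I can freely subtract any function of $y$ alone from the kernel, giving the identity
\begin{equation*}
\langle T_{\alpha,\beta}(\sigma f), g\rangle_w = \int_J \int_{y \notin I} [K_{\alpha,\beta}(x,y) - K_{\alpha,\beta}(x_J, y)]\, f(y)\, g(x)\, \sigma(dy)\, w(dx).
\end{equation*}

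Next, I would invoke the gradient hypothesis \eqref{e:ss}, which yields $\lvert \nabla K_{\alpha,\beta}(z,y)\rvert \lesssim \lvert z-y\rvert^{-1}(\alpha + \lvert z-y\rvert)^{-1}$. Combined with the mean value theorem applied on the segment from $x_J$ to $x \in J$, this gives
\begin{equation*}
\lvert K_{\alpha,\beta}(x,y) - K_{\alpha,\beta}(x_J, y)\rvert \lesssim \frac{\lvert x - x_J\rvert}{\lvert x_J - y\rvert^2}, \qquad x \in J,\ y \notin I.
\end{equation*}
Here I use goodness of $J \Subset I$: for every $z$ on the segment from $x$ to $x_J$ and every $y \notin I$, $\lvert z - y\rvert \gtrsim \lvert J\rvert^\varepsilon \lvert I\rvert^{1-\varepsilon}$, so this distance is comparable to $\lvert x_J - y\rvert$, and the factor $(\alpha + \lvert z - y\rvert)^{-1}$ is simply bounded by $\lvert z-y\rvert^{-1}$. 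Bounding $\lvert x - x_J\rvert \leq \lvert J\rvert/2$ and separating the integrals, the inner product is at most a constant times
\begin{equation*}
\lvert J\rvert \left(\int_{y\notin I} \frac{\lvert f(y)\rvert}{\lvert x_J - y\rvert^2}\,\sigma(dy)\right) \cdot \int_J \lvert g(x)\rvert\, w(dx).
\end{equation*}

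To finish, I would observe that Cauchy--Schwarz controls the $g$-factor by $w(J)^{1/2}\lVert g\rVert_w$, and that the $f$-factor is bounded by $P(\sigma \lvert f\rvert, I)$. The latter reduces to the pointwise inequality
\begin{equation*}
\frac{\lvert J\rvert}{\lvert x_J - y\rvert^2} \lesssim \frac{\lvert I\rvert}{(\lvert I\rvert + \textup{dist}(y,I))^2}, \qquad y \notin I,
\end{equation*}
which I would verify in two regimes. When $\textup{dist}(y,I) \geq \lvert I\rvert$, both sides behave like $\lvert J\rvert/\textup{dist}(y,I)^2$ and $\lvert I\rvert/\textup{dist}(y,I)^2$ respectively, and the bound follows from $\lvert J\rvert \leq \lvert I\rvert$. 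When $y$ is closer to $I$, goodness gives $\lvert x_J - y\rvert^2 \gtrsim \lvert I\rvert^{2-2\varepsilon}\lvert J\rvert^{2\varepsilon}$, reducing the inequality to $\lvert J\rvert^{1-2\varepsilon} \lesssim \lvert I\rvert^{1-2\varepsilon}$, which holds under the standing $\varepsilon \leq 1/4$.

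I do not anticipate a genuine obstacle: the whole argument is a clean mean-value/cancellation estimate, and the only delicate point is ensuring that goodness produces enough separation between $J$ and $\mathbb{R}\setminus I$ to both apply the gradient bound and to convert $\lvert J\rvert/\lvert x_J - y\rvert^2$ into the Poisson kernel attached to $I$. Both are standard consequences of $J \Subset I$ and $\varepsilon < 1/2$.
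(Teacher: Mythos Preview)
Your argument is correct and follows essentially the same route as the paper's proof: subtract $K_{\alpha,\beta}(x_J,y)$ using the mean-zero of $g$, apply the gradient bound \eqref{e:ss} to get $\lvert x-x_J\rvert/\lvert x_J-y\rvert^2$, and finish by Cauchy--Schwarz and the Poisson comparison. The paper compresses the last two steps into ``Cauchy--Schwarz and inspection''; your verification of the pointwise bound $\lvert J\rvert/\lvert x_J-y\rvert^2 \lesssim \lvert I\rvert/(\lvert I\rvert+\textup{dist}(y,I))^2$ via the two regimes (far and near, using goodness for the latter) is exactly the content of that ``inspection.''
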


\begin{proof}
Use the standard subtraction argument to see that 
\begin{align*}
\lvert  \langle  T  \sigma f , g (x) \rangle\rvert  
&= 
\Bigl\lvert 
\int _{J} \int _{\mathbb R \setminus I} \{  K  (x,y) -  K  (x_J,y)\} f (y) g (x) \; \sigma (dy)\, w (dx) 
\Bigr\rvert
\\
& \lesssim 
\int _{J} \int _{\mathbb R \setminus I} 
\frac { \lvert  x-x_J\rvert } { (x_J- y) ^2 } 
\cdot \lvert f (y) g (x)\rvert  \; \sigma (dy)\, w (dx) . 
\end{align*}
The bound follows by Cauchy--Schwarz and  inspection. 
\end{proof}

\subsection{The Global To Local Reduction}
 
One need only prove that 
\begin{equation*}
\lvert  \langle  T _{\sigma } P ^{\sigma  } _{\textup{good}} f, P ^{w} _{\textup{good}}g \rangle _{w}\rvert  
\lesssim \mathscr T \lVert f\rVert_{\sigma } \lVert g\rVert_{w} \,, 
\end{equation*}
where $ \mathscr T \coloneqq  \mathscr T _T+ \mathscr A_2 ^{1/2} + \mathscr P $. 
The set up is much like \S\ref{s:global}.  
We will understand that the functions  $ f$ and $ g$ can be assumed to be good functions.  
 In fact, $ f$ has the `thin' Haar expansion in \eqref{e:mod}, and similarly for $ g$, in order to reduce some case analysis below.  

In analogy to \eqref{e:ABOVE}, define 
\begin{equation}   \label{e:Tabove}
B ^{\textup{above}} (f,g) \coloneqq  \sum_{I \::\: I\subset I_0} \sum_{J \::\: J\Subset I} 
\mathbb E ^{\sigma } _{I_J} \Delta ^{\sigma }_I f \cdot   \langle T _{\sigma } I_J, \Delta ^{w} _{J} g \rangle _w \,, 
\end{equation}
and define $ B ^{\textup{below}} (f,g) $ similarly.  Since Lemma~\ref{l:above} depends only on the $ A_2$ assumption, 
we have

\begin{lemma}\label{l:Tabove}  There holds 
\begin{equation*}
\bigl\lvert  \langle T _{\sigma } f ,g \rangle _{w} - B ^{\textup{above}} (f,g)  - B ^{\textup{below}} (f,g) \bigr\rvert 
\lesssim \mathscr A_2 ^{1/2}  \lVert f\rVert_{\sigma } \lVert g\rVert_{w}  \,. 
\end{equation*}
\end{lemma}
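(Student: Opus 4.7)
The plan is to mirror the decomposition in Section~\ref{s:elem} used to prove Lemma~\ref{l:above}, since every step there relied only on (i) the size bound $\lvert K(x,y)\rvert \le \lvert x-y\rvert^{-1}$, (ii) the gradient bound $\lvert \nabla K(x,y)\rvert \le \lvert x-y\rvert^{-2}$, and (iii) the $A_2$ assumption. All three are in force here, the first two through \eqref{e:ss}, and $A_2$ by hypothesis of Theorem~\ref{t:Pivotal}. Throughout, the smooth truncations $T_{\alpha,\beta}$ are suppressed, all estimates being uniform in $0<\alpha<\beta$.

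First, expand
\begin{equation*}
\langle T_\sigma f,g\rangle_w = \sum_{I,J} \langle T_\sigma \Delta^\sigma_I f,\Delta^w_J g\rangle_w
\end{equation*}
over good intervals in the Haar supports of $f$ and $g$. Partition the pairs according to relative size and position, yielding the four off-diagonal forms $B^{\textup{nearby}}, B^{\textup{far}}, B^{\textup{close}}$ (defined exactly as in \eqref{e:nearby}--\eqref{e:close} but for $T$) together with the two deeply-nested cases $J\Subset I$ and $I\Subset J$. For $J\Subset I$, apply the martingale identity to split $\Delta^\sigma_I f = \mathbb E^\sigma_{I_J}\Delta^\sigma_I f\cdot I_J + \Delta^\sigma_I f\cdot (I-I_J)$; the first piece produces exactly $B^{\textup{above}}(f,g)$ from \eqref{e:Tabove}, and the second piece contributes $B^{\textup{adjacent}}(f,g)$ as in \eqref{e:adj}. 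The symmetric case $I\Subset J$ yields $B^{\textup{below}}(f,g)$ plus a dual adjacent term.

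It then suffices to show that each of $B^{\textup{nearby}}, B^{\textup{far}}, B^{\textup{close}}, B^{\textup{adjacent}}$ is bounded by $\mathscr A_2^{1/2}\lVert f\rVert_\sigma \lVert g\rVert_w$, which is the content of Lemma~\ref{l:3} for the Hilbert transform. The nearby term is treated by the weak-boundedness principle, the only ingredient being the two-weight Hardy inequality combined with the size bound (i) and the $A_2$ hypothesis; this proof transfers line-by-line to $T$, giving $\lvert \langle T_\sigma f\cdot I,g\cdot J\rangle_w\rvert \lesssim \mathscr A_2^{1/2}\lVert f\rVert_\sigma \lVert g\rVert_w$ whenever $I,J$ share no common point mass at their endpoints. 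The far, close and adjacent terms all use an off-diagonal estimate of the form
\begin{equation*}
\lvert \langle T_{\alpha,\beta}(\sigma I),h^w_J\rangle_w\rvert \lesssim \sigma(I)\sqrt{w(J)}\,\frac{\lvert J\rvert}{(\lvert J\rvert+\mathrm{dist}(I,J))^2},
\end{equation*}
which is the analog of Proposition~\ref{p:Eip}: its derivation is the standard subtraction argument using the mean-zero property of $h^w_J$ and the gradient bound (ii), identical to Lemma~\ref{l:off}. Goodness of $J$ upgrades this to the decay factor $2^{-(1-2\varepsilon)s}$ when $\lvert I\rvert = 2^s\lvert J\rvert$, as in Proposition~\ref{p:EEip}.

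With these building blocks, the four off-diagonal sums are estimated exactly as in \S\S\ref{s.12}--\S7.6: each reduces after Cauchy--Schwarz to a geometric series in $s\ge r$ whose coefficients involve the $A_2$ ratio, summing to $\mathscr A_2^{1/2}\lVert f\rVert_\sigma \lVert g\rVert_w$. There is no genuine obstacle; the only thing to verify is that the specific Hardy-type computation underlying Proposition~\ref{p:weakB} uses no property of the Hilbert kernel beyond size and cancellation across a separating point, which is the case. Consequently Lemma~\ref{l:Tabove} follows, with constant depending only on $\mathscr A_2^{1/2}$.
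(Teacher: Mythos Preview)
Your proposal is correct and follows exactly the paper's own argument: the paper proves Lemma~\ref{l:Tabove} in one line, observing that ``Lemma~\ref{l:above} depends only on the $A_2$ assumption,'' so its proof in \S\ref{s:elem} transfers verbatim to any kernel obeying the size and gradient bounds \eqref{e:ss}. Your expansion of this remark --- checking that Propositions~\ref{p:Eip}, \ref{p:EEip}, \ref{p:weakB} and the four subcases of Lemma~\ref{l:3} use nothing beyond (i), (ii), (iii) --- is precisely the verification the paper leaves implicit.
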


Thus, the main technical result is 

\begin{lemma}\label{l:Ttriangular} There holds 
\begin{equation}\label{e:Ttriangular}
\lvert  B ^{\textup{above}} (f,g) \rvert  \lesssim 
\mathscr T \lVert f\rVert_{\sigma } \lVert g\rVert_{w} \,. 
\end{equation}
The same inequality holds for $ B ^{\textup{below}} (f,g) $.  
\end{lemma}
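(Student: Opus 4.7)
\medskip

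\textbf{Proof plan for Lemma~\ref{l:Ttriangular}.} The plan is to mimic the global-to-local reduction of \S\ref{s:global}, but with the pivotal inequality \eqref{e:Pivotal} playing the role of the functional energy inequality and absorbing the roles of both energy stopping and the stopping-form recursion of \S\ref{s:stop}. First I would build Calder\'on-Zygmund stopping data $\mathcal F$ for $f$ using only the $L^1$-averaging rule (without the energy stopping of Definition~\ref{d:energy}): add $I_0$ to $\mathcal F$, and inductively from a minimal $F\in\mathcal F$ add the maximal descendants $F'$ with $\mathbb E^{\sigma}_{F'}|f|\ge 10\alpha_f(F)$. The properties in Lemma~\ref{l:CZ} continue to hold, so the quasi-orthogonality inequality \eqref{e:quasi} is available.

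Next, I split $B^{\textup{above}}(f,g) = B^{\textup{above}}_{\mathcal F,\textup{loc}}(f,g)+B^{\textup{above}}_{\mathcal F,\textup{glob}}(f,g)$ exactly as in \eqref{e:xAbove}--\eqref{e:xGlob}. For the global term, I would carry out the Hilbert-Poisson exchange argument: fix $F\in\mathcal F$, set $g_F:=\sum_{J \::\: \pi_{\mathcal F} J=F,\,J\Subset F}\Delta^w_J g$, and write $I_F=F+(I_F-F)$. When the argument is $F$, the telescoping control $\bigl|\sum_{I\supsetneq F}\mathbb E^\sigma_{I_F}\Delta^\sigma_I f\bigr|\lesssim \alpha_f(F)$ combined with the testing hypothesis gives the bound $\mathscr T_T\alpha_f(F)\sigma(F)^{1/2}\|g_F\|_w$, which sums to $\mathscr T_T\|f\|_\sigma\|g\|_w$ by quasi-orthogonality. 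When the argument is $I_F-F$, the support is separated from $g_F$ since $J\Subset F\subset I$, so the off-diagonal estimate \eqref{e:off} applies; it returns a term of the form $P(\sigma\Phi, J^*)w(J^*)^{1/2}\|g_F\|_w$ summed over maximal good $J^*\Subset F$, where $\Phi=\sum_{F'\in\mathcal F}\alpha_f(F')\cdot F'$. Summing over $F$ and applying Cauchy-Schwarz, the Poisson-squared sum is then controlled directly by the pivotal inequality $\mathscr P$ applied to the partition $\{J^*\}$ (rather than by functional energy), since $\|\Phi\|_\sigma\lesssim\|f\|_\sigma$.

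The local term, which in the Hilbert transform case required the subtle stopping-form recursion of \S\ref{s:stop}, becomes essentially direct under the pivotal hypothesis. By quasi-orthogonality and  the reduction to one stopping interval  $F$, it suffices to bound $\bigl|B^{\textup{above}}(P^\sigma_F f, Q^w_F g)\bigr|$ where $P^\sigma_F f$ has bounded averages. Writing each inner product $\langle T_\sigma I_0-\sum(I_0-I_J)\cdots\rangle$ as in \eqref{e:badMart}, the $I_0$ contribution is absorbed by testing, and the $I_0-I_J$ contribution is handled by the off-diagonal estimate \eqref{e:off}, yielding a Poisson factor $P(\sigma,\cdot)$ times $w(J)^{1/2}|\hat g(J)|$. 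Applying Cauchy-Schwarz in $J$ and then the pivotal bound \eqref{e:Pivotal} on the resulting partition gives $\lesssim(\mathscr T_T+\mathscr P)\sigma(F)^{1/2}\|Q^w_F g\|_w$ per $F$; summing in $F$ by quasi-orthogonality completes the estimate.

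The principal obstacle I foresee is organizational rather than technical: making sure that at every step where the energy inequality or functional energy is invoked in \S\S\ref{s:global}--\ref{s:stop}, the substitution $\mathsf E(w,J)^2 w(J)\to w(J)$ really is legitimated by the pivotal hypothesis on a well-chosen partition. The point is that the stopping-form recursion is avoided entirely because the Poisson factors arising from the off-diagonal estimate never need to be paired with an energy term---the pivotal inequality already bounds $\sum P(\sigma,J)^2 w(J)$ on any partition. The dual bound on $B^{\textup{below}}(f,g)$ follows by symmetry, using the dual pivotal inequality and dual testing constant.
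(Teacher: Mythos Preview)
Your plan has a genuine gap, and it lies precisely in the decision to build $\mathcal F$ using \emph{only} the $L^1$-averaging rule. The pivotal stopping intervals of Definition~\ref{d:pivotal} are not an optional convenience here; they are the mechanism that converts the \emph{partition} hypothesis \eqref{e:Pivotal} into a \emph{pointwise} control on Poisson-weight ratios, and that pointwise control is what the stopping-form estimate actually needs.

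Concretely, in your local step you arrive (after telescoping in $I$ and applying \eqref{e:off}) at
\[
\bigl|B^{\textup{stop}}_F(f,g)\bigr|\ \lesssim\ \alpha_f(F)\,\Bigl[\sum_{J}P(\sigma\cdot F,J)^2\,w(J)\Bigr]^{1/2}\lVert Q^w_F g\rVert_w,
\]
where $J$ ranges over the full Haar support of $Q^w_F g$. You then invoke ``the pivotal bound \eqref{e:Pivotal} on the resulting partition.'' But these $J$'s are \emph{nested}, not a partition; there can be arbitrarily many of them inside a single interval, and $\sum_J P(\sigma\cdot F,J)^2 w(J)$ is in general uncontrolled by $\mathscr P^2\sigma(F)$. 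Fixing a single scale makes the $J$'s disjoint, but then you must sum over infinitely many scales and have no decay. The paper's proof of Lemma~\ref{l:stopT} solves exactly this: it fixes $|I|=2^s|J|$, uses \eqref{e:donotuse} to extract a factor $2^{-(1-\varepsilon)s}$, and then---decisively---uses that $I_\theta$ with $\pi_{\mathcal F}I_\theta=F$ must \emph{fail} \eqref{e:Pstop}, giving $P(\sigma\cdot F,I_\theta)^2 w(I_\theta)\lesssim\mathscr P^2\sigma(I_\theta)$. That last step is unavailable if $\mathcal F$ was built without pivotal stopping.

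There is a secondary issue in your global step: you want to bound $\sum_F\sum_{J^*}P(\sigma\Phi,J^*)^2 w(J^*)$ and call this ``the pivotal inequality applied to the partition $\{J^*\}$.'' The pivotal hypothesis \eqref{e:Pivotal} is stated only for the weight $\sigma$ restricted to an interval, not for a general density $\sigma\Phi$; and the $J^*$'s across different $F\in\mathcal F$ are not jointly disjoint. What you have written is really a \emph{functional} pivotal inequality, the analog of Theorem~\ref{t:funcEnergy} with $E(w,J)^2w(J)$ replaced by $w(J)$. That can indeed be proved (via Sawyer's two-weight Poisson theorem, exactly as in \S\ref{s:fe}), but it is not ``direct.'' The paper sidesteps this by organizing the global sum according to the $\mathcal F$-distance $u$ between the parents of $I$ and $J$ and extracting geometric decay in $u$ (Lemma~\ref{l:aboveCorona}, Case~A), so that at each level one applies \eqref{e:Pivotal} only to genuinely disjoint $\mathcal F$-descendants of $F$.

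In short: reinstate the pivotal stopping rule of Definition~\ref{d:pivotal} in your construction of $\mathcal F$. Then your local argument should be replaced by the scale-by-scale estimate with geometric decay as in Lemma~\ref{l:stopT}; and your global argument can either follow the paper's $\mathcal F$-distance organization, or be completed by proving the functional pivotal inequality you implicitly assumed.
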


The stopping intervals are defined similarly. 

\begin{definition}\label{d:Pivotal_stopping} Define $ \mathcal F$, the stopping intervals, recursively by 
initializing $ I ^{0} \in \mathcal F$, and in the recursive step, if $ F \in \mathcal F$ is minimal, 
add to $ \mathcal F$ the maximal subintervals $F'\subset F  $, with $ F'\in \mathcal D _{f}$, so 
that meet either of these conditions: 
\begin{description}
\item[$ f$ stopping]   $ \mathbb E ^{\sigma } _{F'} \lvert  f\rvert > C \alpha _{f} (F) \coloneqq   \mathbb E ^{\sigma } _{F} \lvert  f\rvert $. 
\item[Pivotal Stopping]    $ \mathsf P( \sigma \cdot  I_0, I) ^2 w (I) > 10 {\mathscr P} ^2 \sigma (I)  $.
\end{description}
That is, we stop if either the average of $ f$ becomes too large, or, essentially, the  pivotal quantity becomes too large. 
\end{definition}

We use the same notation as in \S\ref{s:global}, and  in analogy to Corollary~\ref{t:aboveCorona}, there holds 

\begin{lemma}\label{l:aboveCorona}[The Global to Local Reduction] 
There holds 
\begin{align}
\lvert  B ^{\textup{above}} _{\mathcal F, \textup{glob}}  (f,g)\rvert  &\lesssim \mathscr T
\lVert f\rVert_{\sigma } \lVert g\rVert_{w} \,, 
\\ 
\textup{where } \quad 
\label{e:Tglob}
B ^{\textup{above}} _{\mathcal F, \textup{glob}}  (f,g)& \coloneqq 
\sum_{ \substack{I,J \::\:  \dot\pi _{\mathcal F} J \subsetneq  I\\ J\Subset I }} 
\mathbb E ^{\sigma } _{I_J} \Delta ^{\sigma }_I f \cdot   \langle T _{\sigma } I_J, \Delta ^{w} _{J} g \rangle _w  . 
\end{align}
\end{lemma}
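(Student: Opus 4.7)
The plan is to follow the blueprint of Corollary~\ref{t:aboveCorona} (the global-to-local reduction in the Hilbert case), with the pivotal condition \eqref{e:Pivotal} substituting for the functional energy inequality. Because pivotal directly controls the $P^{2}w$ Carleson sums, no recourse to Sawyer's two weight Poisson theorem (Theorem~\ref{t:poisson}) will be needed; the price is that we lose the monotonicity principle and must instead use the off-diagonal bound of Lemma~\ref{l:off}.

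First I will execute the Calder\'on--Zygmund-Poisson exchange in the same template as the Hilbert case. Fix the global sum $B ^{\textup{above}} _{\mathcal F, \textup{glob}}(f,g)$, and for each $J$ in the Haar support of $g$ set $F=\pi _{\mathcal F}J$; the argument of $T _\sigma$ is then $I_J=I_F$, the child of $I$ containing $F$. Split $I_F = F + (I_F-F)$. For the $F$-part, the telescoping sum $\sum_{I\supsetneq F}\mathbb E^{\sigma}_{I_F}\Delta^{\sigma}_I f$ is pointwise bounded by $\alpha _f(F)$, so grouping $g_F:=\sum_{J\,:\,\pi _{\mathcal F}J=F,\,J\Subset F}\Delta^{w}_J g$ and invoking testing on $F$ gives
\begin{equation*}
	\sum_{F\in\mathcal F}\alpha_f(F)\,|\langle T _\sigma F,g_F\rangle_w|\;\le\;\mathscr T_T\sum_{F\in\mathcal F}\alpha_f(F)\,\sigma(F)^{1/2}\,\lVert g_F\rVert_w,
\end{equation*}
which is absorbed by the quasi-orthogonality bound~\eqref{e:Quasi}.

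For the complementary piece the telescoped function
\begin{equation*}
	\Psi_F(x):=\sum_{I\supsetneq F}\mathbb E^{\sigma}_{I_F}\Delta^{\sigma}_I f\cdot (I_F-F)(x)
\end{equation*}
is supported on $F^{c}$ and dominated pointwise by $\Phi:=\sum_{F'\in\mathcal F}\alpha_f(F')\,F'$. A $J$-Poisson refinement of Lemma~\ref{l:off}, obtained by the same first-order subtraction in $x$ using the gradient bound \eqref{e:ss} and the goodness of $J\Subset F$, yields
\begin{equation*}
	|\langle T _\sigma\Psi_F,\Delta^{w}_J g\rangle_w|\;\lesssim\;P(\Phi\sigma, J)\,w(J)^{1/2}\,|\widehat g(J)|.
\end{equation*}
Cauchy--Schwarz in $(F,J)$ then gives
\begin{equation*}
	\sum_{F,J}P(\Phi\sigma,J)\,w(J)^{1/2}\,|\widehat g(J)|\;\le\;\Bigl[\sum_{J}P(\Phi\sigma,J)^{2}\,w(J)\Bigr]^{1/2}\lVert g\rVert_w,
\end{equation*}
the inner sum running over all good $J$ belonging to some $\mathcal F$-corona.

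The main obstacle, and the step that genuinely uses the pivotal hypothesis, is controlling this multi-scale Poisson sum by $\mathscr P^{2}\lVert f\rVert_\sigma^{2}$. My plan is to exploit the decomposition $\Phi\simeq\sum_{F'\in\mathcal F}\alpha_f(F')\mathbf 1_{E_{F'}}$ with $E_{F'}:=F'\setminus\bigcup\{F''\in\mathcal F:F''\subsetneq F',\ F''\ \text{maximal}\}$, so that the $E_{F'}$ partition $\mathbb R$. By Cauchy--Schwarz inside the Poisson integral,
\begin{equation*}
	P(\Phi\sigma,J)^{2}\;\le\;P(\sigma,J)\sum_{F'\in\mathcal F}\alpha_f(F')^{2}\,P(\sigma\cdot E_{F'},J),
\end{equation*}
after which $\sum_J P(\sigma,J)\,P(\sigma E_{F'},J)\,w(J)$ is handled, for each fixed $F'$, by an application of the pivotal inequality~\eqref{e:Pivotal} to a partition obtained from the maximal good $J$'s together with the $\mathcal F$-children of $F'$, followed by a geometric summation over the tree levels that uses the $\sigma$-Carleson property~\eqref{e:sCarleson} to swallow the multi-scale overlap; the resulting bound $\sum_J P(\Phi\sigma,J)^{2} w(J)\lesssim \mathscr P^{2}\sum_{F'}\alpha_f(F')^{2}\sigma(F')\lesssim \mathscr P^{2}\lVert f\rVert_\sigma^{2}$ is exactly what is required. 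Finally, the subcases B and C (where $J$ is only strongly contained in its $\mathcal F$-grandparent, or $I$ lies inside $\pi_{\mathcal F}^{2}J$) involve the bounded ratio $|F|/|J|\le 2^{r+2}$ and are disposed of by a finite enumeration over scales exactly as in the proof of Corollary~\ref{t:aboveCorona}.
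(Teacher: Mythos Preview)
Your plan follows the template of Corollary~\ref{t:aboveCorona} too literally. In the Hilbert case, after the exchange, the complementary Poisson sum is absorbed by the \emph{functional energy} inequality of Theorem~\ref{t:funcEnergy}, a genuinely multi-scale bound whose proof occupies all of \S\ref{s:fe}. The pivotal hypothesis \eqref{e:Pivotal} is only a single-partition estimate, and your substitute for functional energy is where the argument breaks.

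Two concrete issues. First, the pointwise comparison $\Phi\simeq\sum_{F'}\alpha_f(F')\mathbf 1_{E_{F'}}$ is false: along a chain of pivotal (rather than $f$-average) stopping children the values $\alpha_f(F')$ stay constant, so $\Phi(x)$ accumulates a factor equal to the depth of the chain. Second, and more seriously, even with a corrected Cauchy--Schwarz the sum $\sum_J P(\sigma,J)\,P(\sigma E_{F'},J)\,w(J)$ ranges over intervals $J$ that are \emph{nested across coronas}, not a partition, so \eqref{e:Pivotal} does not apply. Your closing phrase about ``geometric summation over tree levels via $\sigma$-Carleson'' is precisely the work that is missing, and it is not a bookkeeping step.

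The paper supplies that decay by a different organization. It fixes the $\mathcal F$-tree distance $u$ between the corona $F$ of $I$ and the corona $F''$ of $J$, chooses an intermediate $F'$ roughly halfway between them, and splits the argument of $T_\sigma$ at $F'$ rather than at $\pi_{\mathcal F}J$. The testing piece then carries a factor $\sigma(F')^{1/2}$, and iterated $\sigma$-Carleson gives $\sum_{F'}\sigma(F')\lesssim 2^{-u}\sigma(F)$; the off-diagonal piece gains a factor $2^{-(1-2\varepsilon)s}$ from Lemma~\ref{l.donotuse}, after which a single application of \eqref{e:Pivotal} to the \emph{disjoint} family $\{F'\}$ finishes it. Both halves are summable in $u$, yielding \eqref{e:PivA}. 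This intermediate splitting, with explicit geometric decay in the tree distance, is the substantive idea absent from your proposal.
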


\begin{proof}
This variant of the `Hilbert-Poisson exchange' argument is needed.  
Holding $ F\in \mathcal F$ fixed, we sum over $ J$ with $ \dot \pi _{\mathcal F} J= F$ and $ I$ with $ F\subsetneq I$. 
Then, the argument of  $ T _{\sigma }$ is $ I_F$ which is written as $ I_F = F + (I_F \setminus F)$.  
Defining $ \varepsilon _F $ by 
\begin{equation*}
\sum_{I \;:\; I \supsetneq F} \mathbb E ^{\sigma } _{I_J} \Delta ^{\sigma }_I f 
\coloneqq  \varepsilon _F \alpha _{f} (F), 
\end{equation*}
these constants are bounded by a constant: $ \lvert  \varepsilon _F\rvert \lesssim 1 $.  
Then, 
\begin{align*}
\Phi (F) &\coloneqq \Bigl\lvert 
\sum_{\substack{I \::\:   I\supsetneq F}}  \sum_{J \;:\; \dot \pi _{\mathcal F} J=F} 
\mathbb E ^{\sigma } _{I_ {F} } \Delta ^{\sigma } _{I} f  \cdot \langle T_{ \sigma } F, \Delta ^{w} _{J} g \rangle _{w} 
\Bigr\rvert
\\
&= 
\Bigl\lvert 
\Bigl\langle 
 T_{ \sigma } F
,
 \sum_{J \;:\; \dot \pi _{\mathcal F} J=F} 
\varepsilon _J \Delta ^{w } _{J} g 
\Bigr\rangle_{w}
\Bigr\rvert
 \le 
\mathscr T \alpha _{f} (F)\sigma (F)  ^{1/2} \lVert Q ^{w} _{F} g\rVert_w 
 \end{align*}
 This depends upon the testing assumption on $ T _{\sigma }$ applied to intervals. 	
 The operator $ Q ^{w} _{F} g$ is the Haar projection  defined at \eqref{e:PQ}.
 Quasi-orthogonality as in \eqref{e:quasi} finishes the 
 sum over $ F\in \mathcal F$. 
 
 The complementary case is that of the global-to-local reduction. 
 But, under the pivotal condition there is a geometric decay along the stopping tree.  
For $ F  \in \mathcal F$, and integer $ j$, let $ \textup{ch} _{j} (F)$ be the $ j$-fold 
descendants of $ F$ in the collection $ \mathcal F$.  That is,   $ \textup{ch} _{0} (F) = \{F\}$, 
 and $ F' \in  F'\in  \textup{ch} _{j+1} (F)$ iff $ F'$ is the child 
of some interval $ F''\in  \textup{ch} _{j} (F)$.

We will index by  $ F\in \mathcal F$,   $ F'\in  \textup{ch} _{1} (F)$, 
 and $ F'' \in  \textup{ch} _{j} (F')$, where $ j\geq 0$.  
Using  \eqref{e:off} and critically, Lemma~\ref{l.donotuse}, we have 
\begin{align*}
\Bigl\lvert \sum_{I \;:\; \pi _{\mathcal F} I=F} 
\mathbb E ^{\sigma } _{I_ {F'} } f  \langle T _{\sigma  } (I _{F'} \setminus F' ),   Q ^{w} _{F''} g\rangle_w
\Bigr\rvert
& \lesssim  \alpha _{f} (F) 
\mathbb P (\sigma \cdot  (F \setminus F'), F'')  w (F'') \lVert  Q ^{w} _{F''} g\rVert_w 
\\
& \lesssim    \alpha _{f} (F)  2 ^{ (1- \epsilon ) j} \mathbb P (\sigma \cdot  (F \setminus F''), F'')  w (F'') ^{1/2} \lVert  Q ^{w} _{F''} g\rVert_w . 
\end{align*}
We have geometric decay in $ j$ above.  Moreover, summing over $ F'$ and $ F''$, we can appeal to the pivotal condition 
\eqref{e:pivotal} to see that 
\begin{align*}
\sum_{F''\in  \textup{ch} _{j+1} (F)} &\mathbb P (\sigma \cdot  (F \setminus F''), F'')  w (F'')  ^{1/2} \lVert  Q ^{w} _{F''} g\rVert_w 
\\&\leq                   
\biggl[\sum_{F''\in  \textup{ch} _{j+1} (F)} \mathbb P (\sigma \cdot  (F \setminus F''), F'') ^2  w (F'')  
\times 
\sum_{F''\in  \textup{ch} _{j+1} (F)}  \lVert Q ^{w} _{F''} g\rVert_w ^2 
\biggr] ^{1/2} 
\\
& \lesssim \mathscr P \biggl[ \sigma (F)  \sum_{F''\in  \textup{ch} _{j+1} (F)}  \lVert Q ^{w} _{F''} g\rVert_w ^2 
\biggr] ^{1/2} .  
\end{align*}
Then, quasi-orthogonality is used to estimate 
\begin{align*}
\sum_{F\in \mathcal F} \alpha _{f} (F) 
\biggl[ \sigma (F)  \sum_{F''\in  \textup{ch} _{j+1} (F)}  \lVert Q ^{w} _{F''} g\rVert_w ^2 
\biggr] ^{1/2} \lesssim \lVert f\rVert _{\sigma } \lVert g\rVert_w.  
\end{align*}
This completes the global to local reduction.  
\end{proof}

\subsection{The Local Estimate}
It remains to prove the following \emph{local estimate}: 
\begin{equation*}
\lvert  B ^{\textup{above}} (P ^{\sigma } _{F}f, g ) \rvert \lesssim \mathscr T \bigl\{ \alpha _{f} (F) \sigma (F) ^{1/2}
+  \lVert P ^{\sigma } _{F} f\rVert_{\sigma } \bigr\} \lVert g\rVert_{w}\,, \qquad  Q ^{w} _{F} g = g, 
\end{equation*}
for then quasi-orthogonality will complete the bound on $ B ^{\textup{above}} _{\mathcal F} (f,g)$. 

In the bilinear form above, the argument of $ T _{\sigma }$ is, for a pair of intervals $ J\Subset I$,  $ I_J = (F- I_J) + F$.  
Using linearity, and focusing on the argument of $ T _{\sigma }$ being $ F$, 
we can repeat the argument of \eqref{e:badMart}, which depends upon the fact that the averages of $ f$ are controlled. 
Below, there is an  requirement that $ I_J$ has $ \mathcal F$-parent $ F$, which we are free to add since $ Q ^{w} _{F} g = g$. 
\begin{equation*}
\Bigl\lvert 
\sum_{I \::\: \pi _{\mathcal F}I=F} \sum_{J \::\:   \dot\pi _{\mathcal F} I_J=F} 
\mathbb E ^{\sigma } _{I_J} \Delta ^{\sigma } _{I} f  \cdot \langle T_{ \sigma } F, \Delta ^{w} _{J} g \rangle _{w} 
\Bigr\rvert 
\lesssim \mathscr T \alpha _{f} (F) \sigma (F) ^{1/2} \lVert g\rVert_{w} \,. 
\end{equation*}
This bound follows the argument of \eqref{e:badMart}, and we suppress the details.  

It therefore remains to consider the \emph{stopping form} 
\begin{equation*}
B ^{\textup{stop}} _{F} (f,g) \coloneqq  
\sum_{I \::\: \pi _{\mathcal F}I=F} \sum_{J \::\:   \dot\pi _{\mathcal F} I_J=F} 
 \mathbb E ^{\sigma } _{I_J} \Delta ^{\sigma }_I f \cdot \langle T _{\sigma } (I_0 - I_J) ,  \Delta ^{w} _{J} g\rangle _{w}\,. 
\end{equation*}

\begin{lemma}\label{l:stopT} For all $ F\in \mathcal F$, there holds 
\begin{equation*}
\lvert  B ^{\textup{stop}} _{F} (f,g) \rvert \lesssim \mathscr P \lVert f\rVert_{\sigma } \lVert g\rVert_{w} \,.  
\end{equation*}
\end{lemma}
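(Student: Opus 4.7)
The plan is to mirror the proof of the global-to-local reduction (Lemma~\ref{l:aboveCorona}), but with Lemma~\ref{l:off} playing the role the monotonicity principle played there, and with the pivotal inequality \eqref{e:Pivotal} ultimately replacing the functional energy estimate. The starting point is to apply Lemma~\ref{l:off} to each summand of the stopping form: since $J\Subset I_J$ is good and $\mathbf{1}_{I_0-I_J}$ is supported off $I_J$, it yields
\[\bigl|\langle T_\sigma(I_0-I_J),\Delta^w_J g\rangle_w\bigr|\lesssim P(\sigma\mathbf{1}_{I_0-I_J},I_J)\,w(J)^{1/2}\,|\hat g(J)|\,.\]
Combined with the elementary Haar-function bound $|\mathbb{E}^\sigma_{I_J}\Delta^\sigma_I f|\leq |\hat f(I)|\sigma(I_J)^{-1/2}$, this recasts $|B^{\text{stop}}_F(f,g)|$ as an explicit triple sum over admissible pairs $(I,J)$.

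Next I reindex by $K:=I_J$, so that $I=\pi^1_{\mathcal{D}}K$ is forced, and apply Cauchy--Schwarz in $K$ against $|\hat f(\pi^1_{\mathcal{D}}K)|$, which extracts a factor of $\|f\|_\sigma$. It remains to show that
\[\Sigma := \sum_{K} \frac{P(\sigma\mathbf{1}_{I_0-K},K)^2}{\sigma(K)}\,\Bigl[\sum_{\substack{J\Subset K\\ \pi_{\mathcal F}J=F}}w(J)^{1/2}|\hat g(J)|\Bigr]^2 \lesssim \mathscr P^{2}\|g\|_w^2\,.\]
For the inner bracket, I will invoke goodness through Lemma~\ref{l.donotuse}, which provides the geometric decay $P(\sigma\mathbf{1}_{I_0-K},J)\lesssim 2^{-(1-\varepsilon)s}P(\sigma\mathbf{1}_{I_0-K},K)$ for $|J|=2^{-s}|K|$. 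Redistributing this decay within a Cauchy--Schwarz against $|\hat g(J)|$ produces a convergent geometric series in $s$, bounding the inner bracket by $C\,w(K)^{1/2}\|g_K\|_w$, where $g_K := \sum_{J\Subset K,\,\pi_{\mathcal F}J=F}\Delta^w_J g$.

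What remains is to prove $\sum_K P(\sigma\mathbf{1}_{I_0-K},K)^2 w(K)\sigma(K)^{-1}\|g_K\|_w^2\lesssim \mathscr P^2\|g\|_w^2$. The intervals $K$ are not disjoint but span many scales inside $F$, so the pivotal inequality \eqref{e:Pivotal} cannot be applied directly. My plan here is to introduce a secondary Carleson stopping construction on the collection $\{K\}$, analogous to the one built around size in the proof of Lemma~\ref{l:Decompose}: at each level of the resulting tree, the extremal $K$'s form a partition of a subset of $F$ on which \eqref{e:Pivotal} supplies $\sum P^2 w/\sigma\le\mathscr P^{2}$, while the pairwise orthogonality of the $g_K$'s at a fixed level delivers $\|g\|_w^2$. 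I anticipate that the main obstacle will be the bookkeeping in this last step: calibrating the secondary tree so that summation across its levels is geometric rather than logarithmic. This hinges in the end on the $r$-goodness parameter fixed at the outset, exactly as the recursion in $\mathcal L$ within Lemma~\ref{l:Decompose} relied on the constant $\rho=17/16$ to produce decay across levels of its stopping tree.
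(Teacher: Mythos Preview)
Your opening moves are correct and match the paper's: apply Lemma~\ref{l:off}, the Haar bound $|\mathbb E^\sigma_{I_J}\Delta^\sigma_I f|\le |\hat f(I)|\,\sigma(I_J)^{-1/2}$, and plan to use the decay from Lemma~\ref{l.donotuse}. But from your reindexing by $K=I_J$ onward the argument goes astray.

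The decisive point you are missing is that the stopping collection $\mathcal F$ was built using the \emph{pivotal} stopping rule of Definition~\ref{d:pivotal}. Since $\pi_{\mathcal F}I_J=F$, the interval $K=I_J$ is \emph{not} a pivotal stopping child of $F$; hence it must \emph{fail} \eqref{e:Pstop}, giving the pointwise bound
\[
\frac{P(\sigma\cdot F,K)^2\,w(K)}{\sigma(K)}\;\lesssim\;\mathscr P^2
\]
directly, for every such $K$. No secondary Carleson tree is needed; the ratio you are trying to tame is already uniformly controlled. Your claim that ``\eqref{e:Pivotal} supplies $\sum P^2 w/\sigma\le\mathscr P^2$ on a partition'' is also not what \eqref{e:Pivotal} says --- it bounds $\sum P^2 w$ by $\mathscr P^2\sigma(I_0)$, which is a different thing.

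Even granting the pointwise ratio bound, your Step~6 as written does not close: if you bound the inner bracket by $Cw(K)^{1/2}\|g_K\|_w$ you are left with $\sum_K\|g_K\|_w^2$, and a fixed $J$ lies in $\|g_K\|_w^2$ for \emph{every} ancestor $K$ with $\pi_{\mathcal F}K=F$, so this sum diverges logarithmically. The geometric decay from Lemma~\ref{l.donotuse} must be \emph{retained} through the final Cauchy--Schwarz, not absorbed beforehand. The paper does this cleanly by reversing your order of operations: fix the scale ratio $s$ with $|I|=2^s|J|$ \emph{first}; for each $s$ a single Cauchy--Schwarz plus the failure of \eqref{e:Pstop} gives
\[
M_s^2:=\sup_{I,\theta}\frac{1}{\sigma(I_\theta)}\sum_{\substack{J\subset I_\theta\\ |J|=2^{-s}|I|}}P(\sigma(F\setminus I_\theta),J)^2 w(J)\;\lesssim\;2^{-2(1-\varepsilon)s}\mathscr P^2,
\]
and the sum over $s$ is geometric. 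Fixing $s$ also guarantees each $J$ is paired with exactly one $I$, which is what makes the $\|g\|_w$ factor come out correctly. Drop the stopping-tree plan entirely; the argument is three lines once you use that $K$ fails \eqref{e:Pstop}.
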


\begin{proof}
This depends very much on the selection of stopping intervals.  In fact there is geometric decay, holding the 
relative lengths of $ I$ and $ J$ fixed.  Estimate for integers $ s\ge r$, 
\begin{align*}
\sum_{I \::\: \pi _{\mathcal F}I=F} \sum_{\substack{J \::\: J\Subset I_J\,, \pi _{\mathcal F} I_J=F\\ \lvert  I\rvert= 2 ^{s} \lvert  J\rvert   }} &
\lvert   \mathbb E ^{\sigma } _{I_J} \Delta ^{\sigma }_I f \cdot \langle T _{\sigma } (I_0 - I_J) ,  \Delta ^{w} _{J} g\rangle _{w}\rvert 
\\& \le 
\sum_{I \::\: \pi _{\mathcal F}I=F} \sum _{\theta \in \{\pm\}}
\frac {\lvert  \hat f (I) \rvert } {\sigma (I _{\theta }) ^{1/2} } 
\sum_{\substack{J \::\: J\Subset I_ \theta \,, \pi _{\mathcal F} I_J=F\\ \lvert  I\rvert= 2 ^{s} \lvert  J\rvert   }} 
P (\sigma (F- I _{\theta }), J) \langle \tfrac x {\lvert  J\rvert },  h ^{w} _{J}\rangle _{w} \lvert  \hat g (J) \rvert 
\\
& \lesssim  M_s 
\Bigl[\sum_{I \::\: \pi _{\mathcal F}I=F}  \hat f (I) ^2 \Bigr] ^{1/2} 
\times 
\Bigl[\sum_{J \::\: J\subset F}  \hat g (I) ^2 \Bigr] ^{1/2} 
\\
\textup{where } \quad 
M_s ^2  &\coloneqq  \max _{\theta \in \{\pm\}}\sup _{I \::\: \pi _{\mathcal F} I _{\theta }=F} 
\frac 1 {\sigma (I _{\theta })} 
\sum_{\substack{J \::\: J\Subset I_ \theta \,, \pi _{\mathcal F} I_J=F\\ \lvert  I\rvert= 2 ^{s} \lvert  J\rvert   }} 
P (\sigma (F- I _{\theta }), J) ^2 w (J) \,. 
\end{align*}
Here, we have used  (a) used the bound  $ \lvert  \mathbb E ^{\sigma } _{I_J} \Delta ^{\sigma }_I f\rvert 
\le\frac {\lvert  \hat f (I) \rvert } {\sigma (I _{\theta }) ^{1/2} }  $; (b) appealed to  \eqref{e:off}; (c) used Cauchy--Schwarz, together with the fact that for $ J\Subset F$, there is a unique 
$ I$ containing it, with length $ 2 ^{s} \lvert  J\rvert $.  

It remains to bound $M_s$, gaining a geometric decay in $ s$, and appealing to the pivotal condition.  
Return to the inequality \eqref{e:donotuse}, to gain the geometric decay, 
\begin{align*}
\sum_{\substack{J \::\: J\Subset I_ \theta \,, \pi _{\mathcal F} I_J=F\\ \lvert  I\rvert= 2 ^{s} \lvert  J\rvert   }} 
P (\sigma (F- I _{\theta }), J) ^2 w (J)
& \lesssim 2 ^{- (1- \varepsilon )s} P (\sigma \cdot F, I _{\theta }) ^2 w (I _{\theta }) \lesssim 
2 ^{- (1- \varepsilon )s}  \mathscr P ^2 \sigma (I _{\theta }) \,, 
\end{align*}
where the  decisive point is that $ I _{\theta }$ has $ \mathcal F$-parent $ F$, hence it must \emph{fail} the 
pivotal stopping condition.  
\end{proof}

\section{Example Weights} \label{s:examples}

The sharpness of the different conditions in the main theorem is the subject 
of the this section.  

\begin{theorem}\label{t:examples}   There are pairs of weights $\sigma , w$, with no common point masses,  that satisfy any one of these conditions.  
\begin{enumerate}
\item  The pair of weights satisfies the  full Poisson $A_2$ condition, but the norm inequality  for the Hilbert transform \eqref{e:N} does not hold. 
\item  The pair of weights satisfies the  full Poisson $A_2$ condition, and the testing inequality \eqref{e:T}, but  the norm inequality 
for the Hilbert transform  \eqref{e:N} does not hold. 
\item The pair of weights satisfy the two weight norm inequality \eqref{e:N}, but not the pivotal condition 
	\eqref{e:pivotal}. 
\end{enumerate}
\end{theorem}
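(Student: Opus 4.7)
My plan is to give three separate constructions, each engineered to isolate the phenomenon in question.

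For (1), I will adopt a Cantor-type construction. Let $K \subset [0,1]$ be a Cantor set obtained by removing, at the $n$th stage, the middle interval of relative length $\alpha$ from each surviving piece, and let $\sigma$ be the natural self-similar measure on $K$, normalized so that each piece at stage $n$ carries mass $2^{-n}$. For $w$, I place a single point mass of weight $\beta^n$ inside each gap removed at stage $n$, positioned carefully (e.g.\ at a fixed fraction of the gap length from one endpoint). Self-similarity reduces the verification of $P(\sigma,I)\,P(w,I) \le \mathscr A_2$ to checking it over intervals at a single reference scale; a direct geometric-series calculation in $\alpha,\beta$ then fixes admissible parameters. To witness the failure of \eqref{e:N}, I test $H_\sigma$ against $f = c_n \cdot \mathbf 1_{K_n}$ for a Cantor piece $K_n$ at a deep scale. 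Because the point masses of $w$ sit on one definite side of each Cantor piece, the contributions $H_\sigma f$ at the $w$-atoms align in sign across scales and add coherently, producing a ratio $\|H_\sigma f\|_w / \|f\|_\sigma$ that grows without bound as $n \to \infty$. The main obstacle is tuning $\alpha,\beta$ so that $A_2$ is uniformly bounded while the resonance still occurs; this is the content of Nazarov's original Bellman argument rendered concretely.

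For (2), I modify (1) by placing all $w$-masses on the same oriented side of each Cantor gap, and by thinning $\sigma$ slightly so that $H_\sigma \mathbf 1_I$ has a predictable sign structure. The effect is that, for intervals $I$ aligned with the Cantor construction, the values of $H_\sigma \mathbf 1_I$ at $w$-points partially cancel against each other, so that $\int_I |H_\sigma \mathbf 1_I|^2 \, dw \lesssim \sigma(I)$ holds. The dual integrand $H_w \mathbf 1_I$, meanwhile, still accumulates coherently on $K$ exactly as in (1), so \eqref{e:T*} fails and hence so does \eqref{e:N}. The $A_2$ verification is essentially the same as in (1). The delicate step is the sign analysis that distinguishes the testing direction from its dual; pinning down the one-sided placement that achieves exactly this asymmetry is the main obstacle.

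For (3), the strategy is to exploit the extra factor $E(w,I)^2$ that distinguishes the energy inequality from the pivotal one: if $w$ restricted to $I$ is a single Dirac mass, then $E(w,I)=0$. I take
\begin{equation*}
\sigma = \sum_{n\ge 1} c_n \,\delta_{a_n}, \qquad w = \sum_{n\ge 1} d_n \,\delta_{b_n},
\end{equation*}
with $a_n, b_n \downarrow 0$, the sequences interlacing with $\{a_n\} \cap \{b_n\} = \emptyset$, and weights $c_n,d_n$ to be chosen. Pick a partition $\mathcal P$ of $I_0 = [0,1]$ into intervals so that each $I \in \mathcal P$ contains exactly one $b_n$. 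Then $w \restriction I$ is a single Dirac, so $E(w,I)=0$, and the energy inequality \eqref{e:energy} holds trivially for this partition (and, by refinement/dominance, for arbitrary partitions). Now choose $a_n, b_n, c_n, d_n$ so that the bare sum $\sum_{I\in\mathcal P} P(\sigma,I)^2 w(I)$ diverges relative to $\sigma(I_0)$; this is easy to arrange since $P(\sigma,I_n) \gtrsim c_n / |I_n|$ and $w(I_n)=d_n$. This kills the pivotal condition \eqref{e:pivotal}.

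The remaining, and hardest, step in (3) is to verify \eqref{e:N} for this pair. I will apply Theorem~\ref{t:noCommon}: the no-common-point-mass assumption is built in by construction, and both the full Poisson $A_2$ condition and the testing conditions \eqref{e:T},~\eqref{e:T*} reduce to explicit sums over point masses which, after choosing $c_n,d_n$ to decay geometrically and $a_n,b_n$ to be suitably separated within their scales, can be summed directly. The main obstacle is balancing these choices: the decay of $c_n,d_n$ must be fast enough to make the testing sums finite, yet slow enough that the pivotal sum genuinely diverges. Once this balance is struck, Theorem~\ref{t:noCommon} delivers the norm inequality and completes the proof.
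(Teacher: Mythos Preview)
Your proposal identifies plausible architectures for each part but leaves the decisive step unresolved in every case; what you label the ``main obstacle'' is in fact the entire content of the proof, and your mechanisms for overcoming it are either vague or incorrect.

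The paper handles all three parts with a \emph{single} Cantor construction, and the idea you are missing is the precise placement of the point masses relative to the function $Hw$. With $w$ the uniform Cantor measure, $Hw$ restricted to each gap $G$ is smooth and strictly monotone, so it has a unique zero $z_G$. The paper sets $\sigma = \sum_G s_G\,\delta_{z_G}$ and $\sigma' = \sum_G s_G\,\delta_{z'_G}$, where $z'_G$ is the point at which $Hw$ equals the specific value $|G|^{-1+\ln 2/\ln 3}$. The choice $s_G \simeq |G|^{2-\ln 2/\ln 3}$ is dictated by the simple $A_2$ ratio. Now: (i) the full Poisson $A_2$ condition and the testing inequality $\int_I |H_\mu I|^2\,dw \lesssim \mu(I)$ are insensitive to whether the mass sits at $z_G$ or $z'_G$, since both lie a fixed fraction of $|G|$ from $\partial G$; (ii) placing masses at $z'_G$ makes $\int (Hw)^2\,d\sigma' = \sum_G |G|^{\ln 2/\ln 3} = \infty$ by a one-line computation, giving (1) and (2) at once; (iii) placing masses at the zeros $z_G$ means $Hw \equiv 0$ on $\operatorname{supp}\sigma$, so $\int_I |H_w I|^2\,d\sigma = \int_I |H_w([0,1]\setminus I)|^2\,d\sigma$, an off-diagonal term that is easily summed --- this is the cancellation that makes the dual testing condition for $\sigma$ tractable and hence yields the norm inequality for (3). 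The pivotal condition fails because $\sum_G P(w,G)^2\sigma(G) \simeq \sum_G |G|^{\ln 2/\ln 3} = \infty$.

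Your route for (2) --- arranging ``partial cancellation'' of $H_\sigma\mathbf 1_I$ at $w$-points by one-sided placement --- is not how the asymmetry arises; the testing condition \eqref{e:T} holds simply because the point-mass locations are well inside the gaps, with no sign argument needed. Your route for (3) via interlacing Diracs could conceivably be made to work, but verifying both testing conditions without the zero-placement trick is exactly the hard part you defer; the paper's device of putting $\sigma$ on the zeros of $Hw$ is what converts the dual testing inequality from a delicate balance into a clean off-diagonal estimate.
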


Point (1) is a counterexample to Sarason's Conjecture, first disproved by Nazarov \cite{N1}.  
In contrast to his argument,  an explicit pair of weights are exhibited.

\subsection{The Initial Steps in the Main Construction}

Let $ C = \bigcap _{n=0} ^{\infty } C_n$ be the standard middle third Cantor set in the unit interval.  
Thus, $ C_0=[0,1]$, $ C_1 = [0,\frac 13] \cup [\frac 23,1]$, and more generally
\begin{equation*}
	C _{n} = \bigcup \bigl\{ [x, x+ 3 ^{-n}] \::\:  x = \sum_{j=1} ^{n} \epsilon _j 3 ^{-j}\,,\  \epsilon _j \in \{0,2\}\bigr\}\,.
\end{equation*}
Let $ w$ be the standard uniform measure on $ C$. 
Thus $ w (I)= 2 ^{-n}$ on each component of $ C_n$, $ n\in \mathbb N _0$.  
This is phrased slightly differently.  Let $ \mathcal K$ be the collection of components of all the sets $ C_n$. 
Then, for each $ K\in \mathcal K$, there holds $ w (K) = \lvert  K\rvert ^{\frac {\ln 2} {\ln 3}} $.

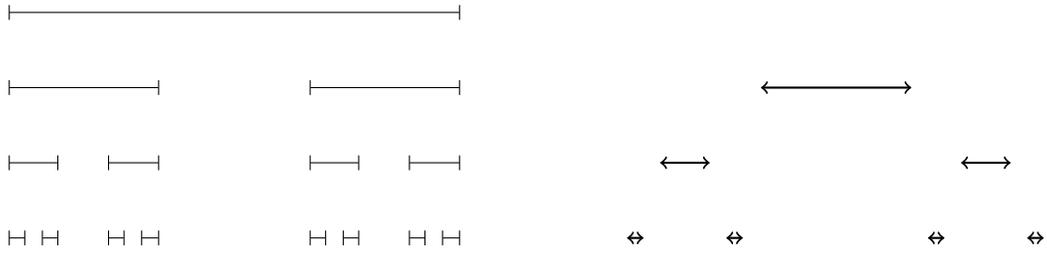
\begin{figure}
\begin{tikzpicture}[scale=2]
\draw[|-|] (0,0) -- (3,0); 
\foreach \x/\y in {0/1 , 2/3}  \draw[|-|] (\x,-.5) -- (\y, -.5); 
\foreach \x/\y in {0/0.33 , 0.66/1, 2/2.33, 2.66/3}  \draw[|-|] (\x,-1) -- (\y, -1); 
\foreach \x/\y in {0/0.11 , 0.22/0.33,   0.66/0.77, 0.88/1,  2/2.11,  2.22/2.33,  2.66/2.77, 2.88/3}  \draw[|-|] (\x,-1.5) -- (\y, -1.5); 

\draw[<->,thick,xshift=4cm] (1,-.5) -- (2,-.5); 
\foreach \x/\y in {0.33/0.66 , 2.33/2.66}  \draw[<->,thick,xshift=4cm] (\x,-1) -- (\y, -1); 
\foreach \x/\y in {0.11/0.22 , 0.77/0.88, 2.11/2.22, 2.77/2.88}  \draw[<->,thick,xshift=4cm] (\x,-1.5) -- (\y, -1.5); 
\end{tikzpicture}
\caption{The approximates to the Cantor set $ C$ on the left, and on the right, the gaps, namely  the components of $ [0,1]-C$ .  
The intervals on the left are in $ \mathcal K$, and those on the right are in $ \mathcal G$.}  
\label{f:cantor}
\end{figure}

The weight $ \sigma $ will be a sum of point masses selected from the intervals in  $ \mathcal G$, taken to be the components of the open set $ [0,1]-C$. ($ G$ is for `gap.') 
Consider the $ H w $ restricted an interval $ G\in \mathcal G$.  This is a smooth, monotone function, hence it has a unique zero $ z_G$.  
Then, the weight $ \sigma $ is 
\begin{equation} \label{e:x:sigma}
\sigma \coloneqq  \sum_{G\in \mathcal G} s_G  \cdot \delta _{z_G}\,, 
\end{equation}
where $ s_G>0$ will be chosen momentarily, consistent with the $ A_2$ condition. 
A second measure is given by 
$
\sigma' \coloneqq  \sum_{G\in \mathcal G} s_G  \cdot \delta _{z_G'}
$,
where $ z'_G$ is the unique point in $ G$ at which $ Hw (z'_G)= \lvert  G\rvert ^{-1+ \frac{\ln 2} {\ln 3}}  $.  
See Figure~\ref{f:z}.  

\begin{figure}
\begin{tikzpicture}[samples=50,domain=-1.1:1.1]
\draw plot  (\x,{tan(\x r)}); 
\draw[|-|]  (1.1,.2) -- (-1.1,.2)  node[above] {$ G$}; 
\draw (.2,.2) -- (.2,0) ; \draw (.2,.4) node[above] {$ z_G$}; 
\draw (.6,.2) -- (.6,0) node[below] {$ z'_G$}; 
\end{tikzpicture}
\caption{The selection of the points $ z_G$ and $ z'_G$ for a gap interval $ G$.  The function $ Hw$, restricted to $ G$ is monotone increasing, 
hence has a unique zero, the point $ z_G$.  The second point $ z'_G$ will be to the right, but a distance to the boundary of $ G$ that is at 
least $ c \lvert  G\rvert $, for absolute constant $ 0<c<\frac 12 $. }
\label{f:z}
\end{figure}
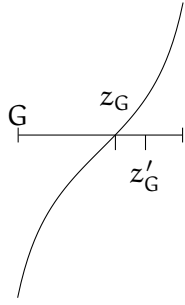

The constants $ s_G$ are be specified by the simple $ A_2$ ratio 
\begin{gather}
\frac {w (3G)} {\lvert  G\rvert } \cdot \frac {\sigma (G)} {\lvert  G\rvert } = 2
\,, \qquad 
\textup{that is} \quad s_G = 2\lvert  G\rvert ^{2- \frac {\ln 2} {\ln 3}} \,. 
\end{gather}
To see this, note that
\begin{equation*}
w (3G)= w (G- \lvert  G\rvert ) + w (G+ \lvert  G\rvert ) = 2 \lvert  G\rvert ^{\frac {\ln 2} {\ln 3}} \,, 
\end{equation*}
since $ G \pm \lvert  G\rvert $ are components of some $ C_n$.   With this definition, the basic facts about the $ w$ and $ \sigma $ 
come from the geometry of the Cantor set and the relations below,  
\begin{gather} \label{e:x:simple}
\begin{split}
w (I)    &\lesssim   \lvert  I\rvert ^{\frac {\ln 2} {\ln 3}}\,, \qquad 
\qquad  \textup{$ I  $ is triadic,}
\\  
 \sigma (I) & \lesssim  \lvert  I\rvert ^{2 - \frac {\ln 2} {\ln 3}} 
\qquad  \textup{$ I  $ is triadic, $ I$ not strictly contained in any $ G\in \mathcal G$.}
\end{split}
\end{gather}
On the other hand, if $ I \in \mathcal G \cup \mathcal K$, the inequalities above can be reversed, namely 
\begin{equation}\label{e:xsimple}
w (3I) \simeq \lvert  I\rvert ^{\frac {\ln 2} {\ln 3}}\,, \quad \sigma (I) \simeq \lvert  I\rvert ^{2 - \frac {\ln 2} {\ln 3}}\,, \qquad 
 I \in \mathcal G \cup \mathcal K \,. 
\end{equation}

The properties of these measures that we are establishing are as follows.

\begin{lemma}\label{l:exampes} For the measures just defined, there holds 
\begin{enumerate}
\item   The Hilbert transform $ H _{\sigma }$ is bounded from $ L ^{2} (\sigma )$ to $ L ^{2} (w)$. 

\item   The Hilbert transform $ H _{\sigma '}$ is \emph{unbounded} from $ L ^{2} (\sigma '  )$ to $ L ^{2} (w)$, 
but the pair of weights satisfy the $ A_2$ condition, and the testing conditions 
\begin{equation*}
 \sup _{ \textup{$ I$ an interval}} \sigma ' (I) ^{-1} \int _{I}  \lvert  H _{\sigma '}  I\rvert ^2 \; d w< \infty \,.  
\end{equation*}
\end{enumerate}
 
\end{lemma}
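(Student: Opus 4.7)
The plan is to apply Theorem~\ref{t:noCommon} for part (1) and to give a direct unboundedness computation for part (2). For part (1), I would first verify the Poisson $A_2$ condition $P(\sigma,I) P(w,I) \lesssim 1$ for all intervals $I$. For $I \in \mathcal{G} \cup \mathcal{K}$ this reduces to \eqref{e:xsimple} together with the defining relation $s_G = 2|G|^{2 - \ln 2/\ln 3}$, which was cooked up to give simple $A_2$ ratio at the scale of gaps. For an arbitrary interval $I$, one groups contributing gaps by scale (using the triadic structure) and sums the geometric series; the exponents $\tfrac{\ln 2}{\ln 3}$ and $2 - \tfrac{\ln 2}{\ln 3}$ conspire so each scale contributes at most $O(1)$ and the sum is dominated by the diagonal scale.

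Next, I would prove the two testing conditions. The forward condition $\int_I |H(\sigma \mathbf{1}_I)|^2\,dw \lesssim \sigma(I)$ is handled by reducing to $I \in \mathcal{K}$ using the gap structure, then splitting $H(\sigma\mathbf{1}_I)$ into local and tail parts on each subgap $G' \subset I$ and bounding the tail by $A_2$ via the Poisson estimate. The backward condition $\int_I |H(w\mathbf{1}_I)|^2\,d\sigma \lesssim w(I)$ is where the clever choice of $z_G$ pays off: since $Hw(z_G) = 0$, we have
\begin{equation*}
H(w\mathbf{1}_I)(z_G) = -H(w\mathbf{1}_{\mathbb{R}\setminus I})(z_G),
\end{equation*}
so the apparently singular quantity is really an off-diagonal piece, controlled by the Poisson integral $P(w\mathbf{1}_{\mathbb{R}\setminus I}, G)$ (using $\mathrm{dist}(z_G, \partial G) \gtrsim |G|$, which must be verified from the definition of $z_G$). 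Summing $s_G \cdot P(w\mathbf{1}_{\mathbb{R}\setminus I}, G)^2$ over $G \subset I$ and telescoping using the $A_2$ condition will give the bound $w(I)$, aided by the self-similarity of the Cantor construction.

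For part (2), the unboundedness is a one-line computation: the dual testing inequality for $(w,\sigma')$ on the interval $[0,1]$ would imply $\int |Hw|^2\,d\sigma' \lesssim w([0,1]) = 1$, but
\begin{equation*}
\int |Hw|^2\,d\sigma' \;=\; \sum_{G\in\mathcal{G}} s_G \cdot |G|^{-2 + 2\ln 2/\ln 3} \;=\; 2\sum_{G\in\mathcal{G}} |G|^{\ln 2/\ln 3},
\end{equation*}
and the latter sum diverges, since at scale $3^{-n-1}$ there are $2^n$ gaps contributing $2^n \cdot 3^{-(n+1)\ln 2/\ln 3} = 2^n \cdot 2^{-n-1} = \tfrac12$ per level. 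The $A_2$ condition for $(\sigma',w)$ and the forward testing $\int_I |H(\sigma'\mathbf{1}_I)|^2\,dw \lesssim \sigma'(I)$ are transferred from the case of $\sigma$: both measures place the same mass $s_G$ in the same gap $G$, and the proofs depend only on the gap scale, not the exact location of the point mass within $G$ (as long as $z'_G$ stays a definite distance from $\partial G$, which the definition of $z'_G$ guarantees).

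The main obstacle I foresee is the backward testing inequality for $\sigma$ in part (1). The cancellation $Hw(z_G) = 0$ converts a potentially singular quantity into an off-diagonal one, but one still has to sum the resulting Poisson contributions over all $G \subset I$ without blow-up. This requires carefully exploiting the nested structure of gaps inside a Cantor interval $I$, and the specific power-law relations \eqref{e:xsimple}; the estimate is genuinely at the boundary of convergence and uses the exact arithmetic of the exponents.
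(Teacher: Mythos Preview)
Your proposal is correct and follows essentially the same route as the paper: verify Poisson $A_2$ (Lemma~\ref{l:xA2}), the two testing conditions (Lemmas~\ref{l:Xtesting} and~\ref{l:xtesting}), and the direct divergence computation \eqref{e:x:zI} for the unboundedness. Two small corrections are worth noting. First, the claim that $\textup{dist}(z'_G,\partial G)\gtrsim|G|$ does \emph{not} follow immediately from the definition of $z'_G$; it requires the derivative estimate $\tfrac{d}{dx}Hw \gtrsim |G|^{-2+\ln 2/\ln 3}$ on $G$ combined with the bound $|Hw(z''_G)|\lesssim|G|^{-1+\ln 2/\ln 3}$ at the midpoint (this is Lemma~\ref{l:xclose}). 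Second, your ``main obstacle'' is misplaced: the backward testing for $\sigma$ is \emph{not} at the boundary of convergence. The paper organizes it by the maximal $K\in\mathcal K$ contained in $I$, uses positivity of the kernel (everything in $K_{\textup{rt}}$ lies to one side of $K$) to get the uniform pointwise bound $|H_w K_{\textup{rt}}|\lesssim w(K)/|K|$ on $K$, and then $\sum_K (w(K)/|K|)^2\sigma(K)\lesssim\sum_K w(K)\le w(I)$ by $A_2$, with geometric slack throughout. The genuinely borderline computation is the divergence $\sum_G|G|^{\ln 2/\ln 3}=\infty$, which is exactly what drives both the unboundedness in part~(2) and the failure of the pivotal condition.
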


Concerning point 2, the unboundedness of $ H _{w}$ is direct from the construction of $ \sigma '$. 
\begin{align}
\int (H w) ^2 \; d \sigma '  &= 
\sum_{G\in \mathcal G} H w (z'_G) ^2 \sigma ' ( \{z'_G\}) 
\\ \label{e:x:zI} 
&= \sum_{G\in \mathcal G}  \lvert  G\rvert  ^{2- \frac {\ln 2} {\ln 3} -2(1-\frac{\ln 2} {\ln 3}) } 
= \sum_{G\in \mathcal G}  \lvert  G\rvert  ^{+ \frac {\ln 2} {\ln 3}  }  = \infty \,. 
\end{align}
There are exactly $ 2 ^{n-1}$ elements of $ \mathcal G$ of length $ 3 ^{-n}$, proving the  sum is infinite.

\subsection{The Poisson $ A_2$ Condition}

\begin{lemma}\label{l:xA2} For either weight $ \mu \in \{\sigma , \sigma '\}$, the pair of weights $ w, \mu $ 
satisfy the $ A_2$ condition. 
\end{lemma}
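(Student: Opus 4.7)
The plan is to verify $P(\mu, I) \cdot P(w, I) \lesssim 1$ uniformly in intervals $I$, for $\mu \in \{\sigma, \sigma'\}$. Since both measures are supported in $[0,1]$ with $O(1)$ total mass, the first step is to dispatch the easy regimes: for $|I| \geq 1$, the trivial bound on the Poisson kernel gives $P(\mu, I), P(w, I) \lesssim 1/|I|$, so their product is $\lesssim 1/|I|^2$; for $\mathrm{dist}(I,[0,1]) \gtrsim |I|$, both integrals decay. This reduces matters to $|I|\leq 1$ with $\mathrm{dist}(I,[0,1]) \lesssim |I|$.

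The engine is the duality built into the construction. From \eqref{e:x:simple} and \eqref{e:xsimple}, on components $K \in \mathcal K$ of the Cantor approximants,
\[
w(K) \simeq |K|^{\ln 2/\ln 3}, \qquad \sigma(K) \simeq |K|^{2-\ln 2/\ln 3}.
\]
I would first extend these to upper bounds on all intervals $J \subset [0,1]$ that meet the Cantor set: for $w$ this is standard Ahlfors regularity; for $\sigma$ it follows from the observation that any gap $G$ with $z_G \in J$ and $J \cap C \neq \emptyset$ must satisfy $|G| \lesssim |J|$ (else the center $z_G$ of $G$ lying in $J$ would force $J \subset G$), so summing $s_G = 2|G|^{2-\ln 2/\ln 3}$ over the geometrically-decaying scales of such $G$'s gives $\sigma(J) \lesssim |J|^{2-\ln 2/\ln 3}$.

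With these scaling bounds in hand, I would use the standard annular decomposition $P(\mu, I) \simeq \sum_{k \geq 0} 2^{-k} \mu(2^k I)/(2^k|I|)$. Plugging in the bounds and summing the resulting (convergent) geometric series yields $P(w,I)\lesssim |I|^{\ln 2/\ln 3 -1}$ and $P(\sigma,I) \lesssim |I|^{1-\ln 2/\ln 3}$, so the product is $\lesssim 1$ in the generic case where $I$ is near $C$ at every scale. The same argument handles $\sigma'$, since $z'_G \in G$ and $s_G$ is unchanged.

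The main obstacle is the remaining case: $I$ sits deep inside a gap $G$ with $|I| \ll |G|$. Here the point mass at $z_G$ contributes $P(\sigma, I) \gtrsim |I|s_G/|G|^2 = 2|I||G|^{-\ln 2/\ln 3}$, a term not captured by the scaling estimate above. However, since $w$ lives outside $G$ at distance $\gtrsim |G|$ from $I$, one has the matching bound $P(w, I) \lesssim |I| w(3G)/|G|^2 = 2|I||G|^{\ln 2/\ln 3 - 2}$; multiplying gives $(|I|/|G|)^2 \leq 1$. This is precisely why the weights $s_G = 2|G|^{2-\ln 2/\ln 3}$ were chosen as they are: the exponent complements the fractal dimension $\ln 2/\ln 3$ of $w$ exactly. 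The analysis for $(w,\sigma')$ is identical since the point $z'_G$ also lies in $G$ with the same weight $s_G$.
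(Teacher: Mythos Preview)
Your strategy matches the paper's: reduce to $I$ near $[0,1]$, use an annular decomposition together with the scaling relations \eqref{e:x:simple}--\eqref{e:xsimple} to bound each Poisson average by the corresponding simple average when $I$ meets the Cantor set, and treat separately the case of $I$ strictly inside a gap $G$. The paper works with triadic intervals and $3^k$-annuli while you use general intervals and $2^k$-annuli; this is an inessential difference.

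Your gap case, however, contains a genuine error. You write that the point mass at $z_G$ contributes $P(\sigma, I) \gtrsim |I|s_G/|G|^2$ and then multiply this \emph{lower} bound by your upper bound on $P(w, I)$; a lower bound on one factor times an upper bound on the other bounds nothing. More importantly, you have missed the scenario where $I$ is close to (or contains) $z_G$: there the point-mass contribution to $P(\sigma, I)$ is $\simeq s_G/|I|$, which diverges as $|I|\to 0$ and is far larger than $|I|s_G/|G|^2$. The fix is to split on $d := \textup{dist}(I, \partial G)$. If $d \gtrsim |G|$ (so $I$ is truly central, hence near $z_G$ --- recall from Lemma~\ref{l:xclose} that $z_G$ lies well inside $G$), bound $P(\sigma, I) \lesssim s_G/|I| + (|I|/|G|)\,P(\sigma,G)$ and combine with your (now valid) bound $P(w, I) \lesssim (|I|/|G|)\,P(w,G)$; the product is $\lesssim s_G\,|G|^{\ln 2/\ln 3 - 2} + (|I|/|G|)^2 \simeq 1$, precisely because $s_G = 2|G|^{2 - \ln 2/\ln 3}$. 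If instead $d \ll |G|$, then $I$ borders a Cantor component and your generic scaling argument already applies to both weights; the now-distant point mass at $z_G$ contributes only $O(|I|\,s_G/|G|^2)$ to $P(\sigma,I)$, which against $P(w,I) \lesssim |I|^{\ln 2/\ln 3 - 1}$ gives $\lesssim (|I|/|G|)^{\ln 2/\ln 3} \le 1$.
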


\begin{proof}
It suffices to check the $ A_2$ condition on the triadic intervals in the unit interval.   
Let us begin by showing that for  any triadic interval $ I \in \mathcal K \cup \mathcal G$, 
\begin{equation} \label{e:xa2}
P (\sigma , I) \lesssim \frac {\sigma (I)} {\lvert  I\rvert } \,, \quad 
\textup{and} \quad 
P (w , I) \lesssim \frac {w (3I)} {\lvert  I\rvert } \,. 
\end{equation}
For then, the control of the simple $ A_2$ ratio will imply the control of the full $ A_2$ ratio.  
(For the inequality on $ w$, the triple of the interval appears on the right, since $ w (I)$ can be zero if $ I\in \mathcal G$.)
Now, it will be clear that this argument is insensitive to the location of the points $ z_G $ and $ z'_G$, so 
the same argument for $ \sigma $ will work equally well for $ \sigma '$. 

Let us consider $ \sigma $. Using \eqref{e:xsimple}, there holds 
\begin{align*}
P (\sigma , I)  & \le \frac {\sigma (I)} {\lvert  I\rvert }
+ \sum_{k=1} ^{\infty } \int _{3 ^{k}I \setminus 3 ^{k-1}I} 
\frac {\lvert  I\rvert } { (\lvert  I\rvert ^2 + \textup{dist} (x,I) ) ^2 } \; \sigma (dx) 
\\
& \lesssim 
\frac {\sigma (I)} {\lvert  I\rvert } +  \sum_{k=1} ^{\infty }  \frac {\sigma (3 ^{k} I)} { 3 ^{k}   \lvert  3 ^{k} I\rvert   } 
\\
& \lesssim 
\frac {\sigma (I)} {\lvert  I\rvert } +  \sum_{k=1} ^{\infty }   3 ^{-k} \lvert  3 ^{k} I\rvert ^{1- \frac {\ln 2} {\ln 3}}  
 \lesssim 
\frac {\sigma (I)} {\lvert  I\rvert }  \sum_{k=0} ^{\infty } 3 ^{-k \frac {\ln 2} {\ln 3} } \lesssim \frac {\sigma (I)} {\lvert  I\rvert }   \,. 
\end{align*}

Turning to the weight $ w$, one has 
\begin{align*}
P (w , I)  & \le \frac {w (3I)} {\lvert  I\rvert }
+ \sum_{k=2} ^{\infty } \int _{3 ^{k}I \setminus 3 ^{k-1}I} 
\frac {\lvert  I\rvert } { (\lvert  I\rvert ^2 + \textup{dist} (x,I) ) ^2 } \; w (dx) 
\\
& \lesssim 
\frac {w(3I)} {\lvert  I\rvert } +  \sum_{k=2} ^{\infty }  \frac {w(3 ^{k} I)} { 3 ^{k}   \lvert  3 ^{k} I\rvert   } 
\\
& \lesssim 
\frac {w(3I)} {\lvert  I\rvert } +  \sum_{k=2} ^{\infty }   3 ^{-k} \lvert  3 ^{k} I\rvert ^{-1+ \frac {\ln 2} {\ln 3}}  
 \lesssim 
\frac {w(3I)} {\lvert  I\rvert }  \sum_{k=1} ^{\infty } 3 ^{-k(2- \frac {\ln 2} {\ln 3}) } \lesssim \frac {w(3I)} {\lvert  I\rvert }   \,. 
\end{align*}

\medskip 

The $ A_2$ product $ P (\sigma ,I) \cdot P (w,I)$ has been bounded for $ I\in \mathcal K \cup \mathcal G$.  
Suppose that $ I$ is a triadic interval that is not in these two collections.  
Then, $ I$ must be strictly contained in some gap $ G\in \mathcal G$.  
Writing $ I ^{(k)}=G$, where, $ I ^{(k)}$ denotes the $ k$-fold parent of $ I$ in the triadic grid, we have $ w (G)=0$. 
Hence,  
\begin{equation*}
P (w , I) = \int _{[0,1] \setminus G} 
\frac {\lvert  I\rvert }  { (\lvert  I\rvert + \textup{dist} (x,I) ) ^2 } \; w (dx)
\simeq 3 ^{-k} 
P (w , G). 
\end{equation*}
First, consider $ \sigma $ restricted to the gap $ G$: 
\begin{equation*}
P (w,I)  P (\sigma \cdot G, I )
\lesssim  3 ^{-k}  P (w , G) \frac {\sigma (G)} {\lvert  I\rvert }  \simeq 
 P (w , G) \frac {\sigma (G)} {\lvert  G\rvert } \lesssim 1. 
\end{equation*}
Now, we have to consider the Poisson average of $ \sigma $ off of the gap $ G$, in which case we have 
\begin{equation*}
P (\sigma \cdot ([0,1] \setminus G), I) \simeq 3 ^{-k} P (\sigma , G)\,,   
\end{equation*}
and so the estimate follows. 

\end{proof}

\subsection{The Testing Conditions}

We turn to the testing conditions, using in an essential way the precise definition of the weight $ \sigma $: it gives 
a huge cancellation, which simplifies things considerably. 

\begin{lemma}\label{l:Xtesting} For any interval $ I$, there holds 
\begin{equation}\label{e:Xtesting}
\int _{I} \lvert  H _{w} I\rvert ^2 \; d \sigma  \lesssim w (I) \,.  
\end{equation}
\end{lemma}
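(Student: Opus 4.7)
\textbf{Plan for Lemma \ref{l:Xtesting}.}
The plan is to exploit the defining cancellation $Hw(z_G)=0$ at every gap $G \in \mathcal G$, which is the very reason $\sigma$ was constructed as in \eqref{e:x:sigma}. Write $\alpha := \ln 2/\ln 3$, so that the standard Frostman bound $w(B(x,r)) \lesssim r^\alpha$ holds uniformly in $x \in \mathbb R$ and $r>0$, and $s_G \simeq |G|^{2-\alpha}$.

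First, since $\sigma$ is purely atomic on $\{z_G : G \in \mathcal G\}$ and $\sigma$ has no mass at $\partial I$,
\begin{equation*}
\int_I |H(w\mathbf 1_I)|^2 \, d\sigma = \sum_{G : z_G\in I} |H(w\mathbf 1_I)(z_G)|^2 \, s_G = \sum_{G : z_G\in I} |H(w\mathbf 1_{I^c})(z_G)|^2 \, s_G,
\end{equation*}
the second identity being the cancellation $Hw(z_G)=0$. Next, I will prove the pointwise bound
\begin{equation*}
|H(w\mathbf 1_{I^c})(z_G)| \leq \int_{|y-z_G| \geq d_G} \frac{w(dy)}{|y-z_G|} \lesssim d_G^{\alpha-1}, \qquad d_G := \mathrm{dist}(z_G,\partial I),
\end{equation*}
by decomposing the kernel dyadically into annuli $\{2^{k-1}d_G \leq |y-z_G| \leq 2^k d_G\}$ and invoking the Frostman bound; the resulting geometric series $\sum_{k\geq 0} 2^{k(\alpha-1)}$ converges since $\alpha<1$. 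Combined with $s_G \simeq |G|^{2-\alpha}$, matters reduce to showing
\begin{equation*}
S(I) := \sum_{G : z_G\in I} d_G^{2\alpha-2} \, |G|^{2-\alpha} \lesssim w(I).
\end{equation*}

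The heart of the argument is verifying $S(I) \lesssim w(I)$ when $I \in \mathcal K$ with $|I|=L$, so that $w(I) \simeq L^\alpha$ by \eqref{e:xsimple}. Each gap $G \subset I$ is the middle third of a unique $K \in \mathcal K$ with $K \subseteq I$, and since $z_G$ sits at distance $\simeq|G|$ from $\partial G$ (a consequence of the self-similar structure), $d_G \simeq \mathrm{dist}(K,\partial I)+|K|$. Organize by the level $k \geq 0$ at which $|K|=L\cdot 3^{-k}$ (there are $2^k$ such $K \subseteq I$), and within level $k$ by the binary branch $\epsilon_1,\dots,\epsilon_k \in \{0,1\}^k$ encoding the descent from $I$ to $K$. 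A direct computation of left/right endpoints shows that the ``first switch'' in the branch, i.e.\ the smallest $j$ with $\epsilon_j \neq \epsilon_1$ (or $j=k$ for the ``corner'' branches), determines $d_G \simeq L\cdot 3^{-(j-1)}$. Counting, the number of $K$ at level $k$ with $d_G \simeq L\cdot 3^{-m}$ is $\simeq 2^{k-m}$ for $m\in\{1,\dots,k\}$. Hence the level-$k$ contribution is
\begin{equation*}
\sum_{m=1}^k 2^{k-m}\bigl(L\cdot 3^{-m}\bigr)^{2\alpha-2}\bigl(L\cdot 3^{-k}/3\bigr)^{2-\alpha} \simeq L^{\alpha} \cdot 2^k \cdot 3^{-k(2-\alpha)} \sum_{m=1}^k \bigl(3^{2-2\alpha}/2\bigr)^m \simeq L^\alpha \cdot 2^{-k},
\end{equation*}
using $3^\alpha = 2$ (so $3^{2-2\alpha}/2 = 9/8$ and $2\cdot 3^{-(2-\alpha)}\cdot (9/8)=1/2$). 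Summing in $k$ yields $S(I) \lesssim L^\alpha \simeq w(I)$. The general interval $I \subset [0,1]$ reduces to this case: if $I$ lies in some $G \in \mathcal G$ then $w(I)=0$ and $H(w\mathbf 1_I)\equiv 0$; otherwise $I$ meets $C$ and is covered by a bounded number of elements of $\mathcal K$ at comparable scale, with at most two ``boundary gaps'' straddling $\partial I$ handled directly using $s_G \simeq |G|^{2-\alpha}$ and the $A_2$ estimate from Lemma~\ref{l:xA2}.

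The main obstacle is the combinatorial count in Step 4: the crude bound $d_G \geq c|G|$ gives $S(I) \lesssim \sum_{G\subseteq I}|G|^\alpha$, which \emph{diverges}. One genuinely needs the refined distribution $d_G \simeq L\cdot 3^{-m}$ dictated by the path to $G$, and the telescoping $2 \cdot 3^{-(2-\alpha)} \cdot (9/8) = 1/2$ is a numerical reflection of $\dim_H(C) = \alpha$ together with the precise exponent in $s_G = 2|G|^{2-\alpha}$. This delicate balance, rather than any Muckenhoupt-type averaging, is what makes the testing inequality hold.
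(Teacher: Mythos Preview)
Your approach is correct in its core and genuinely different from the paper's. Both proofs begin with the same crucial cancellation $Hw(z_G)=0$, replacing $H(w\mathbf 1_I)$ by $H(w\mathbf 1_{[0,1]\setminus I})$ at each atom. After that the routes diverge. The paper decomposes $I$ into its maximal subintervals $K^j\in\mathcal K$, then decomposes $[0,1]\setminus K^j$ Whitney-style into elements of $\mathcal K$ of geometrically increasing length; this yields the uniform bound $|H_w([0,1]\setminus K^j)|\lesssim w(K^j)/|K^j|$ on $K^j$, after which one simply integrates and invokes the already-established $A_2$ relation $\sigma(K^j)w(K^j)\lesssim|K^j|^2$. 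Your argument instead bounds $H(w\mathbf 1_{I^c})$ pointwise at each atom via the Frostman estimate $w(B(x,r))\lesssim r^\alpha$, obtaining $\lesssim d_G^{\alpha-1}$, and then carries out a direct combinatorial sum over the atoms using the ``first-switch'' distribution of $d_G$. The paper's route is more modular (it recycles $A_2$); yours is more explicit and makes visible the exact numerical coincidence $2\cdot 3^{\alpha-2}\cdot\tfrac98=\tfrac12$ that forces convergence, which is a nice structural insight.

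There is, however, a genuine slip in your reduction to $I\in\mathcal K$. You claim a general interval meeting $C$ is ``covered by a bounded number of elements of $\mathcal K$ at comparable scale,'' but this is false: for instance $I=[1/9,1]$ requires maximal $K^j\in\mathcal K$ of several different scales ($[2/9,1/3]$ and $[2/3,1]$), and longer examples require arbitrarily many. The correct reduction is the paper's: decompose $I$ into \emph{all} its maximal $K^j\in\mathcal K$ (possibly infinitely many, at many scales). Your estimate then transfers because for $G\subset K^j$ one has $\textup{dist}(z_G,\partial I)\geq\textup{dist}(z_G,\partial K^j)$, so the contribution of those gaps is at most $S(K^j)\lesssim w(K^j)$, and $\sum_j w(K^j)\leq w(I)$. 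The remaining ``separator'' gaps $G^j$ between consecutive $K^j$'s satisfy $|G^j|\leq\min(|K^j|,|K^{j+1}|)$ and $d_{G^j}\gtrsim|G^j|$, so each contributes $\lesssim|G^j|^\alpha\lesssim w(K^j)$; summing is again $\lesssim w(I)$. With this correction your proof is complete.
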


\begin{proof}
By construction of $ \sigma $, there are two reductions. 
The first is simple, namely that the two endpoints of the interval $ I$ can be taken to be an endpoint of an interval in $ \mathcal G$. 
The second comes from the construction of $ \sigma $:  $ H w \equiv 0$, relative to $ d \sigma $ measure.   Hence, 
\begin{equation*}
\int _{I} \lvert  H _{w} I\rvert ^2 \; d \sigma = \int _{I}   H _{w} ([0,1] -I)^2 \; d \sigma\,,  
\end{equation*}
namely the \emph{complement of $ I$} is the argument of the Hilbert transform on the right.  

Then, one abandons all further cancellations.  Let us show that for all intervals $ K\in \mathcal K$ (the components of the 
	sets $ C_n$ which generate the Cantor set), 
\begin{equation}\label{e:xK<}
\int _{K}  \lvert  H _{w } K _{\textup{rt}}\rvert ^2 \; d \sigma \lesssim w (K)\,, 
\end{equation}
where $ K _{\textup{rt}}$ is the right component of $ [0,1] \setminus K$. The same estimate holds for the left component, and this 
completes the proof.  For, if we set $ I _{\textup{rt}}$ to be the right component of $ [0,1] \setminus I$, 
and take $ K^1 , K ^{2}, \dotsc, $ to be the maximal intervals in $ K$ contained in $ I$, there holds 
\begin{align*}
\int _{I} (H _{w } I _{\textup{rt}}) ^2 \; d \sigma 
& \le \sum_{n=1} ^{\infty } \int _{K^n}(H _{w } K ^n_{\textup{rt}}) ^2 \; d \sigma 
\\
& \lesssim  \sum_{n=1} ^{\infty } w (K^n) \lesssim w (I)\,. 
\end{align*}

Now, for $ K\in \mathcal K$, let $ K_1 , K_2, \dotsc, $ be the maximal intervals in $ \mathcal K$ that lie to the right of $ K$.  
Arranging them in increasing length, note that the length of $ K_1$ is either $ \lvert  K\rvert $ or $ 3 \lvert  K\rvert $.  
For $ n\ge 2$, the length of $ K _{n}$ increases by a factor of 3, and $ \textup{dist} (K, K_n) \gtrsim \lvert  K_n\rvert $, and 
hence there are  at most $ 1 - \log _{3} \lvert  K\rvert $ such intervals in $ \mathcal K$.  
Here is an illustration:

\begin{center}
\begin{tikzpicture}[scale=1.3]
\draw[thick] (0,0) -- (0.33,0) node[above,midway] {$ K _{\phantom 0}$};
\foreach \x/\y/\n in {0.66/1/1,2/3/2,6/9/3} \draw[thick] (\x,0) -- (\y,0) node[above,midway] {$ K_{\n}$}; 
\end{tikzpicture}
\end{center}

Then, one has the estimate below, where the sum is of a decreasing  geometric series, estimated by its first term.   
\begin{equation*}
\lvert  H _{w } K _{\textup{rt}}\rvert  
\lesssim \sum_{n=1} ^{\infty } \frac {w (K_n)} {\lvert  K_n\rvert } \simeq \frac {w (K)} {\lvert  K\rvert } \,. 
\end{equation*}
Hence, \eqref{e:xK<} follows from the control of the $ A_2$ ratio.  

\end{proof}

An important part of the remaining arguments is that points $ z_G $, and  $ z'_G$ cannot cluster close to the boundary of $ G$. 

\begin{lemma}\label{l:xclose} There is a constant $0 < c < \tfrac 12 $ such that 
\begin{equation*}
\lvert  z_G - z'_G\rvert   \le   c \lvert  G\rvert\,.  
\end{equation*}
\end{lemma}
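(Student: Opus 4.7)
Plan. My plan is to derive a uniform lower bound $(Hw)'(x) > 2|G|^{\alpha-2}$ on $G$ (with $\alpha = \ln 2/\ln 3$), so that the mean value theorem applied to $Hw(z'_G) - Hw(z_G) = |G|^{\alpha-1}$ immediately gives $|z'_G - z_G| < |G|/2$. Write $G = (a,b)$, $L = |G|$. Since $Hw$ is real-analytic and strictly increasing on $G$ with $Hw(a^+) = -\infty$, $Hw(b^-) = +\infty$, both $z_G$ and $z'_G$ are well-defined with $z_G < z'_G$ (because $Hw(z'_G) = L^{\alpha-1} > 0$).

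First, drop all $w$-mass outside the two adjacent intervals $K^\pm = G \pm L$, each of which lies in $\mathcal K$ with $w(K^\pm) = L^\alpha$ and where by self-similarity of the Cantor construction $w|_{K^+}(A) = L^\alpha \tilde w((A-b)/L)$, with $\tilde w$ the unit Cantor measure on $[0,1]$. Setting $\theta_+ = (b-x)/L$, $\theta_- = (x-a)/L$ (so $\theta_\pm \in (0,1)$, $\theta_+ + \theta_- = 1$) and defining $F(\theta) := \int_0^1(v+\theta)^{-2}\tilde w(dv)$, a change of variables in $K^\pm$ gives
\begin{equation*}
(Hw)'(x) \ge \int_{K^-\cup K^+}\frac{w(dy)}{(y-x)^2} = L^{\alpha-2}\bigl[F(\theta_+) + F(\theta_-)\bigr].
\end{equation*}
Since $F''(\theta) = 6\int(v+\theta)^{-4}\tilde w(dv) > 0$, $F$ is strictly convex on $(0,\infty)$, and the constraint $\theta_+ + \theta_- = 1$ forces $F(\theta_+) + F(\theta_-) \ge 2F(1/2)$. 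Hence $(Hw)'(x) \ge 2F(1/2)\,L^{\alpha-2}$ uniformly on $G$.

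The key quantitative step is then $F(1/2) > 1$. Writing $\tilde w = \tfrac12\mu_1 + \tfrac12\mu_2$, with $\mu_1,\mu_2$ the pushforwards of $\tilde w$ under $v\mapsto v/3$ and $v\mapsto v/3 + 2/3$, substitution produces the self-similarity recursion
\begin{equation*}
F(\theta) = \tfrac{9}{2}\bigl[F(3\theta) + F(3\theta+2)\bigr], \qquad \theta > 0.
\end{equation*}
Iterating twice from $\theta = 1/2$ reduces the question to bounding $F$ at $\theta \in \{9/2, 13/2, 21/2, 25/2\}$. At each such $\theta$ the trivial inequality $F(\theta) \ge (1+\theta)^{-2}$, valid from $\tilde w([0,1]) = 1$ and $v \le 1$, is already enough: a short arithmetic yields
\begin{equation*}
F(1/2) \ge 81\bigl(\tfrac{1}{121} + \tfrac{1}{225} + \tfrac{1}{529} + \tfrac{1}{729}\bigr) > 1.29.
\end{equation*}

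Combining, $L^{\alpha-1} = Hw(z'_G) - Hw(z_G) \ge 2F(1/2)\,L^{\alpha-2}\,(z'_G - z_G)$, and one may take $c := 1/(2F(1/2)) \in (0, 1/2)$. The main obstacle in the proof is precisely this quantitative passage $F(1/2) > 1$: the crudest bound on $F(1/2)$ using only $\tilde w([0,1]) = 1$ gives $F(1/2) \in [4/9, 4]$, and $4/9 < 1$ is not enough to beat the factor $2$ required. One must descend at least two levels in the Cantor self-similarity to capture the concentration of $\tilde w$ near $v = 0$ and $v = 1$, where the integrand $(v+1/2)^{-2}$ is largest.
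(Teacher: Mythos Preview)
Your proof is correct. Both you and the paper rely on the same core mechanism: a uniform lower bound on $(Hw)'$ over $G$ of order $|G|^{\alpha-2}$, combined with the value difference $Hw(z'_G)-Hw(z_G)=|G|^{\alpha-1}$. The execution differs in a useful way. The paper first estimates $|Hw|$ at the midpoint of $G$ (using the symmetry $H(w\mathbf 1_{3G})=0$ there), then invokes a qualitative derivative bound $(Hw)'(x)\gtrsim w(3G)/|G|^2$, together with the sharper boundary bound $(Hw)'(x)\gtrsim \mathrm{dist}(x,\partial G)^{\alpha-2}$, to conclude that both $z_G$ and $z'_G$ sit a fixed fraction of $|G|$ away from $\partial G$; the constant $c<\tfrac12$ is left implicit. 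You instead bypass the midpoint estimate, apply the mean value theorem directly to $Hw(z'_G)-Hw(z_G)$, and sharpen the derivative bound via the convexity step $F(\theta_+)+F(\theta_-)\ge 2F(1/2)$ plus two iterations of the self-similarity recursion for $F$, obtaining the explicit $c=1/(2F(1/2))<1/2$. Your route is cleaner for the lemma as stated; the paper's route has the side benefit of showing that each of $z_G,z'_G$ is individually bounded away from $\partial G$, which is what is actually used in the proof of Lemma~\ref{l:xtesting}.
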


\begin{proof}

Estimate $ H w$ at the midpoint $ z''_G$ of a component $ G$. 
By symmetry of the Hilbert transform, and the Cantor set, it always holds that 
$ H (w \mathbf 1_{3G}) (z'_G)=0$, so that appealing to \eqref{e:x:simple}, 
\begin{align*}
\lvert  H w (z''_G) \rvert &= \lvert  H (w \mathbf 1_{(3G) ^{c}}) (z''_G) \rvert 
\\
&  \lesssim \sum_{k=2} ^{n }   \frac {w (3 ^{k} G )} {  \lvert  3 ^{k}G\rvert   } 
 \\&\lesssim \sum_{k=2} ^{n }  \lvert  3  ^{k}G\rvert ^{-1+\frac {\ln 2} {\ln 3}} \lesssim  \lvert  G\rvert ^{-1+\frac {\ln 2} {\ln 3}} 
\end{align*}

Next, we turn to a derivative calculation. 
The function $ H w $, restricted to $ G$ is a smooth function, one that diverges at the end points of $ G$ at a 
rate that reflect the fractal dimension of $ G$.  For any $ x\in G$ note that 
\begin{align*}
\frac d {dx} H w (x)  &  =    \int _{C} \frac {w (dy)} { (y-x) ^2 }
\\&
\gtrsim \frac {w (3G)} {\lvert  G\rvert ^2  } \simeq  \lvert  G\rvert ^{-2 + \frac {\ln 2} {\ln 3}} \,.  
\end{align*}
This is a uniform lower bound, and in fact the lower bound is very poor at the boundaries of $ G$.  
Indeed, 
\begin{equation*}
\frac d {dx} H w (x) \gtrsim  \textup{dist} (x, \partial G) ^{-2+\frac {\ln 2} {\ln 3}} \,. 
\end{equation*}
It follows that we have to have $ \lvert  z_G - z'_G\rvert < c \lvert  G\rvert $, for some $ 0< c < \tfrac 12 $.  
That is, one need only move at fixed small multiple of $ \lvert  G\rvert $, passing from the location of the zero $ z_G$ to  the point $ z'_G$.

\end{proof}

The second half of the testing intervals inequalities is as follows. 

\begin{lemma}\label{l:xtesting} For $ \mu \in \{\sigma , \sigma '\}$, and any interval $ I$,  
\begin{equation}\label{e:xtesting}
\int _{I} \lvert  H _{\mu } I  \rvert ^2 \; dw \lesssim \mu (I) \,.  
\end{equation}

\end{lemma}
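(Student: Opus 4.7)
The plan is to prove \eqref{e:xtesting} by a scale induction that exploits the self-similarity of $w$ and the point-mass structure of $\mu$. First reduce to the case where the two endpoints of $I$ are endpoints of gaps in $\mathcal{G}$: enlarging $I$ in this way inflates $\mu(I)$ and the integral only by universal constants, since the $A_2$ bound of Lemma~\ref{l:xA2} already controls the tail contribution. Under this reduction, $I\cap C$ is a finite disjoint union $\bigsqcup_i K_i$ of maximal intervals $K_i\in\mathcal{K}$ contained in $I$.

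The core step is the case $I=K\in\mathcal{K}$. Here $K$ has Cantor children $K_L,K_R\in\mathcal{K}$ of length $|K|/3$ separated by the middle gap $G_0\in\mathcal{G}$, and
\begin{equation*}
H_\mu K \;=\; H_\mu K_L + H_\mu K_R + \frac{s_{G_0}}{z_{G_0}-\cdot}.
\end{equation*}
Lemma~\ref{l:xclose} gives $|z_{G_0}-x|\gtrsim |G_0|$ for all $x\in K_L\cup K_R=K\cap C$, so the point-mass term integrates to $\lesssim s_{G_0}^2 |G_0|^{-2} w(K)\simeq |K|^{2-\ln 2/\ln 3}\simeq \sigma(K)$ by \eqref{e:xsimple}. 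The self-terms $\int_{K_L}|H_\mu K_L|^2\,dw$ and $\int_{K_R}|H_\mu K_R|^2\,dw$ are bounded by $\sigma(K_L),\sigma(K_R)$ via the inductive hypothesis. The cross-term $\int_{K_L}|H_\mu K_R|^2\,dw$ is bounded using $|z_G-x|\ge|G_0|$ for $x\in K_L$, $z_G\in K_R$, so $|H_\mu K_R(x)|\le \sigma(K_R)/|G_0|\lesssim \sigma(K)/|K|$, and integration yields $\sigma(K)^2 w(K_L)/|K|^2\lesssim \mathscr{A}_2\,\sigma(K)$. Summing closes the induction.

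For general $I=\bigsqcup_i K_i$, write $H_\mu I=\sum_i H_\mu K_i$ and for $x\in K_{i_0}$ split off the self-term $H_\mu K_{i_0}(x)$, controlled by the core case. For the cross-sum $\sum_{i\ne i_0}H_\mu K_i(x)$, use Lemma~\ref{l:xclose} to bound $|z_G-x|\gtrsim |G|+\operatorname{dist}(x,K_i)$ for $G\subset K_i$, so that
\begin{equation*}
\Bigl|\sum_{i\ne i_0}H_\mu K_i(x)\Bigr|\;\lesssim\; P(\mu\mathbf{1}_{I\setminus K_{i_0}},K_{i_0}).
\end{equation*}
Squaring and integrating gives $P(\mu\mathbf{1}_{I\setminus K_{i_0}},K_{i_0})^2\,w(K_{i_0})$. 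The key estimate is then to sum these contributions over $i_0$ and reorganize: group each pair $(i_0,i)$ by the smallest common $\mathcal{K}$-ancestor $K'$ in the Cantor hierarchy, so that the contribution at scale $K'$ is bounded by $\mathscr{A}_2\,\sigma(K')$ times a geometric factor in the depth difference between $K_{i_0},K_i$ and $K'$. Telescoping this scale-by-scale sum and using $\sum_{K'\subset I}\sigma(K')$-Carleson bookkeeping yields the total bound $\mu(I)$.

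The main obstacle is precisely Step~3: ensuring that the cross-term sum over $i_0$ collapses to $\mu(I)$ rather than $N\cdot\mu(I)$ where $N$ is the number of maximal $K_i$'s. The pivotal ingredient is the Cantor self-similarity: each gap between two $K_i$'s is itself contained in a unique larger $K'\in\mathcal{K}$, and this nested structure allows one to organize the off-diagonal Poisson sums by scale, with geometric decay coming from \eqref{e:xsimple} and Lemma~\ref{l.donotuse}-type estimates, paralleling the argument used in the proof of \eqref{e:xK<}.
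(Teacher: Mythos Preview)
Your overall architecture matches the paper's: recurse on $K\in\mathcal K$ to get \eqref{e:xK}, reduce general $I$ to a union of maximal $K_{i_0}\in\mathcal K_I$, and split into self-terms plus cross-terms. The self-term treatment is fine. The cross-term treatment, however, contains a genuine error.

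The displayed inequality
\[
\Bigl|\sum_{i\ne i_0}H_\mu K_i(x)\Bigr|\;\lesssim\; P(\mu\mathbf{1}_{I\setminus K_{i_0}},K_{i_0})
\]
is false. The Hilbert kernel $1/|z_G-x|$ decays like $1/\operatorname{dist}(z_G,K_{i_0})$, while the Poisson kernel at scale $|K_{i_0}|$ decays like $|K_{i_0}|/\operatorname{dist}(z_G,K_{i_0})^2$; the former dominates the latter by the unbounded factor $\operatorname{dist}(z_G,K_{i_0})/|K_{i_0}|$. Concretely, if $K_{i_0}$ is a tiny $\mathcal K$-interval near the left end of $I$ and $K_1\subset I$ is a large one near the right, the Hilbert cross-term picks up $\sigma(K_1)/|I|\simeq |I|^{1-\ln2/\ln3}$, whereas the Poisson average at scale $|K_{i_0}|$ of that same mass is $\simeq |K_{i_0}|\cdot|I|^{-\ln2/\ln3}$, which is arbitrarily smaller. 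So the bound you use to set up Step~3 is not available, and the subsequent ``common ancestor plus Carleson bookkeeping'' is both built on this false premise and left vague.

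What the paper does instead is simpler and avoids the issue entirely: it proves the \emph{uniform} pointwise bound $|H_\sigma(I-K)(x)|\lesssim \sigma(I)/|I|$ for every $x\in K\in\mathcal K_I$, via an explicit Whitney-type decomposition of the one-sided complement $I_{\mathrm{rt}}$ into alternating $\mathcal G$- and $\mathcal K$-intervals $J_1,\dots,J_t$ with $|J_s|$ growing geometrically and $\operatorname{dist}(K,J_s)\gtrsim|J_s|$. Then $|H_\sigma(I_{\mathrm{rt}})|\lesssim\sum_s \sigma(J_s)/|J_s|\simeq\sum_s |J_s|^{1-\ln2/\ln3}$ is a geometric sum dominated by its largest term $\simeq|I|^{1-\ln2/\ln3}=\sigma(I)/|I|$. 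Squaring, integrating over $K$, and summing over $K\in\mathcal K_I$ gives $(\sigma(I)/|I|)^2 w(I)\lesssim\mathscr A_2\,\sigma(I)$ in one line --- no pairwise reorganization needed. This is the missing ingredient in your Step~3.
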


\begin{proof}
For the sake of specificity, let $ \mu = \sigma $.  Indeed, by Lemma~\ref{l:xclose}, the same argument will work for   $ \sigma' $.  
To fix ideas, let us assume that $ I\in \mathcal K$. 
Write the left, middle and right thirds of $ I$ as $ I _{-1}, I _{0}, I _{1}$, respectively.  Then,  note that 
\begin{align}
\int _{I} H _{\sigma } (I) ^2 \; dw &= \int _{I _{-1} \cup I _{1} } H _{\sigma } (I) ^2 \; dw 
\\  \label{e:xFix}
& \lesssim  \int  _{I _{-1} \cup I _{1} }  H _{\sigma } (I_0) ^2 \; dw  +
\int _{I _{-1}} H _{\sigma } ( I_0+ I _{1}) ^2 \; dw + \int _{I _{1}} H _{\sigma } (I _{-1}+ I _{0}) ^2 \; dw 
\\  \label{e:xxFix}
& \qquad + \int _{I _{-1}} H _{\sigma } (I _{-1}) ^2 \; dw + \int _{I _{1}} H _{\sigma } (I _{1}) ^2 \; dw 
\,. 
\end{align}
The first term on the right is simple. On the  interval $ I_0$, $ \sigma $ is    a point mass, at a point that is 
at distance $ \ge c \lvert  I\rvert $ from $  I _{\pm 1}$.  Thus, by \eqref{e:xsimple}, 
\begin{equation*}
 \int  _{I _{-1} \cup I _{1} }  H _{\sigma } (I_0) ^2 \; dw
\lesssim \frac {\lvert  I\rvert ^{4- 2\frac {\ln 2} {\ln 3}} } {\lvert  I\rvert ^2 } \lvert  I\rvert ^{\frac {\ln 2} {\ln 3}} \simeq \sigma (I) \,. 
 \end{equation*}
That completes the first integral.  The remaining two integrals in \eqref{e:xFix} are handled by a similar argument.

Concerning the two integrals in \eqref{e:xxFix}, one should note that $ I _{\pm1} \in \mathcal K$
and that $ \sigma (I _{\pm1}) \le 3 ^{-2+2 \frac {\ln 2} {\ln 3}} \sigma (I)$. 
This geometric factor is smaller than $ \tfrac 12 $, therefore one can recurse on \eqref{e:xFix} and \eqref{e:xxFix} to see that 
\begin{equation}\label{e:xK}
\int _{K} H _{\sigma } (K) ^2 \; dw  \lesssim \sigma (K) \,, \qquad K\in \mathcal K \,. 
\end{equation}

\smallskip 
For a general interval $ I$, since $ \sigma $ is a sum of Dirac masses,  we can assume that the interval $ I$ is in a canonical form. 
Namely, each endpoint of $ I$ can be assumed to be an endpoint of an interval in $ \mathcal G$.  
The basic inequality is 
\begin{equation}\label{e:xT1}
\sum_{K\in \mathcal K_I} \int _{K}  \lvert  H _{\sigma } (I-K)\rvert ^2 \; d w \lesssim \sigma (I) \,, 
\end{equation}
where $ \mathcal K_I $ is the  maximal elements of $ \mathcal K$ contained in $ I$.
The integration is over $ K$, and  the argument of the Hilbert transform is $ I-K$.  

To see that \eqref{e:xT1} implies the Lemma, note that by \eqref{e:xK}, 
\begin{align*}
\int _{I} H _{\sigma } (I) ^2 \; dw & = \sum_{K\in \mathcal K_I } \int _{K}  H _{\sigma } (I) ^2 \; dw 
\\
& \lesssim \sum_{K\in \mathcal K_I } \int _{K}  H _{\sigma } (I-K) ^2 \; dw + \sum_{K\in \mathcal K_I } \int _{K}   H ^{\sigma } (K) ^2 \; dw \\
& \lesssim \sigma (I) + \sum_{K\in \mathcal K_I } \sigma (K) \lesssim \sigma (I)\,.  
\end{align*}

In fact, \eqref{e:xT1} follows from 
\begin{equation}\label{e:xT}
\int _{K}  \lvert  H _{\sigma } (I-K)\rvert ^2 \; d w \lesssim \frac {\sigma (I) ^2 } {\lvert  I\rvert ^2  } w (K) \,, \qquad K\in \mathcal K_I \,. 
\end{equation}
For this is summed over $ K\in \mathcal K_I$, and then one uses the $ A_2$ property.  

\smallskip 

To prove \eqref{e:xT}, all hope of cancellation is abandoned.  For an interval $ K\in \mathcal K_I$, let us consider component  $ I_{\textup{rt}}$ of $ I-K$ which lies to the right of $ K$.  It has a Whitney like decomposition into a finite sequence of intervals $ J _{1} ,\dotsc, J _{t}$ that we construct now.  
These intervals will have the property that they are (a) pairwise disjoint, (b) their union is $ I_{\textup{rt}}$, (c) and $ \textup{dist} (K , \textup{supp} (\sigma J_s)) \gtrsim \lvert  J_s\rvert \gtrsim 3 ^{\frac s2} \lvert  K\rvert $, for all $1\le  s \le t$.

Now, $J_1 = K+ \lvert  K\rvert \in \mathcal G$.  If this interval is not contained in $ I$, it follows that $ K$ contains the right hand endpoint of $ I$,  and there is nothing to prove.  
Assuming that $ J_1\subset I$, the inductive step is this.  Given $ J_1 ,\dotsc, J_s$, as above, whose union is not $I_{\textup{rt}}$
\begin{enumerate}
\item  If $ J_s \in \mathcal G $, then $ J_s + \lvert  J_s\rvert \in \mathcal K$.  
If this interval is contained in $ I_{\textup{rt}}$, then we take $ J _{s+1}=  J_s + \lvert  J_s\rvert \in \mathcal K$, and repeat the recursion. 
Otherwise, we update $ J_s \coloneqq  I_{\textup{rt}} - \bigcup _{u=1} ^{s-1} J_t$, and the recursion stops. 

\item  If $ J_s \in \mathcal K$, then it follows that $ J _{s-1} \in \mathcal G$, and the element of $   \mathcal G $ immediately to the right of $ J_s$  is $3 (J _{s} + 6 \lvert  J_s\rvert)  $.   If this interval is contained in $ I_{\textup{rt}}$, then 
we take $ J _{s+1}= 3 (J _{s} + 6 \lvert  J_s\rvert)   \in \mathcal G$, and repeat the recursion. 
Otherwise, we update $ J_s \coloneqq  I_{\textup{rt}} - \bigcup _{u=1} ^{s-1} J_t$, and the recursion stops.  
\end{enumerate}

With this construction, it follows that 
\begin{align*}
\lvert  H _{\sigma } (I_{\textup{rt}}) \cdot K\rvert 
&\lesssim \sum_{u=1} ^{t}  \frac {\sigma ( J_s)} {\lvert  J_s\rvert } 
 \lesssim \sum_{n=1} ^{\infty }  \lvert  J_s\rvert ^{1-\frac {\ln 2} {\ln 3}}  \lesssim   \frac {\sigma (I)} {\lvert I\rvert } \,. 
\end{align*}
This proves the `right half' of \eqref{e:xT}, that is, when the argument of the Hilbert transform is $ I _{\textup{rt}}$.
The `left half' is the same, so the proof is complete. 
\end{proof}

At this point, we have proven that the pair of weights $ (w, \sigma' )$ satisfy the full Poisson $ A_2$ condition,  
and the testing condition \eqref{e:xtesting}.  But, $ \lVert H w \rVert_{L ^2 (\sigma ')}$ is infinite, by \eqref{e:x:zI}.  
Hence, points (1) and (2) of Theorem~\ref{t:examples} are shown.   

We have also shown that the pair of weights $ (w, \sigma )$ satisfy the full Poisson $ A_2$ condition, and both sets of testing conditions. 
Hence, by our main theorem, $ H _{w}$ is bounded from $ L ^2 (w)$ to $ L ^2 (\sigma )$.   This pair of weights also fail the pivotal condition \eqref{e:pivotal} of Nazarov-Treil-Volberg \cite{10031596}.  This is verified by observing that the collection $ \mathcal G$ of gaps is a partition of $ [0,1]$, and 
\begin{align*}
\sum_{G\in \mathcal G}  P (w , G) ^2 w (G) 
& \simeq \sum_{G\in \mathcal G}   \frac {w (3G) ^2 } {\lvert  G\rvert ^2  } \sigma  (I ) 
\\
& \simeq \sum_{G\in \mathcal G} w (3G) \simeq \sum_{G\in \mathcal G} \lvert  G\rvert ^{\frac {\ln 2} {\ln 3}} = \infty 
\end{align*}
since $ \mathcal G$ contains $ 2 ^{n}$ intervals of length $ 3 ^{-n}$, for all integers $ n$.  
Here, we have used \eqref{e:xa2}, followed by \eqref{e:x:simple}.   
Since $\inf _{x\in G} M w (x) \gtrsim  P (w , G) $, this also shows that the maximal function $ M $ is not bounded from $ L ^2 (w)$ to $ L ^2 (\sigma ) $.  

Notice in contrast that the energy inequality \eqref{e:energy} for the partition $ \mathcal G$ is trivial, since $ \sigma $ restricted to any interval $ G$ is a point mass, hence $ E (\sigma ,G)=0$, for all $ G\in \mathcal G$.  

 \subsection{Context and Discussion}
  
\subsubsection{}
Counterexamples were an important source of inspiration on these questions.  
The early paper of Muckenhoupt and Wheeden \cite{MR0417671} includes an example of the fact that the 
simple $ A_2$ condition is not sufficient for the two weight inequality.  
For instance, the  boundedness of the simple $ A_2$ ratio is simple to check for the pair  $ w = \delta _0$, and $ \sigma (dx)= x\mathbf 1_{[0, \infty )} dx $.  Then, one sees that for $ f= \tfrac 1x \mathbf 1_{[1, L]}$, 
\begin{equation*}
\sqrt {\log L} \simeq \lVert f \rVert_{\sigma } 
 \ll   \log L \simeq   \lVert H _{\sigma } f  \rVert_{w} \,, \qquad  L > 1 \,. 
\end{equation*}
Thus, the Hilbert transform is unbounded.  And, one can directly see that the half-Poisson $ A_2$ condition fails.

Much harder, is the fact that the Poisson $ A_2$ condition is not sufficient. This was the contribution of Nazarov \cite{N1}.   This example lead to the conjecture of Nazarov-Treil-Volberg \cite{V} proved herein.   
A more delicate example, of a pair of weights which satisfied the Poisson $ A_2$ condition, and one set of testing conditions, say \eqref{e:T}, but not the norm inequality was that of Nazarov-Volberg \cite{NV}.  Also see Nikol{\cprime }ski{\u \i }-Treil \cite{MR1945291}, for a related example  to disprove a conjecture about similarity to a normal operator. Both of these latter examples were based upon Nazarov's indirect example.  

\subsubsection{}
The example given here is directly inspired by a Cantor set type example in Sawyer's two weight maximal function paper \cite{MR676801}. 
It is drawn from  \cite{10014043}, with the purpose to show that the \emph{pivotal condition} of Nazarov-Treil-Volberg \cites{V,10031596} was \emph{not necessary} for the two weight inequality to hold.  
This was an explicit example, and also pointed to the primary role of the notion of energy. 
It is very interesting and delicate, in that the point masses have to be placed on the zeros 
of the Hilbert transform, in order to obtain the boundedness of the transform.   
It is also humbling in that it still does not reveal how delicate the proof of the sufficiency in the main theorem needs to be.

\subsubsection{}
It is subtle example of Maria Carmen Reguera \cite{MR2799801} and Reguera-Thiele \cite{MR2923171} 
that proves this, as is pointed out by Reguera-Scurry \cite{1109.2027}.  

\begin{priorResults}\label{t:rs} 
	There is a pair of weights for which the maximal function $ M _{\sigma } $ is bounded from $ L ^2 (\sigma ) \to L ^2 (w)$ and $ M _{w} $ is bounded from $ L ^2 (w )\to L ^2 (\sigma )$, but   norm inequality for the Hilbert transform  \eqref{e:N} does not hold. 
\end{priorResults}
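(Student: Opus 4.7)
\textbf{Proof plan for Theorem~\ref{t:rs}.} My strategy follows Reguera--Scurry, building on the dyadic counterexample of Reguera--Thiele. The guiding observation is a consequence of Theorem~\ref{t:pivotal} and the remark at the end of \S\ref{s:pivotal}: if $M_\sigma\colon L^2(\sigma)\to L^2(w)$ and $M_w\colon L^2(w)\to L^2(\sigma)$ are both bounded, then the simple $A_2$ condition holds (since $M$ dominates the averaging operators) and the pivotal condition \eqref{e:pivotal} holds in both directions. By Theorem~\ref{t:pivotal}, the only room left for $\mathscr N=\infty$ is a failure of one of the interval testing inequalities \eqref{e:T} or \eqref{e:T*}. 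Thus the target is to engineer a pair $(\sigma,w)$ with bilateral $M$-bounds and full Poisson $A_2$ for which, on some sequence of intervals $I_n$, one has $\int_{I_n}|H_\sigma \mathbf{1}_{I_n}|^2\,dw/\sigma(I_n)\to\infty$.

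I would first reproduce the Reguera--Thiele iterative construction in its dyadic-martingale form. One fixes a dyadic grid $\mathcal D$ and a Bellman functional that at each scale keeps track of the $A_2$ product, the $M$-testing integrals, and the running Haar coefficients of a model Haar-shift $S$ applied to $\mathbf 1_{I_0}$. At each step one splits a parent interval into children and transfers a controlled amount of mass between $\sigma$ and $w$ in such a way that the Haar coefficients of $S(\sigma\mathbf 1_{I_0})$ on successive generations accumulate \emph{with the same sign}, while the $A_2$ ratio and the two $M$-testing quantities stay bounded. After $n$ generations this produces a pair $(\sigma_n,w_n)$ on a root interval $I_0$ with $\int_{I_0}|S(\sigma_n\mathbf 1_{I_0})|^2\,dw_n\gtrsim (\log n)\,\sigma_n(I_0)$, exactly the mechanism that defeats the dyadic analogue of the testing condition.

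Next I would verify the two-weight maximal function bounds for the resulting $(\sigma,w)$ (extracted as a suitable weak limit or by passing to the infinite-depth object). By Sawyer's theorem \cite{MR676801} it suffices to check Sawyer's testing conditions $\int_I(M_\sigma\mathbf 1_I)^2\,dw\lesssim \sigma(I)$ and its dual. Because $M$ has no signed cancellation, the alternating-sign structure that forces the singular integral testing failure is invisible to $M$: a standard CZ stopping-time decomposition plus the $A_2$ control produced by the Bellman recursion bound $M_\sigma\mathbf 1_I$ pointwise by a geometric sum of averages, giving the required control. The full Poisson $A_2$ condition \eqref{e:A2} follows similarly, since tail terms are of $A_2$-type by \eqref{e:xa2}-style estimates.

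The main obstacle, and the step where care is essential, is transferring the dyadic testing failure to a genuine failure of \eqref{e:T} for the continuous Hilbert transform. Direct application of a Haar-shift representation is not available for a \emph{fixed} pair of weights (such representations live on the \emph{average} over random grids). Reguera--Scurry's solution is to choose the supports of $\sigma$ and $w$ inside each dyadic cell to be disjoint sub-intervals placed in a checkerboard pattern, with a small gap between them, so that for every pair of points $(y,x)\in\mathrm{supp}\,\sigma\times\mathrm{supp}\,w$ contributing to the sum, the sign of $\tfrac{1}{y-x}$ agrees with the sign of the corresponding Haar-shift kernel entry and $|y-x|$ is comparable to the relevant dyadic scale. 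With this geometric arrangement, for each testing interval one obtains, after expanding $H_\sigma\mathbf 1_{I_0}$ in the $w$-Haar basis, a lower bound
\begin{equation*}
\int_{I_0}|H_\sigma\mathbf 1_{I_0}|^2\,dw \gtrsim \int_{I_0}|S(\sigma\mathbf 1_{I_0})|^2\,dw - C\mathscr A_2\,\sigma(I_0),
\end{equation*}
which by the Bellman construction grows like $(\log n)\sigma_n(I_0)$. This refutes the interval testing inequality and, via Theorem~\ref{t:noCommon} (or directly), yields $\mathscr N=\infty$, completing the proof.
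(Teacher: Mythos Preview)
The paper does not supply its own proof of this statement: it is recorded as a prior result, attributed to Reguera \cite{MR2799801}, Reguera--Thiele \cite{MR2923171}, and Reguera--Scurry \cite{1109.2027}, with only the remark that the example ``is quite a bit more intricate than the examples we have presented.'' There is therefore nothing in the paper to compare your argument against line by line.

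Your outline is broadly faithful to the strategy of those references. The reduction you start with---that bilateral maximal bounds force the pivotal condition in both directions, so that by Theorem~\ref{t:pivotal} the failure of $\mathscr N<\infty$ must come from a testing inequality---is correct and is exactly how Reguera--Scurry frames the problem. The iterative construction you describe is that of Reguera (and its Hilbert-transform refinement by Reguera--Thiele), and the transfer step via careful placement of the supports so that the sign of $1/(y-x)$ matches the Haar-shift kernel is the decisive idea of \cite{MR2923171}.

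Two places in your sketch are thin enough to flag. First, the sentence ``a standard CZ stopping-time decomposition plus the $A_2$ control \ldots\ bound $M_\sigma\mathbf 1_I$ pointwise by a geometric sum of averages'' is not how the maximal testing is actually verified in \cite{1109.2027}; the Reguera construction has a specific self-similar structure, and Sawyer's testing condition is checked directly from that structure, not from a generic stopping argument. Second, the displayed lower bound $\int_{I_0}|H_\sigma\mathbf 1_{I_0}|^2\,dw \gtrsim \int_{I_0}|S(\sigma\mathbf 1_{I_0})|^2\,dw - C\mathscr A_2\,\sigma(I_0)$ hides the real work: one does not compare $H$ to a Haar shift globally and control the difference by $\mathscr A_2$, but rather arranges the supports so that, scale by scale, the Hilbert kernel restricted to $\operatorname{supp}\sigma\times\operatorname{supp} w$ is pointwise bounded below by a positive multiple of the corresponding dyadic kernel. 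That is a pointwise, not an $L^2$, comparison. With those two clarifications, your plan matches the literature.
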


This is quite a bit more intricate than the examples we have presented.  
It had been suggested, in the early days of the weighted theory, that the boundedness of the maximal functions would be sufficient for the norm boundedness of the Hilbert transform.   On the other hand, if one considers `off-diagonal' estimates, then boundedness of the 
maximal function is sufficient for norm inequalities for singular integrals \cite{12035906}.

\section{Applications of the Main Inequality} 

The interest in the two weight problem stems from a range of potential applications arising in 
sophisticated arenas of complex function and spectral theory.  The motivations for these questions 
are complicated, and based upon subtle theories.
The connections to the two weight Hilbert transform are 
not always immediate, and the properties of interest are frequently more intricate than those of mere 
boundedness of a transform.  Nevertheless, the acknowledged experts Belov-Mengestie-Seip in \cite{MR2812502} write 
``\ldots we have found it both useful and conceptually appealing to transform the subject into a study of the mapping properties of discrete Hilbert transforms. We have learned to appreciate that the essential difficulties thus seem to appear in a more succinct form.'' 
A brief guide to the subjects, and some of the `essential difficulties' follow. 

\subsection{Sarason's Question on Toeplitz Operators}\label{s:sarason}

This question arose from Sarason's work on exposed points of $ H ^{1}$ \cite{MR1038352}. 
Indeed, this was part of an influential body of work that pointed to the distinguished role of de Branges spaces in the subject. 
This paper contains  examples of pairs of functions $ f,g$, for which the individual Toeplitz operators where unbounded, but the composition bounded.

\begin{question}[Sarason \cite{sarasonConj}] 
	Characterize those pairs of outer functions $ g, h \in H ^2 $ for which the composition of Toeplitz operators  $ T _{g} T _{\overline h }$ is bounded on $ H ^2 $. 
\end{question}

Following \cite{sarasonConj}, for a function $ h \in L ^2 (\mathbb T )$, the Toeplitz operator $ T _{h}$ can be thought of 
as taking $ f \in H ^{2}$ to the space of analytic functions by the definition
\begin{equation*}
	T_h f (z) \coloneqq  \frac 1 {2 \pi }\int _{\partial \mathbb D } f (\operatorname e ^{i \theta  }) h (\operatorname e ^{i \theta })
\overline {k _{z} (\operatorname e ^{i \theta })}\; d \theta \,, 
\end{equation*}
where $ k _{w} (z) \coloneqq  \frac { (1- \lvert  w\rvert ^2  ) ^{1/2}} { 1- \overline w z}$ is the reproducing kernel.  

Also in \cite{sarasonConj} is an argument of S.~Treil that  a Poisson $ A_2$ condition is 
necessary condition for the boundedness of the composition: 
\begin{equation} \label{e:PA2}
\sup _{z\in \mathbb D } P \lvert  f\rvert ^2  (z) P \lvert  g\rvert ^2 (z) < \infty \,, 
\end{equation}
where $ P$ denotes the Poisson extension to the unit disk.  
Sarason wrote that \emph{`It is tempting to conjecture that the last condition is also sufficient for the boundedness of $ T _{g} T _{\overline h}$.'} 
This statement, widely referred to as the Sarason Conjecture, is of interest in both the Hardy and Bergman space settings.%
(Aleman-Pott-Reguera \cite{13041750} have resolved the conjecture in the negative in 
a Bergman space setting. A striking argument in which they prove the boundedness of the Bergman projection 
is \emph{equivalent to} the boundedness of the positive part of the Bergman projection. This allows 
a much simpler counterexample to be identified.) 

The connection with the two weight problem for the Hilbert transform is indicated by the diagram from \cite{MR1294717}*{\S5}, 
see Figure~\ref{f:Sarason}.  
In the diagram, 
$ M _{\overline h}$ is multiplication by $ \overline h$ and $ P_+$ is the Riesz projection from $ L ^2 $ to $ H ^2 $.   
The boundedness is equivalent to 
\begin{equation*}
	M _{g} P_+ M _{\overline f} \::\: H ^2 \mapsto H ^2 \,. 
\end{equation*}

The structure of outer functions  leads to these simplifications.  
Since the product of analytic is analytic, the  second $ H ^2 $ above can be replaced by $ L ^2 $, and then, the outside multiplication $ M _{g}$ can then be replaced by $ M _{\lvert  g\rvert }$.  
Thus, we are considering $ M _{\lvert  g\rvert } P_+ M _{\overline f} \::\: H ^2 \mapsto L^2 $.  
Now, $ \overline f$ is anti-analytic, so we can replace $ H ^2 $ above by $ L ^2 $. 
Moreover, the multiplication operator $ M _{f/ \lvert  f\rvert }$ is unitary, 
since an outer function can be equal to zero on $ \mathbb T $ only on a set of measure zero. 
Thus, it is equivalent to consider 
\begin{equation*}
	M _{\lvert  g\rvert } P_+ M _{\lvert  f\rvert } \::\: L ^2 \mapsto L^2 \,. 
\end{equation*}
This is a two weight inequality for $ P_+$. (Sergei Treil helped us with the history of this question.) 

\begin{figure}
\begin{tikzpicture}[node distance=3cm, auto] 
\node (A) {$ H ^2 $};   \node[right of=A] (B) {$ H ^2 $}; 
\node[below of=A] (C)  {$ L ^2 (\lvert  h\rvert ^{-2} )$};    \node[below of=B] (D) {$ H ^2 (\lvert  g\rvert ^2  )$};    
\draw[->]  (A) to node {$T _{g} T _{\overline h}$} (B); 
\draw[->]  (C) to  node[above] {$P_+$} (D); 
\draw[->]  (A) to node[left] {$M _{\overline h}$} (C); 
\draw[->]  (D) to node[right] {$M_g$} (B);  
\end{tikzpicture}
\caption{Sarason's Question concerns the top line of the diagram, which is equivalent to the lower part of the diagram. 
The  operator  $ M _{\overline h}$ on the left  is an isometry onto its range, while   $ M _{g}$, the operator on the right is an isometry between the two spaces.  } 
\label{f:Sarason} 
\end{figure}

The Riesz projection is a linear combination of the identity and the Hilbert transform, and   our main theorem will apply to it. 
 Note that the inequality 
\begin{equation*}
\lVert P_+ (\lvert  f\rvert \phi  )\rVert_{L ^2 (\lvert  g\rvert ^2 dx   )} \lesssim \lVert \phi \rVert_{ L ^2 (dx)}
\end{equation*}
is equivalent to 
\begin{equation*}
\lVert P_+ (\lvert  f\rvert ^2  \psi   )\rVert_{L ^2 (\lvert  g\rvert ^2 dx   )} \lesssim \lVert \psi  \rVert_{ L ^2 ( \lvert  f\rvert ^2 dx)}\,. 
\end{equation*}
Recall that $ P_+ = I  -\frac  \pi i  H $, according to how we defined the Hilbert transform, where $ I$ represents the identity operator.  In the two weight setting, we interpret the norm inequality $ \lVert P _{+} (\sigma f) \rVert_{w} \lesssim \lVert f\rVert_{\sigma }$, as uniform over all truncations $ 0< \tau < 1$ defined by 
\begin{equation*}
P _{+, \tau } (\sigma f) \coloneqq  \sigma f + \frac i \pi \int _{\tau < \lvert  x-y\rvert < \tau ^{-1}  } f (y) \frac {\sigma (dy)} {y-x} \
\end{equation*}

\begin{theorem}\label{t:P} For pairs of weights $ w, \sigma $ that   absolutely continuous with respect to Lebesgue measure,  the norm inequality 
$ \lVert P _{+} (\sigma f) \rVert_{w} \lesssim \lVert f\rVert_{\sigma }$ holds if and only if the pair of weights satisfy the Poisson $ A_2$ condition \eqref{e:A2}, and these testing inequalities hold, uniformly over all intervals $ I$, for a finite positive constant $ \mathscr P$, 
\begin{equation*}
\int _{I} \lvert  P _{+} (\sigma \mathbf 1_{I})\rvert ^2 \; w (dx) \le \mathscr P ^2 \sigma (I) \,, 
\qquad 
\int _{I} \lvert  P _{+} (w \mathbf 1_{I})\rvert ^2 \; \sigma  (dx) \le \mathscr P ^2 w (I) \,. 
\end{equation*}
\end{theorem}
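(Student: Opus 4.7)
The strategy is to reduce the two-weight inequality for $P_+$ to that for $H$ via an elementary pointwise orthogonality identity. The truncated definition gives $P_{+,\tau}(\sigma f) = \sigma f + \tfrac{i}{\pi} H_{\tau, \tau^{-1}}(\sigma f)$, and since for real-valued $f$ both $\sigma f$ and $H_{\tau, \tau^{-1}}(\sigma f)$ are real, the real-imaginary decomposition of $P_{+,\tau}(\sigma f)$ yields the pointwise identity
\[
	\lvert P_{+,\tau}(\sigma f)(x) \rvert^{2} = \lvert \sigma(x) f(x) \rvert^{2} + \pi^{-2} \lvert H_{\tau, \tau^{-1}}(\sigma f)(x) \rvert^{2}.
\]
Integrated against $w$ on any Borel set $E$, this becomes an identity between squared $L^{2}(w)$-norms. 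The plan is to use it in both directions: to derive the $H$-inequality and its testing conditions from those for $P_+$, and then to reassemble.

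Necessity is immediate. The testing inequalities for $P_+$ follow by taking $f = \mathbf 1_I$ and restricting the integral to $I$. The Poisson $A_2$ condition follows because, integrated over $\mathbb R$, the identity gives $\pi^{-1}\lVert H_{\tau,\tau^{-1}}(\sigma f) \rVert_w \le \lVert P_{+,\tau}(\sigma f) \rVert_w \lesssim \lVert f \rVert_\sigma$ uniformly in $\tau$; hence $H_\sigma$ is bounded $L^{2}(\sigma) \to L^{2}(w)$, and Proposition~\ref{p:A2} supplies the Poisson $A_2$ condition.

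For sufficiency, the first move is to derive the testing inequalities for $H$ from those for $P_+$. Taking $E = I$ and $f = \mathbf 1_I$ in the identity,
\[
	\pi^{-2} \int_I \lvert H_{\tau, \tau^{-1}}(\sigma \mathbf 1_I) \rvert^{2} \, dw = \int_I \lvert P_{+,\tau}(\sigma \mathbf 1_I) \rvert^{2} \, dw - \int_I \sigma^{2} w \, dx \le \mathscr P^{2} \sigma(I),
\]
uniformly in $\tau$; the dual testing inequality follows analogously. Via Proposition~\ref{p:equiv}, this uniform bound over the one-parameter truncation family $(\tau, \tau^{-1})$ combined with the $A_2$ condition extends to uniform testing over all truncations $0 < \alpha < \beta$ of $H$. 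Theorem~\ref{t:noCommon} then delivers $\lVert H_\sigma f \rVert_w \lesssim \lVert f \rVert_\sigma$.

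It remains to control the multiplier term $\lVert \sigma f \rVert_w^{2} = \int \sigma^{2} f^{2} w \, dx$. The Poisson $A_2$ condition implies the simple $A_2$ condition $\sigma(I) w(I) \lesssim \lvert I \rvert^{2}$ (since $P(\mu, I) \gtrsim \mu(I)/\lvert I \rvert$ for any weight $\mu$), and Lebesgue differentiation at common Lebesgue points of the densities $\sigma(x)$ and $w(x)$---which exist almost everywhere since both weights are absolutely continuous---gives the pointwise bound $\sigma(x) w(x) \lesssim 1$ a.e. Therefore $\int \sigma^{2} f^{2} w \, dx \le \lVert \sigma w \rVert_\infty \int f^{2} \sigma \, dx \lesssim \lVert f \rVert_\sigma^{2}$. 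Reassembling via the integrated pointwise identity yields $\lVert P_{+,\tau}(\sigma f) \rVert_w \lesssim \lVert f \rVert_\sigma$ uniformly in $\tau$. There is no real obstacle beyond bookkeeping: all the substantive work is already encapsulated in Theorem~\ref{t:noCommon}, and the role of absolute continuity is precisely to make the pointwise product $\sigma(x) w(x)$ meaningful and to justify the identification of $\sigma f$ with a function for the multiplier estimate.
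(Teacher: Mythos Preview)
Your proof is correct and follows essentially the same route as the paper: split $P_+$ into the identity plus a multiple of $H$, use the real/imaginary orthogonality for real $f$ to separate the two pieces, pull the pointwise bound $\sigma(x)w(x)\lesssim 1$ from the simple $A_2$ condition to handle the identity part, and invoke Theorem~\ref{t:noCommon} for the Hilbert part. One small imprecision: Proposition~\ref{p:equiv} as stated compares hard versus smooth truncations rather than one-parameter $(\tau,\tau^{-1})$ versus two-parameter $(\alpha,\beta)$ families, but the passage you need is justified by the same mechanism used in its proof---the difference between truncation schemes is controlled by averaging operators of the type \eqref{e:Avg}, which are bounded once $A_2$ holds.
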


One must be sure that the $ A_2$ inequality is necessary from the norm inequality.  As it suffices to test real-valued functions, the real-variable proof 
given here will suffice.  
This in particular shows that  for the densities of the weights, $ \sigma (x) \cdot w (x) \le \mathscr A_2  $, for a.e.$ x$. Thus, the identity part of 
the norm, and testing, inequalities are trivial.  The remaining parts just concern the Hilbert transform, so one can use the main result. 

If one is interested in the Sarason question for functions $ f,g$ that are not outer, there is no simple reduction to the two weight inequality 
for the Hilbert transform, and the problem is quite subtle, as the role of the multiplier $ P_+ M _{\overline f}$ is more involved than that of just a weight.

\subsection{Model Spaces}

For a  probability measure $ \sigma $ on $ \mathbb T $, define a holomorphic function $ \theta $ on $ \mathbb D $ by the Poisson integral 
\begin{equation*}
\frac 1 {1 - \theta (z)} \coloneqq  \int _{\mathbb T } \frac  1 {1 - z \overline \zeta } \; \sigma (d \zeta )\,. 
\end{equation*}
This is an inner function: A holomorphic map of $ \mathbb D $ to itself which is unimodular a.e.\thinspace on $ \mathbb T $.  Also, $ \theta (0)=1$.  
(The measure $ \sigma $ is a Clark measure for $ \theta $, frequently written as  $ \sigma _1$.)

The shift operator $ S f (z) = z f(z)$ on $ H ^2 $ has invariant subspace $ \theta H ^2 = \{ \theta f \::\: f\in H ^2 \}$, whence 
$ K _{\theta } \coloneqq  H ^2 \ominus \theta H ^2 $ is invariant for $ S ^{\ast} $.  
Beurling's theorem states that every invariant subspace for $ S ^{\ast} $ is of this form.  
The model operator is $ S _{\theta } \coloneqq  P _{\theta } S$, where $ P _{\theta }$ is the orthogonal projection from $ H ^2 $ onto $ K _{\theta }$.  
Remarkably, subject to mild conditions, every contractive operator on a Hilbert space is unitarily equivalent to a properly chosen $ S _{\theta }$.  For this, and other reasons, properties of the $ K _{\theta }$ spaces have broad significance.  

The spaces $ K _{\theta }$ and $ L ^2 (\sigma )$ are unitarily equivalent, with the unitary  map  from $ f \in L ^2 (\sigma )$ to $ F\in K _{\theta }$ given by  
\begin{equation*}
F (z) = (1- \theta (z)) \int _{\mathbb T } \frac {f (\zeta )} {1- z \overline \zeta } \sigma (d \zeta ) \,. 
\end{equation*}
One is interested in those measures $ \mu $ on $ \mathbb T $ for which the natural embedding operator 
is bounded from $ K _{\theta }$ to $ L ^2 (\mu )$, namely, is it the case that $ \lVert F\rVert_{ \mu } \lesssim \lVert F\rVert_{ K _{\theta }}$.  
We see that this bound is equivalent to  
\begin{equation*}
\int _{\mathbb T } \Bigl\lvert \int _{\mathbb T } \frac {f (\zeta )} {1- z \overline \zeta } \sigma (d \zeta )  \Bigr\rvert ^2 
\lvert  1 - \theta (z)\rvert ^2 \mu (dz) \lesssim \lVert f\rVert_{\sigma } ^2 \,.  
\end{equation*}
That is, the question is equivalent to a two weight inequality for the Hilbert transform on $ \mathbb T $. 

From this perspective, one can lift counterexamples concerning the two weight Hilbert transform to those for embedding operators, which is the tactic of \cite{NV}, from which we have taken this condensed presentation.  
A characterization of the embedding question can be read off from our main theorem.   

But note that Clark measure is on $ \mathbb T $, by definition, and the second measure $ \mu $ is constrained to be supported on $ \mathbb T $, whereas the disk would be the natural assumption. 
In the case where $ \mu $ is supported on the disk, and one seeks an \emph{isometric} embedding, the question has 
a remarkable answer, found by Aleksandrov \cite{MR1464420}.  The general question is resolved in \cite{13104820}, 
which gives a characterization of a two weight inequality for the Cauchy transform, under these restrictions on the supports of the weights.  The method of the proof is similar to that of the Hilbert transform, with some additional complications.  

The model spaces are also important to spectral theory, and the subject of rank one perturbations of a unitary operator.  
In spectral theory, it is important to understand the structure of the unitary operator that sends the Hilbert space to into 
$ L ^2 $ of the spectral measure. Weighted Hilbert transforms arise therein.  See for instance \cite{MR1945291}, which 
uses the example of Nazarov showing that the $ A_2$ condition is not sufficient for the boundedness of the Hilbert transform. 
Also see \cite{MR2540995}. 

We point the interested readers to \cites{MR827223,MR2198367}, and the many citations therein for more information about these subjects. 

\subsection{de Branges Spaces}

We recall the setting of \cites{MR2812502,MR2763007}. 
For a sequence of distinct points $ \Gamma =\{ \gamma _n\} \subset \mathbb C $ and a sequence of positive numbers $ v = \{v_n\}$ 
consider the Cauchy transform 
\begin{equation*}
H _{(\Gamma ,v)} \::\: 
 a=\{a_n\} \mapsto \sum_{n \::\: z\neq \gamma _n} \frac {a_n v_n} {z- \gamma _n}
\end{equation*}
This is well defined for $ a \in \ell ^{2} _{v}$ and $ z \in \Omega $, defined by 
\begin{equation*}
\Omega \coloneqq  \Bigl\{ z\in \mathbb C \::\:  \sum_{n \::\: z\neq \gamma _n} \frac {v_n} {\lvert  z- \gamma _n\rvert ^2  } < \infty  \Bigr\}\,. 
\end{equation*} 

Call $ \mathcal H (\Gamma ,v)$ the space of functions analytic on $ \Omega $ given by the image of $ \ell ^2 _v$ under $ H _{(\Gamma ,v)}$. 
For appropriate choices of $ (\Gamma ,v)$, these Hilbert spaces have deep connections to analytic function spaces.  
For instance,  the reproducing kernels of $ \mathcal H (\Gamma ,v)$ are 
\begin{equation*}
k _{ z} (\zeta ) \coloneqq  \sum_{n} \frac {v_n} {(\overline  {z - \gamma _n}) (\zeta - \gamma _n)}\,, \qquad z\in \Omega \,. 
\end{equation*}
And, many natural questions, such as the structure of frames of reproducing kernels for $\mathcal H (\Gamma ,v) $, 
require knowledge about the two weight inequality for the Cauchy transform. 
For instance, the main real-variable result in \cite{MR2812502} is a characterization of a two weight inequality,
but under the requirement that both measures be a sum of point masses on sparse collections of points.   
This yields interesting results in the setting of de Branges spaces.  

The definition of $ \mathcal H (\Gamma ,v)$ provides just one possible representation of a de Branges space, a class of Hilbert spaces with remarkable properties.  
The standard reference for them is \cite{MR0229011}.  Beginning from the works of Sarason \cites{MR1038352}, they have 
become an essential part of subject of analytic function spaces.

\begin{bibsection}
\begin{biblist}
\bib{MR1464420}{article}{
  author={Aleksandrov, A. B.},
  title={Isometric embeddings of co-invariant subspaces of the shift operator},
  language={Russian, with English and Russian summaries},
  journal={Zap. Nauchn. Sem. S.-Peterburg. Otdel. Mat. Inst. Steklov. (POMI)},
  volume={232},
  date={1996},
  number={Issled. po Linein. Oper. i Teor. Funktsii. 24},
  pages={5--15, 213},
  translation={ journal={J. Math. Sci. (New York)}, volume={92}, date={1998}, number={1}, pages={3543--3549}, 
  },
}

\bib{13041750}{article}{
   author = {{Aleman}, Alexandru},
   author={{Pott}, Sandra},
   author={{Reguera}, Maria Carmen},
    title ={Sarason Conjecture on the Bergman space},
   eprint = {http://www.arxiv.org/abs/1304.1750},
}

\bib{MR2763007}{article}{
   author={Belov, Yurii},
   author={Mengestie, Tesfa Y.},
   author={Seip, Kristian},
   title={Unitary discrete Hilbert transforms},
   journal={J. Anal. Math.},
   volume={112},
   date={2010},
   pages={383--393},
}

\bib{MR2812502}{article}{
  author={Belov, Yurii},
  author={Mengestie, Tesfa Y.},
  author={Seip, Kristian},
  title={Discrete Hilbert transforms on sparse sequences},
  journal={Proc. Lond. Math. Soc. (3)},
  volume={103},
  date={2011},
  number={1},
  pages={73--105},
}

\bib{MR0199631}{article}{
  author={Carleson, Lennart},
  title={On convergence and growth of partial sums of Fourier series},
  journal={Acta Math.},
  volume={116},
  date={1966},
  pages={135--157},
}

\bib{MR2269591}{article}{
  author={Cascante, Carme},
  author={Ortega, Joaquin M.},
  author={Verbitsky, Igor E.},
  title={On $L^p$-$L^q$ trace inequalities},
  journal={J. London Math. Soc. (2)},
  volume={74},
  date={2006},
  number={2},
  pages={497--511},
}

\bib{MR730075}{article}{
   author={Cotlar, M.},
   author={Sadosky, C.},
   title={On some $L^{p}$ versions of the Helson-Szeg\H o theorem},
   conference={
      title={Conference on harmonic analysis in honor of Antoni Zygmund,
      Vol. I, II },
      address={Chicago, Ill.},
      date={1981},
   },
   book={
      series={Wadsworth Math. Ser.},
      publisher={Wadsworth, Belmont, CA},
   },
   date={1983},
   pages={306--317},
   review={\MR{730075 (85i:42015)}},
}

\bib{MR1294717}{article}{
  author={Cruz-Uribe, David},
  title={The invertibility of the product of unbounded Toeplitz operators},
  journal={Integral Equations Operator Theory},
  volume={20},
  date={1994},
  number={2},
  pages={231--237},
}

\bib{12035906}{article}{
author={Cruz-Uribe, David},
  author={Martell, Jose},
  author={P{\'e}rez, Carlos},
  title={A note on the off-diagonal Muckenhoupt-Wheeden conjecture},
  date={2012},
  eprint={http://www.arxiv.org/abs/1203.5906},
  }

\bib{MR0229011}{book}{
   author={de Branges, Louis},
   title={Hilbert spaces of entire functions},
   publisher={Prentice-Hall Inc.},
   place={Englewood Cliffs, N.J.},
   date={1968},
   pages={ix+326},
}

\bib{fefferman}{article}{
  author={Fefferman, Charles},
  title={Pointwise convergence of Fourier series},
  journal={Ann. of Math. (2)},
  volume={98},
  date={1973},
  pages={551--571},
}

\bib{MR0312139}{article}{
  author={Hunt, Richard},
  author={Muckenhoupt, Benjamin},
  author={Wheeden, Richard},
  title={Weighted norm inequalities for the conjugate function and Hilbert transform},
  journal={Trans. Amer. Math. Soc.},
  volume={176},
  date={1973},
  pages={227--251},
}

\bib{MR2464252}{article}{
  author={Hyt{\"o}nen, Tuomas},
  title={On Petermichl's dyadic shift and the Hilbert transform},
  language={English, with English and French summaries},
  journal={C. R. Math. Acad. Sci. Paris},
  volume={346},
  date={2008},
  number={21-22},
  pages={1133--1136},
}

\bib{2912709}{article}{
  author={Hyt{\"o}nen, Tuomas P.},
  title={The sharp weighted bound for general Calder\'on-Zygmund operators},
  journal={Ann. of Math. (2)},
  volume={175},
  date={2012},
  number={3},
  pages={1473--1506},
}

\bib{MR3204859}{article}{
   author={Hyt{\"o}nen, Tuomas P.},
   title={The $A_2$ theorem: remarks and complements},
   conference={
      title={Harmonic analysis and partial differential equations},
   },
   book={
      series={Contemp. Math.},
      volume={612},
      publisher={Amer. Math. Soc., Providence, RI},
   },
   date={2014},
   pages={91--106},
   review={\MR{3204859}},
   doi={10.1090/conm/612/12226},
}

\bib{13120843}{article}{
  author={Hyt{\"o}nen, Tuomas P.},
  title={The two weight inequality for the Hilbert transform with general measures},
  date={2013},
  eprint={http://www.arxiv.org/abs/1312.0843 },
}

\bib{MR2993026}{article}{
  author={Hyt{\"o}nen, Tuomas P.},
  author={Lacey, Michael T.},
  author={Martikainen, Henri},
  author={Orponen, Tuomas},
  author={Reguera, Maria Carmen},
  author={Sawyer, Eric T.},
  author={Uriarte-Tuero, Ignacio},
  title={Weak and strong type estimates for maximal truncations of Calder\'on-Zygmund operators on $A_p$ weighted spaces},
  journal={J. Anal. Math.},
  volume={118},
  date={2012},
  pages={177--220},
}

\bib{150105818 }{article}{
   author={Lacey, Michael T.},
   title={An elementary proof of the $A_2$ Bound}, 
   date={2015},
   eprint={http://arxiv.org/abs/1501.05818 }
}

\bib{MR3285858}{article}{
   author={Lacey, Michael T.},
   title={Two-weight inequality for the Hilbert transform: A real variable
   characterization, II},
   journal={Duke Math. J.},
   volume={163},
   date={2014},
   number={15},
   pages={2821--2840},
}

\bib{MR2657437}{article}{
  author={Lacey, Michael T.},
  author={Petermichl, Stefanie},
  author={Reguera, Maria Carmen},
  title={Sharp $A_2$ inequality for Haar shift operators},
  journal={Math. Ann.},
  volume={348},
  date={2010},
  number={1},
  pages={127--141},
}

  \bib{09113437}{article}{
  author={Lacey, Michael T.},
    author={Sawyer, Eric T.},
      author={Uriarte-Tuero, Ignacio},
      title={Two Weight Inequalities for Discrete Positive Operators},
      date={2009},
      journal={Submitted},
      eprint={http://www.arxiv.org/abs/0911.3437},
     }   

\bib{08070246}{article}{
  author={Lacey, Michael T.},
  author={Sawyer, Eric T.},
  author={Uriarte-Tuero, Ignacio},
  title={A characterization of two weight norm inequalities for maximal singular integrals with one doubling measure},
  journal={Anal. PDE},
  volume={5},
  date={2012},
  number={1},
  pages={1--60},
}

\bib{10014043}{article}{
  author={Lacey, Michael T.},
  author={Sawyer, Eric T.},
  author={Uriarte-Tuero, Ignacio},
  title={A Two Weight Inequality for the Hilbert transform Assuming an Energy Hypothesis},
  journal={J Funct Anal},
  volume={263},
  pages={305--363},
  date={2012},
}

\bib{11082319}{article}{
  author={Lacey, Michael T.},
  author={Sawyer, Eric T.},
  author={Shen, Chun-Yun},
  author={Uriarte-Tuero, Ignacio},
  title={The two weight inequality for the Hilbert transform, coronas, and energy conditions},
  eprint={http://www.arXiv.org/abs/1108.2319},
  date={2011},
}

\bib{parallel}{article}{
  author={Lacey, Michael T.},
  author={Sawyer, Eric T.},
  author={Shen, Chun-Yun},
  author={Uriarte-Tuero, Ignacio},
  title={Two Weight Inequality for the Hilbert Transform: Parallel Corona},
}

\bib{MR3285857}{article}{
   author={Lacey, Michael T.},
   author={Sawyer, Eric T.},
   author={Shen, Chun-Yen},
   author={Uriarte-Tuero, Ignacio},
   title={Two-weight inequality for the Hilbert transform: A real variable
   characterization, I},
   journal={Duke Math. J.},
   volume={163},
   date={2014},
   number={15},
   pages={2795--2820},
}  

\bib{13104820}{article}{
   author={Lacey, Michael T.},
   author={Sawyer, Eric T.},
   author={Uriarte-Tuero, Ignacio},
   author={Shen, Chun-Yun},
   author={Wick, Brett}, 
   title={Two Weight Inequalities for the Cauchy Transform from  $ \mathbb R $ to $ \mathbb C _+$}, 
   date={2013},
   eprint={http://arxiv.org/abs/1310.4820}
}

\bib{lacey-thiele-carleson}{article}{
  author={Lacey, Michael T.},
  author={Thiele, Christoph},
  title={A proof of boundedness of the Carleson operator},
  journal={Math. Res. Lett.},
  volume={7},
  date={2000},
  pages={361\ndash 370},
}

\bib{13126163}{article}{
   author={Lacey, Michael T.},
   author={Wick, Brett}, 
   title={Two Weight Inequalities for Riesz Transforms: Uniformly Full Dimension Weights},
   date={2013},
   eprint={http://arxiv.org/abs/1312.6163}
}

\bib{150705570} {article}{
   author = {{Lai}, J. and {Treil}, S.},
    title = {Two weight \$L\^{}p\$ estimates for paraproducts in non-homogeneous settings},
   eprint = {http://arxiv.org/abs/1507.05570},
}

\bib{MR2721744}{article}{
  author={Lerner, Andrei K.},
  title={A pointwise estimate for the local sharp maximal function with applications to singular integrals},
  journal={Bull. Lond. Math. Soc.},
  volume={42},
  date={2010},
  number={5},
  pages={843--856},
}

\bib{MR3085756}{article}{
   author={Lerner, Andrei K.},
   title={A simple proof of the $A_2$ conjecture},
   journal={Int. Math. Res. Not. IMRN},
   date={2013},
   number={14},
   pages={3159--3170},
   issn={1073-7928},
   review={\MR{3085756}},
}

\bib{MR2540995}{article}{
  author={Liaw, Constanze},
  author={Treil, Sergei},
  title={Rank one perturbations and singular integral operators},
  journal={J. Funct. Anal.},
  volume={257},
  date={2009},
  number={6},
  pages={1947--1975},
}

\bib{MR3010121}{article}{
   author={Liaw, Constanze},
   author={Treil, Sergei},
   title={Regularizations of general singular integral operators},
   journal={Rev. Mat. Iberoam.},
   volume={29},
   date={2013},
   number={1},
   pages={53--74},
}

\bib{MR0311856}{article}{
  author={Muckenhoupt, Benjamin},
  title={Hardy's inequality with weights},
  note={Collection of articles honoring the completion by Antoni Zygmund of 50 years of scientific activity, I},
  journal={Studia Math.},
  volume={44},
  date={1972},
  pages={31--38},
}

\bib{MR0417671}{article}{
  author={Muckenhoupt, Benjamin},
  author={Wheeden, Richard L.},
  title={Two weight function norm inequalities for the Hardy-Littlewood maximal function and the Hilbert transform},
  journal={Studia Math.},
  volume={55},
  date={1976},
  number={3},
  pages={279--294},
}

\bib{N1}{article}{
  author={Nazarov, F.},
  title={A counterexample to Sarason's conjecture},
  date={1997},
  journal={Preprint, MSU},
  eprint={http://www.math.msu.edu/~fedja/prepr.html},
}

\bib{NTV4}{article}{
  author={Nazarov, F.},
  author={Treil, S.},
  author={Volberg, A.},
  title={Cauchy integral and Calder\'on-Zygmund operators on nonhomogeneous spaces},
  journal={Internat. Math. Res. Notices},
  date={1997},
  number={15},
  pages={703--726},
}

\bib{NTV2}{article}{
  author={Nazarov, F.},
  author={Treil, S.},
  author={Volberg, A.},
  title={Accretive system $Tb$-theorems on nonhomogeneous spaces},
  journal={Duke Math. J.},
  volume={113},
  date={2002},
  number={2},
  pages={259--312},
}

\bib{MR1998349}{article}{
  author={Nazarov, F.},
  author={Treil, S.},
  author={Volberg, A.},
  title={The $Tb$-theorem on non-homogeneous spaces},
  journal={Acta Math.},
  volume={190},
  date={2003},
  number={2},
  pages={151--239},
}

\bib{10031596}{article}{
  author={Nazarov, F.},
  author={Treil, S.},
  author={Volberg, A.},
  title={Two weight estimate for the Hilbert transform and Corona decomposition for non-doubling measures},
  date={2004},
  eprint={http://arxiv.org/abs/1003.1596},
}

\bib{MR2407233}{article}{
  author={Nazarov, F.},
  author={Treil, S.},
  author={Volberg, A.},
  title={Two weight inequalities for individual Haar multipliers and other well localized operators},
  journal={Math. Res. Lett.},
  volume={15},
  date={2008},
  number={3},
  pages={583--597},
}

\bib{NV}{article}{
  author={Nazarov, F.},
  author={Volberg, A.},
  title={The Bellman function, the two-weight Hilbert transform, and embeddings of the model spaces $K_\theta $},
  note={Dedicated to the memory of Thomas H.\ Wolff},
  journal={J. Anal. Math.},
  volume={87},
  date={2002},
  pages={385--414},
}

\bib{MR827223}{book}{
  author={Nikol{\cprime }ski{\u \i }, N. K.},
  title={Treatise on the shift operator},
  series={Grundlehren der Mathematischen Wissenschaften [Fundamental Principles of Mathematical Sciences]},
  volume={273},
  note={Spectral function theory; With an appendix by S. V. Hru\v s\v cev [S. V. Khrushch\"ev] and V. V. Peller; Translated from the Russian by Jaak Peetre},
  publisher={Springer-Verlag},
  place={Berlin},
  date={1986},
  pages={xii+491},
}

\bib{MR1945291}{article}{
  author={Nikolski, Nikolai},
  author={Treil, S.},
  title={Linear resolvent growth of rank one perturbation of a unitary operator does not imply its similarity to a normal operator},
  note={Dedicated to the memory of Thomas H.\ Wolff},
  journal={J. Anal. Math.},
  volume={87},
  date={2002},
  pages={415--431},
}

\bib{MR1756958}{article}{
  author={Petermichl, Stefanie},
  title={Dyadic shifts and a logarithmic estimate for Hankel operators with matrix symbol},
  language={English, with English and French summaries},
  journal={C. R. Acad. Sci. Paris S\'er. I Math.},
  volume={330},
  date={2000},
  number={6},
  pages={455--460},
}

\bib{MR2354322}{article}{
  author={Petermichl, S.},
  title={The sharp bound for the Hilbert transform on weighted Lebesgue spaces in terms of the classical $A\sb p$ characteristic},
  journal={Amer. J. Math.},
  volume={129},
  date={2007},
  number={5},
  pages={1355--1375},
}

\bib{ptv}{article}{
  author={P{\'e}rez, Carlos},
  author={Treil, S.},
  author={Volberg, A.},
  title={On $A_2$ conjecture and Corona decomposition of weights},
  date={2010},
  eprint={http://arxiv.org/abs/1006.2630},
}

\bib{MR2198367}{article}{
   author={Poltoratski, Alexei},
   author={Sarason, Donald},
   title={Aleksandrov-Clark measures},
   conference={
      title={Recent advances in operator-related function theory},
   },
   book={
      series={Contemp. Math.},
      volume={393},
      publisher={Amer. Math. Soc.},
      place={Providence, RI},
   },
   date={2006},
   pages={1--14},
}

\bib{MR2799801}{article}{
  author={Reguera, Maria Carmen},
  title={On Muckenhoupt-Wheeden conjecture},
  journal={Adv. Math.},
  volume={227},
  date={2011},
  number={4},
  pages={1436--1450},
}

\bib{1109.2027}{article}{
  author={Reguera, Maria Carmen},
  author={Scurry, James},
  title={On joint estimates for maximal functions and singular integrals on weighted spaces},
  journal={Proc. Amer. Math. Soc.},
  volume={141},
  date={2013},
  number={5},
  pages={1705--1717},
}

\bib{MR2923171}{article}{
  author={Reguera, Maria Carmen},
  author={Thiele, Christoph},
  title={The Hilbert transform does not map $L^1(Mw)$ to $L^{1,\infty }(w)$},
  journal={Math. Res. Lett.},
  volume={19},
  date={2012},
  number={1},
  pages={1--7},
}

\bib{MR1038352}{article}{
  author={Sarason, Donald},
  title={Exposed points in $H^1$. I},
  conference={ title={The Gohberg anniversary collection, Vol.\ II}, address={Calgary, AB}, date={1988}, },
  book={ series={Oper. Theory Adv. Appl.}, volume={41}, publisher={Birkh\"auser}, place={Basel}, },
  date={1989},
  pages={485--496},
}

\bib{sarasonConj}{article}{
  author={Sarason, Donald},
  title={Products of Toeplitz operators},
  book={title={Linear and complex analysis. Problem book 3. Part I}, series={Lecture Notes in Mathematics}, volume={1573}, editor={Havin, V. P.}, editor={Nikolski, N. K.}, publisher={Springer-Verlag}, place={Berlin}, date={1994}, },
  pages={318-319},
}

\bib{MR676801}{article}{
  author={Sawyer, Eric T.},
  title={A characterization of a two-weight norm inequality for maximal operators},
  journal={Studia Math.},
  volume={75},
  date={1982},
  number={1},
  pages={1--11},
}

\bib{MR930072}{article}{
  author={Sawyer, Eric T.},
  title={A characterization of two weight norm inequalities for fractional and Poisson integrals},
  journal={Trans. Amer. Math. Soc.},
  volume={308},
  date={1988},
  number={2},
  pages={533--545},
}


\bib{14010467}{article}{
   author = {{Sawyer}, E.~T. and {Shen}, C.-Y. and {Uriarte-Tuero}, I.},
    title = {A note on failure of energy reversal for classical fractional singular integrals},
  journal = {Revisita Math, to appear},
   eprint = {http://www.arxiv.org/abs/1401.0467},
}


\bib{10073089}{article}{
  author={{Scurry}, J.},
  title={A Characterization of Two-Weight Inequalities for a Vector-Valued Operator},
  eprint={http://www.arxiv.org/abs/1007.3089},
  date={2010},
}

\bib{MR3232035}{article}{
   author={Tanaka, Hitoshi},
   title={A characterization of two-weight trace inequalities for positive
   dyadic operators in the upper triangle case},
   journal={Potential Anal.},
   volume={41},
   date={2014},
   number={2},
   pages={487--499},
   issn={0926-2601},
   review={\MR{3232035}},
   doi={10.1007/s11118-013-9379-0},
}

\bib{12011455}{article}{
  author={Treil, Sergei},
    title = {A remark on two weight estimates for positive dyadic operators},
   eprint = {http://www.arxiv.org/abs/1201.1455},
     date = {2012},
}


\bib{V}{book}{
  author={Volberg, A.},
  title={Calder\'on-Zygmund capacities and operators on nonhomogeneous spaces},
  series={CBMS Regional Conference Series in Mathematics},
  volume={100},
  publisher={Published for the Conference Board of the Mathematical Sciences, Washington, DC},
  date={2003},
  pages={iv+167},
}

\bib{14122127}{article}{ 
   author ={{Vuorinen}, E.},
  title = {$L^p(\mu)-L^q(\nu)$ characterization for well localized operators},
   eprint = {http://www.arxiv.org/abs/1412.2127},
     date ={2015},
}

\bib{150405759}{article}{ 
   author ={{Vuorinen}, E.},
    title ={Two weight $L^p$-inequalities for dyadic shifts and the dyadic square function},
   eprint ={http://www.arxiv.org/abs/1504.05759},
     date ={2015},
}

\end{biblist}
\end{bibsection}

\end{document}